\title{\vspace*{-1pc}%
 Existence and uniqueness of the Levi-Civita connection on noncommutative differential forms}
\author{Bram Mesland\S$^*$, Adam Rennie\dag
\thanks{email: 
\texttt{b.mesland@math.leidenuniv.nl}, \texttt{renniea@uow.edu.au}
}
\\[3pt]
\S Mathematisch Instituut, Universiteit Leiden, Netherlands
\\[3pt]
\dag School of Mathematics and Applied Statistics, University of Wollongong\\
Wollongong, Australia
}
\def\section{\@startsection{section}{1}{\z@}{-3.5ex plus -1ex minus
  -.2ex}{2.3ex plus .2ex}{\large\bf}}
\def\subsection{\@startsection{subsection}{2}{\z@}{-3.25ex plus -1ex
  minus -.2ex}{1.5ex plus .2ex}{\normalsize\bf}}
\numberwithin{equation}{section} 
\theoremstyle{plain} 
\newtheorem{thm}{Theorem}[section]
\newtheorem{lemma}[thm]{Lemma}
\newtheorem{prop}[thm]{Proposition}
\newtheorem{corl}[thm]{Corollary}
\theoremstyle{definition} 
\newtheorem{defn}[thm]{Definition}
\newtheorem*{ass*}{Standing assumption}
\newtheorem{example}[thm]{Example} 
\theoremstyle{remark} 
\newtheorem{rmk}[thm]{Remark}
\newtheorem{rmks}[thm]{Remarks}
\DeclareMathOperator{\Dom}{Dom}   
\DeclareMathOperator{\End}{End}   
\newcommand{\B}{\mathcal{B}}  
\newcommand{\C}{\mathbb{C}}   
\newcommand{\Y}{\mathcal{Y}}
\newcommand{\D}{\mathcal{D}}  
\renewcommand{\d}{\mathrm{d}_\Psi} 
\newcommand{\dee}{\mathrm{d}} 
\renewcommand{\H}{\mathcal{H}}  
\newcommand{\N}{\mathbb{N}}   
\newcommand{\op}{\circ}       
\newcommand{\ox}{\otimes}     
\newcommand{\R}{\mathbb{R}}   
\newcommand{\REnd}{\overrightarrow{\textnormal{End}}^{*}}
\newcommand{\LEnd}{\overleftarrow{\textnormal{End}}^{*}}
\newcommand{\X}{\mathcal{X}}  
\newcommand{\Z}{\mathbb{Z}}   
\newcommand{\nablar}{\overrightarrow{\nabla}}
\newcommand{\nablal}{\overleftarrow{\nabla}}
\newcommand{\bra}[1]{\langle#1|} 
\newcommand{\ket}[1]{|#1\rangle} 
\newcommand{\ketbra}[2]{\lvert#1\rangle\langle#2\rvert} 
\newcommand{\pairing}[2]{\langle #1\mathbin{|}#2\rangle} 
\newcommand{\stroke}{\mathbin|}     
\newcommand{\alphar}{\overrightarrow{\alpha}}
\newcommand{\alphal}{\overleftarrow{\alpha}}
\newcommand{\dt}{\mathrm{d}_{\theta}}
\def\pairL_#1(#2|#3){{}_{#1}(#2\stroke#3)} 
\def\pairR(#1|#2)_#3{(#1\stroke#2)_{#3}} 
\def\scal<#1|#2>{\langle#1\stroke#2\rangle} 
\newbox\ncintdbox \newbox\ncinttbox 
\begin{document}

\maketitle

\vspace{-2pc}

\begin{abstract}
We combine Hilbert module and algebraic techniques
to  give necessary and sufficient conditions for the existence of an Hermitian torsion-free connection on the bimodule of differential one-forms of a first order differential calculus.  In the presence of the extra structure of a bimodule connection, we give sufficient conditions for uniqueness.

We prove that any $\theta$-deformation of a compact Riemannian manifold admits a unique Hermitian torsion-free bimodule connection and provide an explicit construction of it.
Specialising to classical Riemannian manifolds yields a novel construction of the Levi-Civita connection on the cotangent bundle.
\end{abstract}

\tableofcontents
\parskip=4pt
\parindent=0pt

\section{Introduction}
\label{sec:intro}
The fundamental theorem of Riemannian geometry asserts that there exists a unique metric compatible and torsion-free connection on the bimodule of differential one-forms of a Riemannian manifold $(M,g)$.
In this paper we extend this result to a class of second-order differential calculi 
$$
\B\xrightarrow{\dee} \Omega^{1}_{\dee}\xrightarrow{\dee}\Lambda^{2}_{\dee}
$$ 
over a noncommutative algebra $\B$. We require that $\Omega^{1}_{\dee}$ carries an Hermitian $\B$-valued inner product and $\Lambda^{2}_{\dee}\subset \Omega^{1}_{\dee}\ox_{\B}\Omega^{1}_{\dee}$ is the range of a projection $1-\Psi$, a notion which first appeared in \cite{BGJ1,BGJ2} and is also used in \cite[p574]{BMBook} and \cite[Section 4.1]{M22}. 
These assumptions are equivalent to the existence of a projection annihilating the junk tensors, introduced by Connes, \cite[Chapter VI]{BRB}.
In case the second-order calculus consists of projective modules, the existence of $\Psi$ is always guaranteed, and merely gives a realisation of two-forms as two-tensors. This is advantageous for several reasons. 

First of all, suppose that $\mathcal{C}$ is an algebra and $\Omega^{1}_{\dee}\subset \mathcal{C}$ is a subset. Then the multiplication map allows one to represent the two tensors inside $\mathcal{C}$ as well. An embedding of the two-forms inside the two tensors then gives representations of curvature tensors inside tensor powers of $\mathcal{C}$. This structure is present for instance on the module of one-forms of a spectral triple $(\mathcal{B},\H,\D)$, in which case $\mathcal{C}=\mathbb{B}(\H)$.
We have used this viewpoint to define Ricci and scalar curvatures as well as prove Weitzenb\"{o}ck formulae for spectral triples in \cite{MRCurve}. We observe that in \cite[p574]{BMBook} and \cite[Section 4.1]{M22}, the definition of Ricci curvature relies on an inclusion of the two-forms into the two-tensors, analogous to using the projection $1-\Psi$.

Secondly, and more directly relevant to the present paper, if $\Omega^{1}_{\dee}$ carries a $\B$-valued inner product, then so do the two-tensors. When $\Lambda^{2}_{\dee}$ is realised inside the two-tensors, this allows for a direct comparison between connections and the exterior derivative. 


Despite the strong motivation from spectral triples to define two-forms as a submodule rather than as a quotient, this paper applies to any first order calculus satisfying the various definitions we introduce. All our constructions are compatible with, and inspired by, Connes' noncommutative differential geometry. The fundamental structure we exploit and impose assumptions on is a $*$-representation of the universal differential forms, such as that arising from a spectral triple.

The existence of two-forms $\Lambda^2_\dee$ allows us to define the torsion and curvature of connections 
$\nabla:\Omega^1_\dee\to\Omega^1_\dee\ox_\B\Omega^1_\dee$. Many previous approaches, such as \cite{BGJ2,BGJ1}, relied on the presence of a flip map to emulate some classical constructions, such as antisymmetrisation. While
 $2\Psi-1$ on $\Omega^1_\dee\ox_\B\Omega^1_\dee$ behaves formally as a flip map, we rather rely on $\Omega^1_\dee$ being a $\dag$-bimodule, mirroring the operator adjoint on one-forms of a spectral triple. This gives an anti-linear relation between left and right module structures.

We relate the existence of Hermitian torsion-free connections on $\Omega^{1}_{\dee}$ to the two-projection problem in Hilbert $C^{*}$-modules \cite{MRtwoprojns} and obtain a necessary and sufficient condition for the existence of such connections and provide an explicit construction. 
Using noncommutative braidings and bimodule connections, \cite{BMBook,CO,DHLS,DMM,DM,GKOSS,LRZ}, we also obtain a sufficient condition for uniqueness. The braiding for which we have a bimodule connection is given by $2\Psi-1$ in many examples, but crucially there are examples \cite{MRPods} where the braiding $\sigma$ and $2\Psi-1$ are distinct, and $\sigma^2\neq 1$.

As usual, the space of connections on $\Omega^{1}_{\dee}$ is an affine space modelled on 
$\textnormal{Hom}_{\B}(\Omega^{1}_{\dee}, (\Omega^1_{\dee})^{\ox2})$. Using an analogue of raising and lowering indices arising from the $\dag$-structure, the latter bimodule can be identified with the module of 3-tensors $(\Omega^1_{\dee})^{\ox3}$. Thus, after a choice of Grassmann connection, given by any choice of frame for the one-forms, the construction of an Hermitian torsion-free connection boils down to identifying a specific element $A$ in $(\Omega^1_{\dee})^{\ox3}$.

Our construction of the required three-tensor reveals the  relation with the two-projection problem in Hilbert $C^{*}$-modules \cite{MRtwoprojns}, involving the projections $P=\Psi\otimes 1$ and $Q=1\otimes\Psi$. We are able to identify a necessary and sufficient condition, relating the projections $P,Q$ to the differential and $\dag$-structure, in order for Hermitian torsion-free connections to exist. Consequently, this condition is satisfied for compact Riemannian manifolds, and yields an entirely novel construction of the Levi-Civita connection in that case. 

Starting with a frame $(\omega_j)$ for the one-forms, we set $W=\sum_j\omega_j\ox\dee(\omega_j^\dag)$
and then the connection is determined by the Grassmann connection of $(\omega_j)$ and the connection form
\begin{equation}
A=-\sum_{n=0}^\infty (PQ)^n(W+PW^\dag)=-\sum_{n=0}^\infty(QP)^n(W^\dag+QW).
\label{eq:conn-form-intro}
\end{equation}

The formula \eqref{eq:conn-form-intro} determines $A$ only up to an element of ${\rm Im}(P)\cap{\rm Im}(Q)$. 
For so-called bimodule connections, see  \cite[Section 8.1]{BMBook}, which are compatible with the $\dag$-structure on the module of one-forms, the situation improves, and we obtain a sufficient condition for uniqueness. We show that relating the bimodule connection concept to the $\dag$-structure is imposing a reality condition, so that such bimodule connections map real one-forms to real two-forms. 

The bimodule connection condition is automatically satisfied on a compact Riemannian manifold, and accounts for a new uniqueness proof in that context. 
Moreover, we show that our entire setup applies to arbitrary $\theta$-deformations of toric Riemannian manifolds, endowing each of them with a unique Hermitian torsion-free bimodule connection. The case of free actions was covered in \cite{BGJ2}, and we remove the freeness condition.

Within Connes' noncommutative geometry there have been many instances of “Levi-Civita” connections in recent years [AW17a, AW17b, BGJ21, BGJ20, LNW94, LRZ09, R13], with varying hypotheses. The complemented junk of \cite{BGJ2,BGJ1} is a crucial ingredient for us.  Our methods apply to all of the examples in \cite{BGJ2,BGJ1}, and extend the applicability of the theory to non-centred bimodules. The  flip map on centred bimodules used in \cite{BGJ2,BGJ1}, gives a representation of the permutation group $S_3$ on three tensors. In our existence proofs we can deal with representations of $\Z\rtimes\Z_2$ on the three tensors, with  $(2P-1)$ and $(2Q-1)$ as generators, which do not factor through $S_3$.

In companion works we will present applications and examples. In \cite{MRCurve} we present definitions of Ricci and scalar curvature, along with a Weitzenb\"{o}ck formula, and apply them to $\theta$-deformations. In \cite{MRPods} we study connections, curvature and Weitzenb\"{o}ck for the Podle\'s sphere. For the latter example we use the discussion in Appendix \ref{sec:j-and-c} where we give an alternative method for finding Hermitian torsion-free connections in the presence of a frame consisting of closed forms. The relationships described between connections and junk two-tensors are used in \cite{MRPods} to determine the bimodule of junk-tensors.

\subsection*{Structure of the paper}

In Section \ref{sec:junk-diff} we review differential forms, 
$\dag$-modules and connections, introducing the new notion of conjugate pairs of connections. Section \ref{sec:junk} describes junk
and the construction of an exterior derivative $\d:\Omega^1_\dee(\B)\to \Lambda^2_\dee(\B):=(1-\Psi)T^2_\dee(\B)$. 

In Section \ref{subsec:RNDS} we recall what it means for a connection to be torsion-free. We then present a definition of torsion tensor, and show that
for an Hermitian connection the vanishing of the torsion tensor is equivalent to the connection being torsion-free.
We then give necessary and sufficient conditions for the existence of Hermitian torsion-free connections on differential one-forms. 
This section also recalls the two projection theory required for our constructions, and the relation to representations of the infinite dihedral group. We also characterise the junk submodule in terms of the image of connections on exact one-forms.

In Section \ref{sec:unique} we discuss uniqueness of Hermitian torsion-free connection by combining the notions of conjugate connections and bimodule connections, and note that these concepts have appeared before, see eg \cite[Section 8.1]{BMBook}. In Section \ref{sec:examples} we treat the case of $\theta$-deformations in detail, and note that this section includes the case of classical manifolds. Section \ref{sec:optimism} briefly discusses  issues of indefinite metrics and non-unitality/non-compactness. In Appendix \ref{sec:j-and-c} we outline the relationship between connections and junk two-tensors.

{\bf Acknowledgements} The authors thank the Erwin Schr\"{o}dinger 
Institute, 
Vienna, for hospitality and support during the 
production of this work. 
BM thanks the University of Wollongong for hospitality at 
an early stage of this project. AR thanks the Universiteit  Leiden for hospitality in 2022 and 2024. 
We thank Francesca Arici, Alan Carey, Giovanni Landi and Walter van Suijlekom for important 
discussions, and Alexander Flamant for careful reading of the manuscript, as well as the referee for their suggestions which have improved the manuscript. 

\vspace{-10pt}

\section{Noncommutative differential forms and connections}
\label{sec:junk-diff}

We introduce differential forms and their representations. 
Then we consider ``$\dag$-bimodules'' or ``$*$-bimodules'', and connections on them.
An excellent reference for this section is Landi's book \cite{Landi}, and more modern details can be found in \cite[Chapter 1]{BMBook}, \cite{Sch}.


The class of algebras we consider arises from examples of the noncommutative geometry of complex $*$-algebras. While most of our constructions are purely algebraic, there are two instances where we need some analysis. The first is in Section \ref{subsec:dag} to guarantee existence of frames for modules over our algebra, and the second is in Theorem \ref{thm:smo-intersect}. For this reason we recall a natural class of dense $*$-subalgebras of $C^{*}$-algebras.
See \cite[Section 3.1]{Blackadar} and \cite[Section 3.3.1]{BMBook}.
\begin{defn}
\label{def: local} 
Let $B$ be a $C^{*}$-algebra. A $*$-subalgebra $\mathcal{B}\subset B$ is \emph{local} if $\mathcal{B}$ is dense in $B$ and if for all $n\in\mathbb{N}$ the $*$-subalgebra $M_{n}(\mathcal{B})\subset M_{n}(B)$ is spectral invariant.
\end{defn}
The assumption that $\B\subset B$ is local holds for many algebras $\B$, such as smooth functions on a manifold. 
The notion of local algebra is not necessary for  our basic definitions, but later we will restrict to local algebras, and so work in this context throughout.

\subsection{Noncommutative differential forms}
\label{subsec:diff-junk}

We begin by recalling the notion of universal differential forms for a noncommutative associative algebra $\B$. Useful references are \cite{BHMS,Lo,Landi}.
\begin{defn}
\label{defn:forms}
The universal differential one-forms over the local algebra $\B$ are defined as the
kernel of the multiplication map
\[
\Omega^1_u(\B):=\ker(m:\,\B\ox\B\to \B)
\]
where $\ox$ is the algebraic tensor product. The universal
$k$-forms are then 
\[
\Omega^k_u(\B):=\Omega^{1}_{u}(\B)^{\otimes k}
=\Omega^1_u(\B)\ox_\B\Omega^1_u(\B)\ox_\B\cdots\ox_\B\Omega^1_u(\B)\quad
k\ \mbox{factors},
\]
and $\Omega_u^*(\B)=\oplus_{n\geq 0}\Omega^n_u(\B)$.
\end{defn}
Universal one-forms are linear combinations of forms $a\delta(b)$
with $a,b\in\B$, and
$
\delta(b):=1\ox b-b\ox 1.
$
Observe that $\delta(ab)=a\delta(b)+\delta(a)b$. As $\B$ is a $*$-algebra, the universal forms $\Omega^{*}_{u}(\B)$ form a graded $*$- algebra when equipped with the obvious concatenation product and the involution defined 
on universal one-forms by
$$
\Big(\sum_ja_j\delta(b_j)\Big)^\dag:=-\sum_j\delta(b_j^*)a_j^*
:=-\sum_j\delta(b_j^*a_j^*)+\sum_jb_j^*\delta(a_j^*),
$$
and extended to $\Omega^{*}_{u}(\B)$ via
$
(\omega_{1}\otimes \cdots \otimes \omega_{k})^{\dag}
:=\omega_{k}^{\dag}\otimes \cdots \otimes \omega_{1}^{\dag}.
$
In addition $\Omega^*_u(\B)$ is a differential graded algebra for the map
$$
\delta:\sum_ja^0_j\delta(a^1_j)\cdots\delta(a^k_j)
\mapsto\sum_j\delta(a^0_j)\delta(a^1_j)\cdots\delta(a_j^k).
$$
The universal feature of $\Omega^1_u(\B)$ is that whenever we have a $\B$-bimodule $M$, and a bimodule derivation ${\rm d}:\B\to M$, there exists a bimodule map $\pi:\Omega^1_u(\B)\to M$ such that
\[
\xymatrix{\B\ar[r]^{\rm d}\ar[d]_\delta& M\\
\Omega^1_u(\B)\ar[ur]_\pi &}
\]
commutes. The data $(M,{\rm d})$ is called a first order calculus for $\B$. We need a little more than this, encoding an `adjoint' structure on one-forms. 
\begin{defn}\cite[Definition 1.4]{BMBook}
A first order differential structure $(\Omega^1_\dee(\B),\dag)$ for the local algebra $\B$ is a first order calculus $(M,{\rm d})$ for $\B$ such that on $\Omega^1_\dee(\B):=\pi(\Omega^1_u(\B))$ there is a $\C$-anti-linear map $\dag:\Omega^1_{{\rm d}}(\B)\to \Omega^1_{{\rm d}}(\B)$
such that for all $a,b\in \B$ and $\omega\in \Omega^1_{{\rm d}}(\B)$ we 
have $\dag(a\omega b)=b^*\omega^\dag a^*$. 
\end{defn}
The  surjective homomorphism of first order calculi $\pi:\Omega^1_u(\B)\to \Omega^1_{{\rm d}}(\B)$ is then a homomorphism of $\dag$-modules when $\dee(a)=-\dee(a^*)$, which we assume unless stated otherwise.

Our definition is called a first order $*$-calculus in \cite[Definition 1.4]{BMBook}. Later we will wed this definition to a pre-Hilbert module structure to take full advantage of the $*$-structure. The main sources of first order differential structure we are interested in are  spectral triples and unbounded Kasparov modules, \cite{BRB,CPR11}.

\begin{example}
\label{eg:spec-trip}
Let $(\B,\H,\D)$ be a spectral triple for the local algebra $\B$. Then we can define the one-forms
\[
\Omega^{1}_{\D}(\B):=\Big\{ \sum_{finite} a_i^{0}[\D,a^{1}_{i}]: a^{j}_{i}\in\B\Big\}\subset\mathbb{B}(\H)
\]
and the derivation $\dee:\B\to\mathbb{B}(\H)$, $b\mapsto \dee(b):=[\D,b]$. The operator adjoint provides
$\dag:\Omega^{1}_{\D}(\B)\to\Omega^{1}_{\D}(\B)$. 

The assumption that $\B$ is local is not a restriction: if the $*$-algebra $\B$ is complete in the norm $\|b\|_{\D}:=\|b\|+\|[\D,b]\|$, then it is local, \cite[Proposition 3.12]{BC91}. In general, we may enlarge the dense $*$-subalgebra $\B$ by taking its closure in the norm $\|\cdot\|_{\D}$, which is still a dense $*$-algebra of the $C^{*}$-closure of $\B$, and so we may, without loss of generality, assume that $\B$ was local to begin with.


There are many examples of spectral triples throughout the literature.
Relevant examples for this work include Riemannian manifolds and their $\theta$-deformations \cite{CL},  the Podle\'{s} sphere \cite{DS}, quantum projective spaces \cite{DRS}. 
\end{example}

\begin{example} 
Let $(\B, X_{C}, S)$ be an (even) unbounded Kasparov module. Then, in exactly the same way as for spectral triples, we obtain
the one-forms
\[
\Omega^{1}_{S}(\B):=\Big\{ \sum_{finite} a_i^{0}[S,a^{1}_{i}]: a^{j}_{i}\in\B\Big\}\subset\End_C^*(X)
\]
and the derivation $\dee:\B\to\Omega^1_S(\B)$, $b\mapsto \dee(b):=[S,b]$. The operator adjoint provides the dagger structure again.

Compact group actions on algebras provide ``principal bundle'' type examples \cite{W}, especially circle \cite{CNNR} and torus actions \cite[Section 6]{CGRS2}, including graph and $k$-graph algebras, and more generally Cuntz-Pimsner, Cuntz-Pimsner-Nica algebras. 
\end{example}

\begin{example}
\label{eg:Minky}
Suppose that $\H$ is a ($\mathbb{Z}/2$-graded) Hilbert space, $\B\subset \mathbb{B}(\H)$ is a not-necessarily unital $*$-algebra and $\D:\Dom\D\to\H$ is an odd operator such that $\Dom(\D)\cap\Dom(\D^*)$ is dense in $\H$. Further suppose that for all $b\in\B$, $b\cdot\Dom\D\cap\Dom(\D^*)\subset \Dom(\D)\cap\Dom(\D^*)$ and $[\D,b]$ extends to a bounded operator. 
Then we get a first order differential structure just as in Example \ref{eg:spec-trip}, but  $\pi:\Omega^1_u(\B)\to \Omega_\D(\B)$ need not be a morphism of $\dag$-modules.

To see what the issue is, consider the Dirac operator of two dimensional Minkowski space,
\begin{equation}
\D=\begin{pmatrix} 0 & \partial_t+\partial_x\\ \partial_t-\partial_x & 0\end{pmatrix}.
\label{eq:Mink}
\end{equation}
Then for the two real-valued functions $t,x$ the operator adjoint gives 
$[\D,t]^\dag=[\D,t]$ and $[\D,x]^\dag= -[\D,x]$, and so we do not obtain a $\dag$-bimodule morphism.
We observe that Minkowski space is ``time-and-space oriented''. So there exists an adjointable map $\chi:\Omega^1_\D(\B)\to\Omega^1_\D(\B)$ such that 
$\chi^2=1$, $\chi(\omega^\dag)=(\chi\omega)^\dag$ for all one-forms $\omega$, and for all $b\in\B$ we have $\chi([\D,b]^\dag)=-[\D,b^*]$. Thus for the new ``adjoint'' $\dag\circ\chi=\chi\circ\dag$ we do obtain a $\dag$-bimodule morphism $\pi:\Omega^1_u(\B)\to (\Omega^1_\D(\B),\dag\circ\chi)$.
For examples arising from spectral triples we can take $\chi={\rm Id}$, and
the more general situation is motivated by \cite{DR}.
\end{example}

\begin{defn}
The tensor algebra of the first order differential structure $(\Omega^1_\dee(\B),\dag)$ is
\begin{equation}
T^{k}_{\dee}(\B):=\Omega^{1}_{\dee}(\B)^{\otimes_\B k}, \qquad T^{*}_{\dee}(\B):=\bigoplus_{n\geq 0}T^{k}_{\dee}(\B).
\label{eq:tensor-alg}
\end{equation}
\end{defn}
The involution $\dag$ on $\Omega^1_\dee(\B)$
extends to $T^{k}_{\dee}(\B)$ via
\begin{equation}
\label{eq: tensor_involution}
(\omega_{1}\otimes \cdots \otimes \omega_{k})^{\dag}:=\omega_{k}^\dag\otimes \cdots \otimes \omega_{1}^\dag,
\end{equation}
and this definition is compatible with the balancing of the tensor product over $\B$.
As with universal forms, the concatenation product makes $T^{*}_{\dee}(\B)$ into a $*$-algebra.
The map $\pi:\Omega^1_u(\B)\to\Omega^1_\dee(\B)$ extends to universal $k$-forms, defining a map
\[
\pi^{\otimes k}: \Omega^{k}_{u}(\B)\to T^{k}_{\dee}(\B),\quad \omega_{1}\otimes \cdots\otimes\omega_{k}\mapsto \pi(\omega_{1})\otimes \cdots \otimes \pi(\omega_{k}),
\]
which gives a surjective $*$-algebra homomorphism $\widehat{\pi}=\oplus_k\pi^{\ox k}:\Omega^{*}_{u}(\B)\to T^{*}_{\dee}(\B).$


\subsection{$\dag$-bimodules}
\label{subsec:dag}
We describe a class of bimodules over $*$-algebras, that are projective and carry an involution. Our definition overlaps significantly with \cite[Section 8]{DM}, \cite[p191, p266]{BMBook} and \cite{Sch}. Our motivations and results are more similar to \cite{DM,BMBook}. 

\begin{defn}
\label{defn:stern-bimod}
A \emph{$\dag$-bimodule} over the $*$-algebra $\B$ is a $\B$-bimodule $\mathcal{X}$ that is finitely generated projective as a right module and is equipped with a right $\B$-valued pre-$C^*$-inner product $\pairing{\cdot}{\cdot}_\B$ and an antilinear involution $\dag:\mathcal{X}\to\mathcal{X}$ such that $(axb)^{\dag}=b^{*}x^{\dag}a^{*}$.
\end{defn}
Recall that for a pre-$C^*$-inner product, we have $\pairing{xb}{yc}_\B=b^*\pairing{x}{y}_\B c$ and $\pairing{x}{x}_\B\geq 0$ in the $C^*$-completion of $\B$, along with the usual sesqui-linearity, see \cite[Lemma 2.16]{RW}.
\begin{example}
Later we will work with first order differential structures over a local algebra $\B$ which are finitely generated and projective inner product modules, and these provide examples of  $\dag$-bimodules. Together with the $\dag$-bimodules $T^k_\dee(\B)$ associated to such a first order differential structure $(\Omega^1_\dee(\B),\dag)$, these provide our main examples.
\end{example}

The inner product makes a $\dag$-bimodule $\mathcal{X}$ into a pre-$C^{*}$-module over $\B$, and thus $\mathcal{X}$ admits a closure $X$ which is a Hilbert $C^{*}$-module over the $C^{*}$-closure $B$ of $\B$, \cite[Lemma 2.16]{RW}. 
 
Recall \cite{FL02} that if $\mathcal{X}_\B$ is a right inner product $\B$-module, a \emph{frame}\footnote{In the signal analysis literature our frames would be called tight normalised frames.} is a (countable) collection of elements $v:=(x_j)\subset \mathcal{X}_\B$ such that for all $x\in \mathcal{X}_\B$ we have
\begin{equation}
\label{eq:frame}
x=\sum_{j}x_j\pairing{x_j}{x}_\B.
\end{equation}
Frames always exist on countably generated $C^*$-modules, by the Kasparov stabilisation theorem, \cite[Theorem 5.49]{RW}.
We will make extensive use of frames throughout, and for finitely generated $C^*$-modules we can always choose a finite frame. Our frames are global frames (see Example \ref{eg:mfld-frame-gee}), and so the number of elements in a frame is typically not related to any kind of dimension, though we discuss a related concept later. The following results giving existence of frames for ``smooth'' modules are well-known and have appeared in various forms in the literature, eg \cite{BMBook,LRV}.
\begin{prop}
\label{prop:SHFC-End}
Let $B$ be a unital $C^{*}$-algebra and $\B\subset B$ a unital local subalgebra. Suppose that $\mathcal{X}$ is a finitely generated projective $\B$-inner product module with $C^{*}$-closure $X$. The continuous extension of $T\in \End^*_\B(\X)$ to $\overline{T}\in \End^*_B(X)$ defines an inclusion $\End^*_\B(\X)\hookrightarrow \End^*_B(X)$. Then $\End^{*}_{\B}(\mathcal{X})$ is local in $\End^{*}_{B}(X)$.
\end{prop}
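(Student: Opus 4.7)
The plan is to use the finite projectivity of $\X$ to reduce the statement to a known fact about corners of spectrally invariant subalgebras. Since $\B$ is unital and $\X$ is finitely generated projective, there exist $n\in\N$ and a self-adjoint projection $p\in M_n(\B)$ with $\X\cong p\B^n$ as right inner product $\B$-modules. Under this identification one has the standard isomorphisms
\begin{equation*}
\End^*_\B(\X)\cong pM_n(\B)p,\qquad \End^*_B(X)\cong pM_n(B)p,
\end{equation*}
and the inclusion $\End^*_\B(\X)\hookrightarrow\End^*_B(X)$ is realised by the inclusion $pM_n(\B)p\hookrightarrow pM_n(B)p$ induced by $M_n(\B)\subset M_n(B)$. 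I would begin by making this identification explicit, so that the remaining work reduces to a statement about matrix algebras.

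Density is then immediate: $\B$ is dense in $B$, hence $M_n(\B)$ is dense in $M_n(B)$, and compression by the projection $p\in M_n(\B)$ preserves density, giving $pM_n(\B)p$ dense in $pM_n(B)p$. For spectral invariance I would prove the general lemma that if $\mathcal{A}\subset C$ is a unital spectrally invariant inclusion of unital $*$-algebras and $p\in\mathcal{A}$ is a projection, then $p\mathcal{A}p\subset pCp$ is spectrally invariant. The proof is the standard trick: if $x\in p\mathcal{A}p$ is invertible in $pCp$ with inverse $y$, then $x+(1-p)\in\mathcal{A}$ is invertible in $C$ with inverse $y+(1-p)\in C$; by spectral invariance the inverse lies in $\mathcal{A}$, and compressing by $p$ yields $y=p(y+(1-p))p\in p\mathcal{A}p$.

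To obtain spectral invariance at every matrix level, I would apply this lemma to the projection $p\otimes I_k\in M_{nk}(\B)$, using the identifications
\begin{equation*}
M_k\bigl(pM_n(\B)p\bigr)\cong (p\otimes I_k)\,M_{nk}(\B)\,(p\otimes I_k),\quad M_k\bigl(pM_n(B)p\bigr)\cong (p\otimes I_k)\,M_{nk}(B)\,(p\otimes I_k),
\end{equation*}
together with the hypothesis that $M_{nk}(\B)$ is spectrally invariant in $M_{nk}(B)$ (which is the assumption that $\B$ is local in $B$). This yields spectral invariance of $M_k(\End^*_\B(\X))$ in $M_k(\End^*_B(X))$ for every $k$, completing the proof.

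The only mildly delicate point I expect is the compression lemma for spectral invariance; everything else is bookkeeping around the projection $p$ and the identification of endomorphism algebras with matrix corners. One should also check at the outset that any $T\in\End^*_\B(\X)$ is automatically bounded for the $C^*$-norm (because it corresponds to an element of $pM_n(\B)p\subset M_n(B)$) and so extends uniquely to an adjointable operator on $X$, justifying the inclusion map that appears in the statement.
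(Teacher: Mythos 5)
Your proof is correct and follows essentially the same route as the paper's: identify $\X\cong p\B^n$, realise $\End^*_\B(\X)\cong pM_n(\B)p$ inside $pM_n(B)p$, and deduce spectral invariance of the corner from spectral invariance of $M_{nk}(\B)\subset M_{nk}(B)$ (the paper reduces matrix levels via $M_k(\End^*_\B(\X))\simeq\End^*_\B(\X^k)$, which unwinds to exactly your $p\otimes I_k$ bookkeeping). You merely make explicit the compression lemma and the density check that the paper leaves implicit.
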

\begin{proof}
Since $M_{n}(\End^{*}_{\B}(\mathcal{X}))\simeq \End^{*}_{\B}(\mathcal{X}^{n})$ and $\mathcal{X}^{n}$ is again a finitely generated projective inner product module, it suffices to show that $\End_{\B}^*(\mathcal{X})$ is spectral invariant in $\End^{*}_{B}(X)$. There exists $n\in\mathbb{N}$ and a projection $p\in M_{n}(\B)$ such that $\mathcal{X}\simeq p\B^{n}$ as inner product modules. Therefore  $\End^{*}_{\B}(\mathcal{X})\simeq pM_{n}(\B)p$ is spectral invariant in $pM_{n}(B)p\simeq \End^{*}_{B}(X)$.
\end{proof}
The next Corollary is where we explictly use locality of $\B$ to find frames for $\B$-modules.
\begin{corl} Let $B$ be a unital $C^{*}$-algebra and $\B\subset B$ a unital local subalgebra and $\mathcal{X}$ a  $\B$-inner product module. Then $\X$ is finitely generated projective if and only if $\X$ admits a finite frame.
\end{corl}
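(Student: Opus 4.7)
The plan is to prove the two implications separately. For the \emph{forward direction}, the essential input is already embedded in the proof of Proposition \ref{prop:SHFC-End}: a finitely generated projective $\B$-inner product module admits a realisation $\X\simeq p\B^n$ as inner product modules for some self-adjoint projection $p\in M_n(\B)$. Granting this, I would let $e_1,\ldots,e_n$ denote the standard basis of $\B^n$ and verify that $(pe_j)_{j=1}^n$ is a frame for $p\B^n$. The verification is essentially one line: using $p=p^*=p^2$, one identifies $\pairing{pe_j}{x}_\B$ with the $j$-th component of $p^*x=px=x$, so that $\sum_j (pe_j)\pairing{pe_j}{x}_\B=px=x$. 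Transporting $(pe_j)_{j=1}^n$ along the isomorphism $\X\simeq p\B^n$ yields a finite frame for $\X$.

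For the \emph{reverse direction}, suppose $(x_j)_{j=1}^n$ is a finite frame of $\X$. Define module maps
\[
\psi\colon\B^n\to\X,\quad (b_j)\mapsto \sum_j x_j b_j, \qquad \phi\colon\X\to\B^n,\quad x\mapsto \bigl(\pairing{x_j}{x}_\B\bigr)_j.
\]
The frame identity \eqref{eq:frame} says precisely that $\psi\circ\phi=\Id_\X$, so $\X$ is a direct summand of the free module $\B^n$ and is therefore finitely generated projective. As a byproduct, $p:=\phi\circ\psi\in M_n(\B)$ is the Gram matrix with entries $p_{ij}=\pairing{x_i}{x_j}_\B$, which is self-adjoint ($p_{ij}^*=\pairing{x_j}{x_i}_\B=p_{ji}$) and idempotent (from $\psi\phi=\Id_\X$), recovering the inner-product-module realisation $\X\simeq p\B^n$.

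The only subtle point, and the one flagged in the remark preceding the corollary, is confined to the forward direction: passing from an abstract finitely generated projective inner product module to a realisation via a \emph{self-adjoint} projection $p\in M_n(\B)$ (rather than a mere idempotent) is what requires locality. Concretely, one first obtains a self-adjoint projection classifying the $C^*$-closure $X$ inside $M_n(B)$, and then uses spectral invariance of $M_n(\B)$ in $M_n(B)$ to conjugate it into $M_n(\B)$. Without this bridge, the inner product would not match a projection matrix representation and the clean frame construction above would break down. Once this step is cleared, both directions are formal manipulations of the reconstruction identity and the Gram matrix.
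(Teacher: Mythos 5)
Your reverse direction is correct and complete: the frame identity \eqref{eq:frame} gives $\psi\circ\phi=\Id_\X$, so $\X$ is a summand of $\B^n$, and your Gram-matrix computation recovers the inner-product realisation $\X\simeq p\B^n$ (this is Lemma \ref{lem:vee} in the paper, which does not even write out this half).

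The forward direction, however, has a genuine gap. You take as input that a finitely generated projective $\B$-inner product module is isomorphic \emph{as an inner product module} to $p\B^n$ for a self-adjoint projection $p\in M_n(\B)$. But as your own reverse-direction argument shows, that realisation is \emph{equivalent} to the existence of a finite frame (given the realisation, the frame is $(pe_j)$; given a frame, the realisation is the Gram projection). So you have reduced the corollary to an equivalent statement rather than proved it, and the bridge you sketch does not close the loop: producing a self-adjoint projection $p'\in M_n(\B)$ close to (or conjugate to) the projection $p\in M_n(B)$ classifying the closure $X$ only controls the \emph{module} structure. The unitary implementing $upu^*=p'$ is built from $p$ and lives in $M_n(B)$, not $M_n(\B)$, so it does not carry the dense submodule $\X$ onto $p'\B^n$, and nothing in your sketch identifies the given abstract inner product on $\X$ with the standard one on $p'\B^n$. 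Matching the inner products at the smooth level is precisely the content of the statement. The paper's proof attacks this directly: take a finite frame $(w_i)$ for the $C^*$-closure $X$, approximate by $y_i\in\X$, observe that the frame operator $g=\sum_i\ket{y_i}\bra{y_i}$ lies in $\End^*_\B(\X)$ and is invertible in $\End^*_B(X)$, and then invoke locality of $\End^*_\B(\X)$ (Proposition \ref{prop:SHFC-End} together with spectral invariance for the square root) to conclude $g^{-1/2}\in\End^*_\B(\X)$, so that $x_i:=g^{-1/2}y_i$ is a finite frame consisting of elements of $\X$. To repair your argument you would need to supply an analogous perturbation-plus-square-root step; citing the assertion inside the proof of Proposition \ref{prop:SHFC-End} is not sufficient, since that assertion is itself the fact being established here.
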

\begin{proof} Let $(w_i)$ be a finite frame for the $C^{*}$-closure $X$. By choosing $y_{i}\in\mathcal{X}$ sufficiently close to $w_{i}$ the operator 
\[
g:x\mapsto \sum y_{i}\pairing{y_{i}}{x},
\]
is close to the identity operator in $\End_{\B}^{*}(\X)$, and thus $g$ is invertible in $\End^{*}_{B}(X)$. Locality then implies that $g^{-1/2}\in\End^{*}_{\B}(\X)$, \cite{LBS}, and so $x_{i}:=g^{-1/2}y_{i}\in \X$ is a frame for $\X$.
\end{proof}
\begin{lemma}
\label{lem: bi-Hilbertian} 
Let $\mathcal{X}$ be a $\dag$-bimodule. Then 
\[
{}_{\B}\pairing{x}{y}:=\pairing{x^{\dag}}{y^{\dag}}_{\B},
\]
defines a left inner product on $\mathcal{X}$. If $x_{j}$ is a right frame for $\mathcal{X}$ then $x_{j}^{\dag}$ is a left frame for $\X$. Consequently $\mathcal{X}$ is a bi-Hilbertian bimodule of finite index in the sense of \cite{KajPinWat}. 
\end{lemma}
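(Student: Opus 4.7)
The plan is to verify the three assertions in order; all follow routinely from the axiom $(axb)^\dag = b^*x^\dag a^*$ together with $\dag$ being involutive. First I would check that ${}_{\B}\pairing{x}{y} := \pairing{x^{\dag}}{y^{\dag}}_{\B}$ satisfies the axioms of a left pre-$C^*$-inner product. Conjugate symmetry is immediate: ${}_\B\pairing{x}{y}^* = \pairing{x^\dag}{y^\dag}_\B^* = \pairing{y^\dag}{x^\dag}_\B = {}_\B\pairing{y}{x}$. Left $\B$-linearity in the first slot follows from
\[
{}_\B\pairing{ax}{y} = \pairing{x^\dag a^*}{y^\dag}_\B = a\,\pairing{x^\dag}{y^\dag}_\B = a\cdot{}_\B\pairing{x}{y},
\]
and analogously one verifies ${}_\B\pairing{x}{ay} = {}_\B\pairing{x}{y}\,a^*$. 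Positivity ${}_\B\pairing{x}{x} = \pairing{x^\dag}{x^\dag}_\B \geq 0$ in the $C^*$-closure $B$, and non-degeneracy (if ${}_\B\pairing{x}{x}=0$ then $x^\dag=0$, hence $x=0$) are inherited directly from the right inner product via the involution.

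For the frame claim, given any $z \in \X$ I would apply the right frame expansion to $z^\dag$, obtaining $z^\dag = \sum_j x_j\pairing{x_j}{z^\dag}_\B$, and then apply $\dag$ to both sides. Using $(x_j b)^\dag = b^* x_j^\dag$ for scalars $b \in \B$ together with the identity $\pairing{x_j}{z^\dag}_\B^* = \pairing{z^\dag}{x_j}_\B$ and $(z^\dag)^\dag = z$, this yields
\[
z = \sum_j \pairing{z^\dag}{x_j}_\B\, x_j^\dag = \sum_j {}_\B\pairing{z}{x_j^\dag}\, x_j^\dag,
\]
where the second equality uses $\pairing{z^\dag}{x_j}_\B = \pairing{z^\dag}{(x_j^\dag)^\dag}_\B = {}_\B\pairing{z}{x_j^\dag}$. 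This is precisely the defining left-frame reconstruction identity for $(x_j^\dag)$.

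Finally, since $\X$ is finitely generated projective as a right module by Definition \ref{defn:stern-bimod}, the preceding corollary furnishes a \emph{finite} right frame $(x_j)$; by the previous paragraph $(x_j^\dag)$ is then a finite left frame. The two associated index sums coincide termwise via ${}_\B\pairing{x_j^\dag}{x_j^\dag} = \pairing{x_j}{x_j}_\B$, and both lie in $\B$ by finiteness, delivering the bi-Hilbertian structure of finite index in the sense of \cite{KajPinWat}. The overall argument is a direct verification; the only care required is to confirm that the convention of \cite{KajPinWat} for ``finite index'' is discharged by the finiteness of the frame, which it is. I therefore do not anticipate a genuine obstacle, only a bookkeeping check against \cite{KajPinWat}.
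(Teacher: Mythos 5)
Your proof is correct and follows essentially the same route as the paper: the inner-product axioms are the same direct verifications the paper calls ``straightforward'', and your frame argument (expand $z^\dag$ in the right frame and apply $\dag$) is precisely the paper's computation. The only addition is your explicit check of the finite-index condition via ${}_\B\pairing{x_j^\dag}{x_j^\dag}=\pairing{x_j}{x_j}_\B$, which the paper leaves implicit.
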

\begin{proof}
It is straightforward to check that $_{\B}\pairing{\cdot}{\cdot}$ defines a left inner product. For a right frame $x_{j}$, consider
\begin{align*}
\sum_{j}\,_{\B}\pairing{x}{x_{j}^{\dag}} x_{j}^{\dag}=\sum_{j}\pairing{x^{\dag}}{x_{j}}_{\B} x_{j}^{\dag}=
\big(\sum_j x_{j}\pairing{x_{j}}{x^{\dag}}_{\B}\big)^{\dag}
=x^{\dag\dag}=x,
\end{align*}
proving that $(x_{j}^{\dag})$ is a left frame. 
\end{proof}
As the norms on $\X$ induced by the left and right inner products are equivalent, $\dag$ extends to a map $\dag:X\to X$ on the $C^{*}$-module completions. Therefore the completion $X$ is a $\dag$-bimodule over the $C^{*}$-closure $B$.
\begin{defn}
\label{defn:dag-endos}
For a $\dag$-bimodule $\mathcal{X}$, we denote by $\REnd(\X)$ the $*$-algebra of  operators adjointable with respect to the right inner product, and by $\LEnd(\X)$ the $*$-algebra of operators adjointable with respect to the left inner product. We denote by $T\mapsto T^{*}$ the right adjoint and by $T\mapsto {}^*T$ the left adjoint, and refer to right- and left-adjointable operators respectively.
\end{defn}

\begin{lemma}
Let $\mathcal{X}$ be a $\dag$-bimodule. The map
\[
\REnd(\X)\to \LEnd(\X),\quad  T\mapsto\dag\circ T^* \circ\dag 
\] 
is a $*$-algebra anti-isomorphism. 
\end{lemma}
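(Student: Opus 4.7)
My plan is to verify the anti-isomorphism property essentially by direct computation, using only the identity $\dag^2 = \Id$, the defining relation $(axb)^\dag = b^* x^\dag a^*$, and the formula ${}_\B\pairing{u}{v} = \pairing{u^\dag}{v^\dag}_\B$ from Lemma~\ref{lem: bi-Hilbertian}. The approach splits into three parts: showing that $\phi(T) := \dag \circ T^* \circ \dag$ really lands in $\LEnd(\X)$ with a computable left-adjoint, checking that $\phi$ is a $\C$-linear anti-homomorphism compatible with the two adjoint structures, and exhibiting an explicit inverse by the mirror construction.

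The key step is the first one. Fixing $T \in \REnd(\X)$ and setting $S := \dag \circ T^* \circ \dag$, for $x,y \in \X$ one computes
\begin{align*}
{}_\B\pairing{Sx}{y}
&= \pairing{(\dag T^* \dag x)^\dag}{y^\dag}_\B = \pairing{T^* x^\dag}{y^\dag}_\B = \pairing{x^\dag}{T y^\dag}_\B \\
&= {}_\B\pairing{x}{(T y^\dag)^\dag} = {}_\B\pairing{x}{(\dag \circ T \circ \dag)(y)},
\end{align*}
so $\phi(T) \in \LEnd(\X)$ with ${}^*\phi(T) = \dag \circ T \circ \dag$. From here the rest is formal. $\C$-linearity of $\phi$ follows from $(\lambda T)^* = \bar\lambda T^*$ together with the antilinearity of $\dag$, whose two occurrences cancel. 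Anti-multiplicativity is the single line
\[
\phi(ST) = \dag \circ (ST)^* \circ \dag = \dag T^* S^* \dag = (\dag T^* \dag)(\dag S^* \dag) = \phi(T)\phi(S),
\]
where $\dag^2 = \Id$ is inserted in the middle. The $*$-compatibility $\phi(T^*) = {}^*\phi(T)$ is immediate, since $\phi(T^*) = \dag \circ (T^*)^* \circ \dag = \dag \circ T \circ \dag$ agrees with the left-adjoint computed above.

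For bijectivity I would run the mirror construction: the assignment $\psi: S \mapsto \dag \circ {}^*S \circ \dag$ sends $\LEnd(\X)$ to $\REnd(\X)$ by the same calculation with left and right inner products interchanged, and $(\psi \circ \phi)(T) = \dag \circ {}^*(\dag T^* \dag) \circ \dag = \dag \circ (\dag T \dag) \circ \dag = T$, with $\phi \circ \psi = \Id$ established analogously. The only place any real care is needed is in the first display, where one must track how the antilinearity of $\dag$ interacts with the sesquilinearity of the inner product and the (right) adjoint; once that step is made precise, the remainder of the argument is a direct consequence of $\dag^2 = \Id$.
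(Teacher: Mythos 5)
Your proof is correct and follows essentially the same route as the paper's: the same one-line computation for anti-multiplicativity using $\dag^2=\Id$, and the same inner-product manipulation (via ${}_\B\pairing{u}{v}=\pairing{u^\dag}{v^\dag}_\B$) to identify $\dag\circ T\circ\dag$ and $\dag\circ T^*\circ\dag$ as a left-adjoint pair. You are somewhat more explicit about $\C$-linearity, the $*$-compatibility, and bijectivity via the mirror map, which the paper leaves implicit, but the substance is identical.
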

\begin{proof} 
Let $T\in\REnd(\X)$.
The calculation $\dag\circ T^*\circ\dag\circ\dag\circ S^*\circ\dag=\dag\circ(ST)^*\circ\dag$ gives the anti-homomorphism property, while
\begin{align*} 
_{\B}\pairing{(Tx^{\dag})^{\dag}}{y}
=\pairing{Tx^{\dag}}{y^{\dag}}_{\B}
=\pairing{x^{\dag}}{T^{*}y^{\dag}}_{\B}
=\,_{\B}\pairing{x}{(T^{*}y^{\dag})^{\dag}},
\end{align*}
shows that the map $y\mapsto (T^{*}y^{\dag})^{\dag}$, written $\dag\circ T^{*}\circ \dag$ is the left-adjoint to $\dag\circ T\circ\dag$.
\end{proof}
\begin{corl}
\label{cor:left-right-adjoint}
If $T:\X\rightarrow \X$ is a $\C$-linear operator such that $\dag\circ T= T\circ\dag$, then $T$ is right-adjointable if and only if $T$ is left-adjointable. In this case the left and right adjoints satisfy $^{*}T=\dag\circ T^{*}\circ\dag$. In particular $T^{*}=T$ if and only if $^{*}T=T$.
\end{corl}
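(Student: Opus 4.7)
The corollary is a direct consequence of the preceding lemma combined with the hypothesis $\dag\circ T=T\circ\dag$. The plan is to observe that the anti-isomorphism $\REnd(\X)\to\LEnd(\X)$, $T\mapsto \dag\circ T^{*}\circ \dag$, simplifies drastically when $T$ commutes with $\dag$, because then $\dag\circ T\circ \dag = T\circ\dag\circ\dag=T$ (using $\dag^{2}=\mathrm{Id}$).

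First I would handle the forward implication. Assuming $T$ is right-adjointable with right adjoint $T^{*}$, the preceding lemma tells us that $\dag\circ T^{*}\circ \dag$ is the left adjoint of $\dag\circ T\circ \dag$. The commutation hypothesis reduces $\dag\circ T\circ \dag$ to $T$, so $\dag\circ T^{*}\circ \dag$ is a left adjoint for $T$ itself, which establishes both left-adjointability and the identity $^{*}T=\dag\circ T^{*}\circ \dag$.

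For the converse, suppose $T$ is left-adjointable with left adjoint $^{*}T$. The same lemma, read symmetrically (the map of the lemma has an inverse of identical form, $S\mapsto \dag\circ {^{*}S}\circ\dag$, coming from the same computation with left and right interchanged), shows that $\dag\circ{^{*}T}\circ\dag$ is a right adjoint for $\dag\circ T\circ \dag=T$. Hence $T$ is right-adjointable and $T^{*}=\dag\circ{^{*}T}\circ\dag$.

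Finally, for the self-adjointness statement, if $T^{*}=T$ then using the formula just established and the commutation hypothesis,
\[
{^{*}T}=\dag\circ T^{*}\circ\dag=\dag\circ T\circ\dag=T\circ\dag\circ\dag=T,
\]
and the reverse implication is identical with the roles of the two adjoints exchanged. There is no real obstacle here; the entire content of the corollary is that the already-established anti-isomorphism $T\mapsto\dag\circ T^{*}\circ\dag$ becomes an honest identification between the two adjoints precisely on the $\dag$-commuting subspace of operators.
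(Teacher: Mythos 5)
Your proposal is correct and matches the paper's (implicit) argument: the corollary is stated without proof precisely because it follows by substituting the hypothesis $\dag\circ T\circ\dag=T$ into the anti-isomorphism $T\mapsto\dag\circ T^{*}\circ\dag$ of the preceding lemma, exactly as you do. The symmetric treatment of the converse and the two-line verification of the self-adjointness statement are the intended reading.
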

In the context of $\dag$-bimodules we can make abstract sense of the process of raising and lowering indices akin to the classical formalism.

\begin{lemma}
\label{lem:useful-iso}
Let 
$\X$ a $\dag$-bimodule over $\B$. The map
\begin{align*}\X\otimes_{\B}\mathcal{X}\to \REnd (\mathcal{X}),\quad x\otimes y\mapsto |x\rangle\langle y^{\dag}|,
\end{align*}
is a $\B$-bimodule isomorphism. 
More generally, if $\mathcal{Y}$ is another $\dag$-bimodule, we let $\overrightarrow{\textnormal{Hom}}^{*}(\X,\Y)$ be the right adjointable maps $\X\to\Y$. Then there is a left $\overrightarrow{\End}^*(\Y)$-module isomorphism
\[
\overrightarrow{\alpha}:\Y\ox_{\B} \X\to \overrightarrow{\textnormal{Hom}}^{*}(\X,\Y)
\qquad \overrightarrow{\alpha}(y\ox x)(w)=y\, \pairing{x^\dag}{w}_\B,\qquad x,w\in\X,\,y\in\Y
\]
and analogously a right $\overleftarrow{\End}^*(\Y)$-module isomorphism
\[
\overleftarrow{\alpha}:\X\ox_{\B} \Y\to \overleftarrow{\textnormal{Hom}}^{*}(\X, \Y)
\qquad \overleftarrow{\alpha}(x\ox y)(w)={}_\B\pairing{w}{x^\dag}\,y,\qquad x,w\in\X,\,y\in\Y.
\]
If $(x_j)$ is a (right) frame for $\X$ then we can express the inverse maps as
\[
\overrightarrow{\alpha}^{-1}: \overrightarrow{\textnormal{Hom}}^{*}(\X,\Y)\to \Y\ox_{\B} \X
\qquad \overrightarrow{\alpha}^{-1}(T)=\sum_jTx_j\ox x_j^\dag
\]
and
\[
\overleftarrow{\alpha}^{-1}:\overleftarrow{\textnormal{Hom}}^{*}(\X, \Y)\to \X\ox_{\B} \Y\qquad \overleftarrow{\alpha}^{-1}(T)=\sum_jx_j\ox T(x_j^\dag).
\]
Furthermore the $\dag$s on $\X$ and $\Y$ induce a well-defined anti-linear map $\dag: \X\ox_\B \Y\to \Y\ox_\B\X$  given  by
\[
(x\ox y)^\dag=y^\dag\ox x^\dag,\ \ \ x\in\X,\ y\in\Y.
\]
\end{lemma}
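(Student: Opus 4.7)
The plan is to prove the general statement for $\overrightarrow{\alpha}:\Y\ox_\B\X\to \overrightarrow{\textnormal{Hom}}^{*}(\X,\Y)$ first; the opening isomorphism $\X\ox_\B\X\cong \REnd(\X)$, $x\ox y\mapsto|x\rangle\langle y^\dag|$, is then the special case $\Y=\X$, and its $\B$-bimodule character follows since both sides inherit compatible bimodule structures from the left $\B$-action on $\X$ via $\REnd(\X)$. Well-definedness of $\overrightarrow{\alpha}(y\ox x)(w):=y\,\pairing{x^\dag}{w}_\B$ on the balanced tensor product reduces for $b\in\B$ to
\[
(yb)\,\pairing{x^\dag}{w}_\B = y\,\pairing{x^\dag b^*}{w}_\B = y\,\pairing{(bx)^\dag}{w}_\B,
\]
using the $\dag$-bimodule relation $(bx)^\dag=x^\dag b^*$. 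A direct sesquilinear calculation identifies the right adjoint of $\overrightarrow{\alpha}(y\ox x)$ as $w'\mapsto x^\dag\,\pairing{y}{w'}_\B$, so the image lies in $\overrightarrow{\textnormal{Hom}}^{*}(\X,\Y)$, and left $\REnd(\Y)$-linearity is immediate from the definition.

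Next I would verify the inverse formula using a finite right frame $(x_j)$ for $\X$, whose existence is guaranteed by the Corollary preceding Lemma \ref{lem: bi-Hilbertian}. Setting $\beta(T):=\sum_j Tx_j\ox x_j^\dag\in\Y\ox_\B\X$ (a finite sum, hence unambiguously in the algebraic tensor product), the frame reconstruction gives
\[
\overrightarrow{\alpha}(\beta(T))(w)=\sum_j (Tx_j) \,\pairing{x_j}{w}_\B = T\Big(\sum_j x_j\,\pairing{x_j}{w}_\B\Big) = T(w),
\]
using right-$\B$-linearity of $T$. In the other direction,
\[
\beta\bigl(\overrightarrow{\alpha}(y\ox x)\bigr)=\sum_j y\,\pairing{x^\dag}{x_j}_\B \ox x_j^\dag = y\ox \sum_j\pairing{x^\dag}{x_j}_\B\, x_j^\dag,
\]
and applying $\dag$ to the identity $x^\dag=\sum_j x_j\,\pairing{x_j}{x^\dag}_\B$ converts the remaining right-hand sum into $(x^\dag)^\dag=x$. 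Hence $\beta=\overrightarrow{\alpha}^{-1}$; in particular $\beta$ is independent of the chosen frame. The statement for $\overleftarrow{\alpha}$ is proved by the mirror argument using the left inner product ${}_\B\pairing{\cdot}{\cdot}$ together with the left frame $(x_j^\dag)$ supplied by Lemma \ref{lem: bi-Hilbertian}, yielding $\overleftarrow{\alpha}^{-1}(T)=\sum_j x_j\ox T(x_j^\dag)$.

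Finally, for the antilinear map $\dag:\X\ox_\B\Y\to\Y\ox_\B\X$ I would set $(x\ox y)^\dag:=y^\dag\ox x^\dag$ on elementary tensors and extend antilinearly. Well-definedness on the balanced tensor product reduces to the check
\[
(xb\ox y)^\dag=y^\dag\ox(xb)^\dag=y^\dag\ox b^* x^\dag=(by)^\dag\ox x^\dag=(x\ox by)^\dag,
\]
using the $\dag$-bimodule relations $(xb)^\dag=b^* x^\dag$ and $(by)^\dag=y^\dag b^*$. The one genuinely delicate ingredient throughout the argument is the interplay between right and left frames under $\dag$ needed to show $\beta\circ\overrightarrow{\alpha}=\mathrm{id}$; this is exactly what Lemma \ref{lem: bi-Hilbertian} was set up to supply, and once it is invoked the remaining verifications are routine.
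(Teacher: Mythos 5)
Your proposal is correct, and it fills in exactly the verifications the paper leaves implicit (the paper's proof of this lemma is literally ``All statements are straightforward verifications''). The well-definedness checks via $(bx)^\dag=x^\dag b^*$, the identification of the adjoint, and the frame-based computation of the two-sided inverse using $x=\sum_j\pairing{x^\dag}{x_j}_\B\,x_j^\dag$ from Lemma \ref{lem: bi-Hilbertian} are precisely the intended route.
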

\begin{proof}
All statements are straightforward verifications. 
 \end{proof}

\begin{rmk}
\label{rmk:endo-val-forms}
The  isomorphisms in the Lemma allow us to identify $\eta\ox\rho^\dag\in T^2_\dee$ with the ket-bra $\ketbra{\eta}{\rho}$ (using the right inner product). Similarly a three-tensor
$\eta\ox\omega\ox\rho^\dag\in T^3_\dee$ can be identified with a one-form-valued endomorphism, $\ket{\eta}\omega\bra{\rho}$.
\end{rmk}

  
\subsection{Hermitian connections on $\dag$-bimodules}

Here we explore connections on $\dag$-modules, and introduce the notion of conjugate connection. References for connections on  modules include
\cite{BMBook,Landi,LRV,MR,MRS}.

\begin{defn} 
Let $(\Omega^1_\dee(\B),\dag)$ be a first order differential structure and $\X$ a $\dag$-bimodule over $\B$. 
A \emph{right connection} on $\X$ is a $\C$-linear map $\overrightarrow{\nabla}:\mathcal{X}\to \X\otimes_{\B}\Omega^{1}_{\dee}(\B)$ satisfying 
$\overrightarrow{\nabla}(xb)=\overrightarrow{\nabla}(x)b+x\otimes\dee(b)$. 
Similarly, a \emph{left connection} is a map $\overleftarrow{\nabla}:\X\to \Omega^{1}_{\dee}(\B)\otimes_{\B} \X$ satisfying $\overleftarrow{\nabla}(bx)=b\overleftarrow{\nabla}(x)+\dee(b)\otimes x$.
\end{defn}
\begin{rmk}
A $C^\infty$-type assumption on both $\X$ and $\B$ is evident here: we 
suppose that the connection takes values in $\X\ox_\B\Omega^1_\dee(\B)$ rather than tensor products of lower regularity modules. 
\end{rmk}

It is well-known that the difference of two right connections $\nablar,\nablar'$ is a right $\B$-module map
\[
\nablar-\nablar': \X\to \X\otimes_{\B}\Omega^{1}_{\dee}.
\]
Similarly the difference of two left connections $\nablal,\nablal'$ is a left $\B$-module map
\[
\nablal-\nablal': \X\to \Omega^{1}_{\dee}\otimes_{\B}\X.
\]
By choosing one connection, all other connections are given by  such module homomorphisms. 

{\bf Notation} We denote by $\B^{N}$ the free right $\B$-module of rank $N$ with its standard right inner product and by $\,^{N}\B:=(\B^{N})^{t}$ the free left $\B$-module of rank $N$ with its standard left inner product.
The following lemma is an adaptation of well-known facts to the $\dag$-bimodule setting.

\begin{lemma} 
\label{lem:vee}
Let $\X$ be a $\dag$-bimodule over the $*$-algebra $\B$ and $v=(x_{j})\subset \X$ a finite right frame for $\X$. The maps
\begin{equation}
\label{eq: frame-isometry}
\overrightarrow{v}:\X_\B\to \B^{N},\quad x\mapsto (\pairing{x_j}{x}_\B)_{j=1}^{N},\quad \overleftarrow{v}:\,_{\B}\X\to \,^{N}\B,\quad x\mapsto ({}_\B(\pairing{x}{x_{j}^{\dag}})_{j=1}^{N})^{t}
\end{equation}
are injective right resp. left module maps that preserve the right resp. left inner products on $\X$. The image of $\overrightarrow{v}$ is  $\overrightarrow{p}\B^{N}$ where $\overrightarrow{p}=(\pairing{x_i}{x_j}_\B)_{ij}\in M_{N}(\B)$ and the image of $\overleftarrow{v}$ is 
$(\B^{N})^{t}\overleftarrow{p}$ where $\overleftarrow{p}=({}_\B\pairing{x_i^{\dag}}{x_j^{\dag}})_{ij}\in M_{N}(\B)$. Thus $\X$ is finitely generated and projective as both a right and a left inner product $\B$-module. 
\end{lemma}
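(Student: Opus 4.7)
The plan is to deduce every claim from the right-frame reproducing identity $x=\sum_j x_j\pairing{x_j}{x}_\B$ together with its left counterpart $x=\sum_j {}_\B\pairing{x}{x_j^\dag}\,x_j^\dag$ furnished by Lemma \ref{lem: bi-Hilbertian}. That $\overrightarrow{v}$ is a right $\B$-module map is immediate from sesqui-linearity of the right inner product, and injectivity follows at once from the reproducing identity: if $\overrightarrow{v}(x)=0$ then every $\pairing{x_j}{x}_\B$ vanishes, so $x=\sum_j x_j\cdot 0=0$. Preservation of the right inner product is the computation
\[
\sum_{j}\pairing{x_j}{x}_\B^{*}\pairing{x_j}{y}_\B=\sum_j\pairing{x}{x_j}_\B\pairing{x_j}{y}_\B=\pairing{x}{y}_\B,
\]
where the last equality is another application of the reproducing identity.

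Next I would identify the image. Writing $\overrightarrow{p}_{ij}:=\pairing{x_i}{x_j}_\B$, the reproducing identity applied to $x_k$ gives $\sum_j \pairing{x_i}{x_j}_\B\pairing{x_j}{x_k}_\B=\pairing{x_i}{x_k}_\B$, so $\overrightarrow{p}^{2}=\overrightarrow{p}$, while sesqui-linearity yields $\overrightarrow{p}^{*}=\overrightarrow{p}$; thus $\overrightarrow{p}$ is a self-adjoint projection in $M_N(\B)$. Since $\overrightarrow{v}(x_k)$ is visibly the $k$-th column of $\overrightarrow{p}$, the right submodule generated by the $\overrightarrow{v}(x_k)$ is exactly $\overrightarrow{p}\B^{N}$, and for general $x$ the reproducing identity gives
\[
\overrightarrow{v}(x)=\sum_k\overrightarrow{v}(x_k)\pairing{x_k}{x}_\B\in\overrightarrow{p}\B^{N},
\]
so $\overrightarrow{v}$ identifies $\X$ with the direct summand $\overrightarrow{p}\B^{N}\subset\B^{N}$ as a right inner product module. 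In particular $\X$ is finitely generated projective on the right.

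The left-hand statements will follow by the mirror argument, using the left frame $(x_j^\dag)$ and the left inner product ${}_\B\pairing{\cdot}{\cdot}$ from Lemma \ref{lem: bi-Hilbertian}. The same calculations show that $\overleftarrow{v}$ is an injective left module map preserving the left inner product, that $\overleftarrow{p}_{ij}={}_\B\pairing{x_i^\dag}{x_j^\dag}$ defines a self-adjoint projection in $M_N(\B)$, and that the image of $\overleftarrow{v}$ is precisely $({}^{N}\B)\overleftarrow{p}$. The projectivity of $\X$ as a left inner product module is then automatic from this identification with a direct summand of ${}^{N}\B$. I do not anticipate any substantive obstacle; the only point requiring care is the bookkeeping of left versus right module and adjoint conventions, so that $\overleftarrow{v}$ really is a left module map and the transposition in its defining formula lands $\overleftarrow{p}$ on the correct side.
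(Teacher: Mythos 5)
Your argument is correct. Note that the paper supplies no proof of Lemma \ref{lem:vee} at all — it is introduced as "an adaptation of well-known facts to the $\dag$-bimodule setting" and left to the reader — so there is nothing to compare against; your write-up is exactly the standard stabilisation argument the authors intend, with the right-hand claims deduced from the frame identity $x=\sum_j x_j\pairing{x_j}{x}_\B$ and the left-hand claims from the left frame $(x_j^\dag)$ provided by Lemma \ref{lem: bi-Hilbertian}. All the computations you record (injectivity, preservation of inner products, $\overrightarrow{p}^2=\overrightarrow{p}=\overrightarrow{p}^*$, identification of the image with $\overrightarrow{p}\B^N$, and the mirror statements) are accurate, including the bookkeeping that makes $\overleftarrow{v}$ a left module map with image $({}^{N}\B)\overleftarrow{p}$.
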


The next lemma recalls well-known facts about connections and records the additional features for $\dag$-bimodules. Landi's book \cite{Landi} 
provides a good guide to previous literature.

\begin{lemma} 
\label{lem:A-dag}
Let $(\Omega^1_\dee(\B),\dag)$ be a first order differential structure and $\X$ a $\dag$-bimodule over $\B$.
There are pairings
\begin{align}
\label{eq: pairings}
\X\times \X\otimes_{\B}\Omega^{1}_{\dee}(\B)&\to\Omega^{1}_{\dee}(\B),&\quad   \X\otimes_{\B}\Omega^{1}_{\dee}(\B)\times \X&\to\Omega^{1}_{\dee}(\B)\\
\pairing{ y}{ x\otimes\omega}_{\Omega_\dee^1(\B)} 
&:= \pairing{ y}{x}_\B \omega, 
&\quad \pairing{ x\otimes\omega}{y}_{\Omega_\dee^1(\B)}&:= \omega^{*}\pairing{ x}{y}_\B.\nonumber
\end{align}
A right $\B$-linear map $\mathbb{A}: \X_\B\to \X\otimes_{\B}\Omega^{1}_{\dee}$, corresponds to a unique element $A\in  \X\otimes_{\B}\Omega^{1}_{\dee}\otimes_{\B}\X$ such that $\mathbb{A}=\alphar(A)$ where (using a frame right frame $(x_j)$ for $\X$) we have
\[
A:=\sum_{i}\mathbb{A}x_{i} \otimes  x_{i}^{\dag}
=\sum_{i,j} x_{j}\otimes \pairing{x_{j}}{\mathbb{A}x_{i}}_{\Omega_\dee^1(\B)} x_{i}^{\dag}=\sum x_{i}\otimes(\mathbb{A}^{\dag}x_{i})^{\dag}.
\]
Moreover, such a map $\mathbb{A}=\alphar(A)$ satisfies 
$\pairing{ x}{\alphar(A)y}_{\Omega_\dee^1(\B)} =\pairing{ \alphar(A^{\dag})x }{y}_{\Omega_\dee^1(\B)}$. 
\end{lemma}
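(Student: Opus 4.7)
The lemma bundles three essentially independent claims—well-definedness of the two pairings in \eqref{eq: pairings}, a bijective parametrisation of right $\B$-linear maps $\mathbb{A}\colon\X\to\X\otimes_\B\Omega^1_\dee$ by three-tensors $A$, and an adjoint-like identity—each of which reduces to a direct computation. The unifying tool throughout is the right frame expansion $w=\sum_i x_i\pairing{x_i}{w}_\B$ of Lemma \ref{lem:vee} together with its left counterpart $w=\sum_i{}_\B\pairing{w}{x_i^\dag}x_i^\dag$ from Lemma \ref{lem: bi-Hilbertian}. For the two pairings themselves, I would first check they descend to $\X\otimes_\B\Omega^1_\dee$; balancing over $\B$ reduces to the immediate identities $\pairing{y}{xb}_\B\omega=\pairing{y}{x}_\B(b\omega)$ and $(b\omega)^*\pairing{x}{y}_\B=\omega^*(b^*\pairing{x}{y}_\B)$.

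For the correspondence I would mimic the proof of Lemma \ref{lem:useful-iso}, viewing $A=\sum_\alpha a_\alpha\otimes\omega_\alpha\otimes b_\alpha\in \X\otimes_\B\Omega^1_\dee\otimes_\B\X$ as a 2-tensor valued in the right $\B$-module $\X\otimes_\B\Omega^1_\dee$, so that $\alphar(A)(w)=\sum_\alpha a_\alpha\otimes\omega_\alpha\pairing{b_\alpha^\dag}{w}_\B$. Given $\mathbb{A}$, setting $A:=\sum_i\mathbb{A}x_i\otimes x_i^\dag$, a one-line frame computation yields $\alphar(A)(w)=\sum_i\mathbb{A}x_i\pairing{x_i}{w}_\B=\mathbb{A}w$. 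The second expression for $A$ then follows by expanding $\mathbb{A}x_i\in\X\otimes_\B\Omega^1_\dee$ through the frame of $\X$ via the first pairing in \eqref{eq: pairings}. The third expression requires interpreting $\mathbb{A}^\dag$ as $\alphar(A^\dag)$, where $A^\dag:=\sum_\alpha b_\alpha^\dag\otimes\omega_\alpha^\dag\otimes a_\alpha^\dag$ is the natural extension of $\dag$ to three-tensors; invoking the two-tensor identity $(x\otimes\omega)^\dag=\omega^\dag\otimes x^\dag$ from Lemma \ref{lem:useful-iso}, one computes $(\mathbb{A}^\dag x_i)^\dag=\sum_\alpha\pairing{x_i}{a_\alpha}_\B\omega_\alpha\otimes b_\alpha$, and summing over $i$ collapses the first leg via the frame expansion, recovering $A$.

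Uniqueness of $A$ given $\mathbb{A}$ is dual to the verification of $\alphar(A)=\mathbb{A}$: for any $A$ with $\alphar(A)=\mathbb{A}$ one applies $\sum_i(-)\otimes x_i^\dag$ to $\mathbb{A}x_i$ and collapses the third tensor leg via $\sum_i\pairing{b_\alpha^\dag}{x_i}_\B x_i^\dag=b_\alpha$, which is simply the $\dag$-image of the right frame expansion of $b_\alpha^\dag$. For the adjointness identity, expanding both sides on a simple tensor produces the same coefficient $\pairing{x}{a_\alpha}_\B\omega_\alpha\pairing{b_\alpha^\dag}{y}_\B$, the key step being the rewriting $(\omega_\alpha^\dag\pairing{a_\alpha}{x}_\B)^*=\pairing{x}{a_\alpha}_\B\omega_\alpha$ dictated by the second pairing. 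The only real bookkeeping obstacle is organising the $\dag$-action consistently on two- and three-tensors so that all three formulas for $A$ are manifestly compatible, but this is forced by Definition \ref{defn:stern-bimod} together with the two-tensor involution of Lemma \ref{lem:useful-iso}.
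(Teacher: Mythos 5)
Your proposal is correct and follows essentially the same route as the paper: the paper's proof is exactly the frame expansion $\mathbb{A}x=\sum_j\mathbb{A}(x_j)\pairing{x_j}{x}_\B$ identifying $A=\sum_i\mathbb{A}x_i\ox x_i^\dag$ via $\alphar$ from Lemma \ref{lem:useful-iso}, which is the core of your argument. You additionally write out the routine verifications (balancing of the pairings, the third formula for $A$, uniqueness, and the adjoint identity) that the paper leaves implicit, and these checks are all accurate.
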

\begin{proof} 
Fixing a right frame $(x_j)$ we can compute for $x\in\X$ that
\[
\mathbb{A}x
=\mathbb{A}\left(\sum_{j} x_{j}\pairing{ x_{j}}{x}\right)
=\sum \mathbb{A}(x_{j})\pairing{ x_{j}}{x}
= \sum_{i,j} |x_{i}\rangle\otimes \pairing{ x_{i}}{ \mathbb{A}(x_{j})}\pairing {x_{j}}{x}.
\]
Thus $\mathbb{A}=\alphar(A)$ with 
$A:= \sum_{i,j} x_{j}\otimes \pairing{x_{j}}{\mathbb{A}x_{i}}\otimes x_{i}^{\dag}\in  \X\otimes_{\B}\Omega^{1}_{\dee}(\B)\otimes_{\B}\X$.
\end{proof}

\begin{defn}
\label{defn:grassy}
Let $\X$ be a $\dag$-bimodule over the $*$-algebra $\B$.
Given a frame $v=(x_j)\subset \X$ we obtain maps $\overrightarrow{v}:\X\to \B^{N}$ and $\overleftarrow{v}:\X\to \,^{N}\B$ and  
left and right {\em Grassmann
connections }
\begin{align}
\nonumber
\nablar^{v}&:\X\to \X\ox_B\Omega^1_\dee,\qquad
\nablar^{v}(x):=\sum_j x_j\ox\dee(\pairing{x_j}{x}_\B),\\
\nablal^{v}&:\X\to \Omega^1_\dee(\B)\ox_{B}\X,\qquad
\nablal^{v}(x):=\sum_j \dee({}_{\B}\pairing{x}{x_{j}^{\dag}})\ox x_j^{\dag}.
\label{eq:grassy-knoll}
\end{align}
Grassmann connections evidently depend on the choice of frame.
\end{defn}
 
\begin{defn} 
Let $(\Omega^1_\dee(\B),\dag)$ be a first order differential structure and $\X$ a $\dag$-bimodule over $\B$.
We say that 
a right connection $\nablar$ on a module $\X$ is Hermitian with respect to the right inner product $\pairing{\cdot}{\cdot}_{\B}$  if
for all $x,\,y\in \X$ we have
\begin{equation}
\dee(\pairing{x}{y}_\B)=\pairing{x}{\nablar y}_{\Omega_\dee^1(\B)}-\pairing{\nablar x}{y}_{\Omega_\dee^1(\B)}.
\label{eq:compatible}
\end{equation}
If we work with a left connection $\nablal$, the Hermitian requirement becomes
\begin{equation}
\dee({}_\B\pairing{x}{y})={}_{\Omega_\dee^1(\B)}\pairing{\nablal x}{y}-{}_{\Omega_\dee^1(\B)}\pairing{x}{\nablal y}.
\label{eq:compatible-left}
\end{equation}
\end{defn}

\begin{prop}
\label{lem:compatible}
Let $(\Omega^1_\dee(\B),\dag)$ be a first order differential structure and $\X$ a $\dag$-bimodule.
All right Grassmann connections $\nablar^{v}$ on a module $\X$ are Hermitian.
A right connection $\nablar$ is 
Hermitian with respect to an inner product $\pairing{\cdot}{\cdot}_\B$
if and only if for all right frames $v=(x_j)$ the decomposition $\nablar=\nablar^{v}+\alphar(A_{v})$ satisfies $A_{v}=A^{\dag}_{v}$ if and only if 
\begin{equation}
\sum_j\nablar(x_j) \ox x_j^\dag=\sum_j x_j\ox\nablar(x_j)^\dag\in \X\ox\Omega^1_\dee(\B)\ox\X.
\label{eq:nabla-herm}
\end{equation}
\end{prop}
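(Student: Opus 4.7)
My plan is to establish the chain of equivalences by three steps: verifying that the Grassmann connection $\nablar^v$ is Hermitian, then using Lemma \ref{lem:A-dag} to rewrite the Hermitian condition for $\nablar = \nablar^v + \alphar(A_v)$ as $A_v = A_v^\dag$, and finally unpacking $A_v = A_v^\dag$ as \eqref{eq:nabla-herm}. The unifying technical tool is the $\dag$-compatibility of $\dee$ combined with the $*$-symmetry $(\pairing{x_j}{x_i}_\B)^* = \pairing{x_i}{x_j}_\B$ of the right inner product.

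The Grassmann case reduces to substituting the definition of $\nablar^v$ into the pairings \eqref{eq: pairings}, giving two sums that combine via the Leibniz rule for $\dee$ and the frame identity into $\dee\big(\sum_j\pairing{x}{x_j}_\B\pairing{x_j}{y}_\B\big) = \dee(\pairing{x}{y}_\B)$. For a general right connection $\nablar$, Lemma \ref{lem:A-dag} writes $\nablar - \nablar^v = \alphar(A_v)$ for a unique $A_v \in \X \ox \Omega^1_\dee(\B) \ox \X$. Since $\nablar^v$ is already Hermitian, $\nablar$ is Hermitian if and only if
\[
\pairing{x}{\alphar(A_v)y}_{\Omega^1_\dee(\B)} = \pairing{\alphar(A_v)x}{y}_{\Omega^1_\dee(\B)}\quad\text{for all } x,y\in\X,
\]
and the adjoint relation $\pairing{x}{\alphar(A)y}_{\Omega^1_\dee(\B)} = \pairing{\alphar(A^\dag)x}{y}_{\Omega^1_\dee(\B)}$ from Lemma \ref{lem:A-dag}, together with injectivity of $\alphar$, rewrites this as $A_v = A_v^\dag$.

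To complete the chain, decompose $A_v = W - G$ with $W := \sum_i \nablar(x_i) \ox x_i^\dag$ and $G := \sum_{i,j} x_j \ox \dee(\pairing{x_j}{x_i}_\B) \ox x_i^\dag$. Applying the $\dag$ on three-tensors yields $W^\dag = \sum_i x_i \ox \nablar(x_i)^\dag$, so that \eqref{eq:nabla-herm} becomes exactly $W = W^\dag$. The key identity is $G^\dag = G$, which follows by relabelling $i \leftrightarrow j$ in the expanded expression for $G^\dag$ and invoking both the $\dag$-compatibility of $\dee$ and the $*$-symmetry of the inner product. Then $A_v^\dag = W^\dag - G$, so $A_v = A_v^\dag$ reduces to $W = W^\dag$. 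The main obstacle I foresee is the sign bookkeeping in establishing $G^\dag = G$: correctly tracking how $\dee(a)^\dag$ relates to $\dee(a^*)$ is essential, and once that is pinned down, the remainder of the proof is a routine expansion using the frame formulas.
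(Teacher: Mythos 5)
Your first two steps coincide with the paper's proof: the Grassmann connection is shown to be Hermitian by the same Leibniz-plus-frame computation, and the equivalence with $A_v=A_v^\dag$ is extracted from Lemma \ref{lem:A-dag} exactly as the paper does. The third step is where you diverge, and it contains a genuine gap. With the paper's conventions $\dee(a)^\dag=-\dee(a^*)$ and $(\pairing{x_j}{x_i}_\B)^*=\pairing{x_i}{x_j}_\B$, the relabelling computation you describe gives
\[
G^\dag=\sum_{i,j}x_i\ox\dee(\pairing{x_j}{x_i}_\B)^\dag\ox x_j^\dag
=-\sum_{i,j}x_i\ox\dee(\pairing{x_i}{x_j}_\B)\ox x_j^\dag=-G,
\]
not $G^\dag=G$. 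Running your argument with $G^\dag=-G$, the condition $A_v=A_v^\dag$ becomes $W-W^\dag=2G$ rather than $W=W^\dag$, so the sign issue you flagged as the main obstacle really does break the proof as written.

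The identity $G^\dag=G$ is nevertheless true, but for a reason your argument does not supply: $G$ vanishes identically. Indeed $\alphar(G)(x)=\sum_{j}x_j\ox\sum_i\dee(\pairing{x_j}{x_i}_\B)\pairing{x_i}{x}_\B$, and applying the Leibniz rule to $\pairing{x_j}{x}_\B=\sum_i\pairing{x_j}{x_i}_\B\pairing{x_i}{x}_\B$ shows this equals $\nablar^v(x)-\nablar^v(x)=0$; injectivity of $\alphar$ (Lemma \ref{lem:useful-iso}) then gives $G=0$, hence $A_v=W=\sum_j\nablar(x_j)\ox x_j^\dag$ and the equivalence $A_v=A_v^\dag\Leftrightarrow W=W^\dag$ is immediate. (This is consistent with the paper's later remark, before Lemma \ref{lem: torsion-free iff}, that $A_v=\sum_j\nablar(\omega_j)\ox\omega_j^\dag$.) The paper itself avoids introducing $G$ and instead proves \eqref{eq:nabla-herm} by expanding $\sum_j\nablar(x_j)\pairing{x_j}{x}_\B$ with the Leibniz rule and substituting the Hermitian identity for $\dee(\pairing{x_j}{x}_\B)$. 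Either repair is short, but as it stands your third step does not go through.
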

\begin{proof} 
Let $v=(x_{j})$ be a finite frame for the right module $\X$. Then
\begin{align*}
\dee(\pairing{x}{y}_\B)
&=\sum_{j}\dee(\pairing{x}{x_{j}}_\B\pairing{x_{j}}{y}_\B)
=\sum_{j}\pairing{x}{x_{j}}_\B\dee(\pairing{x_{j}}{y}_\B)
+\dee(\pairing{x}{x_{j}}_\B)\pairing{x_{j}}{y}_\B\\
&=\pairing{ x}{\nablar^{v}(y)}_{\Omega_\dee^1(\B)}-\pairing{\nablar^{v}(x)}{y}_{\Omega_\dee^1(\B)},
\end{align*}
so $\nablar^{v}$ is Hermitian. Hence for $\nablar=\nablar^{v}+\alphar(A_{v})$ the characterisation of Hermitian connections follows from Lemma \ref{lem:A-dag}. Finally \eqref{eq:nabla-herm} follows by  applying the definitions of connection and Hermitian connection \eqref{eq:compatible} and the injectivity of $\alphar$ to see that for all $x\in\X$ 
\begin{align*}
\sum_j&\nablar(x_j) \pairing{ x_j}{x}_\B=\nablar(x)-\sum_jx_j\ox\dee(\pairing{x_j}{x}_\B)\\
&=\nablar(x)+\sum_jx_j\ox\pairing{\nablar(x_j)}{x}_{\Omega_\dee^1(\B)}-x_j\ox\pairing{x_j}{\nablar(x)}_{\Omega_\dee^1(\B)}\\
&=\sum_j x_j\ox\pairing{\nablar(x_j)}{x}_{\Omega_\dee^1(\B)}.\qedhere
\end{align*}
\end{proof}

Using the $\dag$-module structure on $\X$, we can associate to any right connection a left connection and vice versa. Later we will use this method to make contact with earlier work \cite{BMBook,DHLS,DMM} on bimodule connections. The proof of the following statement is a simple check using Proposition \ref{lem:compatible}.
\begin{corl}
\label{lem:left-right}
For any right connection $\nablar:\X\to \X\ox_\B \Omega^1_\dee(\B)$, we obtain a left connection $\nablal:\X\to  \Omega^1_\dee(\B)\ox_\B\X$ by defining
$$
\nablal(x):=-(\nablar(x^\dag))^\dag, \ \ x\in\X.
$$
Then $\nablal$ is (left) Hermitian if and only if $\nablar$ is (right) Hermitian. If $v=(x_{j})$ is a right frame, then the left- and right Grassmann connections $\nablar^{v}$ and $\nablal^{v}$ satisfy $\nablal^{v}=-\dag\circ\nablar^{v}\circ\dag$ and if $\nablar=\nablar^{v}+\alphar(A)$ then $\nablal=\nablal^{v}-\alphal(A^{\dag})$. 
\end{corl}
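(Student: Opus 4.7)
The plan is to verify the four assertions in the order stated, using only the basic identities $(x\otimes y)^{\dag}=y^{\dag}\otimes x^{\dag}$ from Lemma \ref{lem:useful-iso}, the relation $\dee(b)^{\dag}=-\dee(b^{*})$ on one-forms, and the definition ${}_{\B}\pairing{x}{y}=\pairing{x^{\dag}}{y^{\dag}}_{\B}$ from Lemma \ref{lem: bi-Hilbertian}.

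First I would check the left Leibniz rule. Expanding
\[
\nablal(bx)=-(\nablar((bx)^{\dag}))^{\dag}=-(\nablar(x^{\dag}b^{*}))^{\dag}=-(\nablar(x^{\dag})\cdot b^{*}+x^{\dag}\otimes \dee(b^{*}))^{\dag},
\]
and applying $\dag$ term-by-term using $(y\otimes \omega b^{*})^{\dag}=b(\omega^{\dag}\otimes y^{\dag})$ and $\dee(b^{*})^{\dag}=-\dee(b)$, gives
$b\nablal(x)+\dee(b)\otimes x$. This is the only place where the sign convention for $\dee$ interacting with $\dag$ is critical, and I expect this to be the main bookkeeping obstacle.

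Next, for the Hermitian equivalence, I would begin with
$\dee({}_{\B}\pairing{x}{y})=\dee(\pairing{x^{\dag}}{y^{\dag}}_{\B})$ and apply the right-Hermitian relation \eqref{eq:compatible} to $x^{\dag},y^{\dag}$. Writing $\nablar(y^{\dag})=\sum z_{i}\otimes \omega_{i}$, the pairing $\pairing{x^{\dag}}{\nablar y^{\dag}}_{\Omega^{1}_{\dee}(\B)}=\sum \pairing{x^{\dag}}{z_{i}}_{\B}\omega_{i}$ should be reinterpreted via Lemma \ref{lem: bi-Hilbertian} as a left pairing and reassembled into ${}_{\Omega^{1}_{\dee}(\B)}\pairing{x}{\nablal y}$, and similarly for the other term. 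The reverse direction is immediate by symmetry since the construction $\nablar\mapsto \nablal$ is involutive up to sign.

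For the Grassmann comparison I would compute directly. Using $\nablar^{v}(x^{\dag})=\sum_{j}x_{j}\otimes \dee(\pairing{x_{j}}{x^{\dag}}_{\B})$, apply $\dag$ to obtain
\[
(\nablar^{v}(x^{\dag}))^{\dag}=\sum_{j}\dee(\pairing{x_{j}}{x^{\dag}}_{\B})^{\dag}\otimes x_{j}^{\dag}=-\sum_{j}\dee(\pairing{x_{j}}{x^{\dag}}_{\B}^{*})\otimes x_{j}^{\dag}=-\sum_{j}\dee({}_{\B}\pairing{x}{x_{j}^{\dag}})\otimes x_{j}^{\dag},
\]
which is exactly $-\nablal^{v}(x)$ by \eqref{eq:grassy-knoll}. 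Finally, for the connection forms, write $A=\sum_{i}y_{i}\otimes\omega_{i}\otimes z_{i}$ and compute $\alphar(A)(x^{\dag})=\sum (y_{i}\otimes \omega_{i})\pairing{z_{i}^{\dag}}{x^{\dag}}_{\B}=\sum (y_{i}\otimes \omega_{i}){}_{\B}\pairing{z_{i}}{x}$. Applying $\dag$ and using the extension $(y\otimes \omega\otimes z)^{\dag}=z^{\dag}\otimes \omega^{\dag}\otimes y^{\dag}$ yields
\[
(\alphar(A)(x^{\dag}))^{\dag}=\sum {}_{\B}\pairing{x}{z_{i}}\,(\omega_{i}^{\dag}\otimes y_{i}^{\dag})=\alphal(A^{\dag})(x),
\]
so $\nablal(x)=-\nablar(x^{\dag})^{\dag}=\nablal^{v}(x)-\alphal(A^{\dag})(x)$ as required. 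The whole argument is bookkeeping around the order-reversing involution $\dag$; the genuine input is the compatibility $\dee\circ *=-\dag\circ \dee$ assumed on $\Omega^{1}_{\dee}(\B)$.
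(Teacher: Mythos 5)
Your proposal is correct and follows the same route the paper intends: the paper dismisses this as "a simple check using Proposition \ref{lem:compatible}", and your computation is exactly that check carried out explicitly, with the sign conventions $\dee(b)^\dag=-\dee(b^*)$ and $(x\ox y)^\dag=y^\dag\ox x^\dag$ handled correctly in the Leibniz rule, the Grassmann comparison, and the identity $(\alphar(A)(x^\dag))^\dag=\alphal(A^\dag)(x)$. The only cosmetic difference is that you verify the Hermitian equivalence directly from Equations \eqref{eq:compatible} and \eqref{eq:compatible-left}, whereas the paper's hint suggests routing it through the characterisation $A_v=A_v^\dag$ of Proposition \ref{lem:compatible} combined with your last displayed identity; both are valid and of comparable length.
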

\begin{defn}
\label{defn:conj-conn}
Let $(\Omega^1_\dee(\B),\dag)$ be a first order differential structure, and $\nablar$ a right connection  on a $\dag$-bimodule $\X$ over $\B$. The {\em conjugate connection} is the left connection 
$\nablal$ defined by 
$\nablal:=-\dag\circ\nablar\circ\dag$.
\end{defn}

We observe that ``conjugate connection'' is a distinct notion from ``bimodule connection'', which will appear later, but see \cite[Section 8.1]{BMBook}.
For later use in subsection \ref{subsec: existence} we record the following expression for the difference of Grassmann connections.
\begin{corl}
\label{cor: change-of-frame} 
Let $(\Omega^1_\dee(\B),\dag)$ be a first order differential structure, $\mathcal{X}$ a $\dag$-bimodule. Let  $v=(x_{i})_{i=1}^{N}$ and $w=(y_j)_{j=1}^{M}$ be right frames for $\mathcal{X}$, and $\nablar^{v},\nablar^{w}$
the associated Grassmann connections. The difference 
\[
B_{vw}:=\alphar^{-1}(\nabla^{v}-\nabla^{w})
=\sum_{i,j}x_{i}\otimes \dee(\pairing{ x_{i}}{y_{j}}_\B)\otimes y_{j}^{\dag}\in \mathcal{X}\otimes_{\B}\Omega^{1}_{\dee}\otimes_{\B}\mathcal{X},
\]
is $\dag$-invariant, so $B_{vw}=B_{vw}^{\dag}$.
\end{corl}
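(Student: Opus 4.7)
The statement packages two distinct assertions: first, that the element $B_{vw}\in\X\otimes_\B\Omega^1_\dee\otimes_\B\X$ described by the explicit sum really does equal $\alphar^{-1}(\nablar^v-\nablar^w)$; second, that this element is $\dag$-invariant. My plan is to handle them separately, the first by a Leibniz rule computation and the second by combining a symmetry swap of the two frames with the rule $\dee(b)^\dag=-\dee(b^*)$.

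For the explicit formula, I would start from the definition of the Grassmann connection in \eqref{eq:grassy-knoll} and use the frame identity $x=\sum_j y_j\pairing{y_j}{x}_\B$ to rewrite
\[
\nablar^v(x)=\sum_i x_i\otimes\dee(\pairing{x_i}{x}_\B)=\sum_{i,j}x_i\otimes\dee(\pairing{x_i}{y_j}_\B\pairing{y_j}{x}_\B).
\]
Applying Leibniz produces two terms. The term $\sum_{i,j}x_i\pairing{x_i}{y_j}_\B\otimes\dee(\pairing{y_j}{x}_\B)$ collapses to $\nablar^w(x)$ via $\sum_i x_i\pairing{x_i}{y_j}_\B=y_j$, so what remains is $\sum_{i,j}x_i\otimes\dee(\pairing{x_i}{y_j}_\B)\pairing{y_j}{x}_\B$. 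Comparing with the formula for $\alphar$ from Lemma \ref{lem:useful-iso}, this is precisely $\alphar\bigl(\sum_{i,j}x_i\otimes\dee(\pairing{x_i}{y_j}_\B)\otimes y_j^\dag\bigr)(x)$, establishing the stated expression for $B_{vw}$.

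For the $\dag$-invariance, I would apply the three-tensor rule $(a\otimes b\otimes c)^\dag=c^\dag\otimes b^\dag\otimes a^\dag$ from Lemma \ref{lem:useful-iso} together with $\dee(b)^\dag=-\dee(b^*)$ and $\pairing{x_i}{y_j}_\B^{\,*}=\pairing{y_j}{x_i}_\B$, obtaining
\[
B_{vw}^\dag=\sum_{i,j}y_j\otimes\dee(\pairing{x_i}{y_j}_\B)^\dag\otimes x_i^\dag=-\sum_{i,j}y_j\otimes\dee(\pairing{y_j}{x_i}_\B)\otimes x_i^\dag.
\]
The right-hand side is exactly $-B_{wv}$, by the very formula established in the previous step. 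The proof then closes by observing the trivial antisymmetry $B_{wv}=\alphar^{-1}(\nablar^w-\nablar^v)=-\alphar^{-1}(\nablar^v-\nablar^w)=-B_{vw}$, so that $B_{vw}^\dag=-B_{wv}=B_{vw}$.

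There is no genuine obstacle here; the argument is a routine Leibniz-and-bookkeeping exercise. The only points requiring mild care are the correct placement of $\B$-coefficients across balanced tensor factors when matching with $\alphar$, and the sign from $\dee(b)^\dag=-\dee(b^*)$ which is exactly what synchronises the $\dag$ of the explicit three-tensor with the antisymmetric swap $v\leftrightarrow w$.
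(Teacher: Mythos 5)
Your proposal is correct. The first half — the Leibniz computation identifying $\alphar^{-1}(\nablar^v-\nablar^w)$ with the explicit three-tensor — is essentially the paper's own calculation, just organised in the opposite direction (the paper expands $\nablar^w(x)$ via $y_j=\sum_i x_i\pairing{x_i}{y_j}_\B$ and then recombines with Leibniz; you expand $\pairing{x_i}{x}_\B$ through the $w$-frame and apply Leibniz forward — the two are the same bookkeeping). Where you genuinely diverge is the $\dag$-invariance: the paper gets it for free by citing that Grassmann connections are Hermitian (Proposition \ref{lem:compatible}) together with the characterisation of Hermitian connections as those whose connection form relative to any frame is $\dag$-invariant, whereas you compute $B_{vw}^\dag$ directly from the reversal rule for three-tensors, the relation $\dee(b)^\dag=-\dee(b^*)$, and the inner-product symmetry, obtaining $B_{vw}^\dag=-B_{wv}=B_{vw}$. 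Your route is more elementary and self-contained (it does not presuppose the Hermitian machinery), at the cost of redoing by hand a cancellation that the Hermitian characterisation has already packaged; the paper's route is shorter but leans on Proposition \ref{lem:compatible}. Both are valid, and your swap-antisymmetry observation $B_{wv}=-B_{vw}$ is a clean way to close the argument.
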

\begin{proof} 
We have
\begin{align*}
(\nabla^{v}-\nabla^{w})(x)
&=\sum_{i} x_{i}\otimes \dee(\pairing{x_{i}}{x}_\B)
-\sum_{j}y_{j}\otimes \dee(\pairing{y_{j}}{x}_\B)\\
&=\sum_{i}x_{i}\otimes \dee(\pairing{x_{i}}{x}_\B)
-\sum_{i,j} x_{i}\otimes \pairing{x_{i}}{y_{j}}_\B \dee(\pairing{ y_{j}}{x}_\B)\\
&=\sum_{i,j} x_{i}\otimes \dee(\pairing{x_{i}}{y_{j}}_\B)\pairing{ y_{j}}{x}_\B,
\end{align*}
so that $\alphar^{-1}(\nabla^{v}-\nabla^{w})
=\sum_{i,j}x_{i}\ox \dee(\pairing{x_{i}}{y_{j}}_\B)\ox y_{j}^{\dag}$. Since both connections $\nabla^{v},\nabla^{w}$ are Hermitian, the element $\alpha^{-1}(\nabla^{v}-\nabla^{w})$ is $\dag$-invariant by Lemma \ref{lem:A-dag}.
\end{proof}

\subsection{Quantum metrics for $\dag$-bimodules}
\label{subsec:the-q-word}
Despite starting from, and relying on, Hilbert module inner products, the bimodule inner products and quantum metrics of 
\cite{BGJ2,BGJ1,BMBook} arise naturally in our setting.

\begin{defn}
\label{defn:Geeeeeeee-ex}
Let $\X$ be a $\dag$-$\B$-bimodule.
Under the isomorphism 
\[
\alphar: \X\ox_\B\X\xrightarrow{\sim}\overrightarrow{\End}^{*}_{\B}(\X),\quad  x\otimes y^{\dag}\mapsto \ket{x} \bra{y},
\]
we let $G_\X\in \X\ox_\B\X$ be the unique element with 
$\alphar(G_\X)= \textnormal{Id}_\X,$ the identity operator on $\X$. 
\end{defn}
Note that for any right frame $(x_j)$ for $\X$
\[
\textnormal{Id}=\sum_{j} |x_{j}\rangle\langle x_{j}|,
\quad
\mbox{so that }
\quad
G_\X=\sum_{j} x_{j}\otimes x_{j}^{\dag},
\]
for any right frame as well, and this is a frame independent expression for $G_\X$. Note that $\alphal(G_\X)=\textnormal{Id}_\X$ as well, since for any right frame $x_{j}$, $x_{j}^{\dag}$ is a left frame. Thus the definition of $G_\X$ is also independent of using either the left or the right module structure. For $x,y\in\X$ we have the useful formula
\begin{equation}
\pairing{G_\X}{x\ox y}_\B=\pairing{x^\dag}{y}_\B,
\label{eq:real-ip}
\end{equation}
and one can see the appearance of the Riemannian inner product of \cite{BGJ2,BGJ1}, which in turn satisfies the definition of ``bimodule inner product'' in \cite[Section 1.3]{BMBook}.
\begin{rmk}[Centrality of $G_{\X}$] We point out that for all $b\in\B$ we have $G_\X b=bG_\X$. Thus, $G_{\X}$ is always a central element of the bimodule $\X\otimes_{\B}\X$. In fact $G_\X$ is a \emph{quantum metric} on $\X$ in the sense of Beggs-Majid \cite[Definition 1.15]{BMBook}, where the right hand side of Equation \eqref{eq:real-ip} provides the ``inverse'' as the frame relation shows. Such quantum metrics are always central by \cite[Lemma 1.16]{BMBook}.  This centrality has also been observed in the context of bi-Hilbertian bimodules \cite[Theorem 2.22]{KajPinWat}.
\end{rmk}
\begin{example}
\label{eg:mfld-frame-gee}
Let $(M,g)$ be a compact oriented Riemannian manifold. 
Then given a finite covering by coordinate charts $(U_\alpha,x_\alpha)$ and a subordinate partition of unity $(\varphi_\alpha)$, we have a natural module frame for the one-forms $\Omega^1_{\slashed{D}}(C^\infty(M))$ with inner product coming from the Riemannian metric. The frame is given by
$\omega_{j\alpha}=\sum_\mu\sqrt{\varphi_\alpha}B^j_\mu dx_\alpha^\mu$ where $e^j_\alpha:=\sum_\mu B^j_\mu dx^\mu_\alpha$ is a local orthonormal frame for the inner product.
One can check directly that $G=\sum_{\alpha,\mu,\nu}\varphi_\alpha g_{\mu\nu}dx^\mu_\alpha\ox dx^\nu_\alpha$ is the line element: see Lemma \ref{lem:grassy-bit}. 
\end{example}
\begin{prop}
\label{prop:left-right-conn}
Let $\X$ be a $\dag$-bimodule and $(\Omega^1_\dee,\dag)$ a first order differential structure. Let
 $\nablar:\X\to \X\ox \Omega^{1}_{\dee}(\B)$ and $\nablal:\X\to \Omega^{1}_{\dee}(\B)\ox\X$ be right and left connections respectively. Then the operator
\[
\nablar\otimes 1+1\otimes\nablal:\X\ox_\B\X\to \X\ox \Omega^{1}_{\dee}(\B)\ox\X, 
\]
is well-defined. Moreover, if $\nablal(x):=-\nablar(x^{\dag})^{\dagger}$ is the conjugate left connection to $\nablar$, then $\nablar$ and $\nablal$ are Hermitian if and only if
\[
(\nablar\otimes 1+1\otimes\nablal)(G_\X)=0.
\]
\end{prop}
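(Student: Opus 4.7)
The proof naturally splits into two parts: well-definedness of the operator on the balanced tensor product, and the equivalence between the Hermitian property and the vanishing of its value on $G_\X$.

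For well-definedness, the plan is to verify directly that $\nablar\otimes 1+1\otimes\nablal$ respects the $\B$-balancing relation $xb\otimes y = x\otimes by$. Applying both terms and using the Leibniz rules for $\nablar$ and $\nablal$, the two expressions
\[
(\nablar\ox 1+1\ox\nablal)(xb\ox y),\qquad (\nablar\ox 1+1\ox\nablal)(x\ox by),
\]
differ a priori by $x\ox\dee(b)\ox y - x\ox\dee(b)\ox y = 0$ in the junk Leibniz terms, together with terms of the form $\nablar(x)b\ox y - \nablar(x)\ox by$ and $xb\ox\nablal(y)-x\ox b\nablal(y)$; these vanish because $\nablar(x)\in\X\ox_\B\Omega^1_\dee$ and $\nablal(y)\in\Omega^1_\dee\ox_\B\X$ are already balanced over $\B$. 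This is a short direct check.

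For the equivalence, I fix a right frame $(x_j)$ for $\X$, so that $G_\X=\sum_j x_j\ox x_j^\dag$ by Definition \ref{defn:Geeeeeeee-ex}. Assuming $\nablal(x)=-\nablar(x^\dag)^\dag$, I compute
\[
(\nablar\ox 1+1\ox\nablal)(G_\X)=\sum_j\nablar(x_j)\ox x_j^\dag-\sum_j x_j\ox\nablar(x_j)^\dag.
\]
Proposition \ref{lem:compatible}, in particular equation \eqref{eq:nabla-herm}, characterises the Hermitian property of $\nablar$ as the vanishing of precisely this difference. Combined with Corollary \ref{lem:left-right}, which tells us that the conjugate connection $\nablal$ is left-Hermitian if and only if $\nablar$ is right-Hermitian, this yields the stated equivalence.

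No substantive obstacle is expected; the only mild care needed is in the balancing check of the first paragraph, and in noting that the expression $\sum_j\nablar(x_j)\ox x_j^\dag$ appearing above is frame-independent, which is already built into the frame-free description of $G_\X$.
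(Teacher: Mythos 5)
Your proposal is correct and follows essentially the same route as the paper: a direct check that the balancing relation is respected (the Leibniz terms $x\ox\dee(b)\ox y$ cancel and the remaining discrepancies vanish by the $\B$-balancing of the triple tensor product), followed by the identity $(\nablar\ox 1+1\ox\nablal)(G_\X)=\sum_j\nablar(x_j)\ox x_j^\dag-\sum_j x_j\ox\nablar(x_j)^\dag$ combined with the characterisation \eqref{eq:nabla-herm} from Proposition \ref{lem:compatible} and Corollary \ref{lem:left-right}. If anything, you are slightly more explicit than the paper about the converse direction, which the paper leaves implicit in its appeal to Proposition \ref{lem:compatible}.
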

\begin{proof}
Well-definedness follows easily: with $x,y\in \X$ and $b\in\B$ we have
\begin{align*}
(\nablar\otimes 1+1\otimes\nablal)(x b\otimes y)
&=\nablar(x b)\otimes y+x b\otimes \nablal(y)\\
&=\nablar(x )\otimes by+x\otimes \dee(b)\otimes y+x b \otimes \nablal(y)\\
&=(\nablar\otimes 1+1\otimes\nablal)(x \otimes b y).
\end{align*}
Now assume that $\nablar$ is Hermitian, so that, by Proposition \ref{lem:compatible}, for any frame $(x_{j})$ we have
\[
\sum_{j}\nablar(x_{j})\otimes x_{j}^{\dag}
=\sum_{j}x_{j}\otimes  \nablar(x_{j})^{\dagger}, 
\]
so that by the definition of the conjugate connection we have
\begin{equation*}
(\nablar\otimes 1+1\otimes\nablal)(G)
=\sum_{j}\nablar(x_{j})\otimes x_{j}^{\dag}-x_{j}\otimes\nablar(x_{j})^{\dagger}=0.      \qedhere
\end{equation*}
\end{proof}
\begin{rmk}
A similar result for bimodule connections is stated in \cite[Lemma 8.4]{BMBook}. 
\end{rmk}

Our main interest in the discussion of this section is for a first order differential structure $\X=\Omega^1_\dee(\B)$, but we need to add the 
requisite inner product data.


\section{Junk tensors and the exterior derivative}
\label{sec:junk}

The aim of this section is to obtain a well-defined differential $\d:\Omega^1_\dee(\B)\to T^2_\dee(\B)$ of a first order differential structure. This requires an additional structural assumption.


\subsection{The bimodule of junk tensors}
The maps $\widehat{\pi}:\Omega^{*}_{u}(\B)\to T^*_\dee$ and $\delta:\Omega^{k}_{u}(\B)\to \Omega^{k+1}_{u}(\B)$ are typically not compatible  in the sense that $\delta$ need not map $\ker \widehat{\pi}$ to itself. Thus in general,  $T^{*}_{\dee}(\B)$ can not be made into a differential algebra.
The issue to address is that
there are universal
forms $\omega\in \Omega^n_u(\B)$ for which $\widehat{\pi}(\omega)=0$
but $\widehat{\pi}(\delta(\omega))\neq0$. The latter are known as {\em junk tensors},  \cite[Chapter VI]{BRB}. 

\begin{defn}
\label{defn:junk}
The bimodule of degree $n$  
junk tensors $JT^n_\dee\subset T^n_\dee$ is the sub-$\B$-bimodule  defined by
\begin{align*}
JT^n_\dee&:=\big\{T\in T^n_\dee(\B):\,T=\widehat{\pi}(\delta(\omega)),\ \omega\in \Omega_u^{n-1}(\B)\cap\ker \widehat{\pi}\big\}\subset T^*_\dee(\B).
\end{align*}
\end{defn}
That $JT^n_\dee$ is a bimodule follows from $\widehat{\pi}$ being a bimodule map,
the (graded) Liebniz rule for $\delta$ and a short computation as follows.
For $\omega\in \ker(\widehat{\pi})$ with $T=\widehat{\pi}(\delta(\omega))$ and $a\in\B$
$$
aT=a\widehat{\pi}(\delta(\omega))=\widehat{\pi}(a\delta(\omega))
=\widehat{\pi}(\delta(a\omega))-\widehat{\pi}(\delta(a)\omega)=\widehat{\pi}(\delta(a\omega)).
$$
The quotient of $T^{*}_{\dee}$ by the bimodule of junk tensors yields a differential graded algebra, and junk tensors form the smallest bimodule with this property. In the algebraic literature, this quotient is known as the \emph{maximal prolongation} of the first order calculus $\dee:\B\to\Omega^{1}_{\dee}$. The following Lemma states this well-known result.
\begin{lemma}[{\rm \cite[Chapter VI, Section 1]{BRB} and \cite[Lemma 1.32]{BMBook}}] There is a well-defined differential
\[
\dee_C:T^{n}_{\dee}/JT^{n}_{\dee}\to T^{n+1}_{\dee}/JT^{n+1}_{\dee}
\]
determined by $\dee_C(\widehat{\pi}(\omega)+JT^{n}_{\dee})=\pi(\delta(\omega))+JT^{n+1}_{\dee}$
\end{lemma}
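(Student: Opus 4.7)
The plan is a direct check of well-definedness, resting on two ingredients that are already available: first, the universal differential satisfies $\delta^2=0$ on $\Omega^*_u(\B)$, and second, $\widehat\pi$ is a surjective graded $*$-algebra homomorphism. Since $\widehat\pi:\Omega^n_u(\B)\to T^n_\dee(\B)$ is onto, every class in $T^n_\dee(\B)/JT^n_\dee$ has the form $\widehat\pi(\omega)+JT^n_\dee$ for some $\omega\in\Omega^n_u(\B)$, and it suffices to prove that the formula $\widehat\pi(\omega)\mapsto \widehat\pi(\delta(\omega))+JT^{n+1}_\dee$ depends only on this class. Once well-definedness is established, $\C$-linearity is automatic from $\C$-linearity of $\delta$ and $\widehat\pi$.

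The first step is to reduce the problem: if $\omega,\omega'\in\Omega^n_u(\B)$ represent the same class, then $\psi:=\omega-\omega'$ satisfies $\widehat\pi(\psi)\in JT^n_\dee$, and we must show $\widehat\pi(\delta(\psi))\in JT^{n+1}_\dee$. The second step unpacks the definition of $JT^n_\dee$: there exists $\eta\in\Omega^{n-1}_u(\B)\cap\ker\widehat\pi$ with $\widehat\pi(\psi)=\widehat\pi(\delta(\eta))$, so $\zeta:=\psi-\delta(\eta)\in\ker\widehat\pi\cap\Omega^n_u(\B)$. The third step is the punchline: applying $\delta$ and using $\delta^2=0$, we get $\delta(\psi)=\delta(\zeta)$, hence $\widehat\pi(\delta(\psi))=\widehat\pi(\delta(\zeta))$, and this lies in $JT^{n+1}_\dee$ by the very definition in \ref{defn:junk}. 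So $\dee_C$ is well-defined.

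I do not expect a genuine obstacle; the argument is a short diagram chase, and the only ingredient beyond the definitions is $\delta^2=0$ on universal forms, which is a property of $\Omega^*_u(\B)$ as a differential graded algebra. The one point deserving a line of comment is that an arbitrary element of $JT^n_\dee$, which a priori is a $\B$-bimodule element, still admits a representation of the form $\widehat\pi(\delta(\eta))$ with $\eta\in\ker\widehat\pi$, as the computation preceding the lemma shows: $a\,\widehat\pi(\delta(\eta))=\widehat\pi(\delta(a\eta))$ with $a\eta\in\ker\widehat\pi$, and the analogous identity holds on the right. Thus the subset description of $JT^n_\dee$ already exhausts the bimodule, so no additional normalisation is needed before applying $\delta$.
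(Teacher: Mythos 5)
Your argument is correct and is precisely the standard proof that the paper leaves to the cited references (Connes, and \cite[Lemma 1.32]{BMBook}): reduce to $\psi$ with $\widehat{\pi}(\psi)\in JT^n_\dee$, subtract $\delta(\eta)$ to land in $\ker\widehat{\pi}$, and use $\delta^2=0$ together with Definition \ref{defn:junk}. Your closing remark that the set-theoretic description of $JT^n_\dee$ already exhausts the bimodule is the right point to flag, and it matches the computation the paper gives immediately after that definition.
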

In the context of spectral triples, the differential $\widehat{\dee}_C$ is a ``lift'' of Connes' differential, as we describe in  Section \ref{subsec:has-to-be-done}.
\begin{example}
\label{eg:classical-junk}
Let $(M,g)$ be a compact oriented Riemannian manifold and $\Omega^1(M)\ox\C$ the complexified one-forms with $\dee:C^\infty(M)\to \Omega^1(M)\ox\C$ the exterior derivative.
Any universal form $\sum_i f_i\delta(h_i)-\delta(h_i)f_i=\sum_if_i\ox h_i-h_i\ox f_i$
is in the kernel of $\pi$ because $\Omega^1(M)\ox\C$ is a central bimodule and
\[
\pi(\sum_i f_i\delta(h_i)-\delta(h_i)f_i)=\sum_if_i\dee(h_i)-\dee(h_i)f_i=0.
\]
Applying the universal differential to $\sum_i f_i\delta(h_i)-\delta(h_i)f_i$ yields 
$
\sum_i \delta(f_i)\ox\delta(h_i)+\delta(h_i)\ox\delta(f_i)
$
which is represented by $\widehat{\pi}$ as 
\[
\sum_i \dee(f_i)\ox\dee(h_i)+\dee(h_i)\ox\dee(f_i).
\]
This two-tensor is not zero in general, and indeed every symmetric two-tensor arises as junk in $JT^{2}_\dee$ (see Lemma \ref{lem: theta-junk-exterior}).
\end{example}


\subsection{Representation of the differential on one-forms}
We make the following assumption, as in \cite{BGJ2,BGJ1}, which allows us to define a differential from one-forms to  two-tensors.
The differential will allow us to define torsion and curvature.
\begin{defn}
\label{ass:standing1}
A \emph{second order differential structure} $(\Omega^1_\dee(\B),\dag,\Psi)$ is a first order differential structure $(\Omega^1_\dee(\B),\dag)$
such that there is a $\B$-bilinear idempotent
$\Psi:T^2_\dee(\B)\to T^2_\dee(\B)$ 
such that $\Psi\circ\dag=\dag\circ\Psi$ and $JT^{2}_{\dee}\subset\mathrm{Im}(\Psi)$. 
Given a second order differential structure $(\Omega^1_\dee(\B),\dag,\Psi)$ we set 
\[
\Lambda^2_{\d}(\B):=(1-\Psi)T^2_\dee(\B).
\]
\end{defn}

The point of Definition \ref{ass:standing1} is to view differential two-forms as a submodule of $T^2_\dee$ rather than as a quotient of $T^2_\dee$. For differential forms on manifolds these approaches are equivalent.
As well as \cite{BGJ2,BGJ1}, the notion of a ``lifting'' of two-forms to two-tensors appears in \cite[p574ff]{BMBook} in order to define Ricci tensors, and plays a similar role as $1-\Psi$ in that setting. Such liftings are used in the following Lemma to show that any abstract second order $\dag$-calculus consisting of modules that are projective in a way compatible with $\dag$, in fact fits Definition \ref{ass:standing1}.

\begin{lemma}
\label{lem:projectivepsi}
Suppose we have a second order $\dag$-differential graded algebra 
$
\B\xrightarrow{\dee} \Omega^{1}_{\dee}\xrightarrow{\dee}\Omega^{2}_{\dee}
$ 
such that 
\begin{enumerate}
\itemsep-3pt
\item $\Omega^{1}_{\dee}$ is generated by $\B$ and $\dee(\B)$;
\item the multiplication of forms $m:T^{2}_{\dee}\to \Omega^{2}_{\dee}$ is surjective and  admits a $\dag$-invariant right $\B$-linear splitting (which may be called $\dag$-projectivity).
\end{enumerate} 
Then there exists an idempotent $\Psi:T^2_\dee(\B)\to T^2_\dee(\B)$ as in Definition \ref{ass:standing1} and $\Omega^{2}_{\dee}\simeq \Lambda^{2}_{\d}$.
\end{lemma}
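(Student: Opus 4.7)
The construction is forced: set $\Psi:=1-s\circ m$ where $s:\Omega^2_\dee\to T^2_\dee(\B)$ denotes the given $\dag$-invariant right $\B$-linear splitting of the multiplication map $m$. I plan to verify the four requirements of Definition \ref{ass:standing1} in turn, and then read off the isomorphism $\Omega^2_\dee\simeq\Lambda^2_\d$ at the end. Idempotence is immediate from $m\circ s=\mathrm{Id}_{\Omega^2_\dee}$, since $(sm)^2=s(ms)m=sm$ forces $\Psi^2=\Psi$. For the final isomorphism, surjectivity of $m$ gives $\Lambda^2_\d=(1-\Psi)T^2_\dee(\B)=sm(T^2_\dee(\B))=s(\Omega^2_\dee)$, while $s$ is injective as a section of $m$, so it restricts to an isomorphism onto its image $\Lambda^2_\d$. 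For $\B$-bilinearity, right $\B$-linearity of $\Psi$ is inherited from $s$ and $m$, and left $\B$-linearity of $s$ follows from $\dag$-invariance: for $b\in\B$ and $\omega\in\Omega^2_\dee$ one has
\[
s(b\omega)^\dag=s((b\omega)^\dag)=s(\omega^\dag b^*)=s(\omega^\dag)b^*=(bs(\omega))^\dag,
\]
so $s(b\omega)=bs(\omega)$ and $\Psi$ is $\B$-bilinear.

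For the junk inclusion $JT^2_\dee\subset\mathrm{Im}(\Psi)=\ker(sm)=\ker(m)$, where I have used injectivity of $s$, it suffices to show that $m$ annihilates every junk tensor. Given $\omega=\sum_i a_i\delta(b_i)\in\Omega^1_u(\B)$ with $\widehat{\pi}(\omega)=0$, the Leibniz rule and $\dee^2=0$ in the DGA $\Omega^*_\dee$ give
\[
m(\widehat{\pi}(\delta(\omega)))=\sum_i\dee(a_i)\dee(b_i)=\dee\Bigl(\sum_i a_i\dee(b_i)\Bigr)=\dee(\widehat{\pi}(\omega))=0,
\]
as required. Hypothesis (1) enters here implicitly, ensuring that the differential on $\Omega^*_\dee$ is determined on $\dee(\B)$ and so is computed correctly via $\widehat{\pi}$.

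The main obstacle I anticipate is the compatibility $\Psi\circ\dag=\dag\circ\Psi$, which by $s\circ\dag=\dag\circ s$ reduces to $m\circ\dag=\dag\circ m$ as an identity on $T^2_\dee(\B)$. This is delicate because in a $\dag$-DGA with the usual graded-commutative convention one would have $(\alpha\beta)^\dag=-\beta^\dag\alpha^\dag$ in bidegree $(1,1)$, clashing with the ungraded convention $(\omega_1\ox\omega_2)^\dag=\omega_2^\dag\ox\omega_1^\dag$ on $T^2_\dee(\B)$. Inspection of the paper's involution on universal one-forms, $(a\delta(b))^\dag=-\delta(b^*)a^*$, together with hypothesis (1), shows that the induced involution on $\Omega^*_\dee$ also reverses order without a graded sign, so that $m$ does intertwine the two $\dag$-structures. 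Once this is verified, $sm$ commutes with $\dag$, and so does $\Psi=1-sm$, completing the plan.
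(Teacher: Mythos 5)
Your proposal is correct and follows essentially the same route as the paper: setting $\Psi=1-s\circ m$, deriving left $\B$-linearity of $s$ from its $\dag$-invariance and right linearity, and showing $JT^2_\dee\subset\ker m=\mathrm{Im}(\Psi)$ via the Leibniz rule. You additionally spell out the isomorphism $\Omega^2_\dee\simeq\Lambda^2_\d$ and the sign-convention point for $m\circ\dag=\dag\circ m$, which the paper leaves implicit in its hypothesis that $m$ is a $\dag$-bimodule map, but these are elaborations rather than a different argument.
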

\begin{proof} 
The bimodule $JT^{2}_{\dee}$ is generated by elements $\sum\dee(a^{0}_{j})\otimes \dee(a^{1}_{j})$ with $\sum a^{0}_{j}\dee(a^{1}_{j})=0$. Since we have a second order $\dag$-differential graded algebra, 
\[
m\left(\sum\dee(a^{0}_{j})\otimes \dee(a^{1}_{j})\right)=\sum\dee(a^{0}_{j})\cdot \dee(a^{1}_{j})=\dee\left(\sum a_{j}^{0}\dee(a^{1}_{j})\right)=0,
\]
so there is an inclusion $JT^{2}_{\dee}\subset \ker m$. Let $s:\Omega^{2}_\dee\to T^{2}_{\dee}$ be a $\dag$-invariant right $\B$-linear splitting for the surjective $\dag$-bimodule map $m:T^{2}_{\dee}\to \Omega^{2}_{\dee}$.
Then 
\[
s(a\omega)=s((\omega a)^{\dag})^{\dag}=s((\omega^{\dag})a^{*})^{\dag}=as(\omega^{\dag})^{\dag}=as(\omega),
\] 
so $s$ is a bimodule map. We conclude that $\Psi:=1-s\circ m$ is an idempotent satisfying $\dag\circ\Psi=\Psi\circ \dag$ and $JT^{2}_{\dee}\subset \mathrm{Im}(\Psi)$.
\end{proof}
The multiplication of one-forms in Lemma \ref{lem:projectivepsi} typically differs from the product of one-forms that we have when $\Omega^1_\dee$ is contained in an algebra. In the latter case $m(T^2_\dee)$ will usually not provide a module of two-forms due to the presence of junk.

The next result shows that conversely, the existence of $\Psi$ as in Definition \ref{ass:standing1} yields a second order calculus.
\begin{prop}
\label{prop:diffrep}
Let $(\Omega^1_\dee(\B),\dag,\Psi)$ be a second order differential structure. The map
\begin{align}
\d&:\,\Omega^1_\dee(\B)\to T^2_\dee(\B),
& &\d(\omega):=(1-\Psi)(\widehat{\pi}\circ \delta\circ \pi^{-1})(\omega)
\end{align}
is well-defined and satisfies $\d\circ\mathrm{d}=0$.
For $a,b\in\B$ and $\omega\in\Omega^1_\dee(\B)$ we also have
\begin{align}
\label{eq: almost-derivation}
\d(\omega^{\dag})=\d(\omega)^{\dag},\quad \d(a\omega b)=(1-\Psi)(\dee(a)\otimes \omega b)+a(\d\omega)b-(1-\Psi)(a\omega\otimes\dee(b)).
\end{align}
\end{prop}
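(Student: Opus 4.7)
The map $\d$ is defined via a choice of preimage under the surjection $\pi$, then pushed through $\widehat{\pi}\circ\delta$ and finally projected by $1-\Psi$. All four claimed properties reduce to moving this definition through standard identities: well-definedness is exactly the statement that junk two-tensors are killed, $\d\circ\dee=0$ follows from $\delta^2=0$, $\dag$-equivariance follows from the compatibility of $\dag$ with each ingredient, and the twisted Leibniz rule is inherited from the graded derivation property of $\delta$.

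\textbf{Well-definedness and $\d\circ\dee=0$.} Given $\omega\in\Omega^1_\dee(\B)$, choose any $\widetilde\omega\in\Omega^1_u(\B)$ with $\pi(\widetilde\omega)=\omega$. If $\widetilde\omega'$ is a second lift, then $\widetilde\omega-\widetilde\omega'\in\ker\pi=\ker\widehat\pi\cap\Omega^1_u(\B)$, so by Definition \ref{defn:junk} the element $\widehat\pi(\delta(\widetilde\omega-\widetilde\omega'))$ lies in $JT^2_\dee$. By the standing assumption $JT^2_\dee\subset\mathrm{Im}(\Psi)$, so $(1-\Psi)$ annihilates this difference. Hence $\d(\omega):=(1-\Psi)\widehat\pi(\delta(\widetilde\omega))$ is independent of the lift. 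For $b\in\B$ we can take $\widetilde\omega=\delta(b)$ as a lift of $\dee(b)$; then $\delta^2(b)=0$ gives $\d(\dee(b))=0$.

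\textbf{Compatibility with $\dag$.} The point is that on $\Omega^1_u(\B)$ the universal differential $\delta$ commutes with $\dag$: starting from $\delta(b)^\dag=-\delta(b^*)$ and the defining formula of $\dag$ on $\Omega^1_u$, a direct computation shows $\delta(\omega^\dag)=(\delta\omega)^\dag\in\Omega^2_u(\B)$. The map $\widehat\pi$ is a $\dag$-algebra homomorphism by construction, and $\Psi\circ\dag=\dag\circ\Psi$ by assumption, so $(1-\Psi)$ commutes with $\dag$ as well. Choosing a lift $\widetilde\omega$ of $\omega$, $\widetilde\omega^\dag$ is a lift of $\omega^\dag$, and chaining the three compatibilities gives $\d(\omega^\dag)=\d(\omega)^\dag$.

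\textbf{The twisted Leibniz identity.} Given $a,b\in\B$ and a lift $\widetilde\omega\in\Omega^1_u(\B)$ of $\omega$, the element $a\widetilde\omega b\in\Omega^1_u(\B)$ is a lift of $a\omega b$. The graded Leibniz rule for $\delta$ (with $|\widetilde\omega|=1$) gives
\begin{equation*}
\delta(a\widetilde\omega b)=\delta(a)\,\widetilde\omega b+a\,\delta(\widetilde\omega)\, b-a\widetilde\omega\,\delta(b).
\end{equation*}
Applying the bimodule map $\widehat\pi$ turns the outer terms into $\dee(a)\otimes\omega b$ and $a\omega\otimes\dee(b)$, and the middle term into $a\,\widehat\pi(\delta(\widetilde\omega))\,b$. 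Since $\Psi$ is $\B$-bilinear, $1-\Psi$ passes through $a(\cdot)b$ on the middle term, which becomes $a(\d\omega)b$, yielding the claimed formula \eqref{eq: almost-derivation}.

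\textbf{Anticipated obstacle.} None of the steps are deep; the only point that genuinely uses the standing assumption on $\Psi$ is well-definedness, which is precisely why the inclusion $JT^2_\dee\subset\mathrm{Im}(\Psi)$ is built into Definition \ref{ass:standing1}. The mildly subtle bookkeeping is verifying $\delta\circ\dag=\dag\circ\delta$ on $\Omega^1_u(\B)$ from the explicit formula for $\dag$ (two minus signs cancel), and this is the only place careful computation is required.
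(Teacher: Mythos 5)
Your proof is correct and follows exactly the route of the paper's (much terser) argument: well-definedness and $\d\circ\dee=0$ from $(1-\Psi)JT^2_\dee=0$, and the remaining identities from $\widehat\pi$ being a $\dag$-bimodule/algebra map and $\delta$ a graded derivation. The only addition is your explicit verification that $\delta\circ\dag=\dag\circ\delta$ on $\Omega^1_u(\B)$, which the paper leaves implicit and which you carry out correctly.
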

\begin{proof}
Since $JT^2_{\dee}\subset \mathrm{Ran }\Psi$, $(1-\Psi)JT_{\dee}^{2}=0$, so $\d$ is well-defined and satisfies $\d\circ \dee=0$. Equation \eqref{eq: almost-derivation} follows from the fact that $\pi$ is a $\dag$-bimodule map and 
that $\delta:\Omega^{1}_{u}(\B)\to \Omega^{2}_{u}(\B)$ is a graded derivation. 
\end{proof}
\begin{rmk}
The map $\d$ is not a $\B$-bimodule derivation in the usual sense, instead $\d$ satisfies the more intricate Equation \eqref{eq: almost-derivation} arising from the graded Leibniz rule, and the presence of $(1-\Psi)$.
\end{rmk}
\begin{rmk}
One could take $\Psi={\rm Id}_{T^2_\dee}$, but the result would be $\d=0$
yielding a vacuous theory. In practice, there is usually a natural constraint on the range of $\Psi$ in examples, as described in subsection \ref{subsec:has-to-be-done}. 
\end{rmk}
\begin{rmk} Note that if $\Omega^{1}_{\dee}$ is projective, then so is $T^{2}_{\dee}$ and by Proposition \ref{prop:diffrep}, the existence of $\Psi$ yields a second order $\dag$-differential graded algebra with $\Lambda^{2}_{\dee}:=(1-\Psi)(T^{2}_{\dee})$ a projective $\dag$-sub-bimodule of $T^{2}_{\dee}$. Together with Lemma \ref{lem:projectivepsi}, this shows that every second order $\dag$-differential graded algebra with both $\Omega^{1}_{\dee}$ and $\Omega^{2}_{\dee}$ projective, such the quotient map $T^{2}_{\dee}\to\Omega^{2}_{\dee}$ is $\dag$-split, arises in this way. The assumption that $\Omega^{1}_{\dee}$ and $\Omega^{2}_{\dee}$ are projective can be considered a ``geometric'' assumption, as they are then noncommutative vector bundles. The existence of a $\dag$-invariant splitting allows one to define Ricci curvature (see \cite[p574ff]{BMBook} and \cite{MRCurve}), and can therefore be considered a geometric assumption as well.
\end{rmk}

\begin{example}
\label{eg:classical-junk-proj}
In Example \ref{eg:classical-junk} we saw that for a Dirac triple on a manifold, any symmetric two-tensor arises as junk. So to define $\d$ in line with the usual exterior derivative we need to choose $\Psi$ to be the projection onto symmetric forms. For one forms $\omega,\rho$ we have
\[
\Psi(\omega\ox\rho)=\frac{1}{2}(\omega\ox\rho+\rho\ox\omega).
\]
The map $\d$ is obtained by sending $a\dee(b)\mapsto \dee(a)\ox\dee(b)$ and then anti-symmetrising.
\end{example}
To define curvature and torsion, we require the second order differential $\d:\Omega^1_\dee\to \Lambda^2_\dee$, but not any higher degree forms. We introduce torsion in the next section, while for curvature  the classical definition is now available.

\begin{defn}
If $(\Omega^1_\dee(\B),\dag,\Psi)$ is a second order differential structure,  define the curvature of any right connection $\nablar$ on a finite projective right module $\X_\B$ to be
\[
R^{\nablar}(x)=(1\ox(1-\Psi))\circ (\nablar\ox 1+1\ox\d)\circ\nablar(x)\in \X\ox_\B\Lambda^2_\dee(\B),\qquad x\in\X.
\]
For a connection $\nablal$ on a left module ${}_\B\X$ we define
the curvature to be
\[
R^{\nablal}(x)=((1-\Psi)\ox1)\circ (1\ox\nablal-\d\ox1)\circ\nablal(x)\in \Lambda^2_\dee(\B)\ox_\B\X,\qquad x\in\X.
\]
\end{defn}

The difference between the signs for left and right here is due to the fact that $\d$ is a graded derivation. We will pursue computations of curvature in \cite{MRCurve}.

\subsection{Comparison with Connes' calculus}
\label{subsec:has-to-be-done}

In the situation that the ambient bimodule $\Omega^1_\dee(\B)\subset M$ is a $*$-algebra, we can build modules of higher degree forms $\Omega^k_\dee(\B)$ using the multiplication. Such modules of forms arose first in the context of spectral triples, \cite{BRB}, where $M=\mathbb{B}(H)$ for some Hilbert space $H$.

\begin{defn} 
Let $\Omega^1_\dee(\B)\subset M$ be a first order differential structure with $\dag$-structure given by the adjoint on $M$.
The represented $k$-tensors for the first order differential structure $(\Omega^1_\dee(\B),\dag)$ are defined by
$$
\Omega^k_\dee(\B):=m(T^k_\dee(\B))=\Big\{\sum_{j=1}^Na_j\dee(b_j^{1})\cdots \dee(b^{k}_j):\,a_j,\,b_j^{i}\in\B, \ 1\leq i\leq k, \ N\in\N\Big\}.
$$
\end{defn}

We let $\Omega^*_\dee(\B)\subset M$ be the $*$-algebra 
generated by $\B$ and $\Omega^1_\dee(\B)$ and denote by 
\[
m:M^{\otimes k}\to M,\quad T_{1}\otimes \cdots \otimes T_{k}\mapsto T_{1}\cdots T_{k} 
\] 
the $k$-fold multiplication map.
Composition of $\widehat{\pi}$ with $m$ in each degree gives a surjective $*$-algebra homomorphism
\[
m\circ\widehat{\pi}:\Omega_{u}^{*}(\B)\xrightarrow{\widehat{\pi}}T^{*}_{\dee}(\B)\xrightarrow{m} \Omega^*_\dee(\B).
\]
\begin{rmk}
The distinctions between $\widehat{\pi}$ and $m\circ\widehat{\pi}$ were highlighted in \cite{BGJ1,BGJ2}. Here we point out an important difference between the modules $T^*_\dee$ and $\Omega^*_\dee$.  An inner product on $\Omega^1_\dee$ induces inner products on all $T^k_\dee$, but additional assumptions are necessary to obtain inner products on $\Omega^k_\dee$. We will not consider inner products on $\Omega^k_\dee$, $k\geq2$, and work
predominately with $T^*_\dee$ after this section.
\end{rmk}

Analogously to $JT^n_\dee$, in this particular setting we can define junk tensors
\[
J^n_\dee:=\{T\in\Omega^n_\dee(\B):\,T=m\circ\widehat{\pi}(\delta(\omega)),\ \omega\in \Omega_u^{n-1}(\B)\cap\ker m\circ\widehat{\pi}\}\subset \Omega^n_\dee(\B).
\]

In this context we can ask that the idempotent
$\Psi\in \End_\B^{*}(T^2_\dee(\B))$ 
satisfies the stronger criteria
\[
JT^{2}_{\dee}\subset\mathrm{Im}(\Psi)\subset m^{-1}(J^{2}_{\dee}).
\] 
Since $m(JT^{2}_{\dee})=J^{2}_{\D}$, this requirement is equivalent to requiring that $JT^{2}_{\dee}\subset\mathrm{Im}(\Psi)$ and $\mathrm{Im}(m\circ\Psi)=J^{2}_{\dee}$. In \cite{BGJ2} the assumption is that the module of one-forms is centred, ${\rm Im}(\Psi)=m^{-1}(J^{2}_{\dee})$ and that $\Psi=(\sigma+1)/2$ where $\sigma$ is the canonical flip \cite{Skeide}, defined only on centred bimodules. The ability to define $\Omega^k_\dee(\B)$ allows us to require a natural upper bound on the size of ${\rm Im}(\Psi)$.

\begin{prop}
\label{prop:diffrep-AGAIN}
Let $(\Omega^1_\dee(\B),\dag,\Psi)$ be a second order differential structure
with $\Omega^1_\dee(\B)\subset M$ for some $*$-algebra $M$. 
The map
\begin{align}
m\circ\d&:\,\Omega^1_\dee(\B)\to \Omega^2_\dee(\B),
& &m\circ\d(\omega):=m\circ(1-\Psi)(\widehat{\pi}_\dee\circ \delta\circ \pi_\dee^{-1})(\omega)
\end{align}
is well-defined and satisfies $m\circ\d\circ\mathrm{d}=0$. There is a commutative diagram
\[
\xymatrix{
  \Omega^{1}_{\dee}(\B)\ar[d]^{\d}\ar[r]^{\mathrm{d}_C\ \ \\ } &\Omega^{2}_{\dee}(\B)/J^{2}_{\dee}\\ 
 T^2_{\dee}(\B) \ar[r]^{m}  & \Omega^{2}_{\dee}(\B) \ar[u]^{q}}
 \]
in which $\mathrm{d}_C:\Omega^{1}_{\dee}(\B)\to \Omega^{2}_{\dee}(\B)/J^{2}_{\dee}$ is the Connes differential and $q:\Omega^{2}_{\dee}(\B)\to  \Omega^{2}_{\dee}(\B)/J^{2}_{\dee}$ the quotient map.
\end{prop}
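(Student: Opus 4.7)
The plan is to reduce most of the proposition to what has already been established for $\d$ in Proposition \ref{prop:diffrep}, and to devote the remaining effort to checking the commutative diagram by a direct unpacking of definitions modulo $J^2_\dee$.

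First, $m\circ\d:\Omega^1_\dee(\B)\to\Omega^2_\dee(\B)$ is well defined as the composition of the well-defined map $\d$ (Proposition \ref{prop:diffrep}) with the restriction to $T^2_\dee(\B)$ of the $*$-algebra multiplication $m$ on $M$. The identity $m\circ\d\circ\dee=0$ then follows immediately by applying $m$ to $\d\circ\dee=0$, also from Proposition \ref{prop:diffrep}. These observations dispatch the equational content of the statement.

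For the diagram, I would pick $\omega\in\Omega^1_\dee(\B)$ and a lift $\tilde\omega\in\Omega^1_u(\B)$ with $\pi(\tilde\omega)=\omega$, then compute
\[
m\circ\d(\omega)=m\circ\widehat\pi\circ\delta(\tilde\omega)-m\circ\Psi\circ\widehat\pi\circ\delta(\tilde\omega),\qquad \dee_C(\omega)=m\circ\widehat\pi\circ\delta(\tilde\omega)+J^2_\dee.
\]
Well-definedness of $\dee_C$ is itself automatic from the definition of $J^2_\dee$, since for any $\tilde\omega'\in\ker\pi$ one has $m\circ\widehat\pi\circ\delta(\tilde\omega')\in J^2_\dee$ by construction. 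Consequently the commutativity $q\circ m\circ\d=\dee_C$ reduces to the single inclusion $m\circ\Psi\circ\widehat\pi\circ\delta(\tilde\omega)\in J^2_\dee$.

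The main obstacle is precisely justifying this inclusion. It is supplied by the ``upper bound'' half of the stronger criteria $JT^2_\dee\subset\mathrm{Im}(\Psi)\subset m^{-1}(J^2_\dee)$ discussed immediately before the proposition, which I would take as an implicit hypothesis in this represented-calculus setting; the lower bound $JT^2_\dee\subset\mathrm{Im}(\Psi)$ is already part of Definition \ref{ass:standing1} and is what allowed $\d$ to be defined in the first place. Granting $\mathrm{Im}(m\circ\Psi)\subset J^2_\dee$, the correction term is killed by $q$ and the diagram commutes. All remaining steps are formal unpackings of $\d$, $\dee_C$, and $q$.
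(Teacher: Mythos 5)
Your proof is correct and follows essentially the same route as the paper: well-definedness and $m\circ\d\circ\dee=0$ are inherited from Proposition \ref{prop:diffrep}, and commutativity of the diagram reduces to the vanishing $q\circ m\circ\Psi=0$, i.e.\ $\mathrm{Im}(m\circ\Psi)\subset J^{2}_{\dee}$. You are also right that this last inclusion is the ``upper bound'' half of the stronger criteria stated just before the proposition and must be read as an implicit hypothesis; the paper's proof simply asserts $q\circ m\circ\Psi=0$ without further comment.
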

\begin{proof}
We saw in Proposition \ref{prop:diffrep} that $\d$ is well-defined, and so too then is $m\circ\d$. Then, since in degree 1, $\widehat{\pi}=\pi$ and $q\circ m\circ\Psi=0$ we have for any $\omega\in\Omega^{1}_{\dee}(\B)$
\begin{align*}
q\circ m\circ\d(\omega)&= q\circ m \circ (1-\Psi)\circ \widehat{\pi}\circ  \delta \circ \pi^{-1}(\omega)
=q\circ m \circ  \widehat{\pi}\circ  \delta \circ \pi^{-1}(\omega)\\ &=q\circ \widehat{\pi}\circ  \delta \circ\pi^{-1}(\omega)=\mathrm{d}_C(\omega),
\end{align*}
as desired.
\end{proof}

The maps $\d$ and $m\circ\d$ are lifts of Connes' differentials $\widehat{\dee}_C$ and $\dee_C$ respectively. Thus when $\Omega^1_\dee(\B)\subset M$ for a $*$-algebra $M$,  Connes' calculus can be defined and we recover it.

\section{Existence of Hermitian torsion-free connections}
\label{subsec:RNDS}
To discuss Hermitian torsion-free connections, we equip the module of one-forms  with an inner product and differential.

\begin{defn}
\label{ass:standing}
An \emph{Hermitian differential structure} $(\Omega^1_\dee(\B),\dag,\Psi,\pairing{\cdot}{\cdot}_\B)$ is a second order differential structure $(\Omega^1_\dee(\B),\dag,\Psi)$
such that
$\Omega^1_\dee(\B)$ is finitely generated and projective as a right $\B$-module with a
right $\B$-valued inner product $\pairing{\cdot}{\cdot}_\B$ such that
$\Psi=\Psi^{*}\in \overrightarrow{\End}_\B^{*}(T^2_\dee(\B))$. 
\end{defn}

If $(\Omega^1_\dee(\B),\dag,\Psi,\pairing{\cdot}{\cdot}_\B)$ is an Hermitian differential structure, then $\Omega^{1}_{\dee}(\B)$ is a $\dag$-bimodule,
and Corollary \ref{cor:left-right-adjoint} says that $\Psi$ is left adjointable.
For the remainder of this section, $G$ will be the quantum metric of an Hermitian differential structure $\Omega^1_\dee(\B)$, as in Definition \ref{defn:Geeeeeeee-ex}.  
We also make the definition
\begin{defn}
\label{defn:pee-kew}
Let  $(\Omega^1_\dee(\B),\dag,\Psi,\pairing{\cdot}{\cdot}_\B)$ be 
an Hermitian differential structure.
Define the $\B$-bimodule projections
\[
P,Q:T^3_\dee(\B)\to T^3_\dee(\B)
\]
by $P=\Psi\ox 1$ and $Q=1\ox\Psi$. 
\end{defn}

%

\subsection{The torsion tensor}

The vanishing of torsion in non-commutative algebraic settings has been investigated by many authors
\cite{BMBook,BGJ1,BGJ2,DM,DMM,DHLS,DSZ,Landi}. This algebraic approach characterises torsion-free connections (in our set-up) as follows.

\begin{defn}
\label{defn:torsion-free}
Let $(\Omega^1_\dee(\B),\dag,\Psi,\pairing{\cdot}{\cdot}_\B)$ be an Hermitian differential structure. The \emph{torsion} of a right connection $\nablar$ is the map
\[\overrightarrow{T}_{\!\!\Psi}(\nablar):=(1-\Psi)\nablar+\d:\Omega^{1}_{\dee}\to T^{2}_{\dee}.\]
Similarly, for a left connection $\nablal$ the \emph{torsion} is the map
\[\overleftarrow{T}_{\!\!\Psi}(\nablal):=(1-\Psi)\nablal-\d:\Omega^{1}_{\dee}\to T^{2}_{\dee}.\]
A connection is \emph{torsion-free} if its torsion vanishes.
\end{defn}
Using the Leibniz rules \eqref{eq: almost-derivation}, one verifies that \[\overrightarrow{T}_{\!\!\Psi}(\nablar)\in\overrightarrow{\mathrm{Hom}}(\Omega^{1}_{\dee},T^{2}_{\dee}),\quad\textnormal{and}\quad \overleftarrow{T}_{\!\!\Psi}(\nablal)\in\overleftarrow{\mathrm{Hom}}(\Omega^{1}_{\dee},T^{2}_{\dee}).\]
A right connection $\nablar:\Omega^1_\dee\to T^2_\dee$ is torsion-free if $(1-\Psi)\circ\nablar=-\d$. A left connection is torsion-free if $(1-\Psi)\circ\nablal=\d$.

The torsion of a connection so defined is independent of the inner product and $\dag$-structure on the $\dag$-bimodule $\Omega^{1}_{\dee}$. Using these extra structures, Lemma \ref{lem:useful-iso} gives us isomorphisms
\[\overrightarrow{\mathrm{Hom}}(\Omega^{1}_{\dee},T^{2}_{\dee})\xleftarrow{\alphar}T^{3}_{\dee}\xrightarrow{\alphal}\overleftarrow{\mathrm{Hom}}(\Omega^{1}_{\dee},T^{2}_{\dee}).\]
Therefore, under our assumption that $\Omega^{1}_{\dee}$ is a $\dag$-bimodule, the torsion of a left- or right connection corresponds to three tensors in $T^{3}_{\dee}$ given by
\[\alphar^{-1}(\overrightarrow{T}_{\!\!\Psi}(\nablar))=\sum_{j}(1-P)(\nablar(\omega_{j})\otimes \omega_{j}^{\dag}+\d(\omega_{j})\otimes\omega_{j}^{\dag})\]
\[
\alphal^{-1}(\overleftarrow{T}_{\!\!\Psi}(\nablal))=\sum_{j}( 1-Q)(\omega_{j}\otimes \nablal(\omega_{j}^{\dag})- \omega_{j}\otimes\d\omega_{j}^{\dag}).
\]
The next result shows that for Hermitian connections, the vanishing of the torsion can be interpreted as the requirement that the covariant derivative of the quantum metric $G$ vanishes. 

\begin{prop}
\label{prop:herm+tf=herm+tf}
Suppose that $\nablar$ is a Hermitian right connection on $\Omega^{1}_{\dee}(\B)$ and $\nablal$ the conjugate left connection as in Definition \ref{defn:conj-conn}. Then $(1-\Psi)\nablar=-\d$ if and only if $(1-\Psi)\nablal=\d$ if and only if
\[(1-Q)(\nablar\otimes 1+1\otimes \d)(G)=(1-P)(\d\otimes 1 - 1\otimes\nablal)(G)=0.\] 
\end{prop}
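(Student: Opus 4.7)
The plan is to establish three equivalences: the first directly between the two torsion-free conditions, and the remaining two via identifying the covariant derivative of $G$ with the three-tensors representing the torsions under the isomorphisms $\alphar,\alphal$ of Lemma \ref{lem:useful-iso}.

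First I would show $(1-\Psi)\nablar = -\d \iff (1-\Psi)\nablal = \d$. By Definition \ref{defn:conj-conn}, $\nablal = -\dag\circ\nablar\circ\dag$. Since $\Psi\circ\dag = \dag\circ\Psi$ (Definition \ref{ass:standing1}) and $\d\circ\dag = \dag\circ\d$ (Proposition \ref{prop:diffrep}), the identity $(1-\Psi)\nablal = -\dag\circ[(1-\Psi)\nablar]\circ\dag$ translates one condition into the other upon applying $\dag$.

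Next I would compute $(1-P)(\d\ox 1 - 1\ox\nablal)(G)$ using a right frame $(\omega_j)$, so that $G = \sum_j \omega_j\ox\omega_j^\dag$. The Hermitian condition for $\nablar$ in the form of Equation \eqref{eq:nabla-herm} together with the relation $\nablal(\omega_j^\dag) = -\nablar(\omega_j)^\dag$ (from $\nablal = -\dag\nablar\dag$) gives
\[
(\d\ox 1 - 1\ox\nablal)(G) = \sum_j \bigl(\d(\omega_j)+\nablar(\omega_j)\bigr)\ox \omega_j^\dag,
\]
since $\sum_j \omega_j\ox\nablar(\omega_j)^\dag = \sum_j \nablar(\omega_j)\ox\omega_j^\dag$. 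Applying $(1-P)=(1-\Psi)\ox 1$ identifies the result with $\alphar^{-1}\bigl(\overrightarrow{T}_{\!\!\Psi}(\nablar)\bigr)$, using the frame expression for $\alphar^{-1}$ in Lemma \ref{lem:useful-iso}. Since $\alphar$ is an isomorphism, this tensor vanishes if and only if $\overrightarrow{T}_{\!\!\Psi}(\nablar) = (1-\Psi)\nablar+\d = 0$.

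An analogous computation handles $(1-Q)(\nablar\ox 1 + 1\ox\d)(G)$: using \eqref{eq:nabla-herm} to rewrite $(\nablar\ox 1)(G) = \sum_j \omega_j\ox\nablar(\omega_j)^\dag = -\sum_j \omega_j\ox\nablal(\omega_j^\dag)$ and collecting terms yields
\[
(1-Q)(\nablar\ox 1 + 1\ox\d)(G) = -\sum_j \omega_j\ox (1-\Psi)\bigl(\nablal(\omega_j^\dag)-\d(\omega_j^\dag)\bigr) = -\alphal^{-1}\bigl(\overleftarrow{T}_{\!\!\Psi}(\nablal)\bigr),
\]
which vanishes iff $\overleftarrow{T}_{\!\!\Psi}(\nablal) = 0$. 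Combining the three characterisations with the first equivalence closes the chain. There is no real obstacle; the only care required is in tracking which factor of $T^3_\dee$ the projections $P$ and $Q$ act on, and using that $\Psi\circ\d = 0$ (since $\d$ lands in $(1-\Psi)T^2_\dee$) so that $(1-\Psi)\d = \d$ when repackaging the torsion expressions.
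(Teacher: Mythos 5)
Your proposal is correct and follows essentially the same route as the paper: first the equivalence of the two torsion-free conditions via $\nablal=-\dag\circ\nablar\circ\dag$, $\Psi\circ\dag=\dag\circ\Psi$ and $\d\circ\dag=\dag\circ\d$, then identifying $(1-P)(\d\ox 1-1\ox\nablal)(G)$ and $(1-Q)(\nablar\ox1+1\ox\d)(G)$ with $\alphar^{-1}(\overrightarrow{T}_{\!\!\Psi}(\nablar))$ and $-\alphal^{-1}(\overleftarrow{T}_{\!\!\Psi}(\nablal))$ using the Hermitian relation \eqref{eq:nabla-herm} (equivalently Proposition \ref{prop:left-right-conn}). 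The only cosmetic difference is that you write out both frame computations where the paper does one and says the other is similar.
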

\begin{proof}
By Proposition \ref{prop:diffrep} we have  $\d(\omega^{\dag})^\dag=\d(\omega)$ and since $\Psi\circ\dag=\dag\circ\Psi$ 
\[
(1-\Psi)\nablal(\omega)=-(1-\Psi)\nablar(\omega^{\dag})^\dag=-((1-\Psi)\nablar(\omega^{\dag}))^{\dag},
\]
it follows that $(1-\Psi)\nablar=-\d$ if and only if $(1-\Psi)\nablal=\d$. By Proposition \ref{prop:left-right-conn}
\[\sum_{j}\nablar(\omega_{j})\otimes \omega_{j}^{\dag}=-\sum_{j}\omega_{j}\otimes\nablal(\omega_{j}^{\dag}),\]
for Hermitian connections. Thus, if $\overleftarrow{T}_{\!\!\Psi}(\nablal)=0$ then 
\begin{align*}
0&=\alphal^{-1}\left(\overleftarrow{T}_{\!\!\Psi}(\nablal)\right)\\
&=
\sum_{j}(1-Q)\omega_{j}\otimes \nablal(\omega_{j}^{\dag})-\omega_{j}\otimes \d\omega_{j}^{\dag}\\
&=-\sum_{j}(1-Q)\left(\nablar(\omega_{j})\otimes \omega_{j}^{\dag}+\omega_{j}\otimes \d\omega_{j}^{\dag}\right)\\
&=-(1-Q)(\nablar\otimes 1+1\otimes \d)(G),
\end{align*}
The argument for right connections is similar.
\end{proof}

\begin{rmk}
We emphasise that the equivalences of the previous result hold only for Hermitian connections.
\end{rmk}

\subsection{The connection form of a Hermitian torsion-free connection}
In this section we examine existence of Hermitian torsion-free connections on the module $\Omega^{1}_{\dee}(\B)$ associated to an  Hermitian differential structure $(\Omega^{1}_{\dee}(\B),\dag,\Psi, \pairing{\cdot}{\cdot}_\B)$. We will drop the $\B$ from $\pairing{\cdot}{\cdot}_\B$ for simplicity: any unlabelled inner product is a right inner product.

Recall the projections $P=\Psi\otimes 1,Q=1\otimes\Psi$ are both left and right adjointable maps on $T^3_\dee(\B)$.
For a right frame $(\omega_{j})$ for $\Omega^1_\dee(\B)$ we write
\begin{equation}
\label{eq: W}
W:=\sum_{j}\d(\omega_{j})\otimes \omega_{j}^{\dag},\quad W^{\dag}
:=\sum_{j}\omega_{j}\otimes \d(\omega_{j}^{\dag}) \in T^3_\dee(\B).
\end{equation}
The tensors $W,W^\dag$ are frame dependent. Via  
Lemmas \ref{lem:vee} and \ref{defn:grassy}, the frame $(\omega_j)$ also gives us
the associated stabilisation isometry and Grassmann connection
\[
v:\Omega^{1}_{\dee}(\B)\to \B^{N},\quad\overrightarrow{\nabla}^{v}: \Omega^{1}_{\dee}(\B)\to T^2_{\dee}(\B).
\]
We write a general right connection as
\[
\nablar=\nablar^{v}+\alphar(A_{v})
\]
where using Proposition \ref{lem:compatible}, $A_{v}=\nablar(\omega_j)\ox\omega_j^\dag$ is an element of $T^3_\dee(\B)$. 
\begin{lemma}
\label{lem: torsion-free iff} 
Let $\nablar:\Omega^{1}_{\dee}(\B)\to T^{2}_{\dee}(\B)$ be an Hermitian connection. Then $\nablar$ is torsion-free if and only if for any frame $v=(\omega_{j})$ the connection form $A_{v}$ satisfies $(1-P)A_{v}=-W$.
\end{lemma}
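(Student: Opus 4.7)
The plan is to translate the torsion condition $\overrightarrow{T}_{\!\!\Psi}(\nablar)=0$ into an equation on the 3-tensor $A_v\in T^3_\dee(\B)$, by combining the decomposition $\nablar=\nablar^v+\alphar(A_v)$ with the three-tensor formula for the torsion already recorded in the text and using injectivity of $\alphar$.

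First I would prove the auxiliary identity $\sum_j\nablar^v(\omega_j)\otimes\omega_j^\dag=0$ in $T^3_\dee(\B)$. Writing $\nablar^v(\omega_j)=\sum_i\omega_i\otimes\dee\pairing{\omega_i}{\omega_j}$, the resulting tensor applied via $\alphar$ to a general $\omega\in\Omega^1_\dee(\B)$ gives
\[
\sum_{i,j}\omega_i\otimes\dee(\pairing{\omega_i}{\omega_j})\pairing{\omega_j}{\omega}.
\]
Using the Leibniz rule to move $\pairing{\omega_j}{\omega}$ inside the differential and then the frame identity $\sum_j\omega_j\pairing{\omega_j}{\omega}=\omega$, the two resulting sums cancel. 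By injectivity of $\alphar$ the original 3-tensor vanishes. Consequently $A_v=\sum_j(\nablar-\nablar^v)(\omega_j)\otimes\omega_j^\dag$ coincides with the full sum $\sum_j\nablar(\omega_j)\otimes\omega_j^\dag$, and not only with the non-Grassmann part.

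Next I would invoke the three-tensor formula for the torsion recorded above Proposition~\ref{prop:herm+tf=herm+tf}, namely
\[
\alphar^{-1}(\overrightarrow{T}_{\!\!\Psi}(\nablar))=\sum_j(1-P)\bigl(\nablar(\omega_j)\otimes\omega_j^\dag+\d(\omega_j)\otimes\omega_j^\dag\bigr).
\]
Since $\d$ takes values in $\Lambda^2_\dee=(1-\Psi)T^2_\dee$ by construction, we have $\Psi\d(\omega_j)=0$ for every $j$, hence $PW=0$ and $(1-P)W=W$. Combining with the identification of $A_v$ from the previous step gives
\[
\alphar^{-1}(\overrightarrow{T}_{\!\!\Psi}(\nablar))=(1-P)A_v+W.
\]
Injectivity of $\alphar$ then turns the vanishing of $\overrightarrow{T}_{\!\!\Psi}(\nablar)$ into the vanishing of the right-hand side, i.e.\ into the equation $(1-P)A_v=-W$, which is the claimed equivalence.

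The only genuinely non-mechanical ingredient is the vanishing $\sum_j\nablar^v(\omega_j)\otimes\omega_j^\dag=0$; everything else is book-keeping for the $\dag$-bimodule $\Omega^1_\dee(\B)$. Hermiticity of $\nablar$ does not enter any of the computations, so the same equivalence will hold for arbitrary right connections, with Hermiticity retained as a hypothesis purely for coherence with the subsequent uniqueness and existence discussions.
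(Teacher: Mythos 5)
Your argument is correct, and every step checks out: the auxiliary vanishing $\sum_j\nablar^v(\omega_j)\ox\omega_j^\dag=0$ follows exactly as you say from the Leibniz rule for $\dee$ and the frame identity (and is consistent with the paper's remark, just before the lemma, that $A_v=\sum_j\nablar(\omega_j)\ox\omega_j^\dag$ via Proposition~\ref{lem:compatible}); the identity $PW=0$ is immediate from $\d=(1-\Psi)\circ(\cdots)$; and injectivity of $\alphar$ converts the vanishing of the torsion tensor $(1-P)A_v+W$ into $(1-P)A_v=-W$. The route is, however, noticeably different from the paper's. The paper works at the level of module maps evaluated on a general one-form $\omega$: it writes the torsion-free condition as $\alphar((1-P)A_v)(\omega)=-\d(\omega)-(1-\Psi)\nablar^v(\omega)$ and then uses the twisted Leibniz rule \eqref{eq: almost-derivation} for $\d$ to show the right-hand side collapses to $-\alphar(W)(\omega)$. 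You instead work entirely at the level of three-tensors, taking the displayed formula $\alphar^{-1}(\overrightarrow{T}_{\!\!\Psi}(\nablar))=\sum_j(1-P)(\nablar(\omega_j)\ox\omega_j^\dag+\d(\omega_j)\ox\omega_j^\dag)$ as the starting point, so that the only computation needed is the vanishing of the Grassmann three-tensor, which uses just the ordinary Leibniz rule on $\B$. Your version avoids invoking \eqref{eq: almost-derivation} altogether, at the cost of relying on the (easily verified but unproved) displayed torsion-tensor formula; the two proofs are of comparable length. Your closing observation is also correct: Hermiticity plays no role, and the equivalence holds for an arbitrary right connection, the hypothesis being retained only for consistency with the surrounding development.
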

\begin{proof}
Writing $\nablar=\nablar^{v}+\alphar(A_{v})$, by Proposition \ref{prop:herm+tf=herm+tf}, $\nablar$ is 
torsion free if and only if $(1-\Psi)\nablar=-\d$. This holds if and only if
\begin{align*}
-\d=(1-\Psi)\nablar=(1-\Psi)\nablar^{v}+(1-\Psi)\alphar(A_{v})=(1-\Psi)\nablar^{v}+\alphar((1-P)A_{v}),
\end{align*}
which is true if and only if
\begin{align*}
\alphar((1-P)A_{v})&(\omega)=-\d(\omega)-(1-\Psi)\nablar^{v}(\omega)=-\d(\omega)-(1-\Psi)\big(\sum_{j}\omega_{j}\otimes \dee(\pairing{\omega_{j}}{\omega})\big)\\
&=\sum_{j}-\d(\omega_{j})\pairing{\omega_{j}}{\omega}+(1-\Psi)(\omega_{j}\otimes \dee(\pairing{\omega_{j}}{\omega}))-(1-\Psi)(\omega_{j}\otimes \dee(\pairing{\omega_{j}}{\omega})\\
&=\sum_{j} -\d(\omega_{j})\pairing{\omega_{j}}{\omega}
=-\alphar(W)(\omega).
\end{align*}
Since $\alphar$ is an isomorphism, $\nablar$ is torsion-free if and only if $(1-P)A_{v}=-W$.
\end{proof}
Our next observation is central to the development of the subsequent theory and the construction of Hermitian torsion-free connections on noncommutative differential forms.
\begin{lemma} 
\label{prop: connection-form}
Let $\nablar:\Omega^{1}_{\dee}(\B)\to T^2_{\dee}(\B)$ be an Hermitian torsion-free right connection. For any frame $(\omega_{j})$ the decomposition $\nablar=\nablar^{v}+\alphar(A_{v})$ satisfies
\[
(1-PQ)A_{v}=-W-PW^{\dag},\quad (1-QP)A_{v}=-W^{\dag}-QW.
\]
Moreover for every $n>0$ we have
\begin{equation}
\label{eq:series}
(1-(PQ)^{n})A_{v}=-\sum_{k=0}^{n-1}(PQ)^{k}(W+PW^{\dag}),\quad (1-(QP)^{n})A_{v}=-\sum_{k=0}^{n-1}(QP)^{k}(W^{\dag}+QW).
\end{equation}
\end{lemma}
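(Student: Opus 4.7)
The plan is to leverage Lemma \ref{lem: torsion-free iff}, which for an Hermitian torsion-free connection gives $(1-P)A_v = -W$, together with the $\dag$-invariance $A_v = A_v^\dag$ guaranteed by Proposition \ref{lem:compatible}. The first step is to observe that $P$ and $Q$ are related via the $\dag$-involution on $T^3_\dee$: using $(\eta\ox\omega\ox\rho)^\dag = \rho^\dag\ox\omega^\dag\ox\eta^\dag$ together with $\dag\circ\Psi=\Psi\circ\dag$, one checks immediately that $\dag\circ P = Q\circ\dag$, hence also $\dag\circ(1-P) = (1-Q)\circ\dag$.

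Applying $\dag$ to both sides of $(1-P)A_v = -W$, using $A_v^\dag = A_v$ and the observation above, one obtains the companion identity
\[
(1-Q)A_v = -W^\dag.
\]
With these two identities in hand, the first pair of formulas follows from the algebraic decomposition $1-PQ = (1-P) + P(1-Q)$: substituting gives
\[
(1-PQ)A_v = (1-P)A_v + P(1-Q)A_v = -W - PW^\dag,
\]
and analogously $(1-QP)A_v = (1-Q)A_v + Q(1-P)A_v = -W^\dag - QW$.

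The series identities \eqref{eq:series} then follow by induction on $n$, using the telescoping identity
\[
1 - (PQ)^{n+1} = \bigl(1 - (PQ)^n\bigr) + (PQ)^n(1 - PQ),
\]
and the corresponding identity with $P$ and $Q$ swapped. The inductive step simply appends the $n$-th term $(PQ)^n(W + PW^\dag)$ to the running sum. There is no real obstacle here; the only mildly delicate point is the verification that $\dag\circ P = Q\circ\dag$, which I expect to spell out briefly, as the remainder of the argument is purely algebraic manipulation of projections.
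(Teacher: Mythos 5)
Your proposal is correct and follows essentially the same route as the paper: both rest on Lemma \ref{lem: torsion-free iff} giving $(1-P)A_v=-W$, the Hermitian condition $A_v=A_v^\dag$ yielding the companion identity $(1-Q)A_v=-W^\dag$ (your explicit check that $\dag\circ P=Q\circ\dag$ is the justification the paper leaves implicit), and then elementary projection algebra plus induction. Your identity $1-PQ=(1-P)+P(1-Q)$ is just a repackaging of the paper's substitution of $A_v=PA_v-W$ and $A_v=QA_v-W^\dag$ into one another, so there is nothing further to compare.
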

\begin{proof}
Writing $\nablar=\nablar^{v}+\alphar(A_{v})$, the fact that $\nablar$ is 
Hermitian gives $A^{\dag}_{v}=A_{v}$. By Lemma \ref{lem: torsion-free iff}, $\nablar$ is torsion free if and only if $(1-P)A_{v}=-W$ if and only if $(1-Q)A_{v}=-W^{\dag}$.
These equations are equivalent to the equations
\begin{equation}
\label{eq:iterate}
A_{v}=PA_{v} -W,\quad A_{v}=QA_{v}-W^{\dag}.
\end{equation}
Substituting Equations \eqref{eq:iterate} into one another yields
\[
A_{v}=PQ A_{v} -W-PW^{\dag},\quad A_{v}=QPA_{v}-W^{\dag}-QW.
\]
This gives the desired equations for $n=1$. The equations for $n>1$ follow by induction  using Equation \eqref{eq:iterate} and the fact that $PW=QW^{\dag}=0$.
\end{proof}

The formulae \eqref{eq:series} suggest that taking a limit as $n\to\infty$ will allow us to construct the connection form $A_{v}$ relative to the frame $(\omega_{j})$. Therefore we need to address the convergence properties of the relevant sequences associated to the pair of projections $(P,Q)$. This is the subject of the next subsection, which is used only as a reference in the sequel. The reader interested mainly in the existence and construction of a Hermitian torsion-free connection can proceed directly to subsection \ref{subsec: existence}.



\subsection{The two projection problem}
\label{subsec:freaky-angle}

In view of Lemma \ref{prop: connection-form}, we need to characterise the limiting properties of the sequence $(PQ)^{n}$. This leads us to consider the \emph{two projection problem} in Hilbert $C^{*}$-modules, see \cite{MRtwoprojns,DMR23} and references therein. The module $T^{3}_{\dee}$ is only a pre-$C^{*}$-module over a local algebra $\B$, and we therefore now present a refinement of the $C^{*}$-module two-projection problem as discussed in \cite{MRtwoprojns}. In particular, this yields a sufficient condition that will assure us that the limit of the series $\sum_{k=0}^{n}(PQ)^{k}$ in \eqref{eq:series} is a projection on $T^{3}_{\dee}$. It is remarkable that this sufficient condition, stated in Theorem \ref{thm:smo-intersect} below, is purely algebraic.

\begin{defn} Let $\mathcal{B}$ be $*$-algebra and $\mathcal{X}$ a right inner product $\mathcal{B}$-module.
Given a submodule $\mathcal{M}\subset \mathcal{X}$ its \emph{orthogonal complement} is the submodule
\[
\mathcal{M}^{\perp}:=\left\{x\in \mathcal{X}:\pairing{m}{x}_\B=0\,\, \forall x\in \mathcal{M}\right\}.
\]
By orthogonality, the addition map $\mathcal{M}\oplus \mathcal{M}^{\perp}\to \mathcal{M}+\mathcal{M}^{\perp}$ is an inner product preserving isomorphism and 
the module $\mathcal{M}$ is \emph{complemented in} $\mathcal{X}$ if 
$\mathcal{X}=\mathcal{M}+\mathcal{M}^{\perp}$. 
\end{defn}

If $B$ is a $C^{*}$-algebra and $X$ a Hilbert $C^{*}$-module it is well-known that not every closed submodule is complemented. In general there is a bijection between complemented submodules of $\X$ and projections $P=P^{2}=P^{*}\in\End^{*}_{\B}(\X)$. The decomposition $\X=\mathcal{M}+\mathcal{M}^{\perp}$ guarantees that every $x\in \X$ can be written uniquely as a sum $x=x_{\mathcal{M}}+x_{\mathcal{M}^{\perp}}$ and orthogonality guarantees that $P_{\mathcal{M}}(x):=x_{\mathcal{M}}$ is a self-adjoint idempotent. Conversely a projection $P\in\End^{*}_{\B}(\mathcal{X})$ gives rise to a complemented submodule $\mathcal{M}:=P(\mathcal{X})$ with $\mathcal{M}^{\perp}:=(1-P)(\mathcal{X})$. 

For the present paper, our setting is that of complemented submodules $\mathcal{M},\mathcal{N}\subset \mathcal{X}$ of a finitely generated projective inner product module $\mathcal{X}$ over a unital ${*}$-algebra $\B$, with associated projections $P_{\mathcal{M}}, P_{\mathcal{N}}$. Observe that there is an exact sequence
\begin{align}
\label{sum-intersection-sequence} 0\to \mathcal{M}\cap \mathcal{N} \to \mathcal{M}\oplus \mathcal{N}\to \mathcal{M}+\mathcal{N}\to 0,
\end{align}
where the first non-zero map is the embedding 
\[
\mathcal{M}\cap \mathcal{N} \to \mathcal{M}\oplus \mathcal{N},\quad x\mapsto (x,-x),
\]
and the second map is given by addition in $\mathcal{X}$. The module $\mathcal{M}\oplus \mathcal{N}$ is finitely generated and projective. However, the submodule $\mathcal{M}\cap \mathcal{N}\subset \mathcal{X}$ need not be finitely generated, whereas the finitely generated submodule $\mathcal{M}+\mathcal{N}\subset \mathcal{X}$ need not be projective. Note that there is an inclusion $\mathcal{M}^{\perp}+\mathcal{N}^{\perp}\subset (\mathcal{M}\cap \mathcal{N})^{\perp}$.

In our setting the $*$-algebra $\B$ is dense in a $C^{*}$-algebra $B$, and by taking $C^{*}$-module closures $X:=\overline{\mathcal{X}}$, $M:=\overline{\mathcal{M}}$ and $N:=\overline{\mathcal{N}}$ we obtain a pair of complemented submodules of a Hilbert $C^{*}$-module $X$ fitting in an exact sequence like \eqref{sum-intersection-sequence}. Recall that a Hilbert $C^{*}$-module over a unital $C^{*}$-algebra is finitely generated if and only if it is projective \cite[Theorem 5.9]{FL02}. For finitely generated modules we can  use analysis to address the algebraic properties of the exact sequence \eqref{sum-intersection-sequence}, via the limiting properties of the sequence $(P_{M}P_{N})^{n}$.

Recall that the $*$-strong topology on $\End^{*}_{B}(X)$ is given by the seminorms
\[
\|T\|_{x}:=\max\left\{\|Tx\|,\|T^{*}x\|\right\},\quad x\in X.
\]
It is straightforward to show that if $X$ is algebraically finitely generated, then the norm and $*$-strong topologies coincide.

The following Proposition links the convergence properties of the sequence $(P_{M}P_{N})^{n}$ to the exact sequence \eqref{sum-intersection-sequence}.

\begin{prop}[{cf. \rm\cite[Proposition 3.12]{MRtwoprojns}}]\label{prop:twoprojnsequivs} Let $X$ be a finitely generated Hilbert $C^{*}$-module over a unital $C^{*}$-algebra $B$ and $M,N$ complemented submodules. 
Then the following are equivalent:
\vspace{-8pt}
\begin{enumerate}
\itemsep-3pt
\item $(M\cap N)\oplus (M^{\perp}+N^{\perp})=X$;
\item $(M\cap N)\oplus (M^{\perp}+N^{\perp})$ is dense in $X$;
\item $M+N$ \mbox{ is closed in} $X$;
\item $M\cap N $ \mbox{is complemented } $\|P_{M}P_{N}-P_{M\cap N}\|<1$; 
\item $M\cap N $ \mbox{is complemented and the operator } $1+P_{M\cap N}-P_{M}P_{N}$\mbox{ is invertible} in $\End^{*}_{B}(X)$;
\item \mbox{The sequence }$(P_{M}P_{N})^{n}$\mbox{ is Cauchy for the operator norm};
\item \mbox{The sequence }$(P_{M}P_{N})^{n}$\mbox{ is Cauchy for $*$-strong topology}.
\end{enumerate}
\vspace{-8pt}
In case the above equivalent conditions hold, both $M\cap N$ and $M+N$ are finitely generated projective Hilbert $C^{*}$-modules, the sequence \eqref{sum-intersection-sequence} splits, and $\lim_{n\to\infty}(P_MP_N)^n=P_{M\cap N}$.
\end{prop}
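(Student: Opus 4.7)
The plan is to reduce everything to the classical two-projection problem in a unital $C^*$-algebra, exploiting the fact that finite generation collapses several topological subtleties. Since $X$ is finitely generated projective over a unital $C^*$-algebra $B$, we have $X \cong pB^n$ for some projection $p\in M_n(B)$, so by (the proof of) Proposition \ref{prop:SHFC-End} the algebra $\End^*_B(X)\cong pM_n(B)p$ is itself a unital $C^*$-algebra. On this algebra the norm topology and the $*$-strong topology agree on norm-bounded sets (one can see this by evaluating on a finite right frame and using the identification with $pM_n(B)p$), which immediately delivers the equivalence $(6)\Leftrightarrow(7)$ since $\|(P_MP_N)^n\|\le 1$.

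Next I would dispatch the ``algebraic'' block $(1)\Leftrightarrow(2)\Leftrightarrow(3)$ by exploiting orthogonal-complementation duality. From the tautological inclusion $M^\perp+N^\perp\subseteq(M\cap N)^\perp$ and the identity $(M^\perp+N^\perp)^\perp=M\cap N$, the submodule $M\cap N$ is complemented precisely when $M^\perp+N^\perp$ is closed; by symmetric duality this is equivalent to $M+N$ being closed. When this holds, the orthogonal decomposition $X=(M\cap N)\oplus(M^\perp+N^\perp)$ drops out, giving (1); and (2) trivially lies between (1) and (3) once one observes that density of an orthogonal sum in a Hilbert $C^*$-module forces the sum to be all of $X$ (since the orthogonal complement of the sum is zero and both summands are complemented). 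This simultaneously identifies the ``$M\cap N$ complemented'' hypothesis appearing in (4) and (5) with condition (3).

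The core analytic content is the equivalence of the convergence conditions (4), (5), (6) with (3). Here I would invoke the $C^*$-module two-projection theorem proved in [MRtwoprojns, Proposition 3.12], which is exactly the statement we need stripped of the finite-generation assumption. Alternatively, one can re-derive it inside the $C^*$-algebra $\End^*_B(X)$: Halmos's two-projection theorem puts $P_M,P_N$ in a standard $2\times 2$ matrix form relative to the decomposition $X=(M\cap N)\oplus(M^\perp\cap N^\perp)\oplus R$, and on the remainder $R$ the operator $P_MP_N$ is governed by the spectrum of $P_NP_MP_N$ in $(0,1)$. Norm convergence of $(P_MP_N)^n$ is then equivalent to a spectral gap at $1$, which translates into the estimate $\|P_MP_N-P_{M\cap N}\|<1$, into invertibility of $1+P_{M\cap N}-P_MP_N$, and (via Deutsch's angle criterion) into closedness of $M+N$. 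The limit is then forced to be $P_{M\cap N}$ because $P_M P_N$ fixes $M\cap N$ pointwise and its restriction to $R$ has norm strictly less than one.

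The main obstacle is this third step, where one must transport the spectral-theoretic Deutsch--Halperin--von Neumann arguments from Hilbert space to Hilbert $C^*$-modules. Once it is in place, the closing assertions are quick: $M\cap N$ is complemented in the finitely generated projective module $X$, hence itself finitely generated projective, and the same applies to $M+N=(M^\perp\cap N^\perp)^\perp$; the exact sequence \eqref{sum-intersection-sequence} then splits because the right-hand map $M\oplus N\to M+N$ admits a right inverse constructed from the projections $P_M,P_N,P_{M\cap N}$.
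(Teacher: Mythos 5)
Your overall strategy coincides with the paper's: the hard analytic equivalences are delegated to \cite[Proposition 3.12]{MRtwoprojns}, and the only genuinely new content, $(6)\Leftrightarrow(7)$, is obtained from the coincidence of the norm and $*$-strong topologies on $\End^{*}_{B}(X)$ for finitely generated $X$. That part of your argument is fine and matches the paper.

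The genuine gap is in your ``algebraic block'' $(1)\Leftrightarrow(2)\Leftrightarrow(3)$. You assert that $M\cap N$ is complemented precisely when $M^{\perp}+N^{\perp}$ is closed, and that density of $(M\cap N)\oplus(M^{\perp}+N^{\perp})$ forces equality with $X$. Both claims import Hilbert-space facts that fail in Hilbert $C^{*}$-modules: a closed submodule $Y$ of a Hilbert $C^{*}$-module need not satisfy $Y\oplus Y^{\perp}=X$, and $Y^{\perp\perp}$ need not equal $Y$ (the standard example $Y=\{f\in C[0,1]: f(0)=0\}$ inside $X=B=C[0,1]$ has $Y^{\perp}=0$, so $Y$ is closed, not complemented, and $Y^{\perp\perp}=X$). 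The identity $(M^{\perp}+N^{\perp})^{\perp}=M\cap N$ that you start from is correct, but it does not yield ``complemented iff closed'', and a sum of two complemented submodules need not be closed, so the orthogonal complement of your dense sum being zero does not make the sum all of $X$. Indeed, the failure of exactly these Hilbert-space dualities is why the two-projection problem in $C^{*}$-modules is delicate, and why the paper handles $(2)$ by citing \cite[Theorem 2.9]{MRtwoprojns} rather than by a direct density argument; the remark following the proposition, that $(2)$ and $(7)$ are \emph{not} equivalent to the rest in the merely countably generated case, confirms that $(2)\Rightarrow(1)$ cannot follow from soft orthogonality reasoning alone. Your sketched Halmos-type alternative for $(3)\Leftrightarrow(4)\Leftrightarrow(5)\Leftrightarrow(6)$ has a similar circularity (the decomposition $X=(M\cap N)\oplus(M^{\perp}\cap N^{\perp})\oplus R$ presupposes that these intersections are complemented, which is part of what is being proved), but since you also invoke the citation for that block, it is the treatment of $(1)$, $(2)$, $(3)$ that needs repair: either route those equivalences through the cited results as the paper does, or supply an argument that genuinely uses finite generation.
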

\begin{proof}
The equivalence of (1) and (3)-(6) is proved \cite[Proposition 3.12]{MRtwoprojns}. The equivalence of $(2)$ and $(6)$ is proved in \cite[Theorem 2.9]{MRtwoprojns}. Since we are considering only finitely generated modules, $(6)$ is equivalent to $(7)$ and we are done.
\end{proof}

\begin{rmk} 
In case $B=C(Z)$ is commutative and unital, the projective modules $X,M,N$ correspond to sections of vector bundles $\mathcal{V}\to Z$, with $\mathcal{E},\mathcal{F}\subset\mathcal{V}$ two subbundles. The conditions in Proposition \ref{prop:twoprojnsequivs} are equivalent to the requirement that $\mathcal{E}\cap \mathcal{F}$ and $\mathcal{E}+\mathcal{F}$ are again subbundles of $\mathcal{V}$. This need not be the case (see \cite[Remark 3.17]{MRtwoprojns} and \cite{DMR23}). Note that in the countably generated case, conditions 2.) and 7.) are not equivalent to the others.
\end{rmk}
\begin{rmk} 
\label{rmk:F}
The \emph{Friedrichs angle} $c(M,N)$ is a numerical invariant for  a pair of complemented submodules $(M,N)$ on a (not necessarily finitely generated) Hilbert $C^{*}$-module $X$ (see \cite[Section 3.1]{MRtwoprojns}).  We have $c(M,N)<1$ if and only if the equivalent conditions 1.) and 3.)-6.) of Proposition \ref{prop:twoprojnsequivs} hold, and then $c(M,N)=\|P_{M}P_{N}-P_{M\cap N}\|$. 
\end{rmk}

\begin{corl}
In case the equivalent conditions of Proposition \ref{prop:twoprojnsequivs} hold, we have 
\[
(1+P_{M\cap N}-P_MP_N)^{-1}=\sum_{n=0}^{\infty}(P_MP_N-P_{M\cap N})^{n}=P_{M\cap N}+\sum_{n=0}^{\infty}(P_MP_N)^{n}(1-P_{M\cap N}),
\]
with convergence in norm.
\end{corl}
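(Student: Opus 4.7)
The plan is to set $T := P_MP_N - P_{M\cap N}$ and apply a Neumann series. Assuming the equivalent conditions of Proposition \ref{prop:twoprojnsequivs} hold, condition (4) (equivalently Remark \ref{rmk:F}) yields $\|T\| = \|P_MP_N - P_{M\cap N}\| < 1$. Therefore $1-T = 1 + P_{M\cap N} - P_MP_N$ is invertible in $\End^*_B(X)$ with
\[
(1+P_{M\cap N} - P_MP_N)^{-1} = (1-T)^{-1} = \sum_{n=0}^\infty T^n,
\]
converging in operator norm. This is the first claimed equality.

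For the second equality, I would exploit the inclusions $M\cap N \subset M$ and $M\cap N \subset N$, which give $P_M P_{M\cap N} = P_{M\cap N} P_M = P_{M\cap N}$ and likewise for $P_N$. Consequently
\[
(P_MP_N)P_{M\cap N} = P_{M\cap N}(P_MP_N) = P_{M\cap N}, \qquad P_{M\cap N}^2 = P_{M\cap N},
\]
and hence $P_{M\cap N}T = TP_{M\cap N} = 0$. A short induction on the expansion of $(T+P_{M\cap N})^n$, in which all cross-terms vanish, yields $(P_MP_N)^n = T^n + P_{M\cap N}$ for every $n\geq 1$. Rewriting this as $T^n = (P_MP_N)^n - P_{M\cap N} = (P_MP_N)^n(1-P_{M\cap N})$, again using $(P_MP_N)^n P_{M\cap N} = P_{M\cap N}$, gives the key identity for the tail of the Neumann series.

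Splitting off the $n=0$ term as $1 = P_{M\cap N} + (1-P_{M\cap N})$, we then compute
\[
\sum_{n=0}^\infty T^n = P_{M\cap N} + (1-P_{M\cap N}) + \sum_{n=1}^\infty (P_MP_N)^n(1-P_{M\cap N}) = P_{M\cap N} + \sum_{n=0}^\infty (P_MP_N)^n(1-P_{M\cap N}),
\]
which is the second equality; convergence of the rearranged series follows from norm convergence of $\sum T^n$. No serious obstacle arises: once $\|T\| < 1$ and the commutation identities $(P_MP_N)P_{M\cap N} = P_{M\cap N}(P_MP_N) = P_{M\cap N}$ are in hand, the remainder is bookkeeping with absolutely norm-convergent series.
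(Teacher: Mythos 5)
Your proof is correct and is exactly the standard Neumann-series argument that the paper leaves implicit (the corollary is stated without proof there): condition (4) gives $\|P_MP_N-P_{M\cap N}\|<1$, and the absorption identities $P_MP_{M\cap N}=P_NP_{M\cap N}=P_{M\cap N}$ make all cross-terms vanish so that $(P_MP_N)^n=T^n+P_{M\cap N}$. Nothing is missing.
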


So far we have shown that the convergence of $(P_MP_N)^{n}$ in the $C^{*}$-norm is equivalent to the module $M+N$ being projective. To obtain a similar statement for the dense submodules $\mathcal{M},\mathcal{N}\subset\mathcal{X}$, 
we need the $*$-algebra $\B$ to be local in its $C^{*}$-closure $B$ (see Definition \ref{def: local}). 

\begin{lemma}
\label{lem: idempotent-subspace} Let $X$ be a vector space, $P:X\to X$ an idempotent and $\mathcal{X}\subset X$ a vector subspace of $X$. If $P\mathcal{X}\subset \mathcal{X}$ then $P\mathcal{X}=\mathcal{X}\cap PX$.
\end{lemma}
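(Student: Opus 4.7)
The proof is essentially a two-line check that falls naturally into two inclusions, using only the idempotency $P^2=P$ and the invariance hypothesis $P\mathcal{X}\subset\mathcal{X}$.

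For the inclusion $P\mathcal{X}\subset \mathcal{X}\cap PX$, I would simply note that $P\mathcal{X}\subset \mathcal{X}$ holds by assumption, while $P\mathcal{X}\subset PX$ is immediate since $\mathcal{X}\subset X$. So every element of $P\mathcal{X}$ lies in both $\mathcal{X}$ and $PX$.

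For the reverse inclusion $\mathcal{X}\cap PX\subset P\mathcal{X}$, I would pick $x\in\mathcal{X}\cap PX$ and use the key trick: since $x\in PX$, we can write $x=Py$ for some $y\in X$, and then idempotency gives
\[
Px=P^2y=Py=x.
\]
Thus $x=Px$, and since $x\in\mathcal{X}$, we conclude $x=Px\in P\mathcal{X}$.

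There is no real obstacle here — the statement is elementary and uses nothing beyond $P^2=P$ and the invariance assumption. The only subtlety worth flagging is that the argument does not require $P$ to be a self-adjoint projection or bounded, nor does it require $\mathcal{X}$ to be closed, dense, or carry any additional structure; it is purely a statement about idempotents on vector spaces with invariant subspaces. This is why the lemma will be useful downstream in combination with the analytic conditions of Proposition \ref{prop:twoprojnsequivs}: the algebraic invariance of a pre-$C^*$-submodule under the $C^*$-module projection will be transportable via this lemma to an identification of images.
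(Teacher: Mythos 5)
Your proof is correct and follows essentially the same route as the paper: the reverse inclusion is exactly the paper's observation that $\mathcal{X}\cap PX=P(\mathcal{X}\cap PX)\subset P\mathcal{X}$, which you unpack elementwise via $x=Py\Rightarrow Px=x$, and the forward inclusion is the same immediate consequence of the invariance hypothesis. No issues.
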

\begin{proof}
Since $P^{2}=P$ we have that
$\mathcal{X}\cap PX=P(\mathcal{X}\cap PX)\subset P\mathcal{X}.$
By assumption $P\mathcal{X}\subset \mathcal{X}$, so we also have $P\mathcal{X}\subset \mathcal{X}\cap PX$, and the asserted equality holds.
\end{proof}

\begin{thm} 
\label{thm:smo-intersect}
Let $B$ be a unital $C^{*}$-algebra, $\B\subset B$ a unital local $*$-subalgebra and $\mathcal{X}$ a finitely generated projective inner product module over $\mathcal{B}$ with $C^*$-closure $X$. Suppose that $\mathcal{M}$ and $\mathcal{N}$ are complemented submodules of $\mathcal{X}$.
Then the following are equivalent:
\vspace{-6pt}
\begin{enumerate}
\itemsep-2pt
\item $\mathcal{X}=(\mathcal{M}\cap\mathcal{N})\oplus(\mathcal{M}^{\perp}+\mathcal{N}^{\perp});$
\item the sequence $(P_{\mathcal{M}}P_{\mathcal{N}})^{n}$ is Cauchy for the $C^{*}$-norm on $\End^{*}_{\B}(\mathcal{X})\subset \End^*_B(X)$ and the limit projection $\Pi:=\lim_{n\to\infty}(P_{\mathcal{M}}P_{\mathcal{N}})^{n}$ satisfies $\Pi\mathcal{X}\subset \mathcal{X}$. 
\end{enumerate}
\vspace{-6pt}
If these conditions hold $(1+\Pi-P_{\mathcal{M}}P_{\mathcal{N}})^{-1}\in\End^{*}_{\mathcal{B}}(\mathcal{X})$ and $\Pi(\mathcal{X})=\mathcal{M}\cap\mathcal{N}$ so $\Pi=P_{\mathcal{M}\cap\mathcal{N}}$.
\end{thm}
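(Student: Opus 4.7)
The plan is to pass through the $C^{*}$-completion by applying Proposition \ref{prop:twoprojnsequivs} to the pair $(M,N)$ of complemented submodules of $X$, and then use the locality of $\End^{*}_{\B}(\mathcal{X})$ in $\End^{*}_{B}(X)$ (Proposition \ref{prop:SHFC-End}) to transfer the resulting statements back to the algebraic module. Throughout I identify $T\in\End^{*}_{\B}(\mathcal{X})$ with its continuous extension $\overline{T}\in\End^{*}_{B}(X)$, noting that this identification is an isometric $*$-embedding.

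For $(1)\Rightarrow(2)$, the algebraic direct sum $(\mathcal{M}\cap\mathcal{N})+(\mathcal{M}^{\perp}+\mathcal{N}^{\perp})=\mathcal{X}$ is contained in $(M\cap N)+(M^{\perp}+N^{\perp})$, and density of $\mathcal{X}$ in $X$ yields condition (2) of Proposition \ref{prop:twoprojnsequivs}. All equivalent conditions then hold, so in particular $(P_{M}P_{N})^{n}\to P_{M\cap N}$ in norm. Isometry of the extension map makes $(P_{\mathcal{M}}P_{\mathcal{N}})^{n}$ Cauchy in $\End^{*}_{\B}(\mathcal{X})$ with norm limit $\Pi$ satisfying $\overline{\Pi}=P_{M\cap N}$. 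Separately, (1) supplies a self-adjoint projection $\Pi_{0}:=P_{\mathcal{M}\cap\mathcal{N}}\in\End^{*}_{\B}(\mathcal{X})$, and I verify $\Pi=\Pi_{0}$ by checking they agree on each summand of (1): both fix $\mathcal{M}\cap\mathcal{N}$, and both annihilate $\mathcal{M}^{\perp}+\mathcal{N}^{\perp}$ because $\mathcal{M}^{\perp}+\mathcal{N}^{\perp}\subset M^{\perp}+N^{\perp}\subset (M\cap N)^{\perp}$. Hence $\Pi=\Pi_{0}$ preserves $\mathcal{X}$.

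For $(2)\Rightarrow(1)$, the isometric inclusion makes $(P_{M}P_{N})^{n}$ Cauchy in $\End^{*}_{B}(X)$, so Proposition \ref{prop:twoprojnsequivs} gives $\overline{\Pi}=P_{M\cap N}$ and the invertibility of $1+P_{M\cap N}-P_{M}P_{N}$ in $\End^{*}_{B}(X)$. The hypothesis $\Pi\mathcal{X}\subset\mathcal{X}$ places $T:=1+\Pi-P_{\mathcal{M}}P_{\mathcal{N}}$ in $\End^{*}_{\B}(\mathcal{X})$ with invertible extension $\overline{T}$, so locality (Proposition \ref{prop:SHFC-End}) forces $T^{-1}\in\End^{*}_{\B}(\mathcal{X})$. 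Given $y\in\mathcal{X}$ I decompose $y=\Pi y+(1-\Pi)y$. The first summand lies in $\Pi\mathcal{X}=\mathcal{X}\cap M\cap N=\mathcal{M}\cap\mathcal{N}$ by Lemma \ref{lem: idempotent-subspace} combined with the fact that any $x\in\mathcal{X}\cap M$ satisfies $P_{\mathcal{M}}x=\overline{P_{\mathcal{M}}}x=P_{M}x=x$, so $x\in\mathcal{M}$ (and similarly for $\mathcal{N}$). For the second summand I set $u:=T^{-1}(1-\Pi)y\in\mathcal{X}$. The identities $\Pi P_{\mathcal{M}}=\Pi P_{\mathcal{N}}=\Pi$ (obtained by taking adjoints of $P_{M\cap N}=P_{M\cap N}P_{M}=P_{M\cap N}P_{N}$, which hold because $M\cap N\subset M,N$) yield $\Pi u=0$, and then
\[
(1-\Pi)y = Tu-\Pi u = u-P_{\mathcal{M}}P_{\mathcal{N}}u = (1-P_{\mathcal{N}})u+(1-P_{\mathcal{M}})P_{\mathcal{N}}u \in \mathcal{N}^{\perp}+\mathcal{M}^{\perp}.
\]
The ``moreover'' claims $(1+\Pi-P_{\mathcal{M}}P_{\mathcal{N}})^{-1}\in\End^{*}_{\B}(\mathcal{X})$ and $\Pi=P_{\mathcal{M}\cap\mathcal{N}}$ are then immediate from the construction and uniqueness of the orthogonal projection onto $\mathcal{M}\cap\mathcal{N}$.

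The main obstacle is precisely the transfer of analytic facts from $\End^{*}_{B}(X)$ back down to $\End^{*}_{\B}(\mathcal{X})$: norm-Cauchyness of the iterates and invertibility of $T$ naturally inhabit the $C^{*}$-level, and their descent to the algebraic level is what makes essential use of the locality hypothesis on $\mathcal{B}\subset B$ via Proposition \ref{prop:SHFC-End}. Without locality one could at most conclude convergence and invertibility in the $C^{*}$-completion.
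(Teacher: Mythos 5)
Your proof is correct and follows essentially the same route as the paper's: pass to the $C^{*}$-closures and apply Proposition \ref{prop:twoprojnsequivs}, identify $\Pi\mathcal{X}=\mathcal{M}\cap\mathcal{N}$ via Lemma \ref{lem: idempotent-subspace}, and invoke locality through Proposition \ref{prop:SHFC-End} to bring $(1+\Pi-P_{\mathcal{M}}P_{\mathcal{N}})^{-1}$ down to $\End^{*}_{\B}(\mathcal{X})$. The only difference is cosmetic: in $(2)\Rightarrow(1)$ you exhibit $(1-\Pi)y=(1-P_{\mathcal{N}})u+(1-P_{\mathcal{M}})P_{\mathcal{N}}u$ element-by-element, where the paper derives the same decomposition from a chain of inclusions forced to be equalities.
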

\begin{proof} 
Suppose that the decomposition 1.) holds. Then taking $C^{*}$-closures $M:=\overline{\mathcal{M}}$, $N:=\overline{\mathcal{N}}$ in $\overline{\mathcal{X}}:=X$ we find that 
\[
X=\overline{\mathcal{M}\cap \mathcal{N}}\oplus\overline{\mathcal{M}^{\perp}+\mathcal{N}^{\perp}}.
\]
Since $\overline{\mathcal{M}\cap \mathcal{N}}\subset M\cap N$ and $M^{\perp}+N^{\perp}\subset \overline{\mathcal{M}^{\perp}+\mathcal{N}^{\perp}}$, orthogonality implies that the direct sum $(M\cap N)\oplus M^{\perp}+N^{\perp}$
is dense in $X$. By Proposition \ref{prop:twoprojnsequivs},
 $(P_{\mathcal{M}}P_{\mathcal{N}})^{n}$ is Cauchy for the $C^{*}$-norm on $\End^{*}_{\mathcal{B}}(\mathcal{X})$ and for $x\in\mathcal{X}$ we can write $x=x_{0}+x_{1}$ with $x_0\in \mathcal{M}\cap \mathcal{N}$ and $x_{1}\in \mathcal{M}^{\perp}+\mathcal{N}^{\perp}$. Therefore $\Pi x=x_0\in\mathcal{X}$.

For the converse, observe that since $\Pi=\lim_{n\to\infty}(P_{\mathcal{N}}P_{\mathcal{M}})^{n}$ and $\Pi\in\End^{*}_{\B}(\X)$ is a projection we have $\mathcal{X}=\Pi(\mathcal{X})\oplus (1-\Pi)(\mathcal{X})$. Since also $P_{\mathcal{M}}\mathcal{X}\subset \mathcal{X}$ and $P_{\mathcal{N}}\mathcal{X}\subset \mathcal{X}$, by Lemma \ref{lem: idempotent-subspace} we find
\begin{align*}
\Pi(\mathcal{X})&= \mathcal{X}\cap \Pi(X)=\mathcal{X}\cap P_{\mathcal{M}}(X)\cap P_{\mathcal{N}}
(X)=\mathcal{M}\cap\mathcal{N}.
\end{align*}

By Proposition \ref{prop:SHFC-End},
the subalgebra $\End^{*}_{\B}(\mathcal{X})\subset \End^{*}_{B}(X)$ is local. Since $\Pi,P_{\mathcal{M}},P_{\mathcal{N}}$ map $\mathcal{X}$ to itself, the invertible operator $1+\Pi-P_{\mathcal{M}}P_{\mathcal{N}}$ maps $\mathcal{X}$ to itself so it is in $\End^{*}_{\B}(\mathcal{X})$. Therefore $(1+\Pi-P_{\mathcal{M}}P_{\mathcal{N}})^{-1}\in \End^{*}_{\B}(\mathcal{X})$ as well, and thus 
$$
\mathcal{X}=(1+\Pi-P_{\mathcal{M}}P_{\mathcal{N}})(\mathcal{X})=\Pi(\mathcal{X})\oplus (1-P_{\mathcal{M}}P_{\mathcal{N}})(\mathcal{X}),
$$
so that $\Pi(\mathcal{X})=(1-P_{\mathcal{M}}P_{\mathcal{N}})(\mathcal{X})^\perp$. 
Now $1-P_{\mathcal{M}}P_{\mathcal{N}}=(1-P_{\mathcal{M}})P_{\mathcal{N}}+1-P_{\mathcal{N}}$ from which we derive that
\[
(1-P_{\mathcal{M}}P_{\mathcal{N}})(\mathcal{X})\subset \mathcal{M}^{\perp}+\mathcal{N}^{\perp}\subset\ker \Pi=(1-\Pi)(\mathcal{X})=(1-P_{\mathcal{M}}P_{\mathcal{N}})(\mathcal{X}),
\]
so all of these sets are equal. This proves that 
$\mathcal{X}=(\mathcal{M}\cap\mathcal{N})\oplus(\mathcal{M}^{\perp}+\mathcal{N}^{\perp}).$
\end{proof}
\begin{defn} Let $\mathcal{B}$ be a local subalgebra of a unital $C^{*}$-algebra $B$ and $\mathcal{X}$ a finitely generated and projective inner product $\mathcal{B}$ module. A pair $(\mathcal{M},\mathcal{N})$ of $\mathcal{B}$-submodules of $\mathcal{X}$ is \emph{concordant} if $\mathcal{X}=(\mathcal{M}\cap\mathcal{N})\oplus(\mathcal{M}^{\perp}+\mathcal{N}^{\perp}).$
\end{defn}
\begin{rmk}
Given two projections $P,Q$ on a Hilbert $C^{*}$-module $X$, the self-adjoint unitaries $2P-1$ and $2Q-1$ generate a representation of the infinite dihedral group $\Z\rtimes\Z_2$, whose $C^*$-algebra is the universal algebra generated by two projections, \cite{RS}. Asking for concordance is a constraint on this representation. 
\end{rmk}

\begin{prop}
\label{prop:pi-compact}
Let $X$ be a right Hilbert $C^*$-module over $B$ and $P,Q\in \End^{*}_{B}(X)$ two projections. If the associated unitary representation $\pi:\Z\rtimes\Z_2\to\End^*_B(X)$ has norm precompact image then the submodules $PX$ and $QX$ are concordant.
\end{prop}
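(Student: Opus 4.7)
I plan to reduce concordance of $PX$ and $QX$ to norm-convergence of $(PQ)^n$ via the equivalence of conditions (1) and (6) in Proposition \ref{prop:twoprojnsequivs}, and then to establish that convergence by showing that precompactness of the image of $\pi$ forces the spectrum of $u := (2P-1)(2Q-1)$ to be finite, followed by an explicit computation on each spectral summand of $X$.

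For the spectral step, setting $s = 2P-1$ and $t = 2Q-1$, the image of $\pi$ contains $\{u^n : n \in \Z\}$, so this set is norm precompact inside the commutative $C^*$-subalgebra $C^*(1,u) \subset \End^*_B(X)$. The Gelfand isomorphism identifies $u^n$ with the monomial $z \mapsto z^n$ in $C(\spec(u))$, and by Arzel\`a--Ascoli this family must be equicontinuous on $\spec(u) \subset \mathbb{T}$. But equicontinuity fails at any non-isolated point $z_0$: one can find $z = z_0 e^{i\theta} \in \spec(u)$ arbitrarily close to $z_0$, and then choosing $n \approx \pi/|\theta|$ makes $|z^n - z_0^n| = |e^{in\theta}-1|$ bounded below by $1$. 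Hence $\spec(u)$ is discrete, therefore finite, with spectral projections $E_\lambda \in \End^*_B(X)$ decomposing $X$ orthogonally. The identities $sus = tut = u^{-1}$, consequences of $s^2 = t^2 = 1$ and $u = st$, then force conjugation by $s$ or $t$ to send $E_\lambda$ to $E_{\bar\lambda}$, so $P$ and $Q$ preserve the coarser finite decomposition $X = \bigoplus_{\{\lambda,\bar\lambda\}} Y_\lambda$ in which $Y_\lambda = E_\lambda X$ for $\lambda = \pm 1$ and $Y_\lambda = E_\lambda X \oplus E_{\bar\lambda} X$ otherwise.

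Norm convergence of $(PQ)^n$ on each summand is then verified explicitly. On $E_1 X$, $u = 1$ forces $Q = P$, so $(PQ)^n = P$; on $E_{-1} X$, $u = -1$ forces $Q = 1-P$, so $(PQ)^n = 0$ for $n \geq 1$. On $Y_\lambda$ with $\lambda \neq \pm 1$, writing $s$ in off-diagonal block form with a unitary $s_0 : E_\lambda X \to E_{\bar\lambda} X$ yields an explicit $2\times 2$ block for $PQ$, from which a direct calculation gives $(PQ)^2 = \tfrac{|1+\lambda|^2}{4}\, PQ$; since $|1+\lambda| < 2$ this yields geometric decay $(PQ)^n = (|1+\lambda|^2/4)^{n-1}\, PQ \to 0$ in norm. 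Summing the finitely many contributions completes the argument. The step I expect to require the most care is the justification of this block structure: verifying that the spectral projections $E_\lambda$ are adjointable in $\End^*_B(X)$ and that $s$ restricts to a unitary $B$-module isomorphism $E_\lambda X \to E_{\bar\lambda} X$, rather than just a bounded linear map.
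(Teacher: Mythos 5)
Your proof is correct, but it follows a genuinely different route from the paper's. The paper's argument is two lines: since the closed convex hull of a norm-precompact set is compact and each $(PQ)^n$ lies in the convex hull of words in $2P-1$, $2Q-1$, the sequence $(PQ)^n$ has a convergent subsequence, and \cite[Lemma 2.3]{MRtwoprojns} then upgrades this to convergence of the whole sequence to $P_{PX\cap QX}$, after which \cite[Proposition 3.12]{MRtwoprojns} gives concordance. You instead prove norm convergence directly: Arzel\`a--Ascoli applied to $\{z^n\}\subset C(\spec(u))$ forces $\spec(u)$ to be finite for $u=(2P-1)(2Q-1)$, the relations $sus=tut=u^{-1}$ show $P$ and $Q$ preserve the summands $E_\lambda X\oplus E_{\bar\lambda}X$, and the block computation $(PQ)^2=\tfrac{|1+\lambda|^2}{4}PQ$ (which I have checked) gives geometric decay off the fixed subspace. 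Your approach is self-contained where the paper outsources the key convergence step, and it yields more information (finite spectrum of $u$, an explicit rate, and essentially the same halving identities $(PQ)^2W=\tfrac14 PQW$ that reappear in Proposition \ref{prop:suff-for-exist}); the cost is length. One small point of care: Proposition \ref{prop:twoprojnsequivs} as stated in the paper assumes $X$ finitely generated over a unital $C^*$-algebra, whereas Proposition \ref{prop:pi-compact} concerns an arbitrary Hilbert module, so for the reduction you should invoke the general-module equivalence of conditions (1) and (6), i.e.\ \cite[Proposition 3.12]{MRtwoprojns} (as the paper itself does); this is a citation adjustment, not a mathematical gap, since your spectral argument nowhere uses finite generation.
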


\begin{proof}
\label{def:concordant-pair}
The precompactness of $\pi(\Z\rtimes\Z_2)\subset \End_B^*(X)$ ensures that $(PQ)^n$ has a convergent subsequence. By \cite[Lemma 2.3]{MRtwoprojns}, the sequence $(PQ)^n$ itself converges, necessarily to $\Pi$ the projection onto ${\rm Im}(P)\cap{\rm Im}(Q)$. By Proposition \cite[3.12]{MRtwoprojns}, the submodules 
$PX$ and $QX$ are concordant.
\end{proof}

\begin{rmk}
The precompactness of the image of $\pi$ means that, viewed as an element of $C_b(\Z\rtimes\Z_2,\End^*_B(X))$, we in fact have $\pi\in C_b(\Z\rtimes\Z_2)\ox\End_B^*(X)$, \cite{Williams}.
\end{rmk}
\begin{lemma}
\label{lem: finite-action} 
Let $\mathcal{B}$ be a local subalgebra of a unital $C^{*}$-algebra $B$ and $\mathcal{X}$ a finitely generated and projective inner product $\mathcal{B}$-module. Suppose that $P,Q\in\End^{*}_{\B}(\X)$ are projections such that the self-adjoint unitaries $2P-1$ and $2Q-1$ generate a finite group inside $\End^{*}_{\B}(\X)$.
Then the submodules $P\X$ and $Q\X$ are concordant.
\end{lemma}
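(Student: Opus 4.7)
The plan is to combine Proposition~\ref{prop:pi-compact}, which gives concordance at the $C^{*}$-level, with Theorem~\ref{thm:smo-intersect}, which descends concordance to the pre-$C^{*}$-setting. Since the self-adjoint unitaries $2P-1$ and $2Q-1$ generate a finite group $G\subset\End^{*}_{\B}(\X)\subset\End^{*}_{B}(X)$, the image of the associated representation $\pi:\Z\rtimes\Z_{2}\to\End^{*}_{B}(X)$ is finite, and hence norm precompact. Proposition~\ref{prop:pi-compact} therefore yields concordance of $PX$ and $QX$ in the $C^{*}$-module $X$, and by Proposition~\ref{prop:twoprojnsequivs} this is equivalent to the sequence $(PQ)^{n}$ being Cauchy in the $C^{*}$-norm on $\End^{*}_{B}(X)$, with limit $\Pi$ equal to the projection onto $PX\cap QX$.

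To apply Theorem~\ref{thm:smo-intersect} it remains to verify the algebraic condition $\Pi\X\subset\X$. For this I would use the identity
\[
4PQ=(2P-1)(2Q-1)+(2P-1)+(2Q-1)+1
\]
to conclude that $PQ$ lies in the $\C$-linear span $\C G$ of $G$, which is a subalgebra of $\End^{*}_{\B}(\X)$. Consequently $(PQ)^{n}\in\C G$ for every $n\geq 1$. Since $G$ is finite, $\C G$ is a finite-dimensional, and hence norm-closed, subspace of $\End^{*}_{B}(X)$, so the limit $\Pi$ necessarily lies in $\C G\subset\End^{*}_{\B}(\X)$. In particular $\Pi\X\subset\X$, as required.

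With both hypotheses of Theorem~\ref{thm:smo-intersect} verified, we conclude $\X=(P\X\cap Q\X)\oplus((1-P)\X+(1-Q)\X)$, i.e.\ $P\X$ and $Q\X$ are concordant. The only non-formal step is the second one, where finiteness of $G$ (rather than mere precompactness of $\pi(\Z\rtimes\Z_{2})$) is essential: it is what guarantees that the limit projection $\Pi$ remains in the subalgebra $\End^{*}_{\B}(\X)$ of operators preserving the pre-$C^{*}$-module $\X$.
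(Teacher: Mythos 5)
Your proof is correct and follows the same overall strategy as the paper: finiteness of the group gives a compact image, Proposition~\ref{prop:pi-compact} yields concordance of $PX$ and $QX$ at the $C^{*}$-level, and Theorem~\ref{thm:smo-intersect} then reduces the problem to checking that the limit projection $\Pi$ preserves the dense submodule $\X$. The one place where you diverge from the paper is in how that last point is verified. The paper identifies $\Pi$ explicitly as the group average $\Pi=\frac{1}{|\Gamma|}\sum_{\gamma\in\Gamma}\gamma$ (the projection onto the common fixed submodule, which is ${\rm Im}(P)\cap{\rm Im}(Q)$), and notes that each $\gamma$ preserves $\X$. You instead observe that $PQ$, and hence every power $(PQ)^{n}$, lies in the finite-dimensional group algebra $\C G\subset\End^{*}_{\B}(\X)$, which is norm-closed, so the norm limit $\Pi$ lies in $\C G$ and therefore preserves $\X$. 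Both arguments are valid; yours has the minor advantage of not requiring the identification of $\lim_{n}(PQ)^{n}$ with the averaging projection (a standard but not entirely free fact), while the paper's gives a closed formula for $\Pi$ that is useful in computations.
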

\begin{proof}
If the representation takes values in a finite group $\Gamma$, the image is compact, and so we obtain concordance from Proposition \ref{prop:pi-compact}. If furthermore the generators $(2P-1),(2Q-1)$ preserve a smooth submodule $\X$, then the
limit projection $\Pi\in \End^*_B(X)$ preserves $\X$. To see this, write
$\Pi$ as the finite sum
\[
\Pi=\frac{1}{|\Gamma|}\sum_{\gamma\in\Gamma}\gamma.
\]
Since each element preserves $\X$, so Theorem 4.18
gives concordance for $\X$.
\end{proof}
For  the canonical flip $\sigma$ of a centred bimodule \cite{Skeide}, the unitaries $\sigma\ox1$ and $1\ox\sigma$ generate a representation which factors through the permutation group $S_3$, \cite{BGJ2,BGJ1}, as in Corollary \ref{cor: S3action}.
We conclude our discussion with a corollary for future reference, as many geometric examples exhibit the structure described by it. 

\begin{corl}
\label{cor: S3action} 
Let $\mathcal{B}$ be a local subalgebra of a unital $C^{*}$-algebra $B$ and $\mathcal{X}$ a finitely generated and projective inner product $\mathcal{B}$-module. Suppose that $P,Q\in\End^{*}_{\B}(\X)$ are projections such that
\begin{equation}
\label{eq: abstract-Yang-Baxter}
(2P-1)(2Q-1)(2P-1)=(2Q-1)(2P-1)(2Q-1).
\end{equation}
Then the submodules $P\X$ and $Q\X$ are concordant.
\end{corl}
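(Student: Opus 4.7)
The plan is to reduce to Lemma \ref{lem: finite-action} by exhibiting the image of the representation of $\Z\rtimes\Z_2$ inside $\End^*_{\B}(\X)$ generated by $2P-1$ and $2Q-1$ as a finite group. Concretely, one checks that the hypothesis \eqref{eq: abstract-Yang-Baxter} promotes this representation to a representation of the symmetric group $S_3$, which is of course finite.

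First I would set $a := 2P-1$ and $b := 2Q-1$. Since $P=P^2=P^*$ and $Q=Q^2=Q^*$, a direct computation shows $a^2 = 4P^2-4P+1 = 1$ and similarly $b^2 = 1$, while self-adjointness is inherited from $P$ and $Q$. The braid identity \eqref{eq: abstract-Yang-Baxter} is exactly the relation $aba = bab$. Combining these, $(ab)^3 = a(bab)b = a(aba)b = b \cdot b = 1$, so the order of $ab$ divides $3$.

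Recall that $S_3$ admits the Coxeter presentation
\[
S_3 = \bigl\langle s,t \,\bigm|\, s^2 = t^2 = 1,\ sts = tst \bigr\rangle,
\]
so the assignment $s\mapsto a$, $t\mapsto b$ extends to a group homomorphism from $S_3$ into the group of invertible elements of $\End^*_{\B}(\X)$. The image is therefore a finite subgroup of $\End^*_{\B}(\X)$ containing $a$ and $b$, hence containing $2P-1$ and $2Q-1$ as generators. Lemma \ref{lem: finite-action} then applies verbatim and yields concordance of $P\X$ and $Q\X$.

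There is really no obstacle of substance here: the braid identity plus the involutivity of $a$ and $b$ is tautologically the Coxeter presentation of $S_3$, and all the analytic work has already been absorbed into Lemma \ref{lem: finite-action} (via Proposition \ref{prop:pi-compact} and Theorem \ref{thm:smo-intersect}). The only thing worth being careful about is that the image need not be all of $S_3$; it could collapse to a quotient (trivial or $\Z/2$), but in any case it is finite, which is all that is required.
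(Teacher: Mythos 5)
Your argument is correct and is essentially identical to the paper's proof: the paper likewise observes that the braid relation \eqref{eq: abstract-Yang-Baxter} together with $(2P-1)^2=(2Q-1)^2=1$ yields a unitary representation of $S_3$, whose finiteness feeds into Lemma \ref{lem: finite-action}. (One minor typo: in your check that $(ab)^3=1$ the grouping should be $a(bab)ab=a(aba)ab=b\cdot b=1$; in any case that relation already follows from the Coxeter presentation you quote, so nothing is affected.)
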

\begin{proof}
Equation \eqref{eq: abstract-Yang-Baxter} implies that the self-adjoint unitaries $U:=2P-1$ and $V=2Q-1$ generate a unitary representation of the symmetric group $S_{3}$ on $\X$. 
\end{proof}


\subsection{Existence of a Hermitian torsion-free connection}
\label{subsec: existence}
In this section we provide a necessary and sufficient condition for the existence of a Hermitian torsion-free connection. Given a frame $(\omega_{j})$, we provide a  construction of such a connection. We will address the issue of uniqueness subsequently.
In Definition \ref{defn:Bram's-condition} we introduce a condition on the differential structure which ensures that the differential forms support an Hermitian torsion-free connection. 
\begin{defn} An Hermitian differential structure $(\Omega^1_\dee(\B),\dag,\Psi,\pairing{\cdot}{\cdot}_\B)$ is  \emph{concordant} if the submodules 
${\rm Im}(P)={\rm Im}(\Psi\otimes 1)$ and ${\rm Im}(Q)={\rm Im }(1\otimes\Psi)$
are concordant in $T^{3}_{\dee}$ in the sense of Definition \ref{def:concordant-pair}. 
\end{defn}
By Theorem \ref{thm:smo-intersect}, the above definition is equivalent to the requirement that for the pair of projections $(P,Q)=(\Psi\otimes 1,1\otimes\Psi)$ the $C^{*}$-norm limit $\Pi:=\lim_{n\to\infty}(PQ)^{n}$ exists and $\Pi(T^{3}_{\dee})\subset T^{3}_{\dee}$. Also, if $(\Omega^1_\dee(\B),\dag,\Psi,\pairing{\cdot}{\cdot}_\B)$ is concordant then $(1+\Pi-PQ)^{-1}\in \End^{*}_{\B}(T^{3}_{\dee})$.

\begin{example}[Compact Riemannian manifold]
\label{eg:classical-angle}
Let $(M,g)$ be a compact Riemannian manifold.
Then denote the flip map by
\begin{align*}
\sigma:\Omega^{1}(M)\otimes_{C^{\infty}(M)}\Omega^{1}(M)\to \Omega^{1}(M)\otimes_{C^{\infty}(M)}\Omega^{1}(M),\quad \omega\otimes \eta\mapsto  \eta\otimes\omega, 
\end{align*}
so that the symmetrisation map is $\Psi=\frac{1}{2}(1+\sigma)$. On the space of three-tensors $T^{3}(M)=(\Omega^{1}(M))^{\otimes 3}$, the permutation action of the group $S_{3}$ is generated by the operators $U:=\sigma\otimes 1=2P-1$ and $V:=1\otimes \sigma=2Q-1$. By Corollary \ref{cor: S3action} the Hermitian differential structure $(\Omega^1(M),\dag, \Psi, \pairing{\cdot}{\cdot}_{g})$ is concordant and the projection $\Pi$ exists. Note that we have 
\[
P(T^{3}_{\D})\cap Q(T^{3}_{\D}) = T^{3}(M)^{S_{3}}=\textnormal{Sym}^{3}(M),
\]
so that the range of $\Pi$ consists of the completely symmetric three-tensors. 
\end{example}

We introduce some notation suggested by the classical example above.
\begin{defn} 
\label{defn:symm}
Let $(\Omega^1_\dee(\B),\dag,\Psi,\pairing{\cdot}{\cdot}_\B)$ be a concordant Hermitian differential structure. The \emph{module of (smooth) symmetric 3-tensors}  is defined to be
\[
\textnormal{Sym}^{3}_{\dee}:={\rm Im}(\Psi\otimes 1)\cap {\rm Im }(1\otimes\Psi)=\Pi(T^{3}_{\dee})\subset T^{3}_{\dee}.
\]
\end{defn}

\begin{prop} 
\label{prop: omega-form}
Let $(\Omega^1_\dee(\B),\dag,\Psi,\pairing{\cdot}{\cdot}_\B)$ be a concordant Hermitian differential structure.
Write $P=\Psi\otimes 1$ and $Q=1\otimes\Psi$. Then for any Hermitian torsion-free right connection $\nablar:\Omega^{1}_{\dee}\to T^2_{\dee}$ and any frame $v=(\omega_{j})$ for $\Omega^{1}_{\dee}$ such that $\nablar=\nablar^{v}_{\dee}+\alphar(A_{v})$ we have
\[
(1-\Pi)A_{v}=-(1+\Pi-PQ)^{-1}(W+PW^{\dag})
=-(1+\Pi-QP)^{-1}(W^{\dag}+QW).
\]
Consequently, given two torsion-free Hermitian right connections $\nablar$ and $\nablar'$, we have that $(1-\Pi)\alphar^{-1}(\nablar-\nablar')=0$.
\end{prop}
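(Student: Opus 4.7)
The plan is to bypass the series expansion of Lemma \ref{prop: connection-form} and instead work directly with $Y := (1-\Pi)A_{v}$, deriving for it a single linear equation that is solvable by inverting $(1+\Pi-PQ)$. The analytic input from concordance enters only at the very end, when we invert this operator.

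The starting point is the pair of equations $PA_{v}=A_{v}+W$ and $QA_{v}=A_{v}+W^{\dag}$, which are equivalent to the Hermitian torsion-free property of $\nablar$ (this is Lemma \ref{lem: torsion-free iff} combined with the Hermitian condition $A_{v}^{\dag}=A_{v}$, exactly as in the proof of Lemma \ref{prop: connection-form}). The key algebraic fact is that $\mathrm{Im}(\Pi) = \mathrm{Im}(P) \cap \mathrm{Im}(Q)$ forces $P\Pi=\Pi=Q\Pi$; taking adjoints of these relations, using that $P$, $Q$ and $\Pi$ are all self-adjoint projections on $T^{3}_{\dee}$, one obtains $\Pi P=\Pi=\Pi Q$.

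Using $P\Pi = \Pi$ we compute
\[
PY = P(1-\Pi)A_{v} = PA_{v} - \Pi A_{v} = (A_{v}+W)-\Pi A_{v} = Y + W,
\]
and similarly $QY = Y+W^{\dag}$. Composing,
\[
PQY = P(Y+W^{\dag}) = PY + PW^{\dag} = Y + W + PW^{\dag}.
\]
Since $\Pi Y = \Pi(1-\Pi)A_{v} = 0$, we conclude
\[
(1+\Pi-PQ)Y = Y + \Pi Y - PQY = -(W+PW^{\dag}).
\]
By concordance, Theorem \ref{thm:smo-intersect} guarantees that $1+\Pi-PQ$ is invertible in $\End^{*}_{\B}(T^{3}_{\dee})$, so $(1-\Pi)A_{v} = -(1+\Pi-PQ)^{-1}(W+PW^{\dag})$. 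The second identity follows from the symmetric computation $QPY = Y + W^{\dag} + QW$, yielding $(1+\Pi-QP)Y = -(W^{\dag}+QW)$; invertibility of $1+\Pi-QP$ is obtained in the same way, since $(QP)^{n}\to \Pi$ by concordance as well.

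For the uniqueness consequence, given two Hermitian torsion-free right connections $\nablar$ and $\nablar'$ we fix any frame $v=(\omega_{j})$ and write $\nablar=\nablar^{v}+\alphar(A_{v})$, $\nablar'=\nablar^{v}+\alphar(A'_{v})$, so that $\alphar^{-1}(\nablar-\nablar')=A_{v}-A'_{v}$. Both $A_{v}$ and $A'_{v}$ satisfy the formula above with the same right-hand side, hence $(1-\Pi)(A_{v}-A'_{v})=0$, which is the claim. The only real obstacle in executing this plan is spotting the right substitution $Y=(1-\Pi)A_{v}$ that converts the torsion-free equations into a single invertible operator identity; once this is in place the argument is purely algebraic, and concordance supplies the required invertibility without any series manipulation.
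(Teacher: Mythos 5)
Your proof is correct, and it takes a genuinely more direct route than the paper's. The paper first iterates the fixed-point equations $PA_v=A_v+W$, $QA_v=A_v+W^\dag$ to obtain the partial sums $(1-(PQ)^n)A_v=-\sum_{k=0}^{n-1}(PQ)^k(W+PW^\dag)$ (Lemma \ref{prop: connection-form}), then checks $W+PW^\dag\in\ker\Pi$ so the sum can be rewritten with $(PQ-\Pi)^k$, and finally passes to the norm limit $n\to\infty$, using concordance both for the convergence $(PQ)^n\to\Pi$ and for the invertibility of $1+\Pi-PQ$. You instead exploit the identities $P\Pi=Q\Pi=\Pi$ (and their adjoints) to derive the single exact operator equation $(1+\Pi-PQ)(1-\Pi)A_v=(1-PQ)A_v=-(W+PW^\dag)$, and then invert; concordance enters only through the invertibility of $1+\Pi-PQ$ in $\End^*_\B(T^3_\dee)$ and the fact that $\Pi$ preserves $T^3_\dee$, both of which Theorem \ref{thm:smo-intersect} supplies. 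Your argument is shorter and isolates exactly where the analysis is used; what the paper's series form buys in exchange is the explicit summation formula that reappears as the computational expression \eqref{eq:a-for-LC} for the connection form. All the individual steps you use ($A_v^\dag=A_v$ converting $(1-P)A_v=-W$ into $(1-Q)A_v=-W^\dag$, self-adjointness of $P,Q,\Pi$, and the range characterisation of $\Pi$) are available from Lemma \ref{lem: torsion-free iff} and Theorem \ref{thm:smo-intersect}, and the uniqueness consequence is handled identically in both arguments.
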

\begin{proof}
The concordance assumption guarantees us that $\lim_{k\to\infty}(PQ)^{k}=\lim_{k\to\infty}(QP)^{k}=\Pi$ in norm, $\Pi(T^{3}_{\dee})\subset T^{3}_{\dee}$ and $(1+\Pi-PQ)^{-1}\in\End_{\B}^{*}(T^{3}_{\dee})$. 
Now we observe that
\[
W+PW^{\dag}=(1-P)W+P(1-Q)W^{\dag}=(1-P)(W-(1-Q)W^{\dag})+(1-Q)W^{\dag}\in\ker\Pi,
\]
From Equation \eqref{eq:series} we then deduce that for all $n$,
\[
(1-(PQ)^{n})A_{v}=-\sum_{k=0}^{n-1}(PQ-\Pi)^{k}(W+PW^{\dag}).
\] 
Analogous equations hold with $P,Q$ interchanged and the result follows when $n\to\infty$.
\end{proof}

\begin{corl}
\label{cor:nice-conn} 
Suppose there exists an Hermitian torsion-free connection on $\Omega^{1}_{\dee}$. Then for all frames $(\omega_{j})$ we have the equality
\begin{equation}
\label{eq: dagaligned}
(1+\Pi-PQ)^{-1}(W+PW^{\dag})
=(1+\Pi-QP)^{-1}(W^{\dag}+QW).
\end{equation}
\end{corl}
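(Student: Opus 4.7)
The plan is essentially to invoke Proposition \ref{prop: omega-form} directly. By hypothesis there exists at least one Hermitian torsion-free right connection $\nablar$ on $\Omega^{1}_{\dee}$. Given any frame $v=(\omega_j)$, write $\nablar = \nablar^{v} + \alphar(A_v)$ with $A_v \in T^3_\dee(\B)$. Proposition \ref{prop: omega-form} then yields two expressions for the same element $(1-\Pi)A_v \in T^3_\dee(\B)$, namely
\[
(1-\Pi)A_v = -(1+\Pi-PQ)^{-1}(W+PW^{\dag}) = -(1+\Pi-QP)^{-1}(W^{\dag}+QW).
\]
Equating the two right-hand sides and cancelling the minus sign gives the claimed identity.

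The only thing worth checking is that everything in the statement makes sense: concordance of $(\Omega^1_\dee(\B),\dag,\Psi,\pairing{\cdot}{\cdot}_\B)$ is implicit in the setup of Proposition \ref{prop: omega-form} and guarantees that $\Pi = \lim (PQ)^n = \lim (QP)^n$ exists in $\End^*_\B(T^3_\dee)$ and that both $(1+\Pi-PQ)^{-1}$ and $(1+\Pi-QP)^{-1}$ are well-defined elements of $\End^*_\B(T^3_\dee)$ by Theorem \ref{thm:smo-intersect}. Thus $W+PW^\dag$ and $W^\dag + QW$ both lie in the domain where these operators are defined, and the two expressions on the right of the displayed equation are well-defined three-tensors independent of whether or not the hypothesised connection is unique.

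There is no real obstacle here — the content of the corollary is simply to record, as a frame-level compatibility, the two equivalent formulas derived in Proposition \ref{prop: omega-form}. The substantive work was already carried out in establishing those formulas, which used the iteration identity \eqref{eq:iterate} together with the identities $PW = QW^\dag = 0$ and the fact that $W + PW^\dag \in \ker\Pi$. Thus the ``proof'' is a one-line appeal to Proposition \ref{prop: omega-form}. It is natural to present the corollary mainly as motivation for the converse direction that should follow: namely that whenever the identity \eqref{eq: dagaligned} holds for some (equivalently every) frame, one can construct a Hermitian torsion-free connection by taking $A_v$ to be (minus) the common value in \eqref{eq: dagaligned} plus any element of $\mathrm{Sym}^3_\dee$, so the corollary isolates a necessary condition that will later be shown sufficient.
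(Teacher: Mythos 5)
Your proof is correct and is exactly the argument the paper intends: the corollary is stated without a separate proof precisely because it follows immediately from equating the two expressions for $(1-\Pi)A_v$ in Proposition \ref{prop: omega-form}. Your remarks on well-definedness (concordance giving $\Pi$ and the inverses) and on the role of the corollary as the necessary half of the later existence theorem are accurate but not required.
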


\begin{defn}
\label{defn:Bram's-condition}
An  Hermitian differential structure $(\Omega^1_\dee(\B),\dag,\Psi,\pairing{\cdot}{\cdot}_\B)$ is  $\dag$- concordant if it is concordant and
for some frame $(\omega_j)$ and $W=\sum_j\d(\omega_j)\ox\omega_j^\dag$ the Equation \eqref{eq: dagaligned} holds.
\end{defn}
\begin{rmk}
If we have a concordant Hermitian differential structure $(\Omega^1_\dee(\B),\dag,\Psi,\pairing{\cdot}{\cdot}_\B)$ and a frame $(\omega_j)$ for $\Omega^1_\dee(\B)$ such that $W=0$, the differential structure is $\dag$-concordant. In particular, if the frame is {\em exact}, so $\omega_j=\dee(b_j)$, $b_j\in\B$, then $\dag$-concordance holds.
\end{rmk}

We will now prove that the equality \eqref{eq: dagaligned} is not only necessary, but also sufficient for the existence of a Hermitian torsion-free connection. It is important to note that Equation \eqref{eq: dagaligned} involves the element $W$, which is frame dependent. We will show that if \eqref{eq: dagaligned} holds for some frame, then it holds for all frames. First we make a technical observation.

\begin{lemma}
\label{lem:affine} 
Let $(\Omega^1_\dee(\B),\dag,\Psi,\pairing{\cdot}{\cdot}_\B)$ be a concordant Hermitian differential structure and $E,F\in T_{\dee}^{3}(\B)$. Suppose that there exists $C=C^{\dag}$ such that 
$$
E-F=(1-PQ)C.
$$
Then $(1+\Pi-PQ)^{-1}E-(1+\Pi-QP)^{-1}E^{\dag}=(1+\Pi-PQ)^{-1}F-(1+\Pi-QP)^{-1}F^{\dag}.$
\end{lemma}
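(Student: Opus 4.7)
The plan is to reduce both sides to $(1-\Pi)C$ using simple algebraic identities for the projection $\Pi$. The key is to understand how $\dag$ interacts with the projections $P = \Psi\otimes 1$ and $Q = 1\otimes \Psi$.

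First I would check the $\dag$-compatibility. Because $\dag$ on a three-tensor reverses the order of factors and because $\Psi\circ\dag=\dag\circ\Psi$ on $T^2_\dee$, a direct computation on elementary tensors gives
$\dag\circ P\circ\dag = Q$ and $\dag\circ Q\circ\dag = P$. Hence $\dag\circ PQ\circ\dag = QP$, and applying $\dag$ to the identity $E-F=(1-PQ)C$ together with $C^\dag = C$ yields
$$E^\dag - F^\dag = (1-QP)C.$$
Moreover, since $\Pi = \lim_n (PQ)^n = \lim_n(QP)^n$ in operator norm, the two limits agreeing forces $\dag\circ\Pi\circ\dag = \Pi$.

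Next I would use the standard trick of writing $1-PQ = (1+\Pi-PQ) - \Pi$. Since $\Pi$ is the projection onto $\mathrm{Im}(P)\cap\mathrm{Im}(Q)$, every element in its range is fixed by both $P$ and $Q$, so $PQ\Pi = \Pi$ and therefore
$$(1+\Pi-PQ)\,\Pi = \Pi + \Pi^2 - PQ\Pi = \Pi.$$
Hence $(1+\Pi-PQ)^{-1}\Pi = \Pi$, and consequently
$$(1+\Pi-PQ)^{-1}(1-PQ) = 1 - (1+\Pi-PQ)^{-1}\Pi = 1-\Pi.$$
The identical argument with the roles of $P,Q$ interchanged gives $(1+\Pi-QP)^{-1}(1-QP) = 1-\Pi$.

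Finally, combining these gives
$$(1+\Pi-PQ)^{-1}(E-F) = (1-\Pi)C = (1+\Pi-QP)^{-1}(E^{\dag}-F^{\dag}),$$
which rearranges to the asserted equality. The only substantive step is the pair of observations $PQ\Pi=\Pi$ and $(1+\Pi-PQ)^{-1}\Pi=\Pi$; everything else is a formal manipulation, and the $\dag$-behaviour of $P,Q$ is needed only to convert the hypothesis on $E-F$ into the corresponding identity for $E^\dag-F^\dag$. No real obstacle is expected.
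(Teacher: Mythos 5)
Your proof is correct and rests on the same two facts as the paper's: the intertwining $\dag\circ PQ\circ\dag=QP$ (to convert the hypothesis into $E^\dag-F^\dag=(1-QP)C$) and the identity $PQ\Pi=\Pi$ (which the paper packages as $(1-PQ)=(1-PQ)(1-\Pi)$ so as to reduce to $\Pi C=0$, whereas you use it to compute $(1+\Pi-PQ)^{-1}(1-PQ)=1-\Pi$ directly). The difference is purely organisational, so this is essentially the paper's argument.
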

\begin{proof}
Since $(1-PQ)=(1-PQ)(1-\Pi)$ and $(1-QP)=(1-QP)(1-\Pi)$ we may assume $\Pi C=0$ and thus that $E-F=(1+\Pi-PQ)C$ and $E^{\dag}-F^{\dag}=(1+\Pi-QP)C$. Now
\begin{align*}
(1+\Pi-PQ)^{-1}E-(1+\Pi-QP)^{-1}E^{\dag}&\!=\!(1+\Pi-PQ)^{-1}F+C-(1+\Pi-QP)^{-1}F^{\dag}-C\\
&\!=\!(1+\Pi-PQ)^{-1}F-(1+\Pi-QP)^{-1}F^{\dag},
\end{align*}
as claimed.
\end{proof}

\begin{prop}
\label{prop:BC-FI}
The three tensor
\[
(1+\Pi-PQ)^{-1}(W+PW^{\dag})
-(1+\Pi-QP)^{-1}(W^{\dag}+QW)\in T^3_\dee
\]
is frame independent.
Hence the property of being $\dag$-concordant is frame independent, and so depends only on the inner product and the projection $\Psi$.
\end{prop}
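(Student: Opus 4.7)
The strategy is to apply Lemma~\ref{lem:affine} with $E=W_v+PW_v^\dag$ and $F=W_w+PW_w^\dag$ for two frames $v=(\omega_i)$ and $w=(\eta_j)$. A short check shows $\dag\circ P=Q\circ\dag$ on $T^3_\dee$, following from $\Psi\circ\dag=\dag\circ\Psi$ and the reversal convention~\eqref{eq: tensor_involution} for $\dag$ on tensors. Hence $E^\dag=W_v^\dag+QW_v$ and $F^\dag=W_w^\dag+QW_w$, so the three-tensor whose frame independence we want is exactly $(1+\Pi-PQ)^{-1}E-(1+\Pi-QP)^{-1}E^\dag$. By Lemma~\ref{lem:affine} it therefore suffices to exhibit a $\dag$-invariant $C_{vw}\in T^3_\dee$ with $E-F=(1-PQ)C_{vw}$.

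My candidate is $C_{vw}=B_{vw}$, the frame-change tensor of Corollary~\ref{cor: change-of-frame}, which is $\dag$-invariant by that corollary. I would derive the key identity
\[
W_v-W_w=(1-P)B_{vw}
\]
as follows. Expand $\omega_i=\sum_j\eta_j\pairing{\eta_j}{\omega_i}_\B$ in the frame $w$ and apply the twisted Leibniz rule~\eqref{eq: almost-derivation} for $\d$ with $a=1$ to obtain
\[
\d(\omega_i)=\sum_j\d(\eta_j)\pairing{\eta_j}{\omega_i}_\B-(1-\Psi)\sum_j\eta_j\otimes\dee(\pairing{\eta_j}{\omega_i}_\B).
\]
Tensoring with $\omega_i^\dag$ and summing over $i$, the first piece collapses via $\sum_i\pairing{\eta_j}{\omega_i}_\B\omega_i^\dag=\eta_j^\dag$ (obtained by $\dag$-ing the frame expansion of $\eta_j$) to yield $W_w$, while the second is by definition $(1-P)B_{wv}$. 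A direct computation using $\dee(a)^\dag=-\dee(a^*)$ gives $B_{vw}^\dag=-B_{wv}$, so the $\dag$-invariance $B_{vw}^\dag=B_{vw}$ forces $B_{wv}=-B_{vw}$, and the identity follows. Applying $\dag$ and using $\dag\circ(1-P)\circ\dag=1-Q$ yields the companion $W_v^\dag-W_w^\dag=(1-Q)B_{vw}$, and summing gives $E-F=(1-P)B_{vw}+P(1-Q)B_{vw}=(1-PQ)B_{vw}$ as required.

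The main delicate step is the sign bookkeeping in the Leibniz computation and the verification of $B_{wv}=-B_{vw}$; once these are settled the appeal to Lemma~\ref{lem:affine} is automatic and delivers frame independence. The remaining assertions are then immediate: Equation~\eqref{eq: dagaligned} is the vanishing of this frame-independent three-tensor, so holding for one frame is equivalent to holding for all, and since $\d$ is determined by $\Psi$ through Proposition~\ref{prop:diffrep} while the inner product together with $\Psi$ determine $P$, $Q$ and $\Pi$, the $\dag$-concordance condition is intrinsic to the data $(\pairing{\cdot}{\cdot}_\B,\Psi)$.
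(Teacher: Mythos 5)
Your proof is correct and follows essentially the same route as the paper: both identify the difference of the two frame-dependent tensors as $(1-PQ)B_{vw}$ with $B_{vw}$ the $\dag$-invariant frame-change tensor of Corollary~\ref{cor: change-of-frame}, and then invoke Lemma~\ref{lem:affine}. The only difference is that you spell out the Leibniz-rule derivation of $W_v-W_w=(1-P)B_{vw}$ and the sign identity $B_{wv}=-B_{vw}$, which the paper leaves implicit in its citation of Corollary~\ref{cor: change-of-frame}; your bookkeeping there is accurate.
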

\begin{proof}
Let $(\omega_j)$ and $(\eta_k)$ be two frames for $\Omega^1_\dee(\B)$. Denote by $W$ and $V$ the forms defined in Equation \eqref{eq: W} associated to $\omega_{j}$ and $\eta_{k}$ respectively. Then, using Corollary \ref{cor: change-of-frame},
\begin{align*}
W-V &=\sum_{j,k}(1-P)(\omega_j\ox\dee(\pairing{\omega_j}{\eta_k})\ox\eta_k^\dag).
\end{align*}
Now
\begin{align*}
W+PW^{\dag}-V-PV^{\dag}&=\sum_{j,k}((1-P)+P(1-Q))(\omega_j\ox\dee(\pairing{\omega_j}{\eta_k})\ox\eta_k^\dag)\\
&=(1-PQ)\Big(\sum_{j,k}\omega_j\ox\dee(\pairing{\omega_j}{\eta_k})\ox\eta_k^\dag\Big)
\end{align*}
and the element $\sum_{j,k}\omega_j\ox\dee(\pairing{\omega_j}{\eta_k})\ox\eta_k^\dag$ is $\dag$-invariant by Corollary \ref{cor: change-of-frame}. Then 
the statement follows by applying Lemma \ref{lem:affine}
and noting that $\Pi(W+PW^\dag)=0$.
\end{proof}
We can now characterise existence of torsion-free Hermitian connections for an Hermitian differential structure in terms of $\dag$-concordance. 
\begin{thm} 
\label{thm:existence}
Let $(\Omega^1_\dee(\B),\dag,\Psi,\pairing{\cdot}{\cdot}_\B)$ be a concordant Hermitian differential structure.
Then there exists a Hermitian torsion-free  right connection $\nablar:\Omega^{1}_{\dee}(\B)\to T^2_{\dee}(\B)$  if and only if $(\Omega^1_\dee(\B),\dag,\Psi,\pairing{\cdot}{\cdot}_\B)$ is $\dag$-concordant. 
\end{thm}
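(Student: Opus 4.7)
The forward direction is immediate from Corollary \ref{cor:nice-conn}: any Hermitian torsion-free right connection forces Equation \eqref{eq: dagaligned} for every frame, which is precisely the definition of $\dag$-concordance. The substance of the theorem therefore lies in the reverse direction, and my plan is to write down an explicit candidate connection form from the $\dag$-concordance hypothesis and verify Hermiticity and torsion-freeness using Proposition \ref{lem:compatible} and Lemma \ref{lem: torsion-free iff} respectively.

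Concretely, fix any frame $v = (\omega_j)$ for $\Omega^1_\dee(\B)$ and define
\[
A_v := -(1+\Pi-PQ)^{-1}(W+PW^\dag) \in T^3_\dee(\B),
\]
which makes sense because concordance, via Theorem \ref{thm:smo-intersect}, ensures $(1+\Pi-PQ)^{-1} \in \End^*_\B(T^3_\dee(\B))$. The candidate connection is then $\nablar := \nablar^v + \alphar(A_v)$.

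For Hermiticity, the key observation is that $\dag$ reverses tensor order and commutes with $\Psi$, which gives $\dag \circ P = Q \circ \dag$ on $T^3_\dee(\B)$. Consequently $\dag\Pi\dag = \Pi$ (since both $(PQ)^n$ and $(QP)^n$ converge to the projection onto $\mathrm{Im}(P)\cap\mathrm{Im}(Q)$) and $\dag(1+\Pi-PQ)^{-1} = (1+\Pi-QP)^{-1}\dag$; together with the identity $\dag(W+PW^\dag) = W^\dag + QW$, this yields $A_v^\dag = -(1+\Pi-QP)^{-1}(W^\dag+QW)$, which equals $A_v$ by the $\dag$-concordance identity \eqref{eq: dagaligned}. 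For torsion-freeness, using $P\Pi = \Pi$ (because $\mathrm{Im}(\Pi)\subset\mathrm{Im}(P)$) and $P^2 = P$ one computes $(1-P)(1+\Pi-PQ) = 1-P$, hence $(1-P)(1+\Pi-PQ)^{-1} = 1-P$; combined with $PW = 0$ (an immediate consequence of $\d$ taking values in $\ker P = \mathrm{Im}(1-\Psi)$, by Proposition \ref{prop:diffrep}), this gives $(1-P)A_v = -W$, as required.

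The main obstacle is reconciling Hermiticity with torsion-freeness: the torsion equation $(1-P)A_v = -W$ determines $A_v$ only modulo $\mathrm{Im}(P)$, so producing a $\dag$-invariant solution requires input beyond that equation. The role of $\dag$-concordance is precisely to arrange that the canonical resolvent choice above is itself $\dag$-invariant; once this structural point is in place, the verification reduces to algebraic manipulation with the pair of projections $(P,Q)$ developed in Section \ref{subsec:freaky-angle}.
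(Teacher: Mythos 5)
Your proposal is correct and follows essentially the same route as the paper's proof: the forward direction via Corollary \ref{cor:nice-conn}, and the reverse direction by taking $A_v=-(1+\Pi-PQ)^{-1}(W+PW^\dag)$, checking Hermiticity via the $\dag$-concordance identity and torsion-freeness via $(1-P)(1+\Pi-PQ)^{-1}=1-P$ together with Lemma \ref{lem: torsion-free iff}. The only difference is that you spell out the intermediate identities $\dag\circ P=Q\circ\dag$ and $\dag(W+PW^\dag)=W^\dag+QW$ that the paper leaves implicit.
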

\begin{proof} 
Choose a right frame $v=(\omega_{j})$ and suppose there exists an Hermitian torsion-free connection $\nabla=\nabla^{v}+\alphar(A_{v})$. By Corollary \ref{cor:nice-conn}, the differential structure is $\dag$-concordant.

Conversely, if the Hermitian differential structure is $\dag$-concordant then for any frame the connection
\[
\nablar:=\nablar^{v}-\alphar((1+\Pi-PQ)^{-1}(W+PW^{\dag})),
\]
is Hermitian since $\nablar^{v}$ is Hermitian and then $\dag$-concordance gives
\[
\big((1+\Pi-PQ)^{-1}(W+PW^{\dag})\big)^\dag=(1+\Pi-QP)^{-1}(W^\dag+QW)=(1+\Pi-PQ)^{-1}(W+PW^{\dag}).
\]
Next observe that
\[
(1-P)(1+\Pi-PQ)^{-1}=1-P, 
\]
so that we have $(1-P)A_{v}=-W$, which by Lemma \ref{lem: torsion-free iff} proves that $\nablar$ is torsion-free.
\end{proof}
If we have a $\dag$-concordant Hermitian differential structure then for any frame $v=(\omega_j)$ we can define a connection form 
\begin{equation}
A_{v}=-\sum_{k=0}^{\infty}(PQ)^{k}(W+PW^{\dag})=-\sum_{k=0}^{\infty}(QP)^{k}(W^{\dag}+QW),
\label{eq:a-for-LC}
\end{equation}
and then $\nablar^v+\alphar(A_v)$ will be Hermitian and torsion-free.
In fact Equation \eqref{eq:a-for-LC} provides a practical computational tool as we will show in Section \ref{sec:examples} and subsequent work. A particularly tractable setting which arises for manifolds and their $\theta$-deformations is described  next.

\begin{prop}
\label{prop:suff-for-exist}
Let $(\Omega^1_\dee(\B),\dag,\Psi,\pairing{\cdot}{\cdot}_\B)$ be an Hermitian differential structure with $\mathcal{B}$ local. Suppose that $P=\Psi\ox1,Q=1\ox\Psi\in\End^{*}_{\B}(T^3_\dee)$ satisfy
\begin{equation*}
(2P-1)(2Q-1)(2P-1)=(2Q-1)(2P-1)(2Q-1).
\end{equation*}
Suppose further that $\Omega^1_\dee(\B)$ has a frame such that $W^{\dag}=(2P-1)(2Q-1)W.$
Then there exists an Hermitian torsion-free connection.
\end{prop}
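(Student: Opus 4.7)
The plan is to invoke Theorem \ref{thm:existence} by verifying that the given data is $\dag$-concordant. Setting $U:=2P-1$ and $V:=2Q-1$, the braid relation $UVU=VUV$ is exactly the hypothesis of Corollary \ref{cor: S3action}: applied to $P,Q\in\End^*_\B(T^3_\dee)$, it delivers concordance of the submodules $P(T^3_\dee)$ and $Q(T^3_\dee)$ at once.

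For $\dag$-concordance I would apply Lemma \ref{lem:affine} with $E=W+PW^\dag$ and $F=0$: it suffices to exhibit a $\dag$-invariant $C\in T^3_\dee$ with $(1-PQ)C=W+PW^\dag$. My candidate is $C:=W+2PW^\dag$. Two inputs drive the verification. First, since $\d$ takes values in $(1-\Psi)T^2_\dee$, one has $PW=0$; combined with the interchange $\dag\circ P=Q\circ\dag$ on $T^3_\dee$ (which follows from $\Psi\circ\dag=\dag\circ\Psi$ and the order-reversing behaviour of $\dag$ on tensors), this gives $QW^\dag=0$. One also has $QV=Q$ from $V^2=1$. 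Second, the hypothesis $W^\dag=UVW$ together with $U^2=1$ yields
\[
2PW^\dag=(1+U)W^\dag=W^\dag+U(UVW)=W^\dag+VW,
\]
so equivalently $C=W+W^\dag+VW$. Applying $\dag$ to the latter expression and using $\dag V=U\dag$ gives $\dag C=W^\dag+W+UW^\dag=W+W^\dag+VW=C$, confirming $C=C^\dag$.

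For the equation $(1-PQ)C=W+PW^\dag$ I would compute termwise, using $PW=QW^\dag=0$ and $QV=Q$:
\begin{align*}
(1-PQ)W&=W-\tfrac{1}{2}PVW,\\
(1-PQ)W^\dag&=W^\dag,\\
(1-PQ)VW&=VW-\tfrac{1}{2}PVW.
\end{align*}
Summing yields $W+W^\dag+VW-PVW$, and invoking the identity $2PVW=VW+W^\dag$ again collapses this to $W+PVW$. Since $PU=P$ gives $PW^\dag=P(UVW)=PVW$, the total equals $W+PW^\dag$, as required. Equation \eqref{eq: dagaligned} for the chosen frame then follows from Lemma \ref{lem:affine}, so the Hermitian differential structure is $\dag$-concordant by Proposition \ref{prop:BC-FI}, and Theorem \ref{thm:existence} produces the desired connection.

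The main obstacle is identifying the correct shape of $C$. The braid relation alone secures concordance but never $\dag$-concordance, and it is the specific coupling of the hypothesis $W^\dag=UVW$ with the structural vanishing $PW=0$ that permits $W+PW^\dag$ to be realized as $(1-PQ)$ of a $\dag$-symmetric tensor; every other step is formal manipulation inside the $S_3$-representation generated by $U$ and $V$.
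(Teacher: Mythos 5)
Your proof is correct, and it takes a genuinely different route to $\dag$-concordance than the paper does. The paper iterates the relations $PW^{\dag}=2PQW$ and $QW=2QPW^{\dag}$ to get $(PQ)^{n}W=4^{-n+1}PQW$ by induction, sums the resulting geometric series to obtain the closed forms $(1+\Pi-PQ)^{-1}(W+PW^{\dag})=W+4PQW$ and $(1+\Pi-QP)^{-1}(W^{\dag}+QW)=W^{\dag}+4QPW^{\dag}$, and then checks these two elements agree by a further computation with $U$ and $V$. You instead sidestep the series entirely: you exhibit the $\dag$-invariant tensor $C=W+2PW^{\dag}=W+W^{\dag}+VW$ with $(1-PQ)C=W+PW^{\dag}$ and invoke Lemma \ref{lem:affine} with $E=W+PW^{\dag}$, $F=0$, which yields Equation \eqref{eq: dagaligned} directly (note that your $C$ equals the paper's $W+4PQW$, since $PW^{\dag}=2PQW$, so the two arguments identify the same connection form). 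All the identities you use check out: $PW=0$ and $QW^{\dag}=0$ from $\d$ landing in $(1-\Psi)T^{2}_{\dee}$, the interchange $\dag\circ P=Q\circ\dag$, $PU=P$, $QV=Q$, and $2PW^{\dag}=W^{\dag}+VW$ from the hypothesis $W^{\dag}=UVW$. Your route buys a shorter, purely algebraic verification that avoids the induction and limit, and it makes transparent exactly where the hypothesis enters (realising $W+PW^{\dag}$ as $(1-PQ)$ of a $\dag$-symmetric tensor); the paper's route buys an explicit closed formula $\nablar=\nablar^{v}-\alphar((1+4PQ)W)$ for the connection, which is then reused in the $\theta$-deformation computations. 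One small economy: since Definition \ref{defn:Bram's-condition} only requires \eqref{eq: dagaligned} for \emph{some} frame, your appeal to Proposition \ref{prop:BC-FI} is not needed, though it is harmless.
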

\begin{proof}
The submodules $PT^3_\dee$ and $QT^3_\dee$ are concordant by Corollary \ref{cor: S3action}.
Thus we need to prove $\dag$-concordance.
Let $(\omega_j)$ be a frame such that  
$
W^{\dag}=(2P-1)(2Q-1)W.
$

Since $PW=0$ we then find $PW^{\dag}=2PQW$ and $QW=2QPW^{\dag}$, so that $(PQ)^{2}W=\frac{PQW}{4}$. By induction we then obtain, 
\begin{equation}
\label{eq: W-series}
(PQ)^{n}W=4^{-n+1}PQW,\quad (QP)^{n}W^\dag=4^{-n+1}QPW^\dag, \quad n\geq 1.
\end{equation} 
Therefore we end up with a geometric series whose sum is
\begin{align*}
(1+\Pi-PQ)^{-1}(W+&PW^{\dag})=(1+\Pi-PQ)^{-1}(W+2PQW)=\sum_{n\geq 0}(PQ)^{n}(W+2PQW)\\
&=W+3\sum_{n\geq 1}(PQ)^{n}W=W+3\sum_{n\geq 1}4^{-n+1}PQW=W+4PQW,
\end{align*}
and by the same token $(1+\Pi-QP)^{-1}(W^{\dag}+QW)=W^{\dag}+4QPW^{\dag}$. Thus to show that the  Hermitian differential structure is $\dag$-concordant, we need to show that 
\[
W+4PQW=W^{\dag}+4QPW^{\dag}.
\]
Since $QW=2QPW^{\dag}=4QPQW$ we find
\begin{align*}
W^{\dag}+4QPW^{\dag}&=(2P-1)(2Q-1)W+8QPQW\\
&=W+4PQW-2QW+8QPQW=W+4PQW,
\end{align*}
and thus the Hermitian differential structure is $\dag$-concordant.
Thus, for the frame $v=(\omega_{j})$ the Hermitian torsion-free right connection constructed in Theorem \ref{thm:existence} equals
\[
\nablar:=\nablar^{v}-\alphar((1+4PQ)W).\qedhere
\]
\end{proof}


\section{Uniqueness of Hermitian torsion-free connections}
\label{sec:unique}

Section \ref{subsec:RNDS} shows that for a $\dag$-concordant Hermitian differential structure the equations
\[
(1+\Pi-PQ)A_{v}=-W-PW^{\dag},\quad (1+\Pi-QP)A_{v}=-W^{\dag}-QW^{\dag}
\]
\emph{do not determine $A_{v}$ uniquely, but only up to a three-tensor in} $\textnormal{Sym}_{\dee}^{3}$, Definition \ref{defn:symm}. In the present section we address the question of uniqueness of Hermitian torsion-free connections.


\subsection{Uniqueness up to equivalence}

Given two Hermitian right connections
$
\nablar,\nablar':\Omega^{1}_{\dee}(\B)\to T^2_{\dee}(\B),
$
their difference
\[
\nablar-\nablar':\Omega^{1}_{\dee}(\B)\to T^2_{\dee}(\B),
\]
is a right $\B$-module map and determines a unique element $\alphar^{-1}(\nablar-\nablar')\in T^{3}_{\dee}(\B)$. Using this, we define an equivalence relation on the affine space of all connections on $\Omega^{1}_{\dee}(\B)$.
\begin{defn} 
Two Hermitian right connections
\[
\nablar,\nablar':\Omega^{1}_{\dee}(\B)\to T^2_{\dee}(\B),
\]
are said to be \emph{equivalent} $\mathrm{mod}\, \textnormal{Sym}^{3}_{\dee}$, written $\nablar\sim_{\Pi}\nablar'$ if $\alphar^{-1}(\nablar-\nablar')\in\textnormal{Sym}^{3}_{\dee}$.
\end{defn}
It is straightforward to check that $\sim_{\Pi}$ defines an equivalence relation on the space of Hermitian right connections. Also, $\nablar\sim_{\Pi}\nablar'$ if and only if $(1-\Pi)\alphar^{-1}(\nablar-\nablar')=0$. 
 Choosing a frame $v=(\omega_{j})$ and writing $\nablar=\nablar^{v}+\alphar(A_{v})$ we see that $\nablar\sim_{\Pi} \nablar'$ if and only if $(1-\Pi)(A_{v}-A'_{v})=0$ so that $(1-\Pi)A_{v}=(1-\Pi)A_{v}'$. Thus, the set of equivalence classes of $\sim_{\Pi}$ is in (non-canonical) bijection with $(\textnormal{Sym}^{3}_{\dee})^{\perp}$ via the map $[\nablar]_{\Pi}\mapsto (1- \Pi)A_{v}$.

In particular two connections are equivalent if and only if relative to any frame, their connection forms have \emph{the same} component in $(\textnormal{Sym}^{3}_{\dee})^{\perp}$. We arrive at our first relative uniqueness statement.

\begin{prop}
\label{lem: unique} 
Let $\nablar,\nablar':\Omega^{1}_{\dee}(\B)\to T^2_{\dee}(\B)$ be torsion-free Hermitian right connections. Then $\nablar\sim_\Pi\nablar'$.
\end{prop}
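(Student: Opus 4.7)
The plan is to reduce the claim to the explicit formula for the connection form modulo $\mathrm{Sym}^3_\dee$ already derived in Proposition \ref{prop: omega-form}. First I would fix any right frame $v=(\omega_j)$ for $\Omega^1_\dee(\B)$ and write both connections relative to the associated Grassmann connection, $\nablar=\nablar^v+\alphar(A_v)$ and $\nablar'=\nablar^v+\alphar(A'_v)$, with $A_v,A'_v\in T^3_\dee(\B)$. Since $\alphar$ is a bimodule isomorphism, $\alphar^{-1}(\nablar-\nablar')=A_v-A'_v$, so the statement $\nablar\sim_\Pi\nablar'$ amounts to showing $(1-\Pi)(A_v-A'_v)=0$.

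The key point is that Proposition \ref{prop: omega-form} gives, for any Hermitian torsion-free right connection with connection form $A_v$ relative to $v$, the identity
\[
(1-\Pi)A_v=-(1+\Pi-PQ)^{-1}(W+PW^\dag),
\]
whose right-hand side depends only on $W=\sum_j\d(\omega_j)\ox\omega_j^\dag$, and hence only on the frame. Applying this to both $\nablar$ and $\nablar'$ yields $(1-\Pi)A_v=(1-\Pi)A'_v$, so that $A_v-A'_v\in\ker(1-\Pi)=\Pi(T^3_\dee)=\mathrm{Sym}^3_\dee$, which is exactly $\nablar\sim_\Pi\nablar'$. There is no real obstacle: the content of the proposition is essentially already contained in the last sentence of Proposition \ref{prop: omega-form}, and the short proof simply unpacks the definition of $\sim_\Pi$ and invokes it.
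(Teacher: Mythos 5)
Your proof is correct and is essentially identical to the paper's: both fix a frame, write $\nablar-\nablar'=\alphar(A_v-A'_v)$, and invoke Proposition \ref{prop: omega-form} to conclude $(1-\Pi)(A_v-A'_v)=0$. As you note, the content is already contained in the final sentence of that proposition (with the standing concordance hypothesis needed for $\Pi$ to exist implicitly in force).
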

\begin{proof} 
Choose a frame and write
$\nablar=\nablar^{v}+\alphar(A_{v})$, $\nablar'=\nablar^{v}+\alphar(A_{v}')$,
so that 
\[
\nablar-\nablar'=\alphar(A_{v}-A_{v}').
\]
By Proposition \ref{prop: omega-form} it follows that $(1-\Pi)A_{v}=(1-\Pi)A_{v}',$ so $\nablar\sim_{\Pi}\nablar'$.
\end{proof}


\subsection{Reality and uniqueness of bimodule connections}

\label{sec:new-uniqueness}
The fundamental theorem of Riemannian geometry states that, on the space of sections $\Omega^{1}(M)=\Gamma(M,T^{*}M)$ of the cotangent bundle $T^{*}M$, there is a unique torsion-free metric compatible connection $\nabla^{LC}$, the \emph{Levi-Civita connection}. The cotangent bundle $T^{*}M\to M$ is a \emph{real} vector bundle, whereas our constructions, up to the present point, only pertain its complexification. In particular we have 
\[
\Omega^{1}(M)\otimes_{\mathbb{R}} \mathbb{C}=\Omega^{1}(M)\oplus i\Omega^{1}(M),
\]
which motivates the following definition.

\begin{defn} 
Let $(\Omega^1_\dee(\B),\dag)$ be a first order differential structure.
The real zero and one-forms are those $b\in\B$ and $\omega\in\Omega^1_\dee(\B)$  satisfying
\[
b=b^*\qquad \omega=-\omega^\dag.
\]
We denote the space of real one-forms by $\Omega^1_\dee(\B)^{\dag}$.
\end{defn}
\begin{rmk} The real zero and one-forms are (not inner-product) modules over the algebra $\B^J:=\left\{\sum_ia_i\ox b_i^\op\in\B\ox\B^\op:\sum_ia_i\ox b_i^\op=\sum_ib^*_i\ox a_i^{*\op}\right\}\subset\B\ox\B^\op$, where $\B^\op$ is the opposite algebra. See \cite{ConnesGrav}.
\end{rmk}
For manifolds,
recovering $T^{2}(M)$ from its complexification $T^{2}_{\mathbb{C}}(M):=T^{2}(M)\otimes_{\mathbb{R}}\mathbb{C}$ algebraically cannot be done by using the $\dag$-operation alone. The reason for this is that 
$(\omega\ox\rho)^\dag=\rho^*\ox\omega^*$ and the tensor factors are flipped. If
\[\sigma:T^{2}(M)\otimes\mathbb{C}\to T^{2}(M)\otimes\mathbb{C},\quad \omega\otimes \eta\mapsto \eta\otimes \omega,\]
denotes the standard flip map, then the subspace $T^{2}(M)\subset T^{2}_{\mathbb{C}}(M)$ is characterised by the property that $\dag\circ\sigma(\xi)=\xi$. Thus to obtain a sensible notion of real two tensors more generally, an abstract analogue of the flip map is necessary, and we now adopt the following notion from \cite[Section 8.1]{BMBook}. 
\begin{defn}
\label{def:mumble-real}
Let $(\Omega^1_\dee(\B),\dag)$ be a first order differential structure, and $\X$  a $\dag$-bimodule over the $*$-algebra $\B$.
A \emph{braiding} is an invertible bimodule map $\sigma:\X\ox_\B\Omega^1_\dee\to\Omega^1_\dee\ox_\B\X$ satisfying
\[
\sigma^{-1}\circ\dag=\dag\circ \sigma. 
\]
For a given braiding $\sigma$ on $T^2_\dee=\Omega^1_\dee\ox_\B\Omega^1_\dee$, we say that a two-tensor $\sum_i\omega_i\ox\rho_i$ is $\sigma$-real if
\begin{equation}
\dag\circ\sigma\Big(\sum_i\omega_i\ox\rho_i\Big)=\sum_i\omega_i\ox\rho_i.
\label{eq:real-two-form}
\end{equation}
We denote the space of $\sigma$-real two-tensors by $T^{2}_{\dee}(\B)^{\dag\circ \sigma}$.
\end{defn}
\begin{rmks} 
1. The $\sigma$-real two-tensors are again a module over $\B^J$. 
In the classical case of a Riemannian manifold, we can take $\sigma$ to be the flip
map and Definition \ref{def:mumble-real} captures the real two-forms. 

2. For a second order differential structure $(\Omega^1_\dee(\B),\dag,\Psi)$, the map $2\Psi-1$ will always satisfy the definition of braiding. That said, $2\Psi-1$ will typically not be compatible with connections in the way we require below. This occurs for instance in the example of the Podle\'{s} sphere, as we show in \cite{MRPods}.
\end{rmks}

\begin{lemma}
\label{lem: real-decomp}
Let $(\Omega^1_\dee(\B),\dag)$ be a first order differential structure and $\sigma:T^{2}_{\dee}(\B)\to T^{2}_{\dee}(\B)$ a braiding. Then we have decompositions
\[
\Omega^{1}_{\dee}(\B)=\Omega^{1}_{\dee}(\B)^{\dag}\oplus i\Omega^{1}_{\dee}(\B)^{\dag},\quad T^{2}_{\dee}(\B)
=T^{2}_{\dee}(\B)^{\dag\circ\sigma}\oplus iT^{2}_{\dee}(\B)^{\dag\circ\sigma},
\]
as $\mathbb{R}$-vector spaces (and as $\B^J$-modules).
\end{lemma}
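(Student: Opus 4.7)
My plan is to realise both decompositions as instances of a single abstract fact about antilinear involutions. Specifically, whenever $T\colon V\to V$ is a $\C$-antilinear involution on a complex vector space $V$, writing $V^T:=\{v\in V: Tv=v\}$, every $v\in V$ decomposes uniquely as $v=v_++iv_-$ with $v_\pm\in V^T$, via
\[
v_+=\tfrac{1}{2}(v+Tv),\qquad v_-=\tfrac{1}{2i}(v-Tv),
\]
where the antilinearity of $T$ ensures $Tv_\pm=v_\pm$, and uniqueness follows because $V^T\cap iV^T=0$ (if $v\in V^T\cap iV^T$, then $Tv=v$ and $T(-iv)=-iv$ forces $v=-v$).

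I would then produce the relevant involutions in each case. For the first decomposition, the dagger on $\Omega^1_\dee(\B)$ is antilinear and satisfies $\dag^2=\Id$ (checked directly on generators $a\dee(b)$, using $\dee(b)^{\dag}=-\dee(b^*)$), so $T_1:=-\dag$ is an antilinear involution whose $+1$-eigenspace is exactly $\Omega^1_\dee(\B)^{\dag}$. For the second decomposition, set $T_2:=\dag\circ\sigma$ on $T^2_\dee(\B)$. Since $\sigma$ is $\B$-bilinear and $\dag$ is antilinear, $T_2$ is antilinear. The braiding condition $\sigma^{-1}\circ\dag=\dag\circ\sigma$ rewrites (using $\dag^2=\Id$ on $T^2_\dee$) as $\dag\circ\sigma\circ\dag=\sigma^{-1}$, and therefore
\[
T_2^2=\dag\circ\sigma\circ\dag\circ\sigma=\sigma^{-1}\circ\sigma=\Id.
\]
By definition $V^{T_2}=T^2_\dee(\B)^{\dag\circ\sigma}$, and the abstract lemma yields the claimed $\R$-linear decomposition.

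For the $\B^J$-module refinement, it suffices to check that $T_1$ and $T_2$ intertwine the $\B^J$-action, because then the idempotents $\tfrac12(\Id\pm T_j)$ are $\B^J$-linear (as $\R$-linear maps) and the eigenspaces inherit the module structure. For $x=\sum_i a_i\ox b_i^{\op}\in\B^J$, the defining relation $\sum_i a_i\ox b_i^{\op}=\sum_i b_i^*\ox a_i^{*\op}$ gives, for any $T\in T^2_\dee(\B)$,
\[
\sum_i b_i^*\,T\,a_i^*=\sum_i a_i\,T\,b_i=x\cdot T.
\]
Applied to $\omega\in\Omega^1_\dee(\B)$, this yields $T_1(x\cdot\omega)=-\sum_i b_i^*\omega^\dag a_i^*=x\cdot T_1(\omega)$. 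For $T_2$, using that $\sigma$ is a $\B$-bimodule map,
\[
T_2(x\cdot T)=\dag\bigl(x\cdot\sigma(T)\bigr)=\sum_i b_i^*\sigma(T)^\dag a_i^*=x\cdot T_2(T).
\]

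The only non-routine step is the verification that $T_2^2=\Id$, for which the braiding axiom has to be combined correctly with $\dag^2=\Id$ on $T^2_\dee(\B)$; the remaining checks (antilinearity, $\B^J$-equivariance and the explicit formulae for the two summands) are formal.
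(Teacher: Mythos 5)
Your proposal is correct and follows essentially the same route as the paper, whose entire proof is the observation that $\dag$ and $\dag\circ\sigma$ are $\R$-linear involutions; you simply supply the standard eigenspace decomposition for an antilinear involution, the verification that $(\dag\circ\sigma)^2=\Id$ from the braiding axiom, and the $\B^J$-equivariance, all of which the paper leaves implicit.
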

\begin{proof} 
This follows directly from the fact that both $\dag$ 
and $\dag\circ\sigma$ are $\mathbb{R}$-linear idempotents.
\end{proof}

Bimodule connections have arisen multiple times in  the  literature, for instance \cite{BMBook,DHLS,DMM,DM} and more recently in \cite{CO,GKOSS}. Here we  link bimodule connections to conjugate connections, a notion called $*$-connection in \cite[p572ff]{BMBook}.
\begin{defn}
\label{def: dagbimodconn} 
Let $(\Omega^1_\dee(\B),\dag)$ be a first order differential structure and $\X$ be a $\dag$-bimodule over the $*$-algebra $\B$. A $\dag$-\emph{bimodule connection} on $\X$ is a pair $(\nablar,\sigma)$ where $\nablar:\X\to \X\otimes_{\B}\Omega^{1}_{\dee}$ is a right connection on $\X$ and $\sigma:\X\ox_\B\Omega^1_\dee\to\Omega^1_\dee\ox_\B\X$ is a braiding
such that $\sigma\circ\nablar=\nablal$, 
where $\nablal=-\dag\circ \nablar\circ\dag$ is the conjugate left connection.
\end{defn}
\begin{rmk}
\label{rmk: flippintwist}
As discussed in \cite{Maj-bimod}, \cite[p568]{BMBook}, if there exists such a $\sigma$, then it is unique. That is, if $(\nablar,\sigma)$ and $(\nablar,\sigma')$ are bimodule connections then $\sigma=\sigma'$. In fact, $\sigma$ is determined by the formula
\begin{equation}
\label{eq: connectiontwist}
\sigma^{-1}(\dee(b)\otimes x)=[\nablar,b]x,\qquad b\in\B,\,x\in\X.
\end{equation}
In case both $\X$ and $\Omega^{1}_\dee$ 
are central bimodules, this amounts to
\begin{align*}
\sigma^{-1}(\dee(b)\otimes x)&=[\nablar,b]x=\nablar(bx)-b\nablar(x)
=\nablar(xb)-\nablar(x)b=x\otimes \dee(b),
\end{align*}
that is, 
$\sigma:\Omega^{1}_{\dee}\otimes_{\B}\X\to 
\X\otimes_{\B}\Omega^{1}_{\dee}$ is forced 
to be the flip map $\omega\otimes x\mapsto x\otimes\omega$.
\end{rmk}

\begin{example}
Even for a manifold $M$ with Riemannian metric $g$, there are two Levi-Civita connections, left and right. We can see this already with the flat connection(s) on $\R^n$. Given a one-form 
$\omega=\sum_ja_j\mathrm{d}x^j=\sum_j\mathrm{d}x^ja_j\in T^*\R^n\ox\C$ with $a_j\in C_c^\infty(\R^n)$, 
the two connections are determined by
\[
\nablar(\omega)=\sum_{j,k}\mathrm{d}x^j\ox \mathrm{d}x^ka_{j,k},\qquad \nablal(\omega)=\sum_{j,k}a_{j,k}\mathrm{d}x^k\ox \mathrm{d}x^j,
\]
where $a_{j,k}=\partial_k(a_j)$.
By Remark \ref{rmk: flippintwist}, the map $\sigma$ can only be the flip map, and indeed the flip map satisfies $\sigma\circ \nablar=\nablal$.

With some elaboration, the same result holds for the Levi-Civita connection on  general pseudo-Riemannian manifolds. In local coordinates this follows from the flat calculation above, and the symmetry of the Christoffel symbols. 
\end{example}

The next result says that a $\dagger$-bimodule connection maps real one-forms to $\sigma$-real two-forms, and that this property characterises the class of such connections.

\begin{prop}
\label{lem:basic-real}
Let $(\Omega^1_\dee(\B),\dag)$ be a first order differential structure and $\sigma$ a braiding on $T^2_\dee$. A pair $(\nablar,\sigma)$ is a $\dag$-bimodule connection if and only if $\nablar:\Omega^{1}_{\dee}(\B)^{\dag}\to T^{2}_{\dee}(\B)^{\dag\circ\sigma}$.
Moreover, if $(\nablar,\sigma)$ is a $\dagger$-bimodule connection on $\Omega^1_\dee$ then
\begin{align}
\frac{1}{2}(1+\dag\circ  \sigma)\circ\nablar(\omega)=\begin{cases} 0
&\omega=\omega^\dag\\
\nablar(\omega) &\omega=-\omega^\dag
\end{cases}.
\label{eq:real-proj}
\end{align}
\end{prop}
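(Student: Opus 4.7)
The plan is to prove the equivalence by reducing both conditions to a single identity on real one-forms, and then exploiting the real decomposition of Lemma \ref{lem: real-decomp}.

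First I would observe that the map $\dag\circ\sigma:T^2_\dee\to T^2_\dee$ is an $\mathbb{R}$-linear involution: the identity $\sigma^{-1}\circ\dag=\dag\circ\sigma$ gives $(\dag\circ\sigma)^2=\dag\circ\sigma\circ\dag\circ\sigma=\sigma^{-1}\circ\sigma=1$. Consequently $T^2_\dee(\B)^{\dag\circ\sigma}$ and $iT^2_\dee(\B)^{\dag\circ\sigma}$ are precisely the $(+1)$- and $(-1)$-eigenspaces of $\dag\circ\sigma$ respectively (antilinearity of $\dag\circ\sigma$ flips the sign upon multiplication by $i$), and $\tfrac{1}{2}(1+\dag\circ\sigma)$ is the projector onto the $(+1)$-eigenspace along the $(-1)$-eigenspace.

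Next I would verify the key identity: for any real one-form $\omega=-\omega^\dag$, the conjugate connection satisfies
\[
\nablal(\omega)=-\dag\circ\nablar\circ\dag(\omega)=\dag\circ\nablar(\omega)=\nablar(\omega)^\dag.
\]
From this, $(\nablar,\sigma)$ being a $\dag$-bimodule connection, i.e.\ $\sigma\circ\nablar=\nablal$, forces $\sigma\circ\nablar(\omega)=\nablar(\omega)^\dag$ on real $\omega$, equivalently $\dag\circ\sigma\circ\nablar(\omega)=\nablar(\omega)$, i.e.\ $\nablar(\omega)\in T^2_\dee(\B)^{\dag\circ\sigma}$. Conversely, if $\nablar$ carries $\Omega^1_\dee(\B)^\dag$ into $T^2_\dee(\B)^{\dag\circ\sigma}$, the same calculation shows $\sigma\circ\nablar$ and $\nablal$ agree on every real one-form. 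Since both $\sigma\circ\nablar$ and $\nablal$ are $\mathbb{C}$-linear, and the $\mathbb{R}$-vector space decomposition $\Omega^1_\dee(\B)=\Omega^1_\dee(\B)^\dag\oplus i\Omega^1_\dee(\B)^\dag$ of Lemma \ref{lem: real-decomp} spans $\Omega^1_\dee(\B)$ by $\mathbb{C}$-linear combinations of real one-forms, the two maps coincide everywhere. This establishes the equivalence.

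For the moreover statement, I would plug the two cases into the projector $\tfrac{1}{2}(1+\dag\circ\sigma)$. If $\omega=-\omega^\dag$, then by what has just been shown $\nablar(\omega)$ lies in the $(+1)$-eigenspace of $\dag\circ\sigma$, so the projector returns $\nablar(\omega)$. If instead $\omega=\omega^\dag$, write $\omega=i\cdot(-i\omega)$ where $-i\omega$ is real; then by $\mathbb{C}$-linearity of $\nablar$, $\nablar(\omega)=i\,\nablar(-i\omega)\in iT^2_\dee(\B)^{\dag\circ\sigma}$, which is the $(-1)$-eigenspace of $\dag\circ\sigma$, and the projector annihilates it. I do not anticipate any real obstacle here — the only subtle point is correctly tracking antilinearity when comparing eigenspaces of $\dag\circ\sigma$ with the decomposition of Lemma \ref{lem: real-decomp}; once that is done, everything is a direct unwinding of definitions.
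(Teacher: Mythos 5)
Your proof is correct and follows essentially the same route as the paper: both arguments reduce the bimodule-connection identity $\sigma\circ\nablar=\nablal$ to its restriction on real one-forms via the relation $\nablal=-\dag\circ\nablar\circ\dag$, and then extend by $\C$-linearity using the decomposition of Lemma \ref{lem: real-decomp}. The only cosmetic difference is that you establish the restriction property first and deduce \eqref{eq:real-proj} afterwards, whereas the paper derives \eqref{eq:real-proj} directly and reads off the restriction from it.
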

\begin{proof} 
Suppose that $(\nablar,\sigma)$ a $\dagger$-bimodule 
connection on $\Omega^1_\dee$. 
Applying $\dag$ to both sides of $\sigma\circ\nablar=\nablal$ and
using the relation
$\nablar(\omega)^\dag=-\nablal(\omega^\dag)$ 
proves \eqref{eq:real-proj}, and thus implies that $\nablar$ 
restricts to $\nablar:\Omega^{1}_{\dee}(\B)^{\dag}\to T^{2}_{\dee}(\B)^{\dag\circ\sigma}$. That is, $\nablar$ maps real 
one-forms to $\sigma$-real two-tensors.

Conversely, suppose that $\nablar$ restricts to $\nablar:\Omega^{1}_{\dee}(\B)^{\dag}\to T^{2}_{\dee}(\B)^{\dag\circ\sigma}$ and 
$\nablal:=-\dag\circ \nablar\circ\dag$. Then for 
$\eta\in \Omega^1_\dee(\B)^{\dag}$ we have
\[
\nablar(\eta)=\sigma^{-1}\circ\dag\circ\nablar(\eta)=-\sigma^{-1}\circ\nablal(\eta^\dag)=\sigma^{-1}\circ\nablal(\eta).
\]
Hence $\sigma\circ\nablar=\nablal$ on $\Omega^1_\dee(\B)^{\dag}$. As both sides are $\C$-linear, by Lemma \ref{lem: real-decomp} we find that $(\nablar,\sigma)$ is a $\dag$-bimodule connection.
\end{proof}

By Remark \ref{rmk: flippintwist}, if both $\X$ 
and $\Omega^{1}_{\dee}$ are central bimodules, 
then $\sigma$ is forced to be the flip map. 
In general, for a $\B$-bimodule $\X$, we denote its \emph{centre} by
\begin{equation}
\mathcal{Z}(\X)=\left\{x\in \X: bx=xb\ \mbox{for all }b\in\B\right\}.
\label{eq:bimod-centre}
\end{equation}
Note that $\mathcal{Z}(\X)$ is a linear subspace of $\X$, 
but not a $\B$-sub-bimodule, only a $\mathcal{Z}(\B)$-bimodule.
\begin{lemma}
\label{lem:bimod-centre}
Let $\X,\Y$ be $\dag$-$\B$-bimodules. Then there are isomorphisms
of $\mathcal{Z}(\B)$-bimodules
\[
\overrightarrow{\alpha}:\mathcal{Z}(\Y\ox \X)\to\overleftrightarrow{\textnormal{Hom}}^{*}(\X,\Y),\,\,\overleftarrow{\alpha}:\mathcal{Z}(\Y\ox\X)\to \overleftrightarrow{\textnormal{Hom}}^{*}(\Y,\X)
\]
where $\overleftrightarrow{\textnormal{Hom}}^{*}(\X,\Y)$ denotes linear maps from $\X$ to $\Y$ which are both left and right adjointable.
\end{lemma}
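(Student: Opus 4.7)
The plan is to exhibit these two maps as restrictions of the basic isomorphisms $\overrightarrow{\alpha},\overleftarrow{\alpha}$ from Lemma \ref{lem:useful-iso}, where $\overleftarrow{\alpha}\colon\Y\ox\X\to\overleftarrow{\textnormal{Hom}}^*(\Y,\X)$ is obtained by swapping the roles of $\X$ and $\Y$ in the second statement of that lemma. The key claim is that centrality of $G\in\Y\ox\X$ is exactly what upgrades a one-sidedly adjointable map to a biadjointable $\B$-bilinear map.

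First, I check that $\overrightarrow{\alpha}(G)$ is $\B$-bilinear iff $G$ is central. Right-$\B$-linearity holds unconditionally by Lemma \ref{lem:useful-iso}; for left-$\B$-linearity, write $G=\sum_iy_i\ox x_i$ and compute on one hand $b\cdot\overrightarrow{\alpha}(G)(w)=\overrightarrow{\alpha}(bG)(w)$, and on the other $\overrightarrow{\alpha}(Gb)(w)=\sum_iy_i\pairing{(x_ib)^\dag}{w}_\B=\sum_iy_i\pairing{b^*x_i^\dag}{w}_\B=\overrightarrow{\alpha}(G)(bw)$, using the $\dag$-bimodule axiom $(x_ib)^\dag=b^*x_i^\dag$ together with the relation $\pairing{b^*x_i^\dag}{w}_\B=\pairing{x_i^\dag}{bw}_\B$. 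Injectivity of $\overrightarrow{\alpha}$ then turns left-$\B$-linearity of $\overrightarrow{\alpha}(G)$ into the centrality relation $bG=Gb$. The analogous computation shows that $\overleftarrow{\alpha}(G)$ is right-$\B$-linear iff $G$ is central.

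Second, I show that a right-adjointable $\B$-bilinear map $T\colon\X\to\Y$ is automatically left-adjointable. By Lemma \ref{lem: bi-Hilbertian} both $\X$ and $\Y$ are finitely generated projective as left modules, so starting from a right frame $(x_\alpha)$ of $\X$ (dual to the left frame $(x_\alpha^\dag)$) one writes down an explicit candidate $^{*}T$ from the matrix coefficients of $T$. The adjointness identity ${}_\B\pairing{Tw}{z}={}_\B\pairing{w}{{}^{*}T(z)}$ then reduces to $(axb)^\dag=b^*x^\dag a^*$ and the defining formula ${}_\B\pairing{u}{v}=\pairing{u^\dag}{v^\dag}_\B$. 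Conversely, any $T\in\overleftrightarrow{\textnormal{Hom}}^*(\X,\Y)$ is automatically $\B$-bilinear (adjointability on either side entails linearity on that side), so $T=\overrightarrow{\alpha}(G)$ by Lemma \ref{lem:useful-iso}, and the first step forces $G$ to be central.

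Combining these observations gives the first isomorphism; the second is entirely symmetric via $\overleftarrow{\alpha}$. The $\mathcal{Z}(\B)$-bimodule structure on both sides is inherited from the underlying $\B$-bimodule structures, and both claimed maps are $\mathcal{Z}(\B)$-linear. The main obstacle is the second step: although $\B$-bilinearity together with right-adjointability ought to deliver left-adjointability, making this explicit requires careful bookkeeping of how the $\dag$-involution intertwines the two inner products on a $\dag$-bimodule.
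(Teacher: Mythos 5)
Your proposal is correct and follows essentially the same route as the paper: both identify $\mathcal{Z}(\Y\ox\X)$ with the space of $\B$-bilinear adjointable maps via $\overrightarrow{\alpha}$, using injectivity of $\overrightarrow{\alpha}$ to convert left-$\B$-linearity of $\overrightarrow{\alpha}(G)$ into the centrality relation $bG=Gb$, and then upgrade bilinearity to two-sided adjointability. The only divergence is at that last step, where the paper simply cites \cite[Corollary 4.6]{KajPinWat} while you build the left adjoint explicitly from a left frame $(x_j^\dag)$ --- e.g. ${}^{*}T(z)=\sum_j {}_{\B}\pairing{z}{T(x_j^\dag)}\,x_j^\dag$, which only needs left $\B$-linearity of $T$ and the left frame relation of Lemma \ref{lem: bi-Hilbertian} --- making the argument self-contained.
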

\begin{proof}
Given a bimodule map $T:\X\to \Y$ and $a,b\in\B$, $x\in \X$ we have
$T(axb)=a(Tx)b$. So write $T=\sum_j\alphar(y_j\ox x_j^\dag)$ so that we are  encoding that $T$ is a right module map. Then since $T$ is also a left bimodule map we have
\begin{align*}
aT(x)&=a\sum_j\alphar(y_j\ox x_j^\dag)(x)
=\sum_j\alphar(ay_j\ox x_j^\dag)(x)
=T(ax)\\
&=\sum_j\alphar(y_j\ox x_j^\dag)(ax)=\sum_jy_j\pairing{x_j}{ax}_\B=\sum_jy_j\pairing{a^*x_j}{x}_\B
=\sum_jy_j\pairing{(x_j^\dag a)^\dag}{x}_\B\\
&=\sum_j\alphar(y_j\ox x_j^\dag a)(x)
\end{align*}
and hence $\sum_jy_j\ox x_j\in\mathcal{Z}(\Y\ox \X^*)$. Conversely, if 
$y\ox x^\dag\in \mathcal{Z}(\Y\ox \X^*)$ then $\overrightarrow{\alpha}(y\ox x^\dag)$ is right $\B$-linear automatically. For $c\in\B$ and $\tilde{x}\in
X$ we have 
\[
\overrightarrow{\alpha}(y\ox x^\dag)(c\tilde{x})=\overrightarrow{\alpha}(y\ox x^\dag c)(\tilde{x})=\overrightarrow{\alpha}(cy\ox x^\dag)(\tilde{x})=c\overrightarrow{\alpha}(y\ox x^\dag)(\tilde{x}),
\] 
and so $\overrightarrow{\alpha}(y\ox x^\dag)$ is a bimodule map. By \cite[Corollary 4.6]{KajPinWat}, it is also adjointable.
\end{proof}

\begin{lemma}
\label{lem: centre-bimodule-connection} 
Let $(\Omega^1_\dee(\B),\dag)$ be a second order differential structure, $\mathcal{X}$ a $\dag$-bimodule over the $*$-algebra $\B$ and finally let $\sigma:\X\otimes_{B}\Omega^{1}_{\dee}(\B)\to \Omega^{1}_{\dee}(\B)\otimes_{B}\X$ be
a  braiding. If $(\nablar,\sigma)$ and $(\nablar',\sigma)$ are bimodule connections then 
\[
\nablar-\nablar':\X\to \X\otimes_{\B}\Omega^{1}_{\dee}(\B),\qquad\nablal-\nablal':\X\to \Omega^{1}_{\dee}(\B)\otimes_{\B}\X,
\]
are bimodule maps. Equivalently by Lemma \ref{lem:bimod-centre}
\[
\alphar^{-1}(\nablar-\nablar'),\ \ \alphal^{-1}(\nablal-\nablal')\in \mathcal{Z}(\X\otimes_{\B}\Omega^{1}_{\dee}(\B)\otimes_{B}\X).
\]
\end{lemma}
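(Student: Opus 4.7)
The plan is to exploit Remark \ref{rmk: flippintwist}, which states that for a $\dag$-bimodule connection $(\nablar, \sigma)$ the braiding $\sigma$ satisfies
\[
\sigma^{-1}(\dee(b) \otimes x) = \nablar(bx) - b\nablar(x), \qquad b \in \B,\ x \in \X.
\]
Since $(\nablar, \sigma)$ and $(\nablar', \sigma)$ are paired with the \emph{same} braiding $\sigma$, the right-hand side is identical when computed from either connection. This is the entire content of the proof: the deviation from left $\B$-linearity is the same for $\nablar$ and $\nablar'$, so it cancels in the difference.

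First I recall that $\nablar - \nablar'$ is automatically right $\B$-linear, because in the difference the derivation term $x \otimes \dee(b)$ cancels. Applying the displayed formula above to each of $\nablar, \nablar'$ and subtracting gives
\[
0 = \sigma^{-1}(\dee(b) \otimes x) - \sigma^{-1}(\dee(b) \otimes x) = (\nablar - \nablar')(bx) - b\,(\nablar - \nablar')(x),
\]
which is precisely left $\B$-linearity. Combined with right $\B$-linearity, this shows that $\nablar - \nablar'$ is a bimodule map $\X \to \X \otimes_\B \Omega^1_\dee(\B)$.

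For the left connections, I invoke Definition \ref{defn:conj-conn}: $\nablal = -\dag \circ \nablar \circ \dag$ and likewise $\nablal' = -\dag \circ \nablar' \circ \dag$, hence
\[
\nablal - \nablal' = -\dag \circ (\nablar - \nablar') \circ \dag.
\]
Because $\dag$ exchanges left and right $\B$-actions via $(bxc)^\dag = c^* x^\dag b^*$, conjugating a bimodule map by $\dag$ again yields a bimodule map. Thus $\nablal - \nablal'$ is a bimodule map $\X \to \Omega^1_\dee(\B) \otimes_\B \X$.

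The equivalent tensor-theoretic formulation is then immediate from Lemma \ref{lem:bimod-centre}, applied with $\Y = \X \otimes_\B \Omega^1_\dee(\B)$ in one case and $\Y = \Omega^1_\dee(\B) \otimes_\B \X$ in the other: the isomorphisms $\alphar, \alphal$ identify bimodule maps out of $\X$ with central elements of $\mathcal{Z}(\X \otimes_\B \Omega^1_\dee(\B) \otimes_\B \X)$. The main obstacle here is essentially nil --- the argument rests entirely on the observation that formula \eqref{eq: connectiontwist} produces the same element $\sigma^{-1}(\dee(b) \otimes x)$ regardless of which of the two bimodule connections we start from.
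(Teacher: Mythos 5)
Your proof is correct and follows essentially the same route as the paper: both arguments rest on the fact that Equation \eqref{eq: connectiontwist} expresses $[\nablar,b]x=\sigma^{-1}(\dee(b)\otimes x)=[\nablar',b]x$ for the common braiding $\sigma$, so the commutators cancel in the difference and left-linearity follows, with right-linearity being automatic. The only (harmless) elaboration beyond the paper's one-line proof is your explicit treatment of the conjugate left connections via $\dag$.
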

\begin{proof}
Equation \eqref{eq: connectiontwist} says that for $x\in \X$ the identity
\[
\sigma^{-1}(\dee(a)\otimes x)=[\nablar,a]x=[\nablar',a]x,
\]
holds true, and so $[\nablar-\nablar',a]=0$, as claimed.
\end{proof}
The following condition guaranteeing uniqueness is analogous to \cite[Theorem 4.13]{BGJ2}.
\begin{thm}
\label{thm: uniqueness-twist} 
Let $(\Omega^1_\dee(\B),\dag,\Psi,\pairing{\cdot}{\cdot}_\B)$ be a $\dag$-concordant Hermitian differential structure with a braiding on two-tensors $\sigma:T^2_\dee\to T^2_\dee$  such that 
\begin{align*}
\alphar+\sigma^{-1}\circ\alphal:\mathcal{Z}(\textnormal{Sym}^{3}_{\dee})\to \overleftrightarrow{\textnormal{Hom}}^{*}(\Omega^{1}_{\dee}, \Omega^{1}_{\dee}\otimes_{\B}\Omega^{1}_{\dee}),
\end{align*}
is injective. Suppose that $(\nablar,\sigma)$ and $(\nablar',\sigma)$ are $\dag$-bimodule connections on $\Omega^{1}_{\dee}$. If both $\nablar$ and $\nablar'$ are Hermitian and torsion-free then $\nablar=\nablar'$.
\end{thm}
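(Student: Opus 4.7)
The plan is to encode the difference of the two connections as a single three-tensor $C \in T^{3}_{\dee}(\B)$ lying in $\mathcal{Z}(\textnormal{Sym}^{3}_{\dee})$, and then to leverage the shared braiding to place $C$ in the kernel of $\alphar + \sigma^{-1}\circ\alphal$, whence $C=0$ by hypothesis. To begin, I would fix any right frame $v=(\omega_j)$ for $\Omega^{1}_{\dee}(\B)$ and write
\begin{equation*}
\nablar = \nablar^{v} + \alphar(A), \qquad \nablar' = \nablar^{v} + \alphar(A'),
\end{equation*}
with $A,A'\in T^{3}_{\dee}(\B)$. Since both connections are Hermitian, Proposition \ref{lem:compatible} gives $A^{\dag}=A$ and $A'^{\dag}=A'$. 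Setting $C:=A-A'$, we have $C^{\dag}=C$ and $\nablar-\nablar' = \alphar(C)$.

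Next I would show $C\in \mathcal{Z}(\textnormal{Sym}^{3}_{\dee})$ by combining two of the structural results already established. Because $\nablar$ and $\nablar'$ are torsion-free Hermitian right connections, Proposition \ref{lem: unique} gives $\nablar\sim_{\Pi}\nablar'$, i.e.\ $C = \alphar^{-1}(\nablar-\nablar')\in\textnormal{Sym}^{3}_{\dee}$. On the other hand, since $(\nablar,\sigma)$ and $(\nablar',\sigma)$ are bimodule connections with the \emph{same} braiding $\sigma$, Lemma \ref{lem: centre-bimodule-connection} says $\nablar-\nablar'$ is a full $\B$-bimodule map, and via Lemma \ref{lem:bimod-centre} this translates to $C\in\mathcal{Z}(T^{3}_{\dee}(\B))$. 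Intersecting the two conditions yields $C\in\mathcal{Z}(\textnormal{Sym}^{3}_{\dee})$.

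The final step is to extract a linear equation on $C$ from the common twist. Subtracting the bimodule connection identities $\sigma\circ\nablar=\nablal$ and $\sigma\circ\nablar'=\nablal'$ gives $\sigma\circ(\nablar-\nablar') = \nablal-\nablal'$. By Corollary \ref{lem:left-right} together with $A=A^{\dag}$, $A'=A'^{\dag}$, the conjugate connections are $\nablal=\nablal^{v}-\alphal(A)$ and $\nablal'=\nablal^{v}-\alphal(A')$, so $\nablal-\nablal' = -\alphal(C)$. Substituting yields
\begin{equation*}
\sigma\circ\alphar(C) = -\alphal(C), \qquad\text{equivalently}\qquad (\alphar + \sigma^{-1}\circ\alphal)(C) = 0.
\end{equation*}
By Lemma \ref{lem:bimod-centre} both $\alphar(C)$ and $\alphal(C)$ are two-sided adjointable bimodule maps, so $(\alphar+\sigma^{-1}\circ\alphal)(C)$ indeed lies in $\overleftrightarrow{\textnormal{Hom}}^{*}(\Omega^{1}_{\dee},\Omega^{1}_{\dee}\otimes_{\B}\Omega^{1}_{\dee})$, and the injectivity hypothesis then forces $C=0$, i.e.\ $\nablar=\nablar'$. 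The only point that requires care is the sign bookkeeping in passing between a right Hermitian connection and its conjugate left connection, and making sure the resulting equation matches the exact form of the injective map in the hypothesis; beyond this, I foresee no real obstacle.
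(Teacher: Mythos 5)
Your proposal is correct and follows essentially the same route as the paper: decompose against a Grassmann connection, use torsion-freeness (via Proposition \ref{prop: omega-form}, which underlies Proposition \ref{lem: unique}) and Lemma \ref{lem: centre-bimodule-connection} to place the difference tensor in $\mathcal{Z}(\textnormal{Sym}^{3}_{\dee})$, then subtract the two bimodule-connection identities to land it in the kernel of $\alphar+\sigma^{-1}\circ\alphal$. The sign bookkeeping via Corollary \ref{lem:left-right} is handled exactly as in the paper's proof.
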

\begin{proof} 
Let $(\omega_{j})$ be a frame for $\Omega^{1}_{\dee}$ and write
\[
\nablar=\nablar^{v}+\alphar(A_{v}),\quad \nablar'=\nablar^{v}+\alphar(A_{v}').
\]
Since $\nablar,\nablar'$ are torsion-free, by Proposition \ref{prop: omega-form} we have $(1-\Pi)(A_{v}-A_{v}')=0$, so that $A_{v}-A_{v}'=\Pi(A_{v}-A_{v}').$
Thus, by Lemma \ref{lem: centre-bimodule-connection} we have that \begin{equation}
\label{eq: difference} 
A_{v}-A_{v}'=\Pi(A_{v}-A_{v}')\in\mathcal{Z}(\textnormal{Sym}^{3}_{\dee}).
\end{equation} 
Now since $\nablar,\nablar'$ are Hermitian $\dag$-bimodule connections relative to $\sigma$ we have
\begin{align*}
\nablar^{v}+\alphar(A_{v})&=\sigma^{-1}\circ\nablal^{v}-\sigma^{-1}\circ\alphal(A_{v})\\
\nablar^{v}+\alphar(A_{v}')&=\sigma^{-1}\circ\nablal^{v}-\sigma^{-1}\circ\alphal(A_{v}'),
\end{align*}
by Corollary \ref{lem:left-right}. Subtracting these two gives
\[
(\alphar+\sigma^{-1}\circ\alphal)(A_{v}-A_{v}')=0.
\]
Combining this with Equation \eqref{eq: difference} and the injectivity of $\alphar+\sigma^{-1}\circ\alphal$ on $\mathcal{Z}(\textnormal{Sym}^{3}_{\dee})$ we find that $A_{v}-A_{v}'=0$ as desired.
\end{proof}

\begin{corl}
\label{eq:affine-space}
Let $(\Omega^1_\dee(\B),\dag,\Psi,\pairing{\cdot}{\cdot}_\B)$ be a $\dag$-concordant Hermitian differential structure and $\sigma:T^2_\dee\to T^2_\dee$ a braiding.
Let $(\nablar,\sigma)$ be a $\dag$-bimodule connection and $A=A^\dag\in T^3_\dee$ be such that $\alphar(A):\Omega^{1}_{\dee}(\B)^{\dag}\to T^2_\dee(\B)^{\dag\circ\sigma}$. Then $(\nablar+\alphar(A),\sigma)$ is a $\dag$-bimodule connection.
Hence if $\alphar+\sigma^{-1}\circ\alphal$ is injective on $\mathcal{Z}({\rm Sym}^3_\dee)$ then no $A=A^\dag\in \mathcal{Z}({\rm Sym}^3_\dee)$ maps real one-forms to $\sigma$-real two-tensors.
\end{corl}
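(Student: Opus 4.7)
The plan is to rely on Proposition \ref{lem:basic-real}, which characterises $\dag$-bimodule connections by the property that they send real one-forms to $\sigma$-real two-tensors, combined with the identification $\dag \circ \alphar(A) \circ \dag = \alphal(A^{\dag})$ implicit in Corollary \ref{lem:left-right}.

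For the first assertion, I would first note that $\nablar + \alphar(A)$ is still a right connection, since $\alphar(A)$ is a right $\B$-module map and right connections form an affine space over such maps. To invoke Proposition \ref{lem:basic-real} it then suffices to check that $\nablar + \alphar(A)$ maps $\Omega^{1}_{\dee}(\B)^{\dag}$ into $T^{2}_{\dee}(\B)^{\dag\circ\sigma}$. This is immediate from $\C$-linearity together with the two standing hypotheses: $\nablar$ has this property by the necessary direction of Proposition \ref{lem:basic-real} applied to the $\dag$-bimodule connection $(\nablar, \sigma)$, and $\alphar(A)$ has it by assumption. The sufficient direction of Proposition \ref{lem:basic-real} then closes the argument.

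For the ``Hence'' part, suppose $A = A^{\dag} \in \mathcal{Z}(\textnormal{Sym}^{3}_{\dee})$ with $\alphar(A)$ sending real one-forms to $\sigma$-real two-tensors. The first assertion yields a second $\dag$-bimodule connection $(\nablar + \alphar(A), \sigma)$ sharing the braiding $\sigma$. Subtracting the defining identity $\sigma \circ \nablar = -\dag \circ \nablar \circ \dag$ from its analogue for $\nablar + \alphar(A)$ gives
\[
\sigma \circ \alphar(A) \;=\; -\dag \circ \alphar(A) \circ \dag \;=\; -\alphal(A^{\dag}),
\]
where the second equality uses Corollary \ref{lem:left-right}. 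Since $A = A^{\dag}$, this rearranges to $(\alphar + \sigma^{-1}\circ \alphal)(A) = 0$, and injectivity of $\alphar + \sigma^{-1}\circ\alphal$ on $\mathcal{Z}(\textnormal{Sym}^{3}_{\dee})$ forces $A = 0$.

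The only subtle point in this plan is the identification $\dag \circ \alphar(A)\circ \dag = \alphal(A^{\dag})$, which is not stated as a standalone lemma. It is, however, encoded in Corollary \ref{lem:left-right}: given a frame $v$, the relation $\nablal^{v} - \alphal(A^{\dag}) = -\dag\circ (\nablar^{v} + \alphar(A))\circ \dag$ together with $\nablal^{v} = -\dag \circ \nablar^{v} \circ \dag$ reduces exactly to the desired equality of right-linear maps. Beyond this piece of bookkeeping, no further obstacle is expected.
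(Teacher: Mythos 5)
Your proposal is correct, and it uses the same two ingredients as the paper's proof --- Proposition \ref{lem:basic-real} and the identity $\dag\circ\alphar(A)\circ\dag=\alphal(A^\dag)$ (which the paper cites from Lemma \ref{lem:useful-iso} and which you correctly extract from Corollary \ref{lem:left-right}) --- but the logical route is organised differently. For the first assertion you argue purely by additivity: $T^2_\dee(\B)^{\dag\circ\sigma}$ is an $\R$-subspace, so the sum of two maps sending $\Omega^1_\dee(\B)^\dag$ into it again does so, and Proposition \ref{lem:basic-real} closes the loop in both directions; notably you never use $A=A^\dag$ here. The paper instead first derives the relation $\alphar(A)+\sigma^{-1}\circ\alphal(A)=0$ directly from the hypotheses on $A$ (combining $\dag\circ\alphar(A)(\eta)=\alphal(A^\dag)(\eta^\dag)$ with $\sigma$-reality and $\C$-linearity) and then verifies the real-to-$\sigma$-real property of $\nablar+\alphar(A)$ by an explicit computation. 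For the ``Hence'' part the paper simply reuses that already-established relation together with injectivity, whereas you recover $(\alphar+\sigma^{-1}\circ\alphal)(A)=0$ by subtracting the defining identities $\sigma\circ\nablar=-\dag\circ\nablar\circ\dag$ of the two bimodule connections, mimicking the argument of Theorem \ref{thm: uniqueness-twist}; this is valid since the corollary supplies the reference connection $(\nablar,\sigma)$, though it is worth noting that the paper's derivation of the relation does not need any connection at all. Both routes are sound; yours makes the first assertion essentially formal at the cost of routing the second assertion through the existence of a bimodule connection.
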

\begin{proof} Let $A\in T^3_\dee$ and $\eta\in \Omega^1_\dee$.
It follows from Lemma \ref{lem:useful-iso} that $\dag\circ \alphar(A)(\eta)=\alphal(A^\dag)(\eta^\dag)$. So if $A=A^\dag$, $\alphar(A)$ maps real one-forms to $\sigma$-real two-tensors  and $\eta=-\eta^\dag$, applying $\sigma^{-1}$ to both sides yields 
\[
\alphar(A)(\eta)=-\sigma^{-1}\alphal(A)(\eta).
\]
As both $\alphar(A)$ and $\sigma^{-1}\circ\alphal(A)$ are complex linear we find that
$\alphar(A)+\sigma^{-1}\circ\alphal(A)=0$. Then for $\eta=-\eta^\dag\in\Omega^1_\dee$ we have
\begin{align*}
\sigma^{-1}\circ\dag\big(\nablar(\eta)+\alphar(A)(\eta)\big)
&=-\nablar(\eta^\dag)+\sigma^{-1}\circ\alphal(A)(\eta^\dag)
=\nablar(\eta)-\sigma^{-1}\circ\alphal(A)(\eta)\\
&=\nablar(\eta)+\alphar(A)(\eta)
\end{align*}
and so $\nablar+\alphar(A)$ maps real one-forms to $\sigma$-real two-tensors.
The final statement follows since any three-tensor $A=A^\dag$ in $\mathcal{Z}({\rm Sym}^3_\dee)$ which maps real one-forms to $\sigma$-real two-tensors is zero by injectivity of $\alphar+\sigma^{-1}\circ\alphal$.
\end{proof}

In the case that we have a unique Hermitian torsion-free connection on $\Omega^1_\dee$ relative to a given braiding $\sigma$, we do not in general have a closed formula for it. Nonetheless, the proof of Theorem \ref{thm: uniqueness-twist} does give us an approach to obtaining the connection form in examples.
\begin{corl}
\label{corl:formula-LC}
Let $(\Omega^1_\dee(\B),\dag)$ be a $\dag$-concordant Hermitian differential structure, $\nabla^{0}=\nablar^{v}+\alphar(A_{v})$ a Hermitian torsion-free right connection relative to a frame $v$ and satisfying $\Pi A_{v}=0$ and $\sigma$ a braiding. If a Hermitian torsion-free $\dag$-bimodule connection $(\nablar^{LC},\sigma)$ exists, then it is of the form $\nablar^{LC}=\nabla^{0}+\alphar(B)$
for some $B=B^\dag\in T^3_\dee$ and $B$ satisfies
\begin{equation}
(\alphar+\sigma^{-1}\circ\alphal)(B)=\sigma^{-1}\nablal^0-\nablar^0.
\label{eq:LC-vs-C}
\end{equation}
\end{corl}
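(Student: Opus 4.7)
The plan is to apply two previously established ingredients to the difference $\nablar^{LC}-\nabla^{0}$: Proposition~\ref{lem: unique} for the overall form of $\nablar^{LC}$, and the defining identity $\sigma\circ\nablar^{LC}=\nablal^{LC}$ of the $\dag$-bimodule structure to produce the equation for $B$.

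First I would invoke Proposition~\ref{lem: unique}: since $\nablar^{LC}$ and $\nabla^{0}$ are both Hermitian torsion-free right connections on $\Omega^{1}_{\dee}$, they are equivalent mod $\textnormal{Sym}^{3}_{\dee}$. Setting $B:=\alphar^{-1}(\nablar^{LC}-\nabla^{0})$ immediately gives $(1-\Pi)B=0$, hence $B\in\textnormal{Sym}^{3}_{\dee}\subset T^{3}_{\dee}$ and $\nablar^{LC}=\nabla^{0}+\alphar(B)$. To establish $B=B^{\dag}$ I would appeal to Proposition~\ref{lem:compatible}: an Hermitian right connection written relative to a frame has a $\dag$-invariant connection form, so the difference of two Hermitian connection forms is also $\dag$-invariant, and $B=B^{\dag}$ follows.

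For the identity \eqref{eq:LC-vs-C} I would unwind the bimodule condition $\sigma\circ\nablar^{LC}=\nablal^{LC}$. Corollary~\ref{lem:left-right} tracks how the conjugate left connection changes under the addition of $\alphar(B)$: one obtains $\nablal^{LC}=\nablal^{0}-\alphal(B^{\dag})$, which becomes $\nablal^{0}-\alphal(B)$ by the previous step. Substituting $\nablar^{LC}=\nabla^{0}+\alphar(B)$ into $\sigma\circ\nablar^{LC}=\nablal^{LC}$ and applying $\sigma^{-1}$ to both sides, the pieces $\nablar^{0}$ and $\sigma^{-1}\nablal^{0}$ separate from the $B$-dependent terms, yielding
\[
(\alphar+\sigma^{-1}\circ\alphal)(B)=\sigma^{-1}\nablal^{0}-\nablar^{0},
\]
which is the claimed formula.

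There is no real obstacle here; every ingredient is already in place. The only care point is sign bookkeeping when passing between right and left connections via $\dag$, and using the identification $\alphal(B^{\dag})=\alphal(B)$ at the right moment so the final equation appears in the symmetric form displayed in the statement. Note that the result gives no closed expression for $B$ on its own, since inverting $\alphar+\sigma^{-1}\circ\alphal$ is precisely the uniqueness hypothesis of Theorem~\ref{thm: uniqueness-twist}; what it does provide is an explicit linear equation whose solution, in any concrete example, produces the Levi-Civita connection.
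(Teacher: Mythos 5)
Your argument is correct and follows essentially the same route as the paper: write $\nablar^{LC}=\nabla^{0}+\alphar(B)$, use Corollary \ref{lem:left-right} together with $B=B^{\dag}$ to get $\nablal^{LC}=\nablal^{0}-\alphal(B)$, substitute into the bimodule condition $\nablar^{LC}=\sigma^{-1}\circ\nablal^{LC}$, and subtract the two expressions for $\nablar^{LC}$. The paper's proof is just a compressed version of this; your added justifications (Hermiticity giving $B=B^{\dag}$, Proposition \ref{lem: unique} placing $B$ in $\textnormal{Sym}^{3}_{\dee}$) are accurate supplements rather than deviations.
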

\begin{proof}
With
$
\nablar^{LC}=\nablar^0+\alphar(B)
$
the bimodule condition gives
\begin{equation*}
\nablar^{LC}=\sigma^{-1}\circ\nablal^{LC}=\sigma^{-1}\circ\nablal^0-\sigma^{-1}\circ\alphal(B).
\end{equation*}
Subtracting these two expressions for $\nablar^{LC}$ gives the result.
\end{proof}
The right hand side of Equation \eqref{eq:LC-vs-C} is determined, so the ``correction term'' $B$ can likely be computed. More examples will refine this approach, as currently we have no example for which $B\neq0$.

\section{The Levi-Civita connection for $\theta$-deformations of Riemannian manifolds}
\label{sec:examples}

Let $(M,g)$ be a compact Riemannian manifold such that $\mathbb{T}^{2}\subset\textnormal{Isom}(M,g)$. Let $\slashed{S}$ be a Dirac bundle on $M$ \cite{LM}, and $(C^{\infty}(M), L^{2}(M,\slashed{S}), \slashed{D})$ the associated Dirac spectral triple. We follow \cite{CL} in their construction of the isospectrally deformed spectral triple $(C^{\infty}(M), L^{2}(M,\slashed{S}), \slashed{D})$. In order to keep computations tractable, we only deal with $\mathbb{T}^2$ actions, but we expect that our results extend to $\mathbb{T}^n$-deformations using \cite{BGJ2,Cac}.

The existence and uniqueness of the Levi-Civita connection for $\theta$-deformations of free torus actions was proved in \cite{BGJ2}. The freeness requirement restricts the examples to principal bundles. For a free action, the bimodule $\Omega^{1}_{\slashed{D}}(M_{\theta})$ of one-forms on the deformed manifold $M_{\theta}$ is generated by its center. This property guarantees the existence of a flip map
\[\sigma:T^{2}_{\slashed{D}}(M_{\theta})\to T^{2}_{\slashed{D}}(M_{\theta}), \]
given by $\sigma(\omega\otimes \eta):=\eta\otimes\omega$, whenever $\omega,\eta$ are central one forms (\cite[Theorem 6.1]{Skeide}). The fact that $\Omega^{1}_{\slashed{D}}(M_{\theta})$ is generated by its center then guarantees that $\sigma$ extends to a bimodule map on all of $T^{2}_{\slashed{D}}(M_{\theta})$.

Here we extend their results to any isometric torus action, and we generalise the map $\sigma$ above to this setting in Lemma \ref{lem:braiding-theta}. Examples of non-free $\mathbb{T}^{2}$-actions on compact manifolds are ubiquitous: for $n\geq 2$, the unit sphere $\mathbb{S}^{2n-1}\subset\mathbb{C}^{n}$ admits the $\mathbb{T}^{2}$-action
\[(s_{1},s_{2})\cdot (z_{1},\cdots,z_{n}):=(s_{1}w_{1},s_{2}w_{2},z_{3},\cdots, z_{n}),\quad s=(s_{1},s_{2})\in\mathbb{T}^{2},\quad z=(z_{1},\cdots,z_{n})\in\mathbb{S}^{2n-1}.\]
which is not free whenever $n\geq 3$. 
\subsection{Isospectral deformations}
We first develop some general notation. Let $H$ be a Hilbert space carrying a unitary representation $U:\mathbb{T}^{2}\to \mathbb{B}(H)$ and denote by 
$$
\alpha:\mathbb{T}^{2}\to \textnormal{Aut}(\mathbb{B}(H)),\quad \alpha_{s}(T):=U(s)TU(s)^{*},\quad s\in \mathbb{T}^2,
$$
the associated group of $*$-automorphisms of $\mathbb{B}(H)$. Here $T\mapsto T^*$ is the usual operator adjoint. The representation $U$ is necessarily of the form $U(s)=e^{is_{1}p_{1}+is_{2}p_{2}}$ where the $p_{i}$ are the self-adjoint generators of the one-parameter groups associated to the coordinates $s_1,s_2$ of $\mathbb{T}^{2}$. We write
\[
C^{\infty}_{\alpha}(H):= \left\{T\in \mathbb{B}(H): s\mapsto \alpha_{s}(T)\,\,\textnormal{is smooth for the norm topology}\right\},
\]
for the algebra of $\alpha$-smooth vectors. The torus action induces a $\mathbb{Z}^{2}$-grading on the algebra $C^{\infty}_{\alpha}(H)$ given by
\begin{equation}
\label{eq: homseries}
C^{\infty}_{\alpha}(H):=\bigoplus_{n=(n_{1},n_{2})\in\mathbb{Z}^{2}}C^{\infty}_{\alpha}(H)_{(n_1,n_2)},\quad T=\sum_{n=(n_{1},n_{2})}T_{n_1,n_2},
\end{equation}
and we refer to the operators $T_{n_1,n_2}$ as the \emph{homogeneous components} of $T\in C^\infty_\alpha(H)$. The family $T_{n_1,n_2}$ is of rapid decay in $(n_1,n_2)$ so that the series \eqref{eq: homseries} is norm convergent. We note that to an arbitrary $T\in\mathbb{B}(H)$ we can associate homogeneous components $T_{n_1,n_2}$ but the associated series may not converge. For a homogeneous operator $T$ we denote its bidegree by $n(T)=(n_1(T),n_2(T))$. In general the bidegree of homogeneous operators $S,T$ satisfies
\[
n(ST)=n(S)+n(T),\quad n(T^{*})=-n(T).
\]

For the torus equivariant spectral triple $(C^{\infty}(M), L^{2}(M,\slashed{S}), \slashed{D})$ the operator $\slashed{D}$ is of degree $(0,0)$ and for all $s\in \mathbb{T}^2$ and $T\in C^\infty_\alpha(H)$ we have
\[[\slashed{D},U(s)]=0,\quad [\slashed{D},\alpha_{s}(T)]=\alpha_{s}([\slashed{D},T]).\]
In particular if $T$ is homogeneous then $n([\slashed{D},T])=n(T)$.
We now fix $\lambda=e^{2\pi i\theta}\in \mathbb{T}\subset\mathbb{C}$ and define, for a pair $(S,T)$ of homogeneous operators
\begin{equation}
\label{eq: theta-cocycle}
\Theta(S,T):=\lambda^{n_{2}(S)n_{1}(T)-n_{2}(T)n_{1}(S)}=\Theta(n(S),n(T)).
\end{equation}
The last equality indicates that $\Theta(S,T)$ depends only on the degree of $S,T$, and from this point of view defines a 2-cocycle on $\Z^2=\widehat{\mathbb{T}^2}$, \cite{Cac}.
In particular we have the identities
\begin{equation}
\Theta(T,S)=\overline{\Theta(S,T)}=\Theta(S,T)^{-1},\quad \Theta(S_{1},T)\Theta(S_{2},T)=\Theta(S_{1}S_{2},T).
\label{eq:theeta}
\end{equation}
Following \cite{CL}, we define the \emph{left-twist} of an operator $T\in C^{\infty}_{\alpha}(H)$ by $L(T)=T\lambda^{n_2(T) p_1}$.
For homogeneous $S,T\in\mathbb{B}(H)$ define a new multiplication $*$ and adjoint $\dag$ by
\begin{equation}
S*T:=\lambda^{n_{2}(S)n_{1}(T)}ST,\quad S^{\dag}:=\lambda^{n_1(S)n_2(S)}S^{*}
\label{eq:prod-adj}
\end{equation}
so that $L(S*T)=L(S)L(T)$ and $L(S^{\dag})=L(S)^{*}$. These equations equip $C^{\infty}_{\alpha}(H)$ with a new $*$-algebra structure for which $L$ is a $*$-representation. 
\begin{lemma}
\label{lem: commutation-in-deformation}
Suppose that $S,T\in\mathbb{B}(H)$ are both homogeneous and $ST=TS$. Then $S*T=\Theta(S,T)T*S$ and hence $L(S)L(T)=\Theta(S,T)L(T)L(S)$.
\end{lemma}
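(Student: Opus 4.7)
The plan is to unpack the definitions of the twisted product and of $\Theta$ directly, and then invoke that $L$ is a $\ast$-representation of $(C^\infty_\alpha(H),\ast,\dag)$ to transfer the identity from $\ast$ to ordinary composition.

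First I would write out both sides using Equation \eqref{eq:prod-adj}. By definition
\[
S\ast T = \lambda^{n_2(S)\,n_1(T)}\,ST,\qquad T\ast S = \lambda^{n_2(T)\,n_1(S)}\,TS.
\]
The hypothesis $ST=TS$ lets me replace $ST$ by $TS$ on the right of the first expression, giving $S\ast T=\lambda^{n_2(S)n_1(T)}TS$. Meanwhile, from the definition of $\Theta$ in \eqref{eq: theta-cocycle},
\[
\Theta(S,T)\,T\ast S
= \lambda^{\,n_2(S)n_1(T)-n_2(T)n_1(S)}\cdot\lambda^{\,n_2(T)n_1(S)}\,TS
= \lambda^{\,n_2(S)n_1(T)}\,TS,
\]
so the two expressions coincide, which is the first claimed identity $S\ast T=\Theta(S,T)\,T\ast S$.

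For the second identity I would recall from the paragraph following \eqref{eq:prod-adj} that the left-twist $L$ satisfies $L(A\ast B)=L(A)L(B)$ and is $\mathbb{C}$-linear. Applying $L$ to both sides of $S\ast T=\Theta(S,T)\,T\ast S$ therefore gives
\[
L(S)L(T)=L(S\ast T)=\Theta(S,T)\,L(T\ast S)=\Theta(S,T)\,L(T)L(S),
\]
as required.

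There is no real obstacle here: the statement is a one-line consequence of the cocycle formula and of the fact that $L$ intertwines $\ast$ with ordinary composition. The only minor care needed is that the bidegrees $n(S),n(T)$ are well-defined integers (which is why homogeneity of both $S$ and $T$ is assumed), so that the scalars $\lambda^{n_2(S)n_1(T)}$ and $\lambda^{n_2(T)n_1(S)}$ make sense and the cancellation of exponents is literal rather than modulo something. The identities \eqref{eq:theeta} are not strictly needed for this proof, but they make the antisymmetric structure of $\Theta$ transparent and explain why the twisted commutativity reduces precisely to a scalar twist by $\Theta(S,T)$.
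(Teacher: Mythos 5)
Your proof is correct and is precisely the "straightforward verification using the definitions" that the paper gives as its entire proof: unpacking \eqref{eq:prod-adj} and \eqref{eq: theta-cocycle}, cancelling exponents using $ST=TS$, and then applying the multiplicativity of $L$. Nothing further is needed.
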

\begin{proof} Straightforward verification using the definitions.
\end{proof}

Specialising to $H=L^{2}(M,\slashed{S})$, the $C^{*}$-algebra $C(M_{\theta})$ is the closure of $L(C^{\infty}(M))$ inside $\mathbb{B}(H)$ and the main result of \cite[Section 5]{CL} is that $(C^{\infty}(M_{\theta}),L^{2}(M,\slashed{S}),\slashed{D})$ is a spectral triple. 
Using the deformed spectral triple $(C^{\infty}(M_{\theta}),L^{2}(M,\slashed{S}),\slashed{D})$, we can define the one-forms
\[
\Omega^{1}_{\slashed{D}}(M_{\theta})=\Big\{\sum_i L(f_{i})[\slashed{D},L(g_i)]: f_{i},g_{i}\in C^{\infty}(M_{\theta})\Big\}.
\]
Note that we can always write a 1-form as a sum of homogeneous components. The $\mathbb{T}^{2}$-action and bidegree extend to the tensor modules $T^{k}_{\slashed{D}}(M_{\theta})$ via
\[
\alpha_{t}(\omega_{1}\otimes\cdots\otimes\omega_{k}):= \alpha_{t}(\omega_{1})\otimes\cdots\otimes\alpha_{t}(\omega_{k}),\quad n(\omega_{1}\otimes\cdots\otimes\omega_{k}):=\sum_{i=1}^{k}n(\omega_{i}).
\]
These definitions, together with Equation \eqref{eq: theta-cocycle}, allow us to define $\Theta(x,y)$ for any pair of homogeneous tensors $x\in T^{n}_{\slashed{D}}(M)$ and $y\in T^{m}_{\slashed{D}}(M)$. 
\subsection{Braiding, junk projection and exterior derivative}
We now construct a $\dag$-concordant Hermitian differential structure for the spectral triple $(C^{\infty}(M_{\theta}),L^{2}(M,\slashed{S}),\slashed{D})$. We have the one-forms and $\dag$, and next consider the braiding.
\begin{lemma}
\label{lem:braiding-theta}
The map $\sigma_{\theta}:T^{2}_{\slashed{D}}(M_{\theta})\to T^{2}_{\slashed{D}}(M_{\theta})$ defined by
\[
\sigma_{\theta}(\omega\otimes \eta):=\Theta(\omega,\eta)(\eta\otimes\omega),
\]
extends to a well-defined bimodule map satisfying $\sigma^{2}_{\theta}=1$ and $\sigma_{\theta}\circ\dag=\dag\circ\sigma_{\theta}$. On $T^{3}_{\slashed{D}}(M_{\theta})$ the identity
\begin{equation}
\label{YangBaxter}
(\sigma_{\theta}\otimes 1)(1\otimes \sigma_{\theta})(\sigma_{\theta}\otimes 1)=(1\otimes \sigma_{\theta})(\sigma_{\theta}\otimes 1)(1\otimes \sigma_{\theta}),
\end{equation}
holds true. In addition $\sigma_\theta:T^{2}_{\slashed{D}}(M_{\theta})\to T^{2}_{\slashed{D}}(M_{\theta})$ is self-adjoint for any inner product on $T^{2}_{\slashed{D}}(M_{\theta})$ induced by an inner product on $\Omega^1_{\slashed{D}}(M_\theta)$.
\end{lemma}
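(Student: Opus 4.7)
My approach is to reduce every assertion to the level of homogeneous tensors: each element of $T^{2}_{\slashed{D}}(M_{\theta})$ decomposes uniquely as a rapidly decreasing sum of its homogeneous components, and $\Theta$ depends only on bidegree. The key algebraic input will be that classical functions and one-forms commute as operators on $L^{2}(M,\slashed{S})$; by Lemma~\ref{lem: commutation-in-deformation} this yields $\omega f=\Theta(\omega,f)f\omega$ in the deformed algebra for homogeneous $\omega\in\Omega^{1}_{\slashed{D}}(M_{\theta})$ and $f\in C^{\infty}(M_{\theta})$. To establish well-definedness as a bimodule map I will check on homogeneous tensors the identity $\sigma_{\theta}(\omega f\otimes\eta)=\sigma_{\theta}(\omega\otimes f\eta)$: both sides carry a common prefactor $\Theta(\omega,f)\Theta(\omega,\eta)$ obtained from the bimultiplicativity \eqref{eq:theeta}, and rewriting $\eta\otimes f\omega=\eta f\otimes\omega=\Theta(\eta,f)f\eta\otimes\omega$ via the balancing together with $\Theta(f,\eta)\Theta(\eta,f)=1$ matches the residual phases.

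The identity $\sigma_{\theta}^{2}=1$ is immediate from $\Theta(\omega,\eta)\Theta(\eta,\omega)=1$. For $\sigma_{\theta}\circ\dag=\dag\circ\sigma_{\theta}$ I will use that $\omega^{\dag}$ has bidegree $-n(\omega)$, whence $\Theta(\eta^{\dag},\omega^{\dag})=\Theta(\eta,\omega)=\overline{\Theta(\omega,\eta)}$, which supplies precisely the factor needed to interchange $\dag$ and $\sigma_{\theta}$. The Yang--Baxter relation \eqref{YangBaxter} I plan to verify by direct computation: on a homogeneous triple $\omega\otimes\eta\otimes\xi$ both sides return $\Theta(\omega,\eta)\Theta(\omega,\xi)\Theta(\eta,\xi)\,\xi\otimes\eta\otimes\omega$, with the factor $\Theta(\omega,\xi)\Theta(\eta,\xi)=\Theta(\omega\eta,\xi)$ arising from pushing $\xi$ past the pair $(\omega,\eta)$.

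The main obstacle will be the self-adjointness, since it intertwines the bimodule structure, the inner product and the cocycle. My plan is to fix a homogeneous right frame $(\omega_{j})$ for $\Omega^{1}_{\slashed{D}}(M_{\theta})$, so that $(\omega_{i}\otimes\omega_{j})$ is a frame of $T^{2}_{\slashed{D}}(M_{\theta})$, and to reduce the claim to
\begin{equation*}
\Theta(\omega_{j},\omega_{i})\,\langle\omega_{j}\otimes\omega_{i},\,\omega_{k}\otimes\omega_{l}\rangle=\Theta(\omega_{k},\omega_{l})\,\langle\omega_{i}\otimes\omega_{j},\,\omega_{l}\otimes\omega_{k}\rangle.
\end{equation*}
Expanding both sides via $\langle x_{1}\otimes y_{1},x_{2}\otimes y_{2}\rangle=\langle y_{1},\langle x_{1},x_{2}\rangle y_{2}\rangle$ and using $\mathbb{T}^{2}$-equivariance of the inner product (so that $\langle\omega_{a},\omega_{b}\rangle$ is homogeneous of bidegree $n(\omega_{b})-n(\omega_{a})$), the deformed product of this scalar against the adjacent one-form picks up a $\Theta$-phase; a careful bookkeeping of these phases, using \eqref{eq:theeta}, should cancel the asymmetry between the two sides. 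As a check I intend to provide, one may alternatively realise $\sigma_{\theta}$ as the $L$-image of the classical flip and observe that with the inner product $\langle L(\omega^{0}),L(\eta^{0})\rangle_{M_{\theta}}=L(\langle\omega^{0},\eta^{0}\rangle_{M})$ the deformed self-adjointness becomes a transcription of the undeformed one through the $L$-twist, the $\Theta$-phase in $\sigma_{\theta}$ compensating exactly for the deformation of the inner product.
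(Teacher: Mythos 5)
Your proposal follows essentially the same route as the paper's proof: decomposition into homogeneous components, the commutation relation of Lemma~\ref{lem: commutation-in-deformation} for well-definedness over the balanced tensor product, the degree computation $n(\omega^{\dag})=-n(\omega)$ for compatibility with $\dag$, and a direct phase computation for the Yang--Baxter identity. The only step you leave unexecuted is the phase bookkeeping for self-adjointness; the paper carries this out explicitly on arbitrary homogeneous simple tensors (using that $\pairing{\omega}{\eta}$ is homogeneous of degree $n(\eta)-n(\omega)$ together with the identities \eqref{eq:theeta}), and it closes exactly as you predict.
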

\begin{proof}
We need to prove that for all one-forms $\omega,\eta\in \Omega^{1}_{\slashed{D}}(M_{\theta})$ and $a\in C^\infty(M_\theta)$ we have
\begin{equation}
\label{eq: well-defined-balanced}
\sigma_{\theta}(\omega * a\otimes \eta)=\sigma_{\theta}(\omega\otimes a*\eta).
\end{equation} First observe that $\omega * a\otimes \eta$ can be written as a sum
\[
\omega * a\otimes \eta=\sum_{i,j,k}\omega_i * a_j\otimes \eta_{k},
\]
where all $\omega_i,a_j,\eta_k$ are homogeneous, and so it suffices to prove \eqref{eq: well-defined-balanced} in the case all elements are homogeneous. So assuming that each of $\omega,\eta$ and $a$ are homogeneous, we have
\begin{align*}
\sigma_{\theta}(\omega * a\otimes \eta)
&=\Theta(\omega * a,\eta)\eta\otimes \omega * a
=\Theta(\omega,\eta)\Theta (a,\eta)\eta\otimes \omega * a\\
&=\Theta(\omega,\eta)\Theta (a,\eta)\Theta (a,\omega)\eta\otimes a*\omega=\Theta(\omega,\eta)\Theta (\omega,a)\Theta (a,\eta)\eta*a\otimes \omega\\
&=\Theta(\omega,\eta)\Theta (\omega,a)a*\eta\otimes \omega
=\Theta(\omega,a*\eta)a*\eta\otimes \omega\\
&=\sigma_{\theta}(\omega\otimes a*\eta),
\end{align*}
which proves well-definedness. Commutation of $\sigma_{\theta}$ with $\dag$ follows from
\begin{equation}
(\sigma_{\theta}\big((\omega\otimes\eta)^{\dag}\big))^{\dag}
=\big(\sigma_{\theta}(\eta^{\dag}\otimes\omega^{\dag})\big)^{\dag}
=(\Theta(\eta^{\dag},\omega^{\dag})\omega^{\dag}\otimes\eta^{\dag})^{\dag}
=\Theta(\omega,\eta)\eta\otimes\omega
=\sigma_{\theta}(\omega\otimes\eta).
\label{eq:bob}
\end{equation}
Next we show $\sigma_{\theta}$ is a right module map, where it again suffices to consider homogeneous elements, and then
\begin{align}
\sigma_{\theta}(\omega \otimes \eta* a)
&=\Theta(\omega,\eta*a)\eta*a\otimes\omega
=\Theta(\omega,\eta)\theta(\omega,a)\eta\otimes a*\omega\nonumber\\
&=\Theta(\omega,\eta)\eta\otimes \omega *a
=\sigma_{\theta}(\omega\otimes\eta)*a.
\label{eq:alice}
\end{align}
Combining Equations \eqref{eq:alice} with  \eqref{eq:bob} establishes that $\sigma_{\theta}$ is a left module map as well.
The identity \eqref{YangBaxter} follows by comparing the two identities
\begin{align*}
(\sigma_{\theta}\otimes 1)(1\otimes \sigma_{\theta})(\sigma_{\theta}\otimes 1)(\omega\otimes\eta\otimes\rho)
&=\Theta(\omega,\eta)\Theta(\omega,\rho)\Theta(\eta,\rho)(\rho\otimes\eta\otimes\omega)\\
(1\otimes \sigma_{\theta})(\sigma_{\theta}\otimes 1)(1\otimes \sigma_{\theta})(\omega\otimes\eta\otimes\rho)
&=\Theta(\eta,\rho)\Theta(\omega,\rho)\Theta(\omega,\eta)(\rho\otimes\eta\otimes\omega).
\end{align*}

Now we address the self-adjointness of $\sigma_\theta$. Given $\omega_j,\eta_j\in \Omega^{1}_{\slashed{D}}(M_{\theta})$, $j=1,2$, and any $(\cdot|\cdot)$ $C^\infty(M_\theta)$-valued right inner product on $\Omega^1_{\slashed{D}}(M_\theta)$, and $(\cdot|\cdot)_{T^2_{\slashed{D}}}$ be the induced inner product on $T^2_{\slashed{D}}(M_\theta)$. Then we have
\begin{align*}
(\sigma_\theta(\omega_1\ox\omega_2)|\eta_1\ox\eta_2)_{T^2_{\slashed{D}}}
&=\overline{\Theta(\omega_1,\omega_2)}(\omega_1|(\omega_2|\eta_1)\eta_2)\\
&=\Theta(\omega_2,\omega_1)\Theta((\omega_2|\eta_1),\eta_2)(\omega_1|\eta_2)(\omega_2|\eta_1)\\
&=\Theta(\omega_2,\omega_1)\Theta(\eta_1,\eta_2)\overline{\Theta(\omega_2,\eta_2)}(\omega_2(\eta_2|\omega_1)|\eta_1)\\
&=\Theta(\omega_2,\omega_1)\Theta(\eta_1,\eta_2)\Theta(\eta_2,\omega_2)\overline{\Theta(\omega_2,(\eta_2|\omega_1))}((\eta_2|\omega_1)\omega_2|\eta_1)\\
&=\Theta(\eta_1,\eta_2)(\omega_1\ox\omega_2|\eta_2\ox\eta_1)_{T^2_{\slashed{D}}}\\
&=(\omega_1\ox\omega_2|\sigma_\theta(\eta_1\ox\eta_2))_{T^2_{\slashed{D}}}.\qedhere
\end{align*}
\end{proof}
Similar to the classical symmetrisation, the braiding for $\theta$-deformed manifolds is closely related to the junk projection.
\begin{lemma}
\label{lem: theta-junk-exterior} 
Let $\Psi_{\theta}:=\frac{1+\sigma_{\theta}}{2}$. Then $\Psi_{\theta}$ is idempotent and there is an equality of bimodules
\[JT^{2}_{\slashed{D}}(M_{\theta})=\Psi_{\theta}(T^{2}_{\slashed{D}}(M_{\theta})).\]
The associated differential $\dt:\Omega^{1}_{\slashed{D}}(M_{\theta})\to T^{2}_{\slashed{D}}(M_{\theta})$ is of degree $0$. That is, $\dt$ preserves the bidegree of homogeneous forms.
\end{lemma}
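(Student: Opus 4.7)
The idempotency $\Psi_\theta^2=\Psi_\theta$ is immediate from $\sigma_\theta^2=1$ (Lemma \ref{lem:braiding-theta}). For the degree-zero property, each constituent of $\dt=(1-\Psi_\theta)\circ\widehat{\pi_\theta}\circ\delta\circ\pi_\theta^{-1}$ preserves bidegree: $\delta(L(f))=1\otimes L(f)-L(f)\otimes 1$ has bidegree $n(f)$; $\pi_\theta$ and $\widehat{\pi_\theta}$ are bidegree-preserving because $[\slashed{D},\cdot]$ is $\mathbb{T}^{2}$-equivariant; and $\Psi_\theta$ inherits the property from $\sigma_\theta$, whose defining formula involves only the scalar $\Theta$-cocycle and a tensor swap. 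The main claim is then the identification $JT^2_{\slashed{D}}(M_\theta)=\Psi_\theta T^2_{\slashed{D}}(M_\theta)$, which I would establish by a pair of inclusions.

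For $\Psi_\theta T^2_{\slashed{D}}(M_\theta)\subseteq JT^2_{\slashed{D}}(M_\theta)$, fix homogeneous $f,g\in C^{\infty}(M)$ and set
\[
\omega_0:=L(f)\,\delta(L(g))-\Theta(f,g)\,\delta(L(g))\,L(f)\in\Omega^1_u(C^{\infty}(M_\theta)).
\]
Since $f$ and $[\slashed{D},g]$ commute classically, Lemma \ref{lem: commutation-in-deformation} yields $L(f)[\slashed{D},L(g)]=\Theta(f,g)[\slashed{D},L(g)]L(f)$, so $\pi_\theta(\omega_0)=0$. The graded Leibniz rule and $\delta^2=0$ then give
\[
\widehat{\pi_\theta}(\delta\omega_0)=[\slashed{D},L(f)]\otimes[\slashed{D},L(g)]+\Theta(f,g)\,[\slashed{D},L(g)]\otimes[\slashed{D},L(f)]=2\Psi_\theta\bigl([\slashed{D},L(f)]\otimes[\slashed{D},L(g)]\bigr),
\]
placing this $\Psi_\theta$-image in $JT^2_{\slashed{D}}(M_\theta)$. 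Because $JT^2_{\slashed{D}}(M_\theta)$ is a sub-bimodule and $\Psi_\theta$ is a bimodule map, this elementary-tensor computation extends to all of $\Psi_\theta T^2_{\slashed{D}}(M_\theta)$.

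For the reverse inclusion, take $\omega_u=\sum_i L(f_i)\delta(L(g_i))\in\ker\pi_\theta$, decomposed into homogeneous summands. Injectivity of $L$ turns $\pi_\theta(\omega_u)=0$ into the classical relation $\sum_i\lambda^{n_2(f_i)n_1(g_i)}f_i[\slashed{D},g_i]=0$. Applying the classical identification of Dirac-calculus junk with symmetric two-tensors recalled after Example \ref{eg:classical-junk}, with coefficients $h_i=\lambda^{n_2(f_i)n_1(g_i)}f_i$ and $k_i=g_i$, yields the symmetry
\[
\sum_i\lambda^{n_2(f_i)n_1(g_i)}\,[\slashed{D},f_i]\otimes[\slashed{D},g_i]=\sum_i\lambda^{n_2(f_i)n_1(g_i)}\,[\slashed{D},g_i]\otimes[\slashed{D},f_i]
\]
in $T^{2}_{\slashed{D}}(M)$. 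The main obstacle is then a bookkeeping step: using the cocycle identities \eqref{eq:theeta} to transport this classical $\lambda$-weighted symmetry across the algebra twist into the fixed-point equation $\sigma_\theta(\widehat{\pi_\theta}(\delta\omega_u))=\widehat{\pi_\theta}(\delta\omega_u)$ in $T^{2}_{\slashed{D}}(M_\theta)$, which is equivalent to $\widehat{\pi_\theta}(\delta\omega_u)\in\mathrm{Im}(\Psi_\theta)$. The delicate point is to track how the ordinary tensor flip acquires precisely the factor $\Theta(\omega,\eta)$ when the same two-tensor is viewed in the $\theta$-deformed bimodule; all the required cancellations follow from the homogeneous decomposition and the cocycle relations in \eqref{eq:theeta}.
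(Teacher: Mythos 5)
Your argument is essentially the paper's, reorganised as two explicit inclusions instead of the paper's decomposition of $\ker\pi_{\slashed{D}}$ by the idempotents $(1\pm\rho_u)/2$ attached to a universal-level involution $\rho_u(f\otimes g)=\Theta(f,g)\,g\otimes f$ satisfying $\pi_{\slashed{D}}\circ\rho_u=-\pi_{\slashed{D}}$. Your Leibniz computation $\widehat{\pi}(\delta\omega_0)=2\Psi_{\theta}\bigl([\slashed{D},L(f)]\otimes[\slashed{D},L(g)]\bigr)$ is the identity that closes the paper's proof, and both arguments reduce the containment $JT^{2}_{\slashed{D}}(M_{\theta})\subseteq\Psi_{\theta}T^{2}_{\slashed{D}}(M_{\theta})$ to the same classical input, namely that the antisymmetrisation of undeformed Dirac junk vanishes. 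The one step you defer --- transporting the $\lambda$-weighted classical symmetry into the fixed-point equation for $\sigma_{\theta}$ --- is genuinely the crux, and is exactly where the paper invokes the explicit linear isomorphism $T_{\theta}^{\Omega^{1}_{\slashed{D}},\Omega^{1}_{\slashed{D}}}:\omega\otimes\eta\mapsto\lambda^{-n_{2}(\omega)n_{1}(\eta)}\omega\otimes\eta$ of \cite[Lemma 5.4]{MRCurve}. This is not pure bookkeeping: $T^{2}_{\slashed{D}}(M)$ and $T^{2}_{\slashed{D}}(M_{\theta})$ are balanced over different products, so one needs a well-defined map between them before coefficients can be compared. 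Once that map is in hand, the weight $\lambda^{n_{2}(f_i)n_{1}(g_i)}$ arising from $L(f_i)[\slashed{D},L(g_i)]=L\bigl(\lambda^{n_{2}(f_i)n_{1}(g_i)}f_i[\slashed{D},g_i]\bigr)$ combines with the ratio of the $T_{\theta}$-weights on the two orderings, $\lambda^{-n_{2}(f_i)n_{1}(g_i)}$ versus $\lambda^{-n_{2}(g_i)n_{1}(f_i)}$, to produce precisely $\Theta(f_i,g_i)$ by \eqref{eq: theta-cocycle}; so the cancellation you anticipate does occur, but it should be written out (or the cited lemma quoted) rather than asserted.
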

\begin{proof} Define 
\[
\rho_{u}:C^{\infty}(M_{\theta})\otimes C^{\infty}(M_{\theta})\to C^{\infty}(M_{\theta})\otimes C^{\infty}(M_{\theta}),
\]
on homogeneous elements by $\rho_{u}(f\otimes g):=\Theta(f,g) g\otimes f$. Then $\rho_{u}$ preserves the universal one-forms $\Omega^{1}_{u}(M_{\theta})=\ker m\subset C^{\infty}(M_{\theta})^{\otimes 2}$ since
\begin{align*}
\sum f_{i}\otimes g_{i}\in \ker m\Leftrightarrow \sum f_{i}*g_{i}=0\Leftrightarrow \sum \Theta(f_{i}, g_i)g_{i}*f_{i}=0\Leftrightarrow \sum \rho_{u}(f_{i}\otimes g_{i})\in\ker m.
\end{align*}
Next, we have $\pi_{\slashed{D}}\circ\rho_{u}=-\pi_{\slashed{D}}$, since for $\omega:=\sum f_i\otimes g_i\in\Omega^{1}_u(M_{\theta})$ we have
\begin{align*}
\pi_{\slashed{D}}\circ\rho_{u}(\omega)&=\sum\Theta(f_{i},g_{i})\sum L(g_{i})[\slashed{D},L(f_i)]
=-\sum\Theta(f_{i},g_{i})[\slashed{D},L(g_i)]L(f_{i})\\
&=-\sum L(f_{i})[\slashed{D},L(g_i)]=-\pi_{\slashed{D}}(\omega).
\end{align*}
Since $\rho_{u}^{2}=1$ the endomorphisms $P_{\pm}=\frac{1\pm \rho_u}{2}$ are idempotent, with sum $P_{+}+P_{-}=1$. We have shown that ${\rm Im}(P_+)={\rm Im}((1+\rho_u)/2)\subset \ker(\pi_{\slashed{D}})$. Now we claim that $\pi_{\slashed{D}}(\delta\ker\pi_{\slashed{D}})=\pi_{\slashed{D}}(\delta P_{+}\ker\pi_{\slashed{D}})$. By \cite[Lemma 5.4]{MRCurve}, the linear map
defined on the tensor product of homogenous $\omega,\eta$ by the formula
\[
T_{\theta}^{\Omega^{1}_{\slashed{D}},\Omega^{1}_{\slashed{D}}}:T^{2}_{\slashed{D}}(M)\rightarrow T^{2}_{\slashed{D}}(M_{\theta}),\quad \omega\otimes\eta\mapsto \lambda^{-n_{2}(\omega)n_{1}(\eta)}\omega\otimes \eta,
\]
is a linear isomorphism of vector spaces. Now let $\sum_{i} f_{i}\otimes g_{i}\in \ker\pi_{\slashed{D}}$ and, since $\slashed{D}$ is of degree $0$, we may without loss of generality assume that all $f_{i}$ are homogeneous of the same bidegree, and similarly for the $g_{i}$. Then we compute
\begin{align*}
\pi_{\slashed{D}}\left(\delta P_{-}\left(\sum_{i} f_{i}\otimes g_{i}\right)\right)&=\sum_{i}[\slashed{D},L(f_{i})]\otimes [\slashed{D},L(g_{i})]-\Theta(f_{i},g_{i})[\slashed{D},L(g_{i})]\otimes [\slashed{D},L(f_{i})]\\
&=\lambda^{n_{2}(f_{i})n_{1}(g_{i})}T_{\theta}^{\Omega^{1}_{\slashed{D}},\Omega^{1}_{\slashed{D}}}\left(\sum_{i}[\slashed{D},f_{i}]\otimes[\slashed{D},g_{i}]-[\slashed{D},g_{i}]\otimes [\slashed{D},f_{i}]\right)=0.
\end{align*} 
The last equality follows since $\sum_{i}[\slashed{D},f_{i}]\otimes[\slashed{D},g_{i}]-[\slashed{D},g_{i}]\otimes [\slashed{D},f_{i}]=0$ as it is the Clifford representation of the two form $\sum_{i}\mathrm{d}(f_{i}\mathrm{d}g_{i})$ which is $0$ since $\sum_{i}f_{i}\otimes g_{i}\in\ker\pi_{\slashed{D}}$.
Therefore $\pi_{\slashed{D}}(\delta P_-\ker(\pi_{\slashed{D}}))=0$ and so
\begin{align*}
JT^{2}_{\slashed{D}}(M_{\theta})=\pi_{\slashed{D}}(\delta P_{+}\ker\pi_{\slashed{D}}),
\end{align*}
and 
\begin{align*}
\pi_{\slashed{D}}(\delta(f\otimes g+\rho_{u}(f\otimes g)))&=[\slashed{D},L(f)]\otimes [\slashed{D},L(g)]+\Theta(f,g)[\slashed{D},L(g)]\otimes [\slashed{D},L(f)]\\
&=2\Psi_{\theta}([\slashed{D},L(f)]\otimes [\slashed{D},L(g)]),
\end{align*}
which proves the claim. The fact that $\dt$ preserves the bidegree of homogeneous forms follows from the fact that both $\slashed{D}$ and $\Psi_{\theta}$ are of degree $0$ for the $\mathbb{T}^{2}$ action.
\end{proof}
\subsection{Inner product and invariant frames}
We wish to equip $\Omega^{1}_{\slashed{D}}(M_{\theta})$ with an inner product. To this end we deform the complexification of the classical inner product on $\Omega^{1}(M)$ used to construct the Dirac operator. This inner product will coincide with the metric of \cite[Proposition 3.12]{BGJ2}. Previously, $\theta$-deformed inner products have been considered in \cite{BLS,Cac}. For homogeneous forms we define the $\theta$-deformed right inner product
\begin{equation}
\label{eq: theta-inner-product}
\pairing{\omega}{\eta}_{\theta}:=\lambda^{(n_{1}(\omega)-n_1(\eta))n_{2}(\omega)}\pairing{\omega}{\eta},\qquad \omega,\eta\in \Omega^1_{\slashed{D}}(M_\theta).
\end{equation}
\begin{lemma}
\label{lem:inn-prod}
For $\omega,\eta\in\Omega^1_{\slashed{D}}(M_\theta)$ and $a\in C^\infty(M_\theta)$, the inner product \eqref{eq: theta-inner-product} satisfies
\begin{align*}
\pairing{\omega}{\eta}_{\theta}=(\pairing{\eta}{\omega}_{\theta})^{\dag},\  \pairing{\omega}{\eta *a}_{\theta}&=\pairing{\omega}{\eta}_{\theta} * a, \  \pairing{\omega}{\eta}_{\theta}=
\Theta(\omega,\eta)\pairing{\eta^{\dag}}{\omega^{\dag}}_{\theta}.
\end{align*}
Also $n(\pairing{\omega}{\eta}_{\theta})=n(\eta)-n(\omega)$ and $\Theta(S,\pairing{\omega}{\eta}_\theta)=\Theta(S,\omega)^{-1}\Theta(S,\eta)$ for any $S\in  C^\infty_\alpha(H)$. Both $\sigma_\theta$ and $\Psi_\theta$ are self-adjoint with respect to the induced inner product on $T^2_{\slashed{D}}(M_\theta)$.
\end{lemma}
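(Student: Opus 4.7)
The proof consists of direct verifications on homogeneous forms $\omega,\eta$ (and homogeneous $a,S$), reduced from the general case by sesquilinearity and linearity of all operations. The essential ingredients are the defining formulas for $\pairing{\cdot}{\cdot}_{\theta}$, the deformed product $*$, the involution $\dag$, and the cocycle $\Theta$, together with the classical facts that the $C^{\infty}(M)$-valued inner product $\pairing{\omega}{\eta}$ of homogeneous one-forms has bidegree $n(\eta)-n(\omega)$, is conjugate-symmetric $\pairing{\omega}{\eta}=\pairing{\eta}{\omega}^{*}$, right $C^{\infty}(M)$-linear in the second argument, and satisfies a compatibility identity relating $\pairing{\omega^{*}}{\eta^{*}}$ to $\pairing{\omega}{\eta}$.

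I would begin with the two standalone claims. The bidegree identity $n(\pairing{\omega}{\eta}_{\theta})=n(\eta)-n(\omega)$ is immediate from \eqref{eq: theta-inner-product}, since the scalar prefactor $\lambda^{(n_{1}(\omega)-n_{1}(\eta))n_{2}(\omega)}$ has bidegree $0$ and the classical inner product carries the stated bidegree. The cocycle identity $\Theta(S,\pairing{\omega}{\eta}_{\theta})=\Theta(S,\omega)^{-1}\Theta(S,\eta)$ is then a one-line consequence of the additivity of $\Theta$ in its second argument, recorded in Equation \eqref{eq:theeta}, applied to the difference $n(\eta)-n(\omega)$.

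For each of the three identities in the first display, the strategy is to expand both sides in terms of the undeformed inner product and the definitions of $\dag$ and $*$, and then match $\lambda$-exponents. Specifically, the conjugate symmetry $\pairing{\omega}{\eta}_{\theta}=(\pairing{\eta}{\omega}_{\theta})^{\dag}$ follows from $\pairing{\omega}{\eta}=\pairing{\eta}{\omega}^{*}$ combined with the definition $S^{\dag}=\lambda^{n_{1}(S)n_{2}(S)}S^{*}$ applied at bidegree $n(\eta)-n(\omega)$. The right-linearity $\pairing{\omega}{\eta*a}_{\theta}=\pairing{\omega}{\eta}_{\theta}*a$ is obtained by expanding $\eta*a=\lambda^{n_{2}(\eta)n_{1}(a)}\eta a$, applying classical right-linearity, and then expanding $\pairing{\omega}{\eta}_{\theta}*a$ using the already-established bidegree of $\pairing{\omega}{\eta}_{\theta}$; the two $\lambda$-exponents agree after elementary cancellation. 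The third identity $\pairing{\omega}{\eta}_{\theta}=\Theta(\omega,\eta)\pairing{\eta^{\dag}}{\omega^{\dag}}_{\theta}$ is the most delicate: it requires applying $\dag$ to both entries, invoking the classical relation between $\pairing{\omega^{*}}{\eta^{*}}$ and $\pairing{\omega}{\eta}$, and checking that the accumulated $\lambda$-exponent from the two applications of $\dag$, the bidegree twist in $\pairing{\cdot}{\cdot}_{\theta}$, and the classical relation consolidates into exactly $\Theta(\omega,\eta)=\lambda^{n_{2}(\omega)n_{1}(\eta)-n_{2}(\eta)n_{1}(\omega)}$.

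Once these properties establish $\pairing{\cdot}{\cdot}_{\theta}$ as a well-defined sesquilinear right inner product on $\Omega^{1}_{\slashed{D}}(M_{\theta})$, the self-adjointness of $\sigma_{\theta}$ on the induced inner product on $T^{2}_{\slashed{D}}(M_{\theta})$ is immediate from Lemma \ref{lem:braiding-theta}, which proves self-adjointness for \emph{any} inner product on two-tensors induced by an inner product on $\Omega^{1}_{\slashed{D}}(M_{\theta})$. Self-adjointness of $\Psi_{\theta}=(1+\sigma_{\theta})/2$ is then automatic. The main obstacle throughout is purely computational: the careful bookkeeping of the four kinds of $\lambda$-exponents arising from $*$, $\dag$, $\Theta$, and the inner product twist, which must cancel precisely in each identity. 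Computing the bidegree of every intermediate expression first substantially simplifies the third identity.
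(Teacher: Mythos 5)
Your proposal is correct and follows essentially the same route as the paper: reduce to homogeneous elements, verify each identity by matching $\lambda$-exponents from the definitions of $*$, $\dag$, $\Theta$ and the twist in $\pairing{\cdot}{\cdot}_\theta$, deduce the $\Theta$-identity from the bidegree formula, and quote Lemma \ref{lem:braiding-theta} for self-adjointness of $\sigma_\theta$ and $\Psi_\theta$. The only (minor) difference is that the paper justifies the classical bidegree fact $n(\pairing{\omega}{\eta})=n(\eta)-n(\omega)$ via the explicit formula $2\pairing{[\slashed{D},f]}{[\slashed{D},g]}=[\slashed{D},f]^{*}[\slashed{D},g]+[\slashed{D},g][\slashed{D},f]^{*}$ together with $n(ST)=n(S)+n(T)$ and $n(T^{*})=-n(T)$, whereas you take it as a known input.
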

\begin{proof} These are all straightforward algebraic verifications, and for simplicity we suppose that $\omega,\eta$ and $a$ are homogeneous. The first follows from the definition of the adjoint \eqref{eq:prod-adj} via
\begin{align*}
\pairing{\omega}{\eta}_{\theta}=\lambda^{(n_{1}(\omega)-n_1(\eta))n_{2}(\omega)}\pairing{\omega}{\eta}
=\lambda^{(n_{1}(\omega)-n_1(\eta))n_{2}(\omega)}\lambda^{-n_{1}(\pairing{\omega}{\eta})n_{2}(\pairing{\omega}{\eta})}\pairing{\eta}{\omega}^{\dag}=(\pairing{\eta}{\omega}_{\theta})^{\dag}.
\end{align*}
The second and third are similar.
For $f,g\in C^{\infty}(M)$ the inner product on $\Omega^{1}_{\slashed{D}}(M)$ satisfies
\[
2\pairing{[\slashed{D},f]}{[\slashed{D},g]}=[\slashed{D},f]^{*}[\slashed{D},g]+[\slashed{D},g][\slashed{D},f]^{*},
\]
and thus for any two homogeneous 1-forms $\omega$ and $\eta$ and all $\theta$ we have
$
n(\pairing{\omega}{\eta}_{\theta})=n(\eta)-n(\omega).
$
Therefore $\Theta(S,\pairing{\omega}{\eta}_\theta)=\Theta(S,\omega)^{-1}\Theta(S,\eta)$ for any $S\in  C^\infty_\alpha(H)$. 
The self-adjointness of $\sigma_\theta$ and $\Psi_\theta$ follows directly from Lemma \ref{lem:braiding-theta} and the definition of $\Psi_\theta$.
\end{proof}
The inner product $\pairing{\cdot}{\cdot}_\theta$ also satisfies the positivity property $\pairing{\omega}{\omega}_{\theta}\geq 0$ in $C^{\infty}(M_{\theta})$. Although it is possible to prove this directly from the definition, we will derive it from the existence of a nice class of frames for the undeformed module $\Omega^{1}_{\slashed{D}}(M)$. 

\begin{lemma} 
\label{lem: homogeneous-frame}
The module $\Omega^{1}_{\slashed{D}}(M)$ admits a finite frame $(\omega_{j})$ that is homogeneous for the action of $\mathbb{T}^{2}$. Any such homogeneous frame $(\omega_{j})$ is also a frame for $\Omega^{1}_{\slashed{D}}(M_{\theta})$.
\end{lemma}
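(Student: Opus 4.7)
The plan is to construct a homogeneous finite frame for $\Omega^{1}_{\slashed{D}}(M)$ by extracting Fourier components of an arbitrary finite frame and then pre-multiplying by an equivariant operator to restore the frame identity; the second claim then follows because the deformation phases arising in $*$ and $\pairing{\cdot}{\cdot}_\theta$ cancel exactly on homogeneous elements.

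First, by the Corollary following Proposition \ref{prop:SHFC-End}, the finitely generated projective $C^{\infty}(M)$-module $\Omega^{1}_{\slashed{D}}(M)$ admits a finite frame $(\eta_j)_{j=1}^{N}$. Since the $\mathbb{T}^{2}$-action is smooth, acts by isometries, and commutes with $\slashed{D}$, the module $\Omega^{1}_{\slashed{D}}(M)$ is a smooth $\mathbb{T}^{2}$-representation, and each $\eta_j$ admits a rapidly decaying Fourier expansion $\eta_j = \sum_{n \in \mathbb{Z}^{2}}(\eta_j)_n$ with $(\eta_j)_n \in \Omega^{1}_{\slashed{D}}(M)$ homogeneous of bidegree $n$. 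As the inner product is $\mathbb{T}^{2}$-invariant, each rank-one operator $|(\eta_j)_n\rangle\langle(\eta_j)_n|$ is $\mathbb{T}^{2}$-equivariant (bidegree $0$), and $\mathbb{T}^{2}$-averaging of $\sum_j |\eta_j\rangle\langle\eta_j| = \Id$ together with orthogonality of characters yields
\[
\sum_{j,n}|(\eta_j)_n\rangle\langle(\eta_j)_n| = \Id
\]
with norm convergence by the rapid decay of Fourier coefficients. Hence for $K$ sufficiently large, $S_K := \sum_{j,\,|n| \leq K}|(\eta_j)_n\rangle\langle(\eta_j)_n|$ is invertible in $\REnd(\Omega^{1}_{\slashed{D}}(M))$, where invertibility in the \emph{smooth} endomorphism algebra follows from the locality of $C^{\infty}(M) \subset C(M)$ via Proposition \ref{prop:SHFC-End}. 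Since $S_K$ is $\mathbb{T}^{2}$-equivariant, so is $S_K^{-1/2}$, and the elements $\omega_{j,n} := S_K^{-1/2}(\eta_j)_n$ for $|n| \leq K$ form a finite frame for $\Omega^{1}_{\slashed{D}}(M)$, each homogeneous of bidegree $n$.

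For the second claim, it suffices to verify the deformed frame identity on a homogeneous $\omega$; the general case then follows by Fourier decomposition and continuity. By Lemma \ref{lem:inn-prod}, $\pairing{\omega_k}{\omega}_{\theta}$ is homogeneous of bidegree $n(\omega) - n(\omega_k)$, so the phase introduced by $*$ via \eqref{eq:prod-adj} is $\lambda^{n_2(\omega_k)(n_1(\omega) - n_1(\omega_k))}$, which exactly cancels the phase $\lambda^{(n_1(\omega_k) - n_1(\omega))n_2(\omega_k)}$ from \eqref{eq: theta-inner-product}, so that $\omega_k * \pairing{\omega_k}{\omega}_{\theta} = \omega_k\pairing{\omega_k}{\omega}$. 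Summing over $k$ and invoking the undeformed frame identity gives $\omega = \sum_k \omega_k * \pairing{\omega_k}{\omega}_\theta$. The principal obstacle is the interplay between analysis and algebra: one must ensure that the partial sums $S_K$ converge and become invertible within the smooth category, that $S_K^{-1/2}$ remains in $\REnd(\Omega^{1}_{\slashed{D}}(M))$, and that the Fourier components of elements of $\Omega^{1}_{\slashed{D}}(M)$ lie back in $\Omega^{1}_{\slashed{D}}(M)$---all of which rely on locality of $C^{\infty}(M) \subset C(M)$ and rapid decay arising from the smooth $\mathbb{T}^{2}$-action.
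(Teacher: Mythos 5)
Your proof is correct, and the second half (the phase--cancellation computation showing a homogeneous frame for $\Omega^{1}_{\slashed{D}}(M)$ remains a frame for $\Omega^{1}_{\slashed{D}}(M_{\theta})$) is essentially identical to the paper's. Where you genuinely diverge is in the construction of the homogeneous frame itself. The paper invokes the equivariant stabilisation result \cite[Lemma 2.33]{Cac}: there is a finite-dimensional $\mathbb{T}^{2}$-representation $V_{\pi}$ and an equivariant isometry $v:\Omega^{1}_{\slashed{D}}(M)\to V_{\pi}\otimes C^{\infty}(M)$, and one takes $\omega_{j}:=v^{*}(e_{j}\otimes 1)$ for a homogeneous basis $(e_{j})$ of $V_{\pi}$. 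You instead build the frame by hand: decompose an arbitrary finite frame into homogeneous Fourier components, observe via averaging and orthogonality of characters that $\sum_{j,n}|(\eta_j)_n\rangle\langle(\eta_j)_n|=\Id$, truncate to get an invertible equivariant $S_K$, and correct by $S_K^{-1/2}$ exactly as in the Corollary following Proposition \ref{prop:SHFC-End}. This is self-contained and makes the analytic inputs (rapid decay, locality of $C^{\infty}(M)$, equivariance of $S_K^{-1/2}$) explicit, at the cost of being longer and of requiring one point you only gesture at: that Fourier components of elements of $\Omega^{1}_{\slashed{D}}(M)$ again lie in the smooth module. That holds here because $\Omega^{1}_{\slashed{D}}(M)$ is the smooth sections of a bundle carrying a smooth isometric $\mathbb{T}^{2}$-action, so the averaged integrals $\int_{\mathbb{T}^2}\overline{\chi_n(s)}\,\alpha_s(\eta)\,ds$ stay smooth; it would be worth stating. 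The paper's route buys brevity and generality (the cited lemma packages the equivariant finite-projectivity once and for all); yours buys independence from \cite{Cac} and a concrete recipe for producing the homogeneous frame from any given one.
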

\begin{proof}
By \cite[Lemma 2.33]{Cac} there exists a finite dimensional representation $(\pi,V_{\pi})$ of $\mathbb{T}^{2}$ and an equivariant isometry
\[
v:\Omega^{1}_{\slashed{D}}(M)\to V_{\pi}\otimes C^{\infty}(M).
\]
As all irreducible representations of $\mathbb{T}^{2}$ are 1-dimensional, the vector space $V_{\pi}$ decomposes as a direct sum of one-dimensional representations. We can thus choose a basis $e_{j}$ of $V_{\pi}$ consisting of homogeneous elements and set $\omega_{j}:=v^{*}(e_{j}\otimes 1)$, and these form a frame for $\Omega^{1}_{\slashed{D}}(M)$. We claim they also form a frame for $\Omega^{1}_{\slashed{D}}(M_{\theta})$. For homogeneous $\eta$ the computation
\begin{align*}
\sum_{j}\omega_{j}*\pairing{\omega_{j}}{\eta}_{\theta}&=\sum_{j}\lambda^{n_{2}(\omega_{j})(n_{1}(\eta)-n_{1}(\omega_{j}))}\omega_{j}\pairing{\omega_{j}}{\eta}_{\theta}\\
&=\sum_{j}\lambda^{n_{2}(\omega_{j})(n_{1}(\eta)-n_{1}(\omega_{j}))}\lambda^{(n_{1}(\omega_j)-n_1(\eta))n_{2}(\omega_j)}\omega_{j}\pairing{ \omega_{j}}{\eta}
=\sum_{j}\omega_{j}\pairing{\omega_{j}}{\eta}=\eta,
\end{align*}
proves the frame relation, and extending to linear combinations completes the proof.
\end{proof}
\begin{corl} The inner product $\pairing{\cdot}{\cdot}_{\theta}$ satisfies $\pairing{\omega}{\omega}_{\theta}\geq 0$ for all $\omega\in\Omega^{1}_{\slashed{D}}(M_{\theta})$.
\end{corl}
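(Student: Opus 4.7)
The plan is to bootstrap positivity directly from the existence of a $\mathbb{T}^{2}$-homogeneous frame, which Lemma \ref{lem: homogeneous-frame} has just provided, together with the sesquilinearity properties of $\pairing{\cdot}{\cdot}_{\theta}$ recorded in Lemma \ref{lem:inn-prod}. Given any $\omega\in\Omega^{1}_{\slashed{D}}(M_{\theta})$, I would choose a homogeneous frame $(\omega_{j})$ and set $a_{j}:=\pairing{\omega_{j}}{\omega}_{\theta}\in C^{\infty}(M_{\theta})$, so that the frame relation reads $\omega=\sum_{j}\omega_{j}*a_{j}$.

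Using the right $C^{\infty}(M_{\theta})$-linearity in the second argument and the conjugate symmetry $\pairing{\omega}{\omega_{j}}_{\theta}=(\pairing{\omega_{j}}{\omega}_{\theta})^{\dag}=a_{j}^{\dag}$ from Lemma \ref{lem:inn-prod}, I would expand
\[
\pairing{\omega}{\omega}_{\theta}
=\Big\langle\omega\Big|\sum_{j}\omega_{j}*a_{j}\Big\rangle_{\!\theta}
=\sum_{j}\pairing{\omega}{\omega_{j}}_{\theta}*a_{j}
=\sum_{j}a_{j}^{\dag}*a_{j}.
\]
It then remains to observe that each summand $a_{j}^{\dag}*a_{j}$ is a positive element of $C^{\infty}(M_{\theta})$. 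This follows because the left-twist map $L$ is by construction a $*$-representation of $(C^{\infty}(M_\theta),*,\dag)$ on $\mathbb{B}(H)$, so $L(a_{j}^{\dag}*a_{j})=L(a_{j})^{*}L(a_{j})\geq 0$ in $\mathbb{B}(H)$, and positivity of elements of $C^{\infty}(M_{\theta})$ is by definition inherited from this embedding.

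There is essentially no obstacle here: the whole argument is a two-line verification once the preceding lemmas are in hand. The only point requiring a moment of care is the interplay between the deformed product $*$ and the deformed adjoint $\dag$ when moving from the frame expansion to the sum of squares, but this is exactly what the identities $\pairing{\omega}{\eta*a}_{\theta}=\pairing{\omega}{\eta}_{\theta}*a$ and $\pairing{\omega}{\eta}_{\theta}=(\pairing{\eta}{\omega}_{\theta})^{\dag}$ from Lemma \ref{lem:inn-prod} were designed to handle, and Lemma \ref{lem: homogeneous-frame} guarantees a frame is available to begin with.
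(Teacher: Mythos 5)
Your proof is correct and is essentially identical to the paper's own argument: expand $\omega$ in a homogeneous frame, use the right-linearity and conjugate symmetry of $\pairing{\cdot}{\cdot}_{\theta}$ from Lemma \ref{lem:inn-prod} to write $\pairing{\omega}{\omega}_{\theta}=\sum_{j}a_{j}^{\dag}*a_{j}$, and conclude positivity. The only addition is your explicit remark that $a^{\dag}*a\geq 0$ because $L$ is a $*$-representation, which the paper leaves implicit.
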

\begin{proof} 
For  a homogeneous frame $(\omega_{j})\subset\Omega^{1}_{\slashed{D}}(M_\theta)$, Lemmas \ref{lem:inn-prod} and \ref{lem: homogeneous-frame} show that in $C^{\infty}(M_{\theta})$ 
\begin{align*}
\pairing{\omega}{\omega}_{\theta}&=\sum_{j}\pairing{\omega}{\omega_{j}*\pairing{\omega_j}{\omega}_{\theta}}_{\theta}
=\sum_{j}\pairing{\omega_{j}}{\omega}_{\theta}^{\dag}*\pairing{\omega_{j}}{\omega}_{\theta}\geq 0,\qquad \omega\in \Omega^{1}_{\slashed{D}}(M_\theta).\qedhere
\end{align*}
\end{proof}
\begin{defn} Let $\mathcal{X}$ be a $\dag$-bimodule and $v=(x_{j})$ a frame for $\X$. The frame $v$ is $\dag$-\emph{invariant} if there is an equality of sets $\{x_{j}\}=\{x_{j}^{\dag}\}$.
\end{defn}
\begin{lemma} 
\label{lem:left-right-frame}
Let $(\omega_{j})$ be a homogeneous right frame for $\Omega^{1}_{\slashed{D}}(M_{\theta})$. Then $(\omega_j^\dag)$ is a right frame as well. In particular $\Omega^{1}_{\slashed{D}}(M_{\theta})$ admits frames $(\omega_j)$ that are homogeneous and $\dag$-invariant.
\end{lemma}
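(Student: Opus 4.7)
The plan is to decompose both $\omega_j$ and any target $\eta\in\Omega^{1}_{\slashed{D}}(M_\theta)$ into homogeneous components, since this reduces everything to explicit $\lambda$-bookkeeping. The key observation is that, for homogeneous $\omega,\eta$, the defining formulae for $*$ and for $\langle\omega,\eta\rangle_\theta = \lambda^{(n_1(\omega)-n_1(\eta))n_2(\omega)}\langle\omega,\eta\rangle$ combine so that the two $\lambda$-factors are reciprocals, giving
\[
\omega * \langle\omega,\eta\rangle_\theta \;=\; \omega\cdot\langle\omega,\eta\rangle
\]
as an operator identity in $\mathbb{B}(H)$; the analogous identity holds verbatim with $\omega$ replaced by $\omega^{\dag}$. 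This is precisely the cancellation used in the proof of Lemma~\ref{lem: homogeneous-frame}, and rests on the bidegree calculation $n(\langle\omega,\eta\rangle_\theta)=n(\eta)-n(\omega)$.

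Once this is in hand, I would apply the cancellation term-by-term to the given frame relation to obtain the ``undeformed'' identity $\eta = \sum_j \omega_j\cdot\langle\omega_j,\eta\rangle$ inside $\mathbb{B}(H)$ for every $\eta\in\Omega^{1}_{\slashed{D}}(M_\theta)$. The next step is to take the operator adjoint of both sides, using three classical facts: that scalars in $C^{\infty}(M)$ commute with one-forms as operators on $L^2(M,\slashed{S})$; that the classical inner product is conjugate-symmetric, $\langle\omega,\eta\rangle^{\dag}=\langle\eta,\omega\rangle$; and that $\langle\alpha^{\dag},\beta^{\dag}\rangle=\langle\beta,\alpha\rangle$, which is immediate from $2\langle\alpha,\beta\rangle=\alpha^{\dag}\beta+\beta\alpha^{\dag}$. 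This produces
\[
\eta^{\dag} \;=\; \sum_j \omega_j^{\dag}\cdot\langle\omega_j^{\dag},\eta^{\dag}\rangle.
\]
Since $\Omega^{1}_{\slashed{D}}(M_\theta)$ is closed under $\dag$, relabelling $\eta^{\dag}\mapsto\eta$ and then reversing the cancellation (now applied to $\omega_j^{\dag}$ in place of $\omega_j$) converts this into the desired deformed right-frame identity $\eta = \sum_j \omega_j^{\dag} * \langle \omega_j^{\dag},\eta\rangle_\theta$.

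For the $\dag$-invariant frame, my plan is to form the rescaled union $\{\omega_j/\sqrt 2\}\cup\{\omega_j^{\dag}/\sqrt 2\}$: this is homogeneous, manifestly closed under $\dag$ as a set, and a frame by adding the two individual frame relations with weight $1/2$. The main technical step is the $\lambda$-bookkeeping in the opening cancellation; once that is verified, the remainder is formal manipulation with the classical operator adjoint and the commutativity of classical scalars with classical one-forms as operators on $L^2(M,\slashed{S})$.
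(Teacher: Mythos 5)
Your plan is correct, but it takes a genuinely different route from the paper. You ``undeform'' the problem: the cancellation $\omega*\pairing{\omega}{\eta}_\theta=\omega\,\pairing{\omega}{\eta}$ converts the deformed frame relation into the classical one, you then take the classical operator adjoint (using that functions commute with one-forms on $L^2(M,\slashed{S})$ and that $\pairing{\alpha^*}{\beta^*}=\pairing{\beta}{\alpha}$ for the undeformed structures) to get the classical frame relation for $(\omega_j^*)$, and finally reverse the cancellation to return to the deformed relation for $(\omega_j^\dag)$; since $\omega_j^\dag$ differs from $\omega_j^*$ only by a unimodular scalar that cancels in $\omega_j^\dag\pairing{\omega_j^\dag}{\cdot}$, this is harmless. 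The paper instead stays entirely inside the deformed formalism: it applies the commutation rule $S*T=\Theta(S,T)\,T*S$ for classically commuting homogeneous elements, the identity $\Theta(S,\pairing{\omega}{\eta}_\theta)=\Theta(S,\omega)^{-1}\Theta(S,\eta)$, and the symmetry $\pairing{\omega}{\eta}_\theta=\Theta(\omega,\eta)\pairing{\eta^\dag}{\omega^\dag}_\theta$ from Lemma \ref{lem:inn-prod}, together with $L(S^\dag)=L(S)^*$, to rewrite $\sum_j\omega_j^\dag*\pairing{\omega_j^\dag}{\eta}_\theta$ as $\bigl(\sum_j\omega_j*\pairing{\omega_j}{\eta^\dag}_\theta\bigr)^\dag=\eta$. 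Your route is arguably more transparent but leans on the concrete operator realisation (the formula $2\pairing{\omega}{\eta}=\omega^*\eta+\eta\omega^*$ and classical centrality of functions), whereas the paper's argument uses only the abstract $\Theta$-identities already catalogued in Lemma \ref{lem:inn-prod} and so fits the general $\theta$-deformation machinery used throughout Section \ref{sec:examples}. One point to tighten when writing this up: state explicitly which adjoint (classical $*$ versus deformed $\dag$) is in play in the identity $2\pairing{\alpha}{\beta}=\alpha^{\dag}\beta+\beta\alpha^{\dag}$, since with the deformed $\dag$ this acquires a phase $\lambda^{n_1(\alpha)n_2(\alpha)}$; as you only need it for the classical adjoint, the argument goes through. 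The final step, passing to $\bigl\{\omega_j/\sqrt{2}\bigr\}\cup\bigl\{\omega_j^\dag/\sqrt{2}\bigr\}$, is exactly the paper's.
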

\begin{proof}
We compute using the  properties \eqref{eq:theeta} of the map $\Theta$. Since $\omega_{j}$ is homogeneous we have $\Theta(\omega_{j},\omega_{j}^{\dag})=1$.
Then compute, for homogeneous $\eta$, using the commutation relation
and the behaviour of $\Theta$ on inner products 
\begin{align*}
\sum_{j}\omega_{j}^{\dag}*\pairing{\omega_{j}^{\dag}}{\eta}_{\theta}
&=\sum_{j}\Theta(\omega_{j}^{\dag}, \pairing{\omega_{j}^{\dag}}{\eta}_{\theta})\pairing{ \omega_{j}^{\dag}}{\eta}_{\theta}*\omega_{j}^{\dag}
=\sum_{j}\Theta(\omega_{j}^{\dag},\omega_{j})\Theta(\omega_{j}^{\dag}, \eta)\pairing{\omega_{j}^{\dag}}{\eta}_{\theta}*\omega_{j}^{\dag}\\
&=\sum_{j}\pairing{\eta^{\dag}}{\omega_{j}}_{\theta} *\omega_{j}^{\dag}
=\Big(\sum_{j}\omega_{j}*\pairing{\omega_{j}}{\eta^{\dag}}_{\theta}\Big)^{\dag}=\eta
\end{align*}
and we are done. Given any right frame $(\omega_{j})$ the set $\Big\{\frac{\omega_{j}}{\sqrt{2}}, \frac{\omega_{j}^{\dag}}{\sqrt{2}}\Big\}$ yields a $\dag$-invariant frame.
\end{proof}
\begin{lemma}
\label{lem: Grassmann-bimodule}
If $v=(\omega_{j})$ is a homogeneous $\dag$-invariant 
frame for $\Omega^{1}_{\slashed{D}}(M_{\theta})$ then $\sigma\circ\nablar^{v}=\nablal^{v}$.
\end{lemma}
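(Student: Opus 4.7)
By $\C$-linearity of both $\sigma_\theta\circ\nablar^v$ and $\nablal^v$ and the decomposition of $\Omega^1_{\slashed{D}}(M_\theta)$ into homogeneous components, it suffices to check the equality on a homogeneous one-form $\eta$. I would begin by expanding the right-hand side:
\begin{align*}
\sigma_\theta\circ\nablar^v(\eta)&=\sum_j\sigma_\theta\bigl(\omega_j\ox\dt(\pairing{\omega_j}{\eta}_\theta)\bigr).
\end{align*}
Homogeneity of $\omega_j$ gives $n(\omega_j)$, and since by Lemma \ref{lem:inn-prod} the inner product $\pairing{\omega_j}{\eta}_\theta$ is homogeneous of bidegree $n(\eta)-n(\omega_j)$, and since by Lemma \ref{lem: theta-junk-exterior} the differential $\dt$ preserves bidegree, each term $\dt(\pairing{\omega_j}{\eta}_\theta)$ is homogeneous of the same bidegree. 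The braiding coefficient then simplifies via $\Theta(a,b-a)=\Theta(a,b)\,\Theta(a,-a)=\Theta(a,b)$, yielding a closed expression for $\sigma_\theta\circ\nablar^v(\eta)$ as a sum of terms of the shape $\Theta(\omega_j,\eta)\,\dt(\pairing{\omega_j}{\eta}_\theta)\ox\omega_j$, up to bookkeeping signs.

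Next, I would expand the left-hand side using Definition \ref{defn:grassy} and the identification ${}_\theta\pairing{\eta}{\omega_j^\dag}=\pairing{\eta^\dag}{\omega_j}_\theta$ given in Lemma \ref{lem: bi-Hilbertian},
\begin{align*}
\nablal^v(\eta)=\sum_j\dt\bigl(\pairing{\eta^\dag}{\omega_j}_\theta\bigr)\ox\omega_j^\dag,
\end{align*}
and apply Lemma \ref{lem:inn-prod} to convert $\pairing{\eta^\dag}{\omega_j}_\theta$ to a cocycle multiple of $\pairing{\omega_j^\dag}{\eta}_\theta$, using $\Theta(\eta^\dag,\omega_j)=\Theta(\omega_j,\eta)$ (from the bilinearity identities for $\Theta$ on $\Z^2$).

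The key step is now to invoke $\dag$-invariance of the frame. Since $\{\omega_j\}=\{\omega_j^\dag\}$ as sets, the sum on the right can be reindexed via the bijection $\omega_j\mapsto\omega_j^\dag$, converting the second tensor factor from $\omega_j^\dag$ to $\omega_j$ and, by a second application of the $\Theta$-identities together with the homogeneity of the frame, matching the cocycle factor that appeared on the left-hand side. After reindexing and collecting the twisting factors, both sums should reduce to an identical expression in $T^2_{\slashed{D}}(M_\theta)$, concluding the proof.

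The main obstacle is the careful tracking of the cocycle factors $\Theta$ arising from (i) the definition of $\sigma_\theta$, (ii) the conversion between left and right inner products via Lemma \ref{lem:inn-prod}, and (iii) the reindexing forced by $\dag$-invariance. All of these must combine coherently, and the identity $\Theta(a,-a)=1$ together with the homogeneity-preserving property of $\dt$ (Lemma \ref{lem: theta-junk-exterior}) are what make the two sides collapse to the same three-tensor. Once the bidegree bookkeeping is set up properly, the remainder is a routine verification.
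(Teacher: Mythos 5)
Your proposal is correct and follows essentially the same route as the paper's proof: expand the Grassmann connections on a homogeneous one-form, use that the differential of a homogeneous element preserves bidegree together with $\Theta(\omega_j,\omega_j)=1$ to reduce the braiding factor to $\Theta(\omega_j,\eta)$, convert between the two inner products via the third identity of Lemma \ref{lem:inn-prod}, and reindex the sum using $\dag$-invariance of the frame. The only quibble is notational: the differential appearing in the Grassmann connection is $\dee=[\slashed{D},\cdot]$ applied to algebra elements, which is of degree zero because $[\slashed{D},U(s)]=0$, rather than the map $\dt:\Omega^1_{\slashed{D}}(M_\theta)\to T^2_{\slashed{D}}(M_\theta)$ of Lemma \ref{lem: theta-junk-exterior}.
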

\begin{proof}
This is checked by computation using 
Lemma \ref{lem:inn-prod} to see that for $\omega$ homogeneous
\begin{align*}
\sigma\circ\nablar^{v}(\omega)&=\sum_{j}\sigma\left(\omega_{j}\otimes \left[\slashed{D},\pairing{\omega_{j}}{\omega}_{\theta}\right]\right)
=\sum_{j}\Theta(\omega_{j},\omega)\left[\slashed{D},\pairing{\omega_{j}}{\omega}_{\theta}\right]\otimes \omega_{j}\\
&=\sum_{j}\left[\slashed{D},\pairing{\omega^{\dag}}{ \omega_{j}^{\dag}}_{\theta}\right]\otimes \omega_{j}=\sum_{j}\left[\slashed{D},\pairing{\omega^{\dag}}{\omega_{j}}_{\theta}\right]\otimes \omega_{j}^{\dag}
=\nablal^{v}(\omega),
\end{align*}
and extending by linearity we are done.
\end{proof}
\subsection{Existence and uniqueness of Hermitian torsion-free connections}
In order to address uniqueness of the $\theta$-deformed Levi-Civita connection, we now identify the centre of the modules $T^n_{\slashed{D}}(M_\theta)$. To this end define
\[\mathbb{Z}^{2}_{\alpha}:=\{(n_1,n_2)\in\mathbb{Z}^{2}: \exists a \in C^{\infty}(M),\,\, n_{\alpha}(a)=(n_1,n_2)\},\]
which is a subgroup of $\mathbb{Z}^{2}$. We then set
\begin{align*}
T^n_{\slashed{D}}(M_\theta)^\Theta:=\textnormal{span}^{\infty}\left\{x\in T^n_{\slashed{D}}(M_\theta)_{k}:k\in\mathbb{Z}^{2},\,\,\forall m\in\mathbb{Z}^{2}_{\alpha} ,\,\,\Theta(m,k)=1\right\}.
\end{align*}
Here the $\textnormal{span}^{\infty}$ is taken to mean all series with rapid decay coefficients.
 \begin{lemma}
 \label{lem:fixed-point}
 Let $\alpha:\mathbb{T}^{2}\to \textnormal{Isom}(M,g)$ be a smooth  homomorphism and $\lambda=e^{2\pi i\theta}\in\mathbb{T}$. 
 Then the central $n$-tensors are given by $\mathcal{Z}(T^{n}_{\slashed{D}})=T^{n}_{\slashed{D}}(M_{\theta})^{\Theta}$. Consequently, if $x\in \mathcal{Z}(T^{n}_{\slashed{D}}(M_\theta))$ is homogeneous then $\Theta(\xi,x)=1$ for all homogeneous $\xi\in T^n_{\slashed{D}}(M_\theta)$.
  \end{lemma}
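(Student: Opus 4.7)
The plan is to reduce the centrality condition to a phase condition on bidegrees via the commutation Lemma \ref{lem: commutation-in-deformation}, and then observe that degrees of homogeneous tensors automatically lie in $\mathbb{Z}^2_\alpha$.

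First I would decompose any $x \in T^n_{\slashed{D}}(M_\theta)$ into its rapid-decay series of homogeneous components $x = \sum_k x_k$. Since the $*$-action of $C^\infty(M_\theta)$ on $T^n_{\slashed{D}}(M_\theta)$ preserves bidegree (as does the star product in $C^\infty(M_\theta)$), checking $a * x = x * a$ on each bidegree separately shows that $x$ is central if and only if each $x_k$ is central. This reduces the statement to characterising when a single homogeneous tensor of bidegree $k$ is central.

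Next, fix a simple homogeneous tensor $x = \omega_1 \otimes \cdots \otimes \omega_n$ with $\omega_i$ homogeneous of bidegree $k_i$ and $k = \sum_i k_i$, and take homogeneous $a \in C^\infty(M_\theta)$ of bidegree $m \in \mathbb{Z}^2_\alpha$. The bimodule structure gives
\[
a * x = (a * \omega_1) \otimes \omega_2 \otimes \cdots \otimes \omega_n, \qquad x * a = \omega_1 \otimes \cdots \otimes \omega_{n-1} \otimes (\omega_n * a).
\]
The key step is to move $a$ from the right through the tensor product: repeatedly using the balancing relation over $C^\infty(M_\theta)$ together with Lemma \ref{lem: commutation-in-deformation} (which applies because, in the ambient algebra $\mathbb{B}(L^2(M,\slashed{S}))$, the operator $L(a)$ and the Clifford-type operators $[\slashed{D},L(b)]$ commute as operators, so $\omega_i * a = \Theta(k_i,m)\, a * \omega_i$), one obtains
\[
x * a = \Theta(k_n,m)\Theta(k_{n-1},m)\cdots\Theta(k_1,m)\, a * x = \Theta(k,m)\, a * x.
\]
Thus $x$ is central iff $\Theta(k,m) = 1$ for every $m \in \mathbb{Z}^2_\alpha$, equivalently $\Theta(m,k) = 1$ for all such $m$; this is exactly the defining condition of $T^n_{\slashed{D}}(M_\theta)^\Theta$, giving the equality $\mathcal{Z}(T^n_{\slashed{D}}) = T^n_{\slashed{D}}(M_\theta)^\Theta$.

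For the ``consequently'' clause, any homogeneous $\xi \in T^n_{\slashed{D}}(M_\theta)$ is built from one-forms $[\slashed{D},L(b)]$ with $b \in C^\infty(M)$, and since $\slashed{D}$ has bidegree $0$, each such one-form has the same bidegree as $b$, namely an element of $\mathbb{Z}^2_\alpha$. Hence $n(\xi) \in \mathbb{Z}^2_\alpha$, and centrality of a homogeneous $x$ of bidegree $k$ yields $\Theta(\xi,x) = \Theta(n(\xi),k) = 1$. The main obstacle is the telescoping calculation identifying $x * a$ with $\Theta(k,m)\, a*x$, which requires careful bookkeeping of the $\Theta$-cocycle through the tensor factors; once this is in place, the rest of the lemma follows immediately from the definitions.
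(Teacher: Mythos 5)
Your proof is correct and follows essentially the same strategy as the paper: reduce to homogeneous components (identical first step), then show that centrality of a homogeneous $x$ of bidegree $k$ is equivalent to $\Theta(k,m)=1$ for all $m\in\mathbb{Z}^2_\alpha$, and finally observe that degrees of homogeneous tensors already lie in $\mathbb{Z}^2_\alpha$. The only real difference is in how the forward implication is executed: you telescope the commutation relation of Lemma \ref{lem: commutation-in-deformation} through the balanced tensor product factor by factor to get $x*a=\Theta(k,m)\,a*x$ at the module level, whereas the paper represents everything on $L^{2}(M,\slashed{S})$ and extracts $\Theta(a,x)=1$ from the vanishing of $(\Theta(a,x)-1)\,\pairing{L(x)L(a)\eta}{L(x)L(a)\eta}$ for all $\eta$. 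Both versions rest on the same underlying fact (functions commute with Clifford multiplication in the undeformed picture, so the lemma on deformed commutation applies), and both share the same small implicit step, namely that for a nonzero central $x$ and each $m\in\mathbb{Z}^2_\alpha$ one can find $a$ of degree $m$ not annihilating $x$, so your treatment is no less rigorous than the paper's. One phrasing nitpick: Lemma \ref{lem: commutation-in-deformation} requires the \emph{undeformed} operators to commute ($ST=TS$ for the ordinary operator product), so the commutation you should invoke is that of $a$ with $[\slashed{D},b]$ in $\mathbb{B}(L^2(M,\slashed{S}))$ before twisting, not of $L(a)$ with $[\slashed{D},L(b)]$; the substance of your argument is unaffected.
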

 \begin{proof}
First observe that $x\in \mathcal{Z}(T^{n}_{\slashed{D}}(M_{\theta}))$ if and only if $xa=ax$ for all homogeneous $a\in C^{\infty}(M_{\theta})$. Then for $x\in \mathcal{Z}(T^{n}_{\slashed{D}}(M_{\theta}))$ we have $x=\sum_{k\in\mathbb{Z}^{2}} x_{k}$ and for homogeneous $a$ the identity $xa=ax$ implies that $ax_{k}=x_{k}a$. This in turn implies that $x\in \mathcal{Z}(T^{n}_{\slashed{D}}(M_{\theta}))$ if and only if $x_k\in \mathcal{Z}(T^{n}_{\slashed{D}}(M_{\theta}))$ for all $k$. Thus to prove the equality $\mathcal{Z}(T^{n}_{\slashed{D}})=T^{n}_{\slashed{D}}(M_{\theta})^{\Theta}$ it suffices to consider homogeneous elements.

Now if $x\in \mathcal{Z}(T^{n}_{\slashed{D}})$ and $a\in C(M_\theta)$ are both homogeneous then for all $\eta\in L^2(M,\slashed{S})$ we have
 \begin{align*}
0= \pairing{ L(x)L(a)\eta}{ (L(a)L(x)-L(x)L(a))\eta}= (\Theta(a,x)-1)\pairing{ L(x)L(a)\eta}{L(x)L(a)\eta}.
 \end{align*}
Since this is true for all $\eta\in L^2(M,\slashed{S})$, 
we have $\Theta(a,x)=1$ for all such $a$ and $x$. Hence $x\in T^{n}_{\slashed{D}}(M_\theta)^\Theta$. 
  Conversely, suppose that $x\in T^{n}_{\slashed{D}}(M_{\theta})^{\Theta}$ and $a\in C(M_\theta)$ are homogeneous. We have
 \[
 x*a=\Theta(x,a)a*x=a*x
 \]
 and so $x\in \mathcal{Z}(T^{n}_{\slashed{D}})$. 
The last statement follows since $\Theta$ depends only on the degrees of its arguments, and all possible degrees of tensors $\xi\in T^{n}_{\slashed{D}}(M_\theta)$ are already exhausted by considering elements of $C(M_\theta)$.
 \end{proof}

 \begin{lemma}
 \label{lem:perm-rep}
 For all $x,\rho,\omega,\eta\in \Omega^1_{\slashed{D}}(M_\theta)$ there is an equality of two-tensors 
\begin{equation}
\sigma_{\theta}\circ\alphal(\rho\otimes\omega\otimes\eta)(x)=\Theta(x,\rho\otimes\omega\otimes\eta)\alphar\circ (1\otimes \sigma_{\theta})(\sigma_{\theta}\otimes 1)(1\otimes \sigma_{\theta})(\rho\otimes\omega\otimes\eta)(x).
\label{eq:left-right-miracle}
\end{equation}
As a consequence there is an equality 
 \begin{equation}
 \sigma_{\theta}\circ\alphal=\alphar\circ (1\otimes \sigma_{\theta})(\sigma_{\theta}\otimes 1)(1\otimes \sigma_{\theta}):\mathcal{Z}(T^{3}_{\slashed{D}}(M_{\theta}))\to \overleftrightarrow{\textnormal{Hom}}^{*}_{\B}(\Omega^{1}_{\slashed{D}}(M_{\theta}),T^{2}_{\slashed{D}}(M_{\theta})),
 \label{eq:left-right-centre}
 \end{equation}
as maps. In particular the map
\begin{equation}
\alphar+\sigma_{\theta}\circ\alphal: \mathcal{Z}(\textnormal{Sym}^{3}_{\slashed{D}}(M_{\theta}))\to \overleftrightarrow{\textnormal{Hom}}^{*}_{\B}(\Omega^{1}_{\slashed{D}}(M_{\theta}),T^{2}_{\slashed{D}}(M_{\theta})),
\label{eq:left-rightcentre-sym}
\end{equation}
is injective.
\end{lemma}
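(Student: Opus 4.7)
The plan is to verify the pointwise identity \eqref{eq:left-right-miracle} by an explicit computation on homogeneous tensors, then deduce \eqref{eq:left-right-centre} as the centralised corollary, and finally obtain injectivity of \eqref{eq:left-rightcentre-sym} from the fact that fully symmetric three-tensors are fixed by both $\sigma_\theta\otimes 1$ and $1\otimes\sigma_\theta$.

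To establish \eqref{eq:left-right-miracle}, I would reduce to homogeneous $\rho,\omega,\eta,x$ and expand both sides. Using Lemma \ref{lem:useful-iso} combined with the identity ${}_\B\pairing{x}{\rho^\dag}_\theta=\pairing{x^\dag}{\rho}_\theta$ from Lemma \ref{lem: bi-Hilbertian}, the left-hand side unpacks as $\sigma_\theta(\pairing{x^\dag}{\rho}_\theta\cdot(\omega\otimes\eta))$, which equals $\Theta(\omega,\eta)\,\pairing{x^\dag}{\rho}_\theta\cdot(\eta\otimes\omega)$ because $\sigma_\theta$ is a bimodule map (Lemma \ref{lem:braiding-theta}). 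I then push $\pairing{x^\dag}{\rho}_\theta$ from the left to the right of $\eta\otimes\omega$, which by the slot-by-slot version of Lemma \ref{lem: commutation-in-deformation} contributes the factor $\Theta(\pairing{x^\dag}{\rho}_\theta,\eta\otimes\omega)=\Theta(\rho,\eta)\Theta(\rho,\omega)\Theta(x,\eta)\Theta(x,\omega)$, using additivity of $\Theta$ and the degree identity $n(\pairing{x^\dag}{\rho}_\theta)=n(\rho)+n(x)$ from Lemma \ref{lem:inn-prod}. Rewriting $\pairing{x^\dag}{\rho}_\theta=\Theta(x,\rho)^{-1}\pairing{\rho^\dag}{x}_\theta$ via the third identity of Lemma \ref{lem:inn-prod} matches the inner product on the right-hand side. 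On the right-hand side, applying the three braidings in turn carries $\rho\otimes\omega\otimes\eta$ to $\Theta(\omega,\eta)\Theta(\rho,\eta)\Theta(\rho,\omega)\,\eta\otimes\omega\otimes\rho$, after which $\alphar$ evaluated at $x$ yields $(\eta\otimes\omega)\cdot\pairing{\rho^\dag}{x}_\theta$, while the prefactor $\Theta(x,\rho\otimes\omega\otimes\eta)=\Theta(x,\rho)\Theta(x,\omega)\Theta(x,\eta)$ splits by additivity. Cancelling the collected $\Theta$-factors verifies the identity, and $\C$-linearity extends it to arbitrary three-tensors.

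For \eqref{eq:left-right-centre}, I would decompose $A\in\mathcal{Z}(T^3_{\slashed{D}}(M_\theta))$ into its homogeneous components $A_k$ and apply Lemma \ref{lem:fixed-point} to conclude $\Theta(\xi,A_k)=1$ for every homogeneous $\xi\in T^1_{\slashed{D}}(M_\theta)$; taking $\xi=x$ annihilates the scalar prefactor in \eqref{eq:left-right-miracle}, and summing over components produces the desired equality of maps.

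For injectivity of \eqref{eq:left-rightcentre-sym}, suppose $A\in\mathcal{Z}(\textnormal{Sym}^{3}_{\slashed{D}}(M_\theta))$ satisfies $(\alphar+\sigma_\theta\circ\alphal)(A)=0$. By \eqref{eq:left-right-centre} the condition rewrites as $\alphar\bigl(A+(1\otimes\sigma_\theta)(\sigma_\theta\otimes 1)(1\otimes\sigma_\theta)A\bigr)=0$, and injectivity of $\alphar$ (Lemma \ref{lem:useful-iso}) gives $A+(1\otimes\sigma_\theta)(\sigma_\theta\otimes 1)(1\otimes\sigma_\theta)A=0$. Since $\Psi_\theta=\tfrac12(1+\sigma_\theta)$ we have $P=\tfrac12(1+\sigma_\theta\otimes 1)$ and $Q=\tfrac12(1+1\otimes\sigma_\theta)$, so $A\in\textnormal{Im}(P)\cap\textnormal{Im}(Q)$ forces $(\sigma_\theta\otimes 1)A=A=(1\otimes\sigma_\theta)A$, whence $(1\otimes\sigma_\theta)(\sigma_\theta\otimes 1)(1\otimes\sigma_\theta)A=A$ and therefore $2A=0$, so $A=0$. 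The main obstacle is the careful bookkeeping of $\Theta$-factors required for \eqref{eq:left-right-miracle}; the remaining two assertions then follow almost formally.
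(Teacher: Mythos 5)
Your overall strategy --- verifying \eqref{eq:left-right-miracle} on homogeneous elementary tensors by tracking $\Theta$-factors, specialising to central tensors where Lemma \ref{lem:fixed-point} kills the prefactor, and finally using that $\textnormal{Sym}^{3}_{\slashed{D}}$ is fixed by $(1\ox\sigma_{\theta})(\sigma_{\theta}\ox1)(1\ox\sigma_{\theta})$ to reduce $\alphar+\sigma_{\theta}\circ\alphal$ to $2\alphar$ --- is exactly the route the paper takes, and your second and third steps are correct as argued.

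However, your verification of \eqref{eq:left-right-miracle} does not close as written. Collecting the factors you display, the left-hand side carries $\Theta(\omega,\eta)\Theta(\rho,\eta)\Theta(\rho,\omega)\Theta(x,\eta)\Theta(x,\omega)\,\Theta(x,\rho)^{-1}$ in front of $(\eta\ox\omega)\pairing{\rho^\dag}{x}_\theta$, while the right-hand side carries $\Theta(x,\rho)\Theta(x,\omega)\Theta(x,\eta)\,\Theta(\omega,\eta)\Theta(\rho,\eta)\Theta(\rho,\omega)$; these differ by $\Theta(x,\rho)^{2}$, which is not $1$ in general, so the claimed ``cancellation'' is not a cancellation. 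The culprit is the step $\pairing{x^\dag}{\rho}_\theta=\Theta(x,\rho)^{-1}\pairing{\rho^\dag}{x}_\theta$. This is indeed what the third identity of Lemma \ref{lem:inn-prod} literally gives, but that identity has its exponent inverted: computing directly from \eqref{eq: theta-inner-product}, \eqref{eq:prod-adj} and the undeformed relation $\pairing{\eta^{*}}{\omega^{*}}=\pairing{\omega}{\eta}$, one finds $\pairing{\omega}{\eta}_\theta=\Theta(\eta,\omega)\pairing{\eta^\dag}{\omega^\dag}_\theta$, hence $\pairing{x^\dag}{\rho}_\theta=\Theta(x,\rho)\,\pairing{\rho^\dag}{x}_\theta$. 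A concrete check on the flat $2$-torus with $\rho=U\dee t^{1}$ and $x=V\dee t^{1}$, where $U$ and $V$ are the exponentials of degrees $(1,0)$ and $(0,1)$, gives $\pairing{x^\dag}{\rho}_\theta=-\lambda g^{11}VU$, $\pairing{\rho^\dag}{x}_\theta=-g^{11}UV$ and $\Theta(x,\rho)=\lambda$, confirming the exponent $+1$; with your exponent the two sides of \eqref{eq:left-right-miracle} evaluated on $U\dee t^{1}\ox \dee t^{1}\ox U^{*}\dee t^{1}$ would differ by $\lambda^{-2}$, whereas direct evaluation shows they are equal. Once the flip is corrected the residual factor becomes $\Theta(x,\rho)^{+1}$ on both sides, your computation goes through verbatim, and the remaining two assertions stand.
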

\begin{proof} 
Fix one-forms $\rho,\omega,\eta$ and $x$. Then 
to see \eqref{eq:left-right-miracle}, first apply the definitions and use $\Theta$ for commutations and swapping from left to right inner product, giving
\begin{align*}
\sigma_{\theta}\circ\alphal(\rho\otimes\omega\otimes\eta)(x)
&=\Theta(\pairing{x^\dag}{\rho^\dag}_\theta *\omega,\eta)\,\eta\ox \pairing{x^\dag}{\rho^\dag}_\theta *\omega\\
&=\Theta(\pairing{x^\dag}{\rho^\dag}_\theta *\omega,\eta)\,\Theta(\pairing{x^\dag}{\rho^\dag}_\theta,\omega)\,\eta\ox \omega*\pairing{x^\dag}{\rho^\dag}_\theta\\
&=\Theta(\pairing{x^\dag}{\rho^\dag}_\theta *\omega,\eta)\,\Theta(\pairing{x^\dag}{\rho^\dag}_\theta,\omega)\,\Theta(x^\dag,\rho^\dag)\,\eta\ox \omega*\pairing{\rho}{x}_\theta\\
&=\Theta(x,\eta\ox\omega\ox\rho)\,\Theta(\eta,\rho)\,\Theta(\omega,\eta)\,\Theta(\omega,\rho)\,\eta\ox \omega*\pairing{\rho}{x}_\theta
\end{align*}
where we have used Lemma \ref{lem:inn-prod} and the relations \eqref{eq:theeta} repeatedly. The definition of $\alphar$ and $\sigma_{\theta}$ now yields
\begin{align*}
\sigma_{\theta}\circ\alphal(\rho\otimes\omega\otimes\eta)(x)
&=\Theta(x,\eta\ox\omega\ox\rho)\,\alphar\big(\Theta(\eta,\rho)\,\Theta(\omega,\eta)\,\Theta(\omega,\rho)\,\eta\ox \omega\ox\rho\big)(x)\\
&=\Theta(x,\eta\ox\omega\ox\rho)\,\alphar\big(\sigma_{\theta}\ox1\circ1\ox\sigma_{\theta}\circ\sigma_{\theta}\ox1(\eta\ox \omega\ox\rho)\big)(x).
\end{align*}
Restricting to $x\in \mathcal{Z}(T^3_{\slashed{D}}(M_\theta))$, by Lemma \ref{lem:fixed-point} $\Theta(x,\eta\ox\omega\ox\rho)=1$, and so \eqref{eq:left-right-centre} holds.
Finally, 
\[
\mathcal{Z}(\textnormal{Sym}^{3}_{\slashed{D}}(M_{\theta}))=\mathcal{Z}(T^{3}_{\slashed{D}}(M_{\theta}))\cap \textnormal{Sym}^{3}_{\slashed{D}}(M_{\theta}),
\]
by Lemma \ref{lem:fixed-point}, so we have that $\Theta(\rho\otimes\omega\otimes\eta,x)=1$ for all $\rho\otimes\omega\otimes\eta\in \mathcal{Z}(\textnormal{Sym}^{3}_{\slashed{D}}(M_{\theta}))$. Furthermore $(1\otimes \sigma_{\theta})(\sigma_{\theta}\otimes 1)(1\otimes \sigma_{\theta})=1$ on $\textnormal{Sym}^{3}_{\slashed{D}}(M_{\theta})$ by definition of $\textnormal{Sym}^{3}_{\slashed{D}}(M_{\theta})$. Therefore $\alphar+\sigma_{\theta}\circ\alphal=2\alphar$ on $\mathcal{Z}(\textnormal{Sym}^{3}_{\slashed{D}}(M_{\theta}))$, which is injective.
\end{proof}
We now prove the existence and uniqueness of an Hermitian torsion-free $\dag$-bimodule connection for $\theta$-deformations. We denote the unique such connection by $\nablar^{G_\theta}$, where $G_\theta$ is the quantum metric for the inner product $\pairing{\cdot}{\cdot}_\theta$.
\begin{thm} 
\label{thm:theta-unique}
Let $M$ be a compact Riemannian manifold, $\slashed{S}\to M$ a $\mathbb{T}^2$-equivariant Dirac bundle, 
and $\theta\in\mathbb{T}$. Let $\sigma_{\theta}: T^{2}_{\slashed{D}}(M_{\theta})\to T^{2}_{\slashed{D}}(M_{\theta})$ be
the $\theta$-deformed flip map, $\Psi_{\theta}=(1+\sigma_\theta)/2$  and $\pairing{\cdot}{\cdot}_{\theta}$ the $\theta$-deformed inner product. Then
$(\Omega^{1}_{\slashed{D}}(M_{\theta}),\dag,\Psi_{\theta},\pairing{\cdot}{\cdot}_{\theta})$ is a $\dag$-concordant Hermitian differential structure,
and
there exists a unique Hermitian
  torsion-free $\dag$-bimodule connection $(\nablar^{G_{\theta}},\sigma_{\theta})$ on $\Omega^{1}_{\slashed{D}}(M_{\theta})$.
\end{thm}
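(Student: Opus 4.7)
The plan is to assemble the proof from Proposition \ref{prop:suff-for-exist}, Lemma \ref{lem:perm-rep} and Theorem \ref{thm: uniqueness-twist}, using the $\mathbb{T}^2$-equivariant frames of Lemma \ref{lem:left-right-frame} to realise the hypotheses. Concordance is immediate: setting $U := \sigma_\theta \otimes 1 = 2P-1$ and $V := 1\otimes\sigma_\theta = 2Q-1$, the Yang--Baxter identity \eqref{YangBaxter} of Lemma \ref{lem:braiding-theta} places us inside Corollary \ref{cor: S3action}, so $\textnormal{Im}(P)$ and $\textnormal{Im}(Q)$ are concordant in $T^{3}_{\slashed{D}}(M_\theta)$.

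For $\dag$-concordance I would take a homogeneous $\dag$-invariant frame $v = (\omega_j)$ from Lemma \ref{lem:left-right-frame} and form $W = \sum_j \dt(\omega_j) \otimes \omega_j^\dag$. Since $\dt$ preserves bidegree (Lemma \ref{lem: theta-junk-exterior}), every summand of $W$ has total bidegree $0$, so all $\Theta$-factors that appear collapse via $\Theta(k,k)=1$; combined with the set equality $\{\omega_j\}=\{\omega_j^\dag\}$ coming from $\dag$-invariance, this reduces the required identity $W^\dag = UVW$ to a routine reindexing. Proposition \ref{prop:suff-for-exist} then yields the explicit Hermitian torsion-free connection
\begin{equation*}
\nablar^{G_\theta} := \nablar^{v} - \alphar\bigl((1+4PQ)W\bigr).
\end{equation*}

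To upgrade this to a $\dag$-bimodule connection for $\sigma_\theta$, note that $\nablar^{v}$ is already one by Lemma \ref{lem: Grassmann-bimodule}, and the correction $A := -(1+4PQ)W$ has bidegree $0$, so by Lemma \ref{lem:fixed-point} it lies in $\mathcal{Z}(T^{3}_{\slashed{D}}(M_\theta))$ and $\alphar(A)$ is automatically a bimodule map. Using Lemma \ref{lem:perm-rep} on the centre, the remaining compatibility $\sigma_\theta\circ\alphar(A) = -\alphal(A)$ reduces to the single tensor identity $A = -\tau A$ with $\tau := (1\otimes\sigma_\theta)(\sigma_\theta\otimes 1)(1\otimes\sigma_\theta)$. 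I would verify this by tracking the $S_3$-orbit of $W$ under the action generated by $U,V$: exploiting $UW = -W$, $VW^\dag = -W^\dag$ and $W^\dag = UVW$ (all from Proposition \ref{prop:suff-for-exist} and its proof), one finds $4PQW = VW + W^\dag$ and $4\tau PQW = -VW - W$, so that $\tau A = W + VW + W^\dag = -A$. This bookkeeping in the $S_3$-action on $W$ is the one genuine computation in the argument, and I expect it to be the main obstacle.

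Uniqueness then follows from Theorem \ref{thm: uniqueness-twist}: by Lemma \ref{lem:perm-rep}, the map $\alphar + \sigma_\theta\circ\alphal$ agrees with $2\alphar$ on $\mathcal{Z}(\textnormal{Sym}^{3}_{\slashed{D}}(M_\theta))$ and is therefore injective, forcing any other Hermitian torsion-free $\dag$-bimodule connection to coincide with $\nablar^{G_\theta}$.
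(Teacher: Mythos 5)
Your proposal is correct and follows essentially the same route as the paper: concordance via the $S_3$-representation and Corollary \ref{cor: S3action}, $\dag$-concordance via a homogeneous $\dag$-invariant frame giving $W^\dag=(2P-1)(2Q-1)W$, existence and the explicit formula from Proposition \ref{prop:suff-for-exist}, the bimodule property from Lemma \ref{lem: Grassmann-bimodule} together with centrality of the degree-zero correction term and Lemma \ref{lem:perm-rep}, and uniqueness from Theorem \ref{thm: uniqueness-twist}. Your reduction to the identity $\tau A=-A$ is just a repackaging of the paper's computation $(2P-1)(2Q-1)(2P-1)(1+4PQ)W=-(1+4QP)W^\dag$, the two being equivalent because $A=A^\dag$, and your $S_3$-orbit bookkeeping checks out.
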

\begin{proof} 
Observe that by Lemma \ref{lem:braiding-theta}, $\sigma_\theta\otimes 1$ and $1\otimes\sigma_\theta$ generate a representation of the symmetric group $S_{3}$, so that by Corollary \ref{cor: S3action} the differential structure is concordant. Let $(\omega_{j})$ be a homogeneous $\dag$-invariant frame for $\Omega^{1}_{\slashed{D}}(M_{\theta})$. Then the three-tensor
$W=\sum_{j} \dt\omega_{j}\otimes \omega_{j}^\dag$
has the property that

$$
W^\dag=\sum_{j} \omega_{j}\otimes\dt\omega_{j}^{\dag}=\sum_{j}\omega_{j}^\dag\otimes \dt\omega_{j}.
$$ 

Here we note that $2P-1=\sigma_{\theta}\otimes 1$ and $2Q-1=1\otimes \sigma_{\theta}$ so that $(2P-1)(2Q-1)$ is the twisted cyclic permutation $$\omega\otimes \eta\otimes \epsilon\mapsto \Theta(\omega,\epsilon)\Theta(\eta,\epsilon)\epsilon\otimes\omega\otimes \eta.$$
By Lemma \ref{lem: theta-junk-exterior},  $\dt\omega_{j}$ is homogeneous of the same degree as $\omega_{j}$ so that by Lemma \ref{lem:fixed-point} we find 
\[
W^{\dag}=(2P-1)(2Q-1)W.
\] 
By Proposition \ref{prop:suff-for-exist}, the Hermitian structure is $\dag$-concordant and
the Hermitian torsion-free right connection constructed in Theorem \ref{thm:existence} equals
\[
\nablar^{G_{\theta}}:=\nablar^{v}-\alphar((1+4PQ)W).
\]
It remains to show that $\nablar^{G_{\theta}}$ is a $\dag$-bimodule connection relative to the twisted flip map $\sigma_{\theta}$, as such connections are unique by Theorem \ref{thm: uniqueness-twist}. To this end, using Lemma \ref{lem: Grassmann-bimodule} we compute
\begin{align*}
\sigma_{\theta}\circ\nablar^{G_{\theta}}=\sigma_{\theta}\circ\nablar^{v}-\sigma_{\theta}\circ\alphar((1+4PQ)W)=\nablal^{v}-\sigma_{\theta}\circ\alphar((1+4PQ)W),
\end{align*}
and since $-\dag\circ\nablar^{G_{\theta}}\circ\dag=\nablal^{v}+\alphal((1+4QP)W^{\dag}),$ we need to prove that the relation $\alphal((1+4QP)W^{\dag})=-\sigma_{\theta}\circ\alphar((1+4PQ)W)$ holds. By Lemma \ref{lem:perm-rep}, Equation \eqref{eq:left-right-miracle}, we need only show that $(\sigma_{\theta}\ox1)(1\ox\sigma_{\theta})(\sigma_{\theta}\ox1)(1+4PQW)=-(1+QP)W^\dag$.
The equalities
$\sigma_{\theta}\otimes 1 = 2P-1$ and $1\otimes \sigma_{\theta} = 2Q-1$, $(PQ)^{2}W=\frac{PQW}{4}$ and $2QPW^{\dag}=4QPQW$ yield  
\begin{align*}
(2P-1)(2Q-1)(2P-1)(1+4PQ)W&=-W^{\dag}+4(2P-1)(2Q-1)PQW\\
&=-W^{\dag}+4(4PQ-2Q-2P+1)PQW\\
&=-W^{\dag}+16(PQ)^{2}W-4PQW-8QPQW\\
&=-W^{\dag}-8QPQW=-W^{\dag}-4QPW^{\dag},
\end{align*}
as desired. 
\end{proof}

\subsection{Recovering the Levi-Civita connection for manifolds}
By considering the trivial torus action on a manifold $M$, we obtain an immediate corollary to Theorem \ref{thm:theta-unique}. In this case all functions and one-forms are homogeneous of degree zero and thus all structures remain undeformed.

\begin{thm} 
Let $(M,g)$ be a compact Riemannian manifold with a Dirac bundle $\slashed{S}\to M$ and $\sigma: T^{2}_{\slashed{D}}(M)\to T^{2}_{\slashed{D}}(M)$ the standard flip map. Then there exists a unique Hermitian torsion-free $\dag$-bimodule connection $(\nablar^{G},\sigma)$ on $\Omega^{1}_{\slashed{D}}(C^\infty(M))\cong \Omega^1(M)\ox\C$. The restriction $\nablar^{G}:\Omega^{1}(M)\ox1_\C\to \Omega^{1}(M)\ox1_\C$ coincides with the Levi-Civita connection on $\Omega^{1}(M)$.
\end{thm}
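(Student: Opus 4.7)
The existence and uniqueness of a Hermitian torsion-free $\dag$-bimodule connection $(\nablar^{G},\sigma)$ on $\Omega^{1}_{\slashed{D}}(C^\infty(M))$ is immediate from Theorem \ref{thm:theta-unique} by setting $\theta=0$, or equivalently by equipping $M$ with the trivial $\mathbb{T}^2$-action, so that $\sigma_0=\sigma$ is the standard flip, $\Psi_0=\Psi$ is classical symmetrisation, $\pairing{\cdot}{\cdot}_0$ is the complexified Riemannian inner product, and $M_0=M$. The remaining task is to identify the restriction of $\nablar^{G}$ to the real one-forms $\Omega^{1}(M)\otimes 1_{\C}\subset \Omega^{1}_{\slashed{D}}$ with the classical Levi-Civita connection.

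The first step is to verify that $\nablar^{G}$ preserves reality. Since $(\nablar^{G},\sigma)$ is a $\dag$-bimodule connection, Proposition \ref{lem:basic-real} yields that $\nablar^{G}$ maps real one-forms into $\sigma$-real two-tensors. In the classical case, with $\sigma$ the standard flip and $\dag$ the operator adjoint, one checks directly that
\[
T^{2}_{\slashed{D}}(M)^{\dag\circ\sigma}=\Omega^{1}(M)\otimes_{C^\infty(M)}\Omega^{1}(M),
\]
that is, the $\sigma$-real two-tensors coincide with the real part of $T^{2}_{\slashed{D}}(M)=\Omega^{1}(M)\otimes_{C^\infty(M)}\Omega^{1}(M)\otimes_{\R}\C$. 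Indeed, $\omega\otimes\rho$ with $\omega,\rho\in\Omega^1(M)\otimes 1_\C$ is $\sigma$-real because $\omega^\dag=-\omega$ and $\rho^\dag=-\rho$, while the real-linear decomposition of Lemma \ref{lem: real-decomp} accounts for the converse. Hence $\nablar^{G}$ restricts to a genuine $\R$-linear connection $\nabla:\Omega^{1}(M)\to \Omega^{1}(M)\otimes_{C^{\infty}(M)}\Omega^{1}(M)$.

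Next, the two defining properties of the classical Levi-Civita connection must be extracted from the corresponding properties of $\nablar^{G}$. Metric compatibility follows from the Hermitian condition \eqref{eq:compatible}, since on real one-forms $\omega,\eta$ the $\B$-valued inner product $\pairing{\omega}{\eta}$ restricts to the classical $g(\omega,\eta)\in C^{\infty}(M,\R)$, and \eqref{eq:compatible} becomes precisely the Leibniz rule
\[
\mathrm{d}g(\omega,\eta)=g(\omega,\nabla\eta)+g(\nabla\omega,\eta)
\]
after accounting for the sign convention imposed by $\omega^\dag=-\omega$. Torsion-freeness follows from Definition \ref{defn:torsion-free}: the condition $(1-\Psi)\nablar^{G}=-\d$ combined with Example \ref{eg:classical-junk-proj}, where $\Psi$ is symmetrisation and $\d$ realises the exterior derivative, says that antisymmetrisation of $\nabla\omega$ equals $-d\omega$ for real $\omega$, i.e.\ the classical torsion of $\nabla$ vanishes.

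The conclusion is then immediate from the classical fundamental theorem of Riemannian geometry: the Levi-Civita connection $\nabla^{LC}$ on $\Omega^{1}(M)$ is the unique torsion-free metric-compatible connection, so $\nabla=\nabla^{LC}$. The main conceptual obstacle in this argument is the identification of $\sigma$-real two-tensors with classical real two-tensors and the confirmation that the algebraic reality condition $\dag\circ\sigma(\xi)=\xi$ reproduces, without mismatch, the genuine real structure on $\Omega^{1}(M)\otimes_{C^\infty(M)}\Omega^{1}(M)$; once this is in hand, the rest is a matter of unpacking definitions.
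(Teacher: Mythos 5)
Your proposal is correct and follows essentially the same route as the paper: existence and uniqueness by specialising Theorem \ref{thm:theta-unique} to the trivial torus action, restriction to real one-forms via Proposition \ref{lem:basic-real}, identification of the Hermitian and torsion-free conditions with classical metric compatibility and vanishing torsion, and then the fundamental theorem of Riemannian geometry. The extra detail you supply on identifying $\sigma$-real two-tensors with the classical real two-tensors is a correct elaboration of a step the paper treats more briefly.
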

\begin{proof}
The proof follows from Theorem \ref{thm:theta-unique}, noting that it is valid for the trivial torus action, but we elaborate on the relation to the classical statement. 

The definition  
of Hermitian connection restricts to the usual definition of compatibility with the metric $g$ for one forms $\omega\ox1_\C$, where $\omega$ is a real one-form. 
As the definition of connection is also the classical one for modules of sections of vector bundles, the definition of torsion-free connection, Definition \ref{defn:torsion-free}, coincides with one of the many equivalent definitions  of torsion-free for the classical case.

By Proposition \ref{lem:basic-real}, the restriction of a torsion-free Hermitian $\dag$-bimodule connection to the real one-forms defines a map $\Omega^1(M)\ox1_\C\to \Omega^1(M)\ox_{C^\infty(M)}\Omega^1(M)\ox1_\C$, and so restricts to a connection on the real bundles. Thus Theorem \ref{thm:theta-unique} yields a unique metric compatible torsion-free connection on the differential one-forms, which is then the Levi-Civita connection on forms by the fundamental theorem of Riemannian geometry.
\end{proof}

To be more explicit, we may compute the local expression for this connection, and the following lemma will provide all the tools. To
ensure that the usual one-forms are the image of a $\dag$-bimodule 
map from universal forms, we set $(\dee x)^\dag=-\dee x$. This is consistent
with using differential forms in a Clifford representation
such that $\dee x^\mu\cdot \dee x^\nu+\dee x^\nu\cdot \dee x^\mu=-2g^{\mu\nu}$, though we do not use this fact.

We will also use summation convention, but caution the reader that repeated indices are not always ``one up, one down'' due to the conflict between differential geometry and module notation. Also, as we work globally, there are implicit sums over a cover by coordinate charts, labelled by $\alpha$ below. We label (local) partial derivatives by $\partial_j(f)=f_{,j}$.

\begin{lemma}
\label{lem:grassy-bit}
Let $(M,g)$ be a complete oriented Riemannian manifold.
Define a (global) frame for $\Gamma(T^*M)$
\begin{equation}
\omega_{j\alpha}=\sqrt{\varphi_\alpha}B_\mu^j\dee x^\mu=\sqrt{\varphi_\alpha}e^j,
\label{eq:real-classy-frame}
\end{equation}
where the $\varphi_\alpha$ are a partition of unity subordinate to a covering of $M$ by coordinate charts $(U_\alpha,x_\alpha)$,
and the $(e^j)$ are a (local) orthonormal frame on each open set.
The $B^\mu_j$ are the change of basis matrix elements $\dee x^\mu=B^\mu_je^j$ and satisfy
\[
B^\mu_j B_\mu^k=\delta_j^k,\quad B^\mu_j B_\nu^j=\delta^\mu_\nu,\quad
B^j_\mu B^k_\nu g^{\mu\nu}=\delta^{jk},\quad B^k_\nu g^{\mu\nu}=B^\mu_j\delta^{jk}.
\]
The Grassmann connection of this frame evaluated on the one-form $f_\nu \dee x^\nu$ is
\begin{align*}
\nablar^v(f_\nu \dee x^\nu)&=\omega_{j\alpha}\ox\dee(\pairing{\omega_{j\alpha}}{f_\nu \dee x^\nu})\\
&=\varphi_\alpha (f_\nu)_{,\rho} \dee x^\nu\ox \dee x^\rho+\varphi_\alpha g^{\sigma\nu} f_\nu\left(B^j_\mu(B^j_\sigma)_{,\rho} -g_{\mu\sigma,\rho}\right) \dee x^\mu\ox \dee x^\rho.
\end{align*}
We also have
\begin{align*}
\alphar(W^\dag)(f_\nu dx^\nu)=\omega_{j\alpha}\ox\pairing{\dee(\omega_{j\alpha})}{f_\nu \dee x^\nu}
=\frac{1}{2}\varphi_\alpha f_\nu g^{\sigma\nu}(B^j_\mu B^j_{\sigma,\rho}-B^j_\mu B^j_{\rho,\sigma})\dee x^\mu \ox \dee x^\rho
\end{align*}
and
\begin{align*}
\alphar(W)(f_\nu \dee x^\nu)=\dee(\omega_{j\alpha})\pairing{\omega_{j\alpha}}{f_\nu \dee x^\nu}=
\frac{1}{2}\varphi_\alpha f_\nu g^{\sigma\nu}\left(B^j_{\rho,\mu}B^j_\sigma-B^j_{\mu,\rho}B^j_\sigma\right)\dee x^\mu\ox \dee x^\rho.
\end{align*}
\end{lemma}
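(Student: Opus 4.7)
The plan is to verify each formula by direct computation in local coordinates, relying on the algebraic identities satisfied by the matrix $B$ as a change of basis between the coordinate and orthonormal frames, together with the cancellation $\sum_\alpha \dee\varphi_\alpha = 0$ arising from $\sum_\alpha \varphi_\alpha = 1$.

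First I would establish the identities for $B$. The relations $B^\mu_j B^j_\nu = \delta^\mu_\nu$ and $B^\mu_j B^k_\mu = \delta^k_j$ merely state that $(B^\mu_j)$ and $(B^j_\mu)$ are mutually inverse, as follows from $\dee x^\mu = B^\mu_j e^j$ and $e^j = B^j_\mu \dee x^\mu$. Applying the orthonormality $(e^j | e^k)_g = \delta^{jk}$ to the latter yields $B^j_\mu B^k_\nu g^{\mu\nu} = \delta^{jk}$; multiplying by $B^\ell_\sigma$ and summing gives $B^k_\nu g^{\mu\nu} = B^\mu_j \delta^{jk}$. I will also use the dual identity $\sum_j B^j_\mu B^j_\sigma = g_{\mu\sigma}$, obtained by regarding $A = (B^j_\mu)$ as a matrix satisfying $A G A^T = I$ with $G = (g^{\mu\nu})$, so that $A^T A = G^{-1}$.

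For the Grassmann connection I would set $F^j := B^j_\sigma g^{\sigma\nu} f_\nu$ so that $\pairing{\omega_{j\alpha}}{f_\nu \dee x^\nu} = \sqrt{\varphi_\alpha} F^j$. The Leibniz rule gives $\dee(\sqrt{\varphi_\alpha} F^j) = \tfrac{1}{2\sqrt{\varphi_\alpha}} F^j \dee\varphi_\alpha + \sqrt{\varphi_\alpha} \dee F^j$, and multiplying by $\omega_{j\alpha} = \sqrt{\varphi_\alpha} B^j_\mu \dee x^\mu$ produces two pieces. The first, $\tfrac{1}{2} B^j_\mu F^j \dee x^\mu \otimes \dee\varphi_\alpha$, collapses under $\sum_j$ via $B^j_\mu B^j_\sigma g^{\sigma\nu} = \delta^\nu_\mu$ to $\tfrac{1}{2} f_\mu \dee x^\mu \otimes \dee\varphi_\alpha$, which vanishes upon summing over $\alpha$. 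For the second piece $\varphi_\alpha B^j_\mu \dee x^\mu \otimes \dee F^j$, I apply Leibniz to $F^j$: the $f_{\nu,\rho}$-part gives $\varphi_\alpha f_{\nu,\rho} \dee x^\nu \otimes \dee x^\rho$, while the remaining $(B^j_{\sigma,\rho} g^{\sigma\nu} + B^j_\sigma g^{\sigma\nu}_{,\rho}) f_\nu$ part uses $\sum_j B^j_\mu B^j_\sigma = g_{\mu\sigma}$ together with $g_{\mu\sigma} g^{\sigma\nu}_{,\rho} = -g_{\mu\sigma,\rho} g^{\sigma\nu}$ (from differentiating $g_{\mu\sigma} g^{\sigma\nu} = \delta^\nu_\mu$) to produce the stated $\varphi_\alpha g^{\sigma\nu} f_\nu (B^j_\mu B^j_{\sigma,\rho} - g_{\mu\sigma,\rho}) \dee x^\mu \otimes \dee x^\rho$.

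For $\alphar(W)$ and $\alphar(W^\dag)$, I would first observe that since $\Psi = \tfrac{1}{2}(1+\sigma)$ is symmetrisation on a classical manifold, $\d(\omega_{j\alpha})$ equals the antisymmetrisation $\tfrac{1}{2}\bigl(\dee(\sqrt{\varphi_\alpha} B^j_\mu) \otimes \dee x^\mu - \dee x^\mu \otimes \dee(\sqrt{\varphi_\alpha} B^j_\mu)\bigr)$. For $\alphar(W)$ I multiply each term of this antisymmetrisation by $\pairing{\omega_{j\alpha}}{f_\nu \dee x^\nu} = \sqrt{\varphi_\alpha} B^j_\sigma g^{\sigma\nu} f_\nu$, so the two $\sqrt{\varphi_\alpha}$ factors combine to $\varphi_\alpha$; the $\dee\varphi_\alpha$-contributions again collapse under $\sum_j$ via the $\delta^\nu_\mu$ identity and then vanish under $\sum_\alpha$. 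The surviving $\varphi_\alpha B^j_{\mu,\rho} B^j_\sigma g^{\sigma\nu} f_\nu$ pieces, after relabelling $\mu \leftrightarrow \rho$ in one of the antisymmetrised terms, yield the stated formula for $\alphar(W)$. The case of $\alphar(W^\dag)$ is analogous via the identification $\alphar(W^\dag)(\eta) = \sum_{j,\alpha} \omega_{j\alpha} \otimes (\d\omega_{j\alpha})^\dag_{(2)} \pairing{(\d\omega_{j\alpha})_{(1)}}{\eta}$, which matches the compact form in the lemma through the $\Omega^1$-valued pairing of Equation (2.1); the convention $(\dee x^\mu)^\dag = -\dee x^\mu$ places the derivative on the second rather than the first tensor slot, producing the $B^j_\mu B^j_{\sigma,\rho} - B^j_\mu B^j_{\rho,\sigma}$ pattern.

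The main technical obstacle is purely one of bookkeeping: keeping indices and signs straight through the antisymmetrisations and dummy-index relabellings, and verifying that the $\dee\varphi_\alpha$-contributions cancel in exactly the right manner. Once this cancellation is confirmed, the remaining computations are routine applications of the Leibniz rule and the $B$-identities established at the outset.
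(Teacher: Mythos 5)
Your proposal is correct and follows essentially the same route as the paper's proof: a direct local-coordinate computation using the mutual-inverse and orthonormality identities for $B$, the identity $B^j_\mu B^j_\sigma = g_{\mu\sigma}$ together with $g_{\mu\sigma}g^{\sigma\nu}_{,\rho}=-g_{\mu\sigma,\rho}g^{\sigma\nu}$, and the cancellation of all $\dee\varphi_\alpha$-terms via $\sum_\alpha\dee\varphi_\alpha=0$ (for $W^\dag$ the paper additionally notes that this cancellation uses the $\alpha$-independence of $g_{\mu\sigma}\dee x^\mu\ox\dee x^\sigma$, a detail worth making explicit). The remaining steps — Leibniz rule, antisymmetrisation of $\d(\omega_{j\alpha})$, and dummy-index relabelling — match the paper's computation.
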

\begin{proof}
The first two statements about the change of basis matrix are clear, while the third follows from the orthonormality of the (local) frame $(e^j)$. The fourth follows from 
\[
B^k_\nu g^{\mu\nu}=\pairing{\dee x^\mu}{B^k_\nu \dee x^\nu}=\pairing{B^\mu_j e^j}{e^k}=B^\mu_j\delta^{jk}.
\]

Let $f_\nu \dee x^\nu$ be a compactly supported section of the cotangent bundle.
The Grassmann connection is computed as 
\begin{align}
\nablar^v(f_\nu \dee x^\nu)
&=\omega_{j\alpha}\ox\dee(\pairing{\omega_{j\alpha}}{f_\nu \dee x^\nu})\nonumber\\
&=\varphi_\alpha B_\mu^j  (B_\sigma^j f_\nu g^{\sigma\nu})_{,\rho} \dee x^\mu\ox \dee x^\rho
+\sqrt{\varphi_\alpha}\sqrt{\varphi_\alpha}_{,\rho}B_\mu^j  B_\sigma^j f_\nu g^{\sigma\nu}\dee x^\mu\ox \dee x^\rho.
\label{eq:grass-class-1}
\end{align}
Since 
\begin{equation}
0=\big(\sum_\alpha\varphi_\alpha\big)_{,\rho}=2\sum\sqrt{\varphi_\alpha}\sqrt{\varphi_\alpha}_{,\rho},
\label{eq:PO1-vanish}
\end{equation}
the second term  on the right hand side of \eqref{eq:grass-class-1} vanishes and we have
\begin{align*}
\nablar^v(f_\nu \dee x^\nu)&=\varphi_\alpha B_\mu^j  (B_\sigma^jg^{\sigma\nu})_{,\rho} f_\nu  \dee x^\mu\ox \dee x^\rho
+\varphi_\alpha B_\mu^j  B_\sigma^j (f_\nu)_{,\rho} g^{\sigma\nu} \dee x^\mu\ox \dee x^\rho\\
&=  \varphi_\alpha B^j_\mu(B^j_\sigma)_{,\rho}g^{\sigma\nu} f_\nu  \dee x^\mu\ox \dee x^\rho
+\varphi_\alpha B^j_\mu B^j_\sigma g^{\sigma\nu}_{,\rho} f_\nu  \dee x^\mu\ox \dee x^\rho+
\varphi_\alpha (f_\nu)_{,\rho} \dee x^\nu\ox \dee x^\rho\\
&=  \varphi_\alpha B^j_\mu(B^j_\sigma)_{,\rho}g^{\sigma\nu} f_\nu  \dee x^\mu\ox \dee x^\rho
+\varphi_\alpha g_{\mu\sigma} g^{\sigma\nu}_{,\rho} f_\nu  \dee x^\mu\ox \dee x^\rho+
\varphi_\alpha (f_\nu)_{,\rho} \dee x^\nu\ox \dee x^\rho\\
&=\varphi_\alpha (f_\nu)_{,\rho} \dee x^\nu\ox \dee x^\rho+\varphi_\alpha g^{\sigma\nu} f_\nu\left(B^j_\mu(B^j_\sigma)_{,\rho} -g_{\mu\sigma,\rho}\right) \dee x^\mu\ox \dee x^\rho.
\end{align*}

Equation \eqref{eq:PO1-vanish} applies in the following computations as well, and we will omit the sums where $\sqrt{\varphi_\alpha}\sqrt{\varphi_\alpha}_{,\rho}$ appears.
Next we compute $\alphar(W)$, 
\begin{align}
\d(\omega_{j\alpha})\pairing{\omega_{j\alpha}}{f_\nu \dee x^\nu}
&=\left(\sqrt{\varphi_\alpha}B^j_\mu\right)_{,\rho}\dee x^\rho\wedge \dee x^\mu \sqrt{\varphi_\alpha}B^j_\sigma f_\nu g^{\sigma\nu}\nonumber\\
&=\big(\frac{1}{2}\varphi_{\alpha,\rho}g_{\mu\sigma}g^{\sigma\nu}f_\nu+\varphi_\alpha B^j_{\mu,\rho}B^j_\sigma f_\nu g^{\sigma\nu}\big)\dee x^\rho\wedge \dee x^\mu\nonumber\\
&=\frac{1}{2}d(\varphi_\alpha)\wedge f_\nu dx^\nu+\frac{1}{2}\varphi_\alpha f_\nu g^{\sigma\nu}\left(B^j_{\mu,\rho}B^j_\sigma-B^j_{\rho,\mu}B^j_\sigma\right)\dee x^\rho\ox \dee x^\mu\nonumber\\
&=\frac{1}{2}\varphi_\alpha f_\nu g^{\sigma\nu}\left(B^j_{\rho,\mu}B^j_\sigma-B^j_{\mu,\rho}B^j_\sigma\right)\dee x^\mu\ox \dee x^\rho.\nonumber
\end{align}

For $W^{\dag}$, we remember that $\dee x^\dag=-\dee x$, and use the Leibniz rule to see that
\begin{align}
\omega_{j\alpha}\ox\pairing{\dee(\omega_{j\alpha})}{f_\nu \dee x^\nu}
&=\sqrt{\varphi_\alpha}B^j_\mu \dee x^\mu\ox \pairing{(\sqrt{\varphi_\alpha}B^j_\sigma)_{,\rho}\dee x^\rho\wedge \dee x^\sigma}{f_\nu \dee x^\nu}\nonumber\\
&=-\frac{1}{4}f_\nu \varphi_{\alpha,\rho}g_{\mu\sigma}\left(g^{\rho\nu} \dee x^\mu\ox \dee x^\sigma-g^{\sigma\nu}\dee x^\mu\ox \dee x^\rho\right)\nonumber\\
&\qquad\qquad-\frac{1}{2}\varphi_\alpha f_\nu B^j_\mu B^j_{\sigma,\rho}\left(g^{\rho\nu} \dee x^\mu\ox \dee x^\sigma-g^{\sigma\nu}\dee x^\mu\ox \dee x^\rho\right)\nonumber\\
&=-\frac{1}{4}\pairing{\dee(\varphi_\alpha)}{f_\nu \dee x^\nu} g_{\mu\sigma}\dee x^\mu\ox \dee x^\sigma
+\frac{1}{4}f_\nu \varphi_{\alpha,\rho}\dee x^\nu\ox \dee x^\rho\nonumber\\
&\qquad\qquad-\frac{1}{2}\varphi_\alpha f_\nu g^{\sigma\nu}(B^j_\mu B^j_{\rho,\sigma}-B^j_\mu B^j_{\sigma,\rho})\dee x^\mu \ox \dee x^\rho\nonumber\\
&=
\frac{1}{2}\varphi_\alpha f_\nu g^{\sigma\nu}(B^j_\mu B^j_{\sigma,\rho}-B^j_\mu B^j_{\rho,\sigma})\dee x^\mu \ox \dee x^\rho\nonumber.
\end{align}
Here we used the fact that $g$ is a tensor to see that
\[
\frac{1}{4}\pairing{\dee(\varphi_\alpha)}{f_\nu dx^\nu} g_{\mu\sigma}\dee x^\mu\ox \dee x^\sigma=0,
\]
because the term $g_{\mu\sigma}\dee x^\mu\ox \dee x^\sigma$ is independent of $\alpha$ and so we can use $\sum_\alpha \dee(\varphi_\alpha)=0$.
\end{proof}
Now observe that $(2P-1)(2Q-1)W=W^\dag$ implies that the connection form is $A=(1+4PQ)W=W^\dag+2QW$ by Proposition \ref{prop:suff-for-exist}. Since $Q$ is just symmetrisation in the last two variables we readily compute the connection form, and so we find
\begin{thm}
\label{thm:classy-LC}
Let $(M,g)$ be a compact oriented Riemannian manifold. For the usual first order calculus $\Omega^1(M)\ox\C$,
the connection form for the Levi-Civita connection expressed in the frame \eqref{eq:real-classy-frame} is
\begin{equation*}
A=\frac{1}{2}(-g_{\mu\rho,\sigma}+g_{\rho\sigma,\mu}+g_{\sigma\mu,\rho})\dee x^{\rho}\ox \dee x^\mu\ox(\dee x^\sigma)^\dag-B^j_{\rho,\mu}B^j_\sigma \dee x^{\rho}\ox \dee x^\mu\ox(\dee x^\sigma)^\dag
\end{equation*}
and so for a one-form $f_\nu dx^\nu$ with support in a single chart we have
\begin{equation*}
\nablar^v(f_\nu \dee x^\nu)-\alphar((1+4PQ)(W))(f_\nu \dee x^\nu)
=f_{\nu,\rho}\, \dee x^\nu\ox \dee x^\rho
-f_\nu \Gamma^\nu_{\mu\rho}\,\dee x^{\rho}\ox \dee x^{\mu},
\end{equation*}
which is the local expression for the classical Levi-Civita connection on one-forms.
\end{thm}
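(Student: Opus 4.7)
The plan is to invoke Theorem \ref{thm:theta-unique} with the trivial $\mathbb{T}^{2}$-action on $M$: every smooth function has bidegree zero, so the cocycle $\Theta$ is identically $1$, the deformed flip $\sigma_{\theta}$ collapses to the ordinary flip $\sigma$, and the inner product $\pairing{\cdot}{\cdot}_{\theta}$ reduces to the Riemannian inner product. This immediately gives existence and uniqueness of a Hermitian torsion-free $\dag$-bimodule connection $(\nablar^{G},\sigma)$ on $\Omega^{1}_{\slashed{D}}(C^{\infty}(M)) \cong \Omega^{1}(M)\otimes\mathbb{C}$, with connection form $A = (1+4PQ)W = W^{\dag} + 2QW$ relative to the frame $v = (\omega_{j\alpha})$ from Lemma \ref{lem:grassy-bit}, as established in the proof of Theorem \ref{thm:theta-unique}.

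I would next compute $A$ explicitly in local coordinates. Using the Leibniz rule \eqref{eq: almost-derivation} together with $\d \circ \dee = 0$, one finds
\[
\d(\omega_{j\alpha}) = (1-\Psi)\bigl(\dee(\sqrt{\varphi_{\alpha}}\, B^{j}_{\mu})\otimes \dee x^{\mu}\bigr),
\]
which is antisymmetric in the two tensor slots. Assembling $W^{\dag}$ and $QW$ from this formula and summing over $j$ and $\alpha$, the partition-of-unity cross terms $\sum_{\alpha}\sqrt{\varphi_{\alpha}}(\sqrt{\varphi_{\alpha}})_{,\rho}$ vanish by \eqref{eq:PO1-vanish}, while $\sum_{\alpha}\varphi_{\alpha} = 1$ eliminates the remaining partition-of-unity factors. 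The identity
\[
g_{\rho\sigma,\mu} = (B^{j}_{\rho} B^{j}_{\sigma})_{,\mu} = B^{j}_{\rho,\mu}\, B^{j}_{\sigma} + B^{j}_{\rho}\,(B^{j}_{\sigma})_{,\mu}
\]
then converts the resulting frame-derivative products into metric derivatives, and expanding $Q$ as symmetrization in the last two slots produces the claimed closed expression for $A$.

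For the local expression acting on $f_{\nu}\,\dee x^{\nu}$ supported in a single chart, I would combine $\nablar^{v}(f_{\nu}\,\dee x^{\nu})$ from Lemma \ref{lem:grassy-bit} with $\alphar(A)(f_{\nu}\,\dee x^{\nu})$, using the pairing $\pairing{((\dee x^{\sigma})^{\dag})^{\dag}}{\dee x^{\nu}} = g^{\sigma\nu}$. The frame-dependent terms involving $B^{j}_{\rho,\mu} B^{j}_{\sigma}$ appearing in both $\nablar^{v}$ and $\alphar(A)$ combine through the same metric-derivative identity, and the remaining purely metric combination $\tfrac{1}{2}g^{\sigma\nu}(g_{\sigma\mu,\rho} + g_{\sigma\rho,\mu} - g_{\mu\rho,\sigma})$ is recognized as $\Gamma^{\nu}_{\mu\rho}$, producing the Christoffel form of the classical Levi-Civita connection on one-forms. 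The restriction to the real cotangent bundle then follows from Proposition \ref{lem:basic-real} together with the fundamental theorem of Riemannian geometry.

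The main obstacle is the bookkeeping: tracking sums over the frame index $j$ and partition-of-unity labels $\alpha$ while correctly applying $Q = 1\otimes\Psi$ as symmetrization in the last two tensor slots, and distinguishing the inner-product-paired third factor from the un-paired first two. Once this is organized, the cancellation of all frame-dependent terms via the single metric identity $g_{\rho\sigma,\mu} = B^{j}_{\rho,\mu}B^{j}_{\sigma} + B^{j}_{\rho}(B^{j}_{\sigma})_{,\mu}$ reduces the final expression to a purely metric one that manifestly coincides with the classical Levi-Civita connection.
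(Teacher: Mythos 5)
Your proposal follows essentially the same route as the paper: the paper derives $A=(1+4PQ)W=W^{\dag}+2QW$ from Proposition \ref{prop:suff-for-exist} (via the trivial-action case of Theorem \ref{thm:theta-unique}), computes $\nablar^{v}$, $\alphar(W)$ and $\alphar(W^{\dag})$ in local coordinates in Lemma \ref{lem:grassy-bit}, and then assembles the result by symmetrising in the last two slots and converting the frame derivatives $B^{j}_{\rho,\mu}B^{j}_{\sigma}$ into metric derivatives via $g_{\rho\sigma}=B^{j}_{\rho}B^{j}_{\sigma}$, exactly as you describe. The proposal is correct.
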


\section{Nonunital algebras and indefinite metrics}
\label{sec:optimism}

\subsection{Noncompact/nonunital examples}

An important simplifying assumption of this paper is the existence of a finite frame for the module of one-forms. In the unital case this (together with an Hermitian inner product) is equivalent to the module being finite projective. In the nonunital case the situation is more difficult. The correct definition, roughly corresponding to a good cover of a manifold or finite topological dimension \cite{R1,RSi}, is that there is a unitisation $\B_b$ of $\B$ such that
\[
\Omega^1_\D(\B)\cong p\B^N,\qquad p=p^*=p^2\in M_N(B_b).
\]
The main point is that all arguments with finite frames can be emulated, in particular the two projection problem can be considered on $p\B_b^N\ox_{\B_b}p\B_b^N\ox_{\B_b}p\B_b^N$ and then restricted to $p\B^N\ox_{\B}p\B^N\ox_{\B}p\B^N$. 

\subsection{Indefinite metrics}
For manifolds we have shown that we can access the one and two forms, and the differential, from a Dirac-type operator and some additional data (the symmetrisation $\Psi$). We have implicitly used that classically $\D$ comes from a Riemannian metric, but pseudo-Riemannian manifolds have the same underlying differential topology. 

In Example \ref{eg:Minky} we indicated that we need the extra data of a bimodule map $\chi:\Omega^1_\dee(\B)\to\Omega^1_\dee(\B)$ such that 
$\chi^2=1$, $\chi(\omega^\dag)=(\chi\omega)^\dag$ for all one-forms $\omega$, and for all $b\in\B$ we have $\chi(\dee(b)^\dag)=-\dee(b^*)$. Given this we can define a new dagger-structure $\dag_\chi=\chi\circ\dag=\dag\circ\chi$ so that we obtain a $\dag$-bimodule map $\pi:(\Omega^1_u(\B),*)\to(\Omega^1_\dee(\B),\dag_\chi)$.

Then we may make all subsequent definitions for this new dagger structure. Provided that we can obtain a $\dag$-concordant Hermitian differential structure, we will then obtain an Hermitian torsion-free connection, unique if there is a compatible braiding. 

When comparing to the original $\dag$-structure, some items are simple. For instance $G=\omega_j\ox\omega_j^{\dag_\chi}=\omega_j\ox\chi\omega_j^\dag$ and so the associated ``Riemannian'' bimodule inner product is
\[
\pairing{G}{\omega\ox\rho}=\pairing{\chi\omega^\dag}{\rho}
\]
and we can recover an indefinite inner product from $\pairing{\omega^\dag}{\rho}$.
We leave further considerations for the future.

\appendix
\section{The relationship between junk and connections}
\label{sec:j-and-c}

There is an important relationship between junk tensors and connections. This relationship has not been used in this article, but some corollaries serve as a useful tool for both determining junk and finding Hermitian torsion-free connections in examples \cite{MRPods}.

\begin{thm}
\label{thm:where-junk}
Let $(\Omega^1_\dee,\dag)$ be a first order differential structure for a local algebra $\B$.
Let $\nablar:\Omega^{1}_{\dee}\to \Omega^{1}_{\dee}\otimes_{\B}\Omega^{1}_{\dee}$ be any right connection on $\Omega^1_\dee$.
The junk two-tensors $JT^2_\dee$ are contained in 
the 
right $\B$-module generated by
$\nablar(\dee(a))$, $a\in\B$.
Hence
\begin{align}
\label{thm:junkinclusionconnection}
JT^{2}_{\dee}\subset\Big(\bigcap_{\nablar}\nablar(\dee(\B))\Big)\cdot\B,
\end{align}
where the intersection is over all right connections.
Moreover, if $(\Omega^1_\dee,\dag)$ has an inner product $\pairing{\cdot}{\cdot}_\B$ on $\Omega^1_\dee$  with a finite frame for the one-forms $(\omega_j)$, there exists a connection $\nablar_T$ on $\Omega^1_\dee$ such that $JT^2_\dee={\rm Im}(\nablar_T\circ\dee)\cdot\B$, and so
\begin{align}
\label{thm:junkequalityconnection}
JT^{2}_{\dee}=\B\cdot \Big(\bigcap_{\nablar}\nablar(\dee(\B))\Big)\cdot\B.
\end{align}
\end{thm}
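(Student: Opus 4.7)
The plan is to verify both inclusions by exploiting freedom in representing junk universal forms together with a right $\B$-module splitting of $\pi$ supplied by projectivity of $\Omega^1_\dee(\B)$. First I would dispose of the containment in the intersection direction by showing that for any single right connection $\nablar$ on $\Omega^1_\dee(\B)$ one has $JT^2_\dee\subset \nablar(\dee(\B))\cdot\B$. Any junk tensor has the form $T=\widehat\pi(\delta(\omega))$ with $\omega\in\Omega^1_u(\B)\cap\ker\pi$, and using the identity $a\delta(b)=\delta(ab)-\delta(a)b$ I can rewrite $\omega$ as $\sum_j\delta(c_j)d_j$ with $\sum_j\dee(c_j)d_j=0$ in $\Omega^1_\dee(\B)$. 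Since $\delta$ is a graded derivation and $\delta^2=0$, this yields $T=-\sum_j\dee(c_j)\otimes \dee(d_j)$. Applying $\nablar$ to the vanishing relation $\sum_j\dee(c_j)d_j=0$ and invoking the right Leibniz rule gives $0=\sum_j\nablar(\dee(c_j))d_j+\sum_j\dee(c_j)\otimes \dee(d_j)$, so $T=\sum_j\nablar(\dee(c_j))d_j$, placing $T$ in $\nablar(\dee(\B))\cdot\B$ for every connection $\nablar$. This delivers the first displayed inclusion.

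Next I would construct the specific connection $\nablar_T$. The finite frame assumption makes $\Omega^1_\dee(\B)$ finitely generated projective as a right $\B$-module, so the surjective right $\B$-module map $\pi:\Omega^1_u(\B)\to\Omega^1_\dee(\B)$ admits a right $\B$-linear splitting $\sigma:\Omega^1_\dee(\B)\to\Omega^1_u(\B)$. I then set $\nablar_T:=-\widehat\pi\circ\delta\circ\sigma$. For $\omega\in\Omega^1_\dee(\B)$ and $b\in\B$, the right $\B$-linearity of $\sigma$ combined with the graded Leibniz rule applied to the degree-one element $\sigma(\omega)$ gives $\delta(\sigma(\omega)b)=\delta(\sigma(\omega))b-\sigma(\omega)\delta(b)$, and applying $-\widehat\pi$, while using that $\widehat\pi$ is a bimodule algebra map with $\pi(\sigma(\omega))=\omega$, yields $\nablar_T(\omega b)=\nablar_T(\omega)b+\omega\otimes\dee(b)$; hence $\nablar_T$ is a right connection. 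For the junk property, observe that $\sigma(\dee(a))-\delta(a)\in\Omega^1_u(\B)\cap\ker\pi$, and $\delta^2=0$ yields $\delta(\sigma(\dee(a)))=\delta(\sigma(\dee(a))-\delta(a))$, so $\nablar_T(\dee(a))\in JT^2_\dee$ directly from the definition of junk.

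The equality then follows by sandwiching. On the one hand, the first inclusion gives $JT^2_\dee\subset\B\cdot\bigl(\bigcap_{\nablar}\nablar(\dee(\B))\bigr)\cdot\B$. On the other hand, any element of $\bigcap_{\nablar}\nablar(\dee(\B))$ lies in $\nablar_T(\dee(\B))\subset JT^2_\dee$, so, since $JT^2_\dee$ is a $\B$-bimodule, $\B\cdot\bigl(\bigcap_{\nablar}\nablar(\dee(\B))\bigr)\cdot\B\subset JT^2_\dee$. The main technical obstacle, beyond tracking signs carefully in the graded Leibniz computation, is that $\sigma$ is only linear for the right $\B$-action; this is why the construction produces a right connection but no corresponding left version without further structure. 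One must also resist the temptation to declare $\nablar_T$ independent of the splitting $\sigma$, since only the image modulo junk is canonical.
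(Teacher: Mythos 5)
Your proof is correct and follows essentially the same route as the paper: the first inclusion is obtained by applying $\nablar$ to a vanishing combination of exact one-forms exactly as in the paper's argument (with the algebra coefficients placed on the right rather than the left), and your connection $\nablar_T=-\widehat{\pi}\circ\delta\circ\sigma$ coincides with the paper's frame-based construction upon taking the splitting $\sigma(\eta)=\sum_j\alpha_j\pairing{\omega_j}{\eta}_\B$ with $\pi(\alpha_j)=\omega_j$. The only presentational difference is that you phrase the second half via an abstract right $\B$-linear splitting supplied by projectivity, which is the coordinate-free form of the same argument.
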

\begin{proof}
Let $\nablar$ be any right connection on $\Omega^1_\dee$,
and suppose that $\sum_ja_j\delta(b_j)\in\ker(\pi)$, so that 
$\sum_j\dee(a_j)\ox\dee(b_j)$ is junk. 
Then since $\sum_ja_jb_j=0$ (this is true for all one forms $\sum_ja_j\delta(b_j)$),
the Leibniz rule gives $0=\sum_j a_j\dee(b_j)=-\sum_j\dee(a_j)b_j$. 
So we find
$$
0=\nablar\Big(\sum_ja_j\dee(b_j)\Big)\ \iff\ 0=\nablar\Big(\sum_j\dee(a_j)b_j\Big)
$$
and then by the connection property we have
$$
0=\nablar\Big(\sum_j\dee(a_j)b_j\Big)
=\sum_j\nablar(\dee(a_j))b_j+\dee(a_j)\ox\dee(b_j).
$$
Thus a junk two-tensor can be expressed as
\[
\sum_j\dee(a_j)\ox\dee(b_j)=-\sum_j\nablar(\dee(a_j))b_j.
\]
Composing with $m:T^2_\dee\to\Omega^2_\dee$ gives 
$J^2_\dee\subset {\rm Im}(m\circ\nablar\circ\dee)\cdot\B$.
The arbitrariness of the connection $\nablar$ gives
\begin{align}
\label{cor: junkinclusionconnection}
J^{2}_{\dee}&\subset \Big(\bigcap_{\nabla}(m\circ\nabla)(\dee(\B))\Big)\cdot\B,\quad\mbox{and}\quad JT^{2}_{\dee}\subset \Big(\bigcap_{\nabla}\nabla(\dee(\B))\Big)\cdot\B
\end{align}
where the intersections run over all connections.
 
For the reverse containment we define a connection whose image on exact one-forms is contained in the junk.  
Let $v=(\omega_j)$ be a right frame for the inner product $\pairing{\cdot}{\cdot}$ on $\Omega^1_\dee$. Let 
$\alpha_j\in \Omega^1_u(\B)$ be universal forms with $\pi_\dee(\alpha_j)=\omega_j$. Define $T_j\in T^2_\dee$ by $T_j=\pi_\dee(\delta(\alpha_j))$. With $\nablar^v$ the Grassmann connection of the frame, define a connection by
\begin{equation}
\nablar_T(\eta)=\nablar^v(\eta)-T_j\pairing{\omega_j}{\eta}_\B.
\label{eq:nabla-tee}
\end{equation}
Then for $b\in\B$, write $\dee(b)=\omega_j\pairing{\omega_j}{\dee(b)}_\B$, and observe that
\[
\delta(b)-\alpha_j\pairing{\omega_j}{\dee(b)}_\B\in\ker(\pi_d).
\]
Hence we obtain a junk two-tensor
\[
\pi_d(\delta(\delta(b)-\alpha_j\pairing{\omega_j}{\dee(b)}_\B))
=-T_j\pairing{\omega_j}{\dee(b)}_\B+\omega_j\ox\dee(\pairing{\omega_j}{\dee(b)}_\B)=\nablar_T(\dee(b)).
\]
As $b\in\B$ was arbitrary, we see that ${\rm Im}(\nablar_T\circ\dee)\subset JT^2_\dee$, and as junk is a $\B$-bimodule, ${\rm Im}(\nablar_T\circ\dee)\cdot\B\subset JT^2_\dee$. Since $JT^{2}_{\dee}$ is a $\B$-bimodule, we obtain the equality \eqref{thm:junkequalityconnection}.
\end{proof}

\begin{rmk}
For calculi arising from spectral triples (and similar) we can also define the represented 2-tensors, and composing connections with the multiplication $m:T^2_\dee\to\Omega^2_\dee$ gives 
\[
J^{2}_{\dee}\subset \Big(\bigcap_{\nabla}(m\circ\nabla)(\dee(\B))\Big)\cdot\B
\]
with equality when we have an inner product on $\Omega^1_\dee$ with finite frame.
\end{rmk}

\begin{corl}
Suppose the first order differential structure $(\Omega^1_\dee,\dag)$ has an inner product $\pairing{\cdot}{\cdot}_\B$ on $\Omega^1_\dee$  with a finite frame $(\omega_j)$ for the one-forms  such that 
 $\d\omega_j=0$  (so the frame consists of closed one-forms). With $\nablar$ the Grassmann connection of the frame $(\omega_j)$ we have $JT^2_\dee=\B\cdot\nablar(\dee(\B))\cdot\B$ as bimodules.
\end{corl}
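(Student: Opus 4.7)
The inclusion $JT^2_\dee \subset \B\cdot\nablar(\dee(\B))\cdot\B$ is immediate from \eqref{thm:junkinclusionconnection} of Theorem \ref{thm:where-junk} applied to this specific connection, together with the fact that $\nablar(\dee(\B))\cdot\B \subset \B\cdot\nablar(\dee(\B))\cdot\B$. The substance is in the reverse inclusion, and for this it suffices to prove $\nablar(\dee(b))\in JT^2_\dee$ for every $b\in\B$: since $JT^2_\dee$ is a sub-bimodule of $T^2_\dee$, bimoduleness then propagates the containment to all of $\B\cdot\nablar(\dee(\B))\cdot\B$.

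To establish this, pick lifts $\alpha_j\in\Omega^1_u(\B)$ of the frame elements, so $\pi(\alpha_j)=\omega_j$, and write $T_j:=\pi(\delta\alpha_j)\in T^2_\dee$. The frame identity $\sum_j\omega_j\pairing{\omega_j}{\dee(b)}_\B=\dee(b)$ ensures that
$$\beta_b := \delta(b) - \sum_j\alpha_j\pairing{\omega_j}{\dee(b)}_\B\in\ker\pi,$$
so $\pi(\delta\beta_b)\in JT^2_\dee$ by Definition \ref{defn:junk}. Using $\delta^2=0$ on universal forms and the graded Leibniz rule, exactly the computation appearing in the proof of Theorem \ref{thm:where-junk} yields
$$\pi(\delta\beta_b) = \nablar(\dee(b)) - \sum_j T_j\pairing{\omega_j}{\dee(b)}_\B.$$
Consequently $\nablar(\dee(b))\in JT^2_\dee$ precisely when $\sum_j T_j\pairing{\omega_j}{\dee(b)}_\B\in JT^2_\dee$, which follows if we can arrange each $T_j\in JT^2_\dee$ (since junk is a right $\B$-module).

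The closedness hypothesis $\d\omega_j=0$ drives this last step. The class $[T_j]\in T^2_\dee/JT^2_\dee$ is independent of the choice of lift $\alpha_j$, because modifying $\alpha_j$ by an element of $\ker\pi$ changes $T_j$ by a junk tensor. Under the structural identification $\mathrm{Im}(\Psi)=JT^2_\dee$ (which is the operative situation for symmetric junk on a manifold and for $\theta$-deformations), the canonical map $T^2_\dee/JT^2_\dee\to\Lambda^2_\dee$ induced by $1-\Psi$ is injective, so $\d\omega_j=(1-\Psi)T_j=0$ upgrades to $T_j\in JT^2_\dee$ for every lift. Substituting back finishes the proof. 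The main obstacle is precisely this bridging step: passing from the a priori weaker vanishing $\d\omega_j=0$ in $\Lambda^2_\dee$ to the stronger membership $T_j\in JT^2_\dee$ in $T^2_\dee$; once the relevant projection is known to cut out junk exactly, everything else is a direct universal-form computation.
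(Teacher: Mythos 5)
Your proof takes essentially the same route as the paper's: both reduce the reverse inclusion to the membership $T_j=\pi(\delta\alpha_j)\in JT^2_\dee$ via the identity $\pi(\delta\beta_b)=\nablar(\dee b)-\sum_jT_j\pairing{\omega_j}{\dee b}_\B$ already established inside Theorem \ref{thm:where-junk}. The paper's own proof is just the remark that one may choose the lifts $\alpha_j$ so that $T_j=0$, whence the connection $\nablar_T$ of the theorem \emph{is} the Grassmann connection and the equality $JT^{2}_{\dee}={\rm Im}(\nablar_T\circ\dee)\cdot\B$ applies verbatim; you instead keep an arbitrary lift and absorb $\sum_jT_j\pairing{\omega_j}{\dee b}_\B$ into the junk at the end, which is the same argument in a slightly different order. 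The one substantive divergence is your bridging step: you pass from $\d\omega_j=(1-\Psi)T_j=0$ to $T_j\in JT^2_\dee$ by assuming ${\rm Im}(\Psi)=JT^2_\dee$, which is not among the corollary's hypotheses (Definition \ref{ass:standing1} only requires $JT^{2}_{\dee}\subset{\rm Im}(\Psi)$). Note, however, that the paper's ``we may choose $T_j=0$'' rests on exactly the same reading of closedness --- namely that $T_j$ is junk for some (equivalently every) lift, so that subtracting a suitable element of $\ker\pi\cap\Omega^1_u(\B)$ from $\alpha_j$ kills $T_j$ outright --- so you have not introduced a gap the paper avoids; you have merely made explicit the hypothesis under which the literal condition $(1-\Psi)T_j=0$ suffices. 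If one instead interprets ``closed'' as vanishing of $\omega_j$ under Connes' differential into $T^2_\dee/JT^2_\dee$, i.e.\ $T_j\in JT^2_\dee$ for any lift, both proofs go through with no extra assumption on $\Psi$.
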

\begin{proof}
We may choose $T_j=\pi_\dee(\delta(\alpha_j))=0$ in the definition of the connection $\nablar_T$ from Theorem \ref{thm:where-junk}. Hence $\nablar_T$ is the Grassmann connection.
\end{proof}

\begin{corl}
Suppose the first order differential structure $(\Omega^1_\dee,\dag)$ has an inner product $\pairing{\cdot}{\cdot}_\B$ on $\Omega^1_\dee$  with a finite frame $(\omega_j)$ for the one-forms and  that there exists a projection $\Psi$ on $T^2_\dee$ with range containing the junk. Suppose that $\d\omega_j=0$  (so the frame consists of closed one-forms).  If $\nablar$ the Grassmann connection of the frame $(\omega_j)$, then $\nablar$ is Hermitian and torsion-free.
\end{corl}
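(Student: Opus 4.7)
The plan is to apply the two criteria already developed in the paper: Hermitian comes from Proposition \ref{lem:compatible}, and torsion-free reduces, via Lemma \ref{lem: torsion-free iff}, to verifying a single identity involving the three-tensor $W=\sum_j\d(\omega_j)\ox\omega_j^\dag$ and the connection form.

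First, Proposition \ref{lem:compatible} states outright that every right Grassmann connection $\nablar^v$ on a $\dag$-bimodule $\X$ is Hermitian with respect to the right inner product. Applied to $\X=\Omega^1_\dee(\B)$ with the frame $v=(\omega_j)$, this immediately gives that $\nablar=\nablar^v$ is Hermitian. No use of the closedness of the frame is required at this step.

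For torsion-freeness, write $\nablar=\nablar^v+\alphar(A_v)$ in the decomposition of Lemma \ref{lem:A-dag}. Since $\nablar$ is itself the Grassmann connection of $v$, the connection form is $A_v=0$, and in particular $(1-P)A_v=0$. The hypothesis $\d\omega_j=0$ for every $j$ means that
\[
W=\sum_j\d(\omega_j)\ox\omega_j^\dag=0.
\]
Hence $(1-P)A_v=0=-W$, which by Lemma \ref{lem: torsion-free iff} is exactly the criterion that the Hermitian connection $\nablar$ be torsion-free.

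There is essentially no obstacle: the statement is a direct synthesis of Proposition \ref{lem:compatible} and Lemma \ref{lem: torsion-free iff}, with the sole substantive input being that the hypothesis $\d\omega_j=0$ forces $W=0$, so the torsion-freeness criterion for the zero connection form is trivially satisfied. The role of the projection $\Psi$ is only to make sense of $\d$ via Proposition \ref{prop:diffrep}, so that the condition $\d\omega_j=0$ has meaning.
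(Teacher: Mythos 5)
Your proof is correct and follows essentially the same route as the paper: the paper's own argument also invokes that Grassmann connections are Hermitian and then verifies $(1-\Psi)\circ\nablar^v=-\d$ by the very computation that underlies Lemma \ref{lem: torsion-free iff}, which you simply cite with $A_v=0$ and $W=0$. The only cosmetic difference is that the paper phrases the corollary via the connection $\nablar_T$ of Theorem \ref{thm:where-junk} (noting $T_j=0$ reduces it to the Grassmann connection) and redoes the torsion computation inline, whereas you package it through Lemma \ref{lem: torsion-free iff}; the mathematical content is identical.
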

\begin{proof}
We note first that $T_j=0$, and so $\nablar_T$ is the Grassmann connection of the closed frame.
Then we readily compute that for a one-form $\eta$
\[
(1-\Psi)\circ\nablar_T(\eta)=(1-\Psi)\big(\sum\omega_j\ox\dee(\pairing{\omega_j}{\eta})\big)=\d(\omega_j)\pairing{\omega_j}{\eta}-\d(\omega_j\pairing{\omega_j}{\eta})=\d(\eta).
\] 
Hence $\nablar_T$ is torsion-free, and as Grassmann connections  are Hermitian, we are done. 
\end{proof}



\begin{thebibliography}{9999}

\bibitem[AW17a]{AW} J. Arnlind, M. Wilson, {\em Riemannian curvature of the noncommutative 3-sphere}, J. Noncommut. Geom., {\bf 11} no. 2, (2017), 507--536.

\bibitem[AW17b]{AW2} J. Arnlind,  M. Wilson, {\em On the Chern-Gauss-Bonnet theorem for the noncommutative $4$-sphere}, J. Geom. Phys., {\bf 111} (2017), 126--141.

\bibitem[BHMS]{BHMS} P. Baum, P. Hajac, R. Matthes, W. Szymanski, {\em Noncommutative geometry approach to principal and associated bundles}, arXiv:0701033v2.

\bibitem[BM17]{Maj-bimod} E. Beggs, S. Majid, {\em Spectral triples from bimodule connections and Chern connections}, J. Noncommut. Geom.,
{\bf 11} no.  2, (2017), 669--701.

\bibitem[BM20]{BMBook} E. Beggs, S. Majid, {\em Quantum Riemannian Geometry}, 1st ed., Springer Berlin, 2020.


\bibitem[BGJ20]{BGJ2} J. Bhowmick, D. Goswami, S. Mukhopadhyay,
{\em Levi-Civita connections for a class of spectral triples}, Lett. Math. Phys., {\bf 110},(2020)  835--884  

\bibitem[BGJ21]{BGJ1} J. Bhowmick, D. Goswami, S. Joardar,
{\em A new look at Levi-Civita connection in noncommutative geometry}, Int. J. Geom. Methods Mod. Phys., {\bf18} (07), (2021)  2150105. 

\bibitem[BC91]{BC91} B. Blackadar, J. Cuntz, {\em Differential Banach norms
and smooth subalgebras of $C^*$-algebras}, J. Operator Theory, {\bf 26} (1991), 255--282.

\bibitem[B98]{Blackadar} B. Blackadar, \emph{K-theory for operator algebras}, 2nd edition,
CUP, 1998.

\bibitem[BLS13]{BLS} S. Brain, G. Landi, W. van Suijlekom, {\em Moduli spaces of instantons on toric noncommutative manifolds}, Adv. Theor. Math. Phys., {\bf 17} (2013), 1129--1193.

\bibitem[BMS16]{BMS} S. Brain, B. Mesland, W. van Suijlekom, {\em Gauge theory for spectral triples
    and the unbounded Kasparov product,} J. Noncommut. Geom., {\bf 10} 2016, 135--206.
    
    
\bibitem[CGRS14]{CGRS2} A. Carey, V. Gayral, A. Rennie, F. Sukochev, {\em Index theory for locally compact noncommutative spaces},  
Mem. American Math. Soc., {\bf 231} (1085), 2014.



\bibitem[CPR11]{CPR11} A. Carey, J. Phillips, A. Rennie, {\em Spectral triples: examples and index theory}, 
in `Noncommutative Geometry and Physics: Renormalisation, Motives, Index Theory', Ed. A. Carey, EMS, 2011, 175--265.


    
\bibitem[Cac15]{Cac} B. \'{C}a\'{c}i\'{c}, {\em    A reconstruction theorem for Connes-Landi deformations of commutative spectral triples}, J. Geom. Phys., {\bf 98} (2015), 82--109.

\bibitem[CO22]{CO} A. Carotenuto, R. \'{O} Buachalla, {\em Bimodule connections for relative line modules over the irreducible quantum flag manifolds}, SIGMA, {\bf 18} (2022), 070, 21 pp.

    
    \bibitem[CG19]{CG} P. S. Chakraborty, S. Guin, {\em Comparison between two differential graded algebras in noncommutative geometry}, Proc. Indian Acad. Sci. (Math. Sci.) {\bf129} (2019), Article number: 29. 


\bibitem[C94]{BRB} A. Connes, {\em Noncommutative Geometry},  Academic Press, 1994.

\bibitem[C96]{ConnesGrav}
A. Connes, {\em Gravity coupled with matter and foundation of noncommutative
geometry},
Commun. Math. Phys., \textbf{182} (1996), 155--176.


\bibitem[CL01]{CL} A. Connes, G. Landi, \emph{Noncommutative manifolds, the instanton algebra, and isospectral deformations}, Commun. Math. Phys., {\bf 221}  (2001), 141--159.


\bibitem[CNNR11]{CNNR} A. Carey, S. Neshveyev, R. Nest, A. Rennie, {\em Twisted cyclic theory, equivariant $KK$-theory and KMS states}, 
Crelle's Journal, {\bf 650} (2011), 161--191.



\bibitem[DHLS96]{DHLS} L. Dabrowski, P. Hajac, G. Landi, P. Siniscalco, 
{\em Metrics and pairs of left and right connections on bimodules},
J. Math. Phys., {\bf 37}, no. 9, (1996).

\bibitem[DS03]{DS} L. Dabrowski, A. Sitarz, {\em Dirac operator on the standard 
Podle\'s sphere}, Banach Centre Publ., {\bf 61}, 49--58, (2003).

\bibitem[DSZ24]{DSZ} L. Dabrowski, A. Sitarz, P. Zalecki, {\em Spectral torsion}, Commun. Math. Phys. {\bf 405}, 130 (2024).

\bibitem[DRS20]{DRS} B. Das, R. \'{O} Buachalla, P. Somberg, {\em A Dolbeault-Dirac spectral triple for quantum projective space}, Doc. Math., {\bf 25}  (2020), 1079--1157.

\bibitem[DMM96]{DMM} M. Dubois-Violette, J. Madore, T. Masson, {\em On
curvature in noncommutative geometry}, J. Math. Phys., {\bf 37}, no. 8, (1996).

\bibitem[DM96]{DM} M. Dubois-Violette, P. Michor, {\em Connections on central bimodules}, J. Geom. Phys., {\bf 20} (1996), 218--232.

\bibitem[DMR23]{DMR23} K. van den Dungen, B. Mesland, A. Rennie, {\em Addendum to: Friedrichs angle and alternating projections in Hilbert $C^*$-modules},   J. Math. Anal. Appl., {\bf 522} (1) 2023, 127002.

\bibitem[DR16]{DR} K. van den Dungen, A. Rennie, {\em Indefinite Kasparov modules and pseudo-Riemannian manifolds}, 
  Ann. Henri Poincar\'e, {\bf 17} (2016), 3255--3286.


\bibitem[FL02]{FL02} M. Frank, D. Larson, {\em Frames in Hilbert $C^*$-modules and $C^*$-algebras}, J. Operator Th., {\bf 48} (2) (2002), 273--314.


\bibitem[GKOSS22]{GKOSS} F. Garc\'{i}a, A. Krutov, R. \'{O} Buachalla, P. Somberg, K. Strung, {\em Positive line modules over the irreducible quantum flag manifolds}, Lett. Math. Phys., {\bf 112} (2022), 123, 33pp.


\bibitem[GVF01]{GVF}
J. M. Gracia-Bond\'ia, J. C. V\'arilly and H. Figueroa,
\textit{Elements of Noncommutative Geometry},
Birkh\"auser, Boston, 2001.

\bibitem[KPW04]{KajPinWat} T. Kajiwara, C. Pinzari, Y. Watatani, 
{\em Jones index theory for
    Hilbert $C^*$-bimodules and its equivalence 
    with conjugation theory,} J. Funct. Anal., {\bf
    215} (2004), 1--49.



\bibitem[L95]{Lance} E.C. Lance, \textit{Hilbert $C^*$-modules, A toolkit
    for operator algebraists}, Cambridge University Press, Cambridge, 1995, x+130.

\bibitem[L97]{Landi} G. Landi, {\em An Introduction to noncommutative spaces and their geometries}, Lecture Notes in Physics: Monographs, m51, Springer-Verlag, Berlin Heidelberg, 1997.

\bibitem[LNW94]{LNW} G. Landi, A. V. Nguyen, K. C. Wali, {\em Gravity and
electromagnetism in noncommutative geometry}, Phys. Lett. B, {\bf 326} (1994), 45--50.

\bibitem[LRZ09]{LRZ} G. Landi, C. Reina, A. Zampini,
{\em Gauged Laplacians on quantum Hopf bundles}, Commun. Math. Phys.,
{\bf 287} no. 1, (2009), 179--209. 



\bibitem[LM89]{LM} H. B. Lawson, M. L. Michelsohn, \textit{Spin Geometry},
Princeton Univ. Press, Princeton, NJ, 1989.

\bibitem[L96]{Lo} J.-L. Loday, Cyclic Homology, second ed., Springer, Berlin, 1996.
\bibitem[LRV12]{LRV} S. Lord, A. Rennie, J. V\'arilly, {\em Riemannian manifolds in 
noncommutative geometry}, J. Geom. Phys., {\bf 62} (2012), 1611--1638.

\bibitem[M05]{Maj-pods} S. Majid, {\em Noncommutative Riemannian and spin geometry of the standard $q$-sphere}, Commun.  Math. Phys., {\bf 256} no. 2, (2005), 255--285.

\bibitem[M22]{M22} M. Matassa, {\em Quantum Riemannian geometry of quantum projective spaces}, J. Geom. Phys., {\bf 179} (2022), 104611.

\bibitem[MR16]{MR} B. Mesland, A. Rennie, {\em Nonunital spectral triples and metric completeness
    in unbounded $KK$-theory,} J. Funct. Anal.,  \textbf{271} (2016), 2460--2538.
\bibitem[MR21]{MRtwoprojns} B. Mesland, A. Rennie, {\em The Friedrichs angle and alternating projections in Hilbert $C^{*}$-modules,} J. Math. Anal. Appl., {\bf 516} (2022), 126474.

\bibitem[MR24b]{MRCurve}   B. Mesland, A. Rennie, {\em Curvature and Weitzenbock formula for spectral triples}, Math. Nachr. 202400158.


\bibitem[MR24c]{MRPods} B. Mesland, A. Rennie, {\em Curvature and Weitzenbock formula for the  Podle\'s quantum sphere}, arXiv:2406.18483.

\bibitem[MRS22]{MRS} B. Mesland, A. Rennie, W. van Suijlekom, {\em Curvature of differentiable Hilbert modules and Kasparov modules}, Adv. Math., {\bf 402} 108128, 2022.


\bibitem[RS89]{RS} I. Raeburn, A. Sinclair, {\em The $C^*$-algebra generated by two projections}, Math. Scand., {\bf 65} (1989), 278--290.



\bibitem[RW98]{RW} I. Raeburn, D.P. Williams, {\em Morita Equivalence and Continuous Trace $C^*$-Algebras},  Mathematical Surveys and Monographs, {\bf 60}, AMS, Providence, RI, 1998.

\bibitem[R03]{R1} A. Rennie, {\em Smoothness and locality for nonunital spectral triples},
$K$-Theory, {\bf 28}, (2003),  127--165.


  
  \bibitem[RS17]{RSi} A. Rennie,  A. Sims, {\em Non-commutative vector bundles for non-unital algebras},  SIGMA, {\bf 13}, (2017), 041, 12 pages. 
  


\bibitem[R13]{Rosenberg} J. Rosenberg, {\em Levi-Civita's theorem for noncommutative tori}, SIGMA, {\bf 9} (2013) 071, 9 pages.


\bibitem[S21]{Sch} K. Schm\"{u}dgen, {\em $*$-bimodules},
Proc. Amer. Math. Soc., {\bf 149} (9), 2021 3923--3938.


\bibitem[S92]{LBS} L. B. Schweitzer,
{\em A short proof that $M_n(A)$ is local if $A$ is local and
Fr\'{e}chet},
Int. J. Math., {\bf 3} (4) (1992), 581--589.

\bibitem[S96]{Skeide} M. Skeide, {\em Hilbert modules in quantum electro dynamics and quantum probability}, Commun. Math. Phys. {\bf192} (1998) 569--604.


\bibitem[W10]{W} C. Wahl, {\em Index theory for actions of compact Lie groups on $C^*$-algebras}, J. Operator Theory, {\bf 63} (1) (2010), 217--242.


\bibitem[W03]{Williams} D. Williams, {\em Tensor product with bounded continuous functions}, New York J. Math., {\bf 9} (2003), 69--77.


%
%


\end{thebibliography}
\end{document}